%% file: apex_main.tex
\documentclass{article}
\usepackage{graphicx} 
\usepackage{fullpage}
\usepackage[utf8]{inputenc}
\usepackage{amsmath,amsfonts,amsthm,amssymb}
\usepackage{graphics}
\usepackage{url}
\usepackage{hyperref}
\usepackage{cleveref}
\usepackage{color}
\usepackage{enumitem}
\usepackage{algorithm}
\usepackage{algpseudocode}
\usepackage{lineno}
\usepackage{subfiles}
\usepackage{mathrsfs}
\usepackage{caption}
\usepackage{mathabx}
\usepackage{subcaption}
\usepackage{xspace}
\usepackage{booktabs}
\usepackage{multicol}
\usepackage{multirow}
\usepackage[export]{adjustbox}

\usepackage{tikz}
\usepackage{tkz-graph}
\usetikzlibrary{shapes.geometric}
\usetikzlibrary{positioning}
\usetikzlibrary {decorations.markings}
\usetikzlibrary{arrows.meta}
\usetikzlibrary{calc}

\newtheorem{theorem}{Theorem}[section]
\crefname{theorem}{Theorem}{Theorems}
\newtheorem{lem}[theorem]{Lemma}
\crefname{lem}{Lemma}{Lemmas}

\crefname{proposition}{Proposition}{Propositions}
\newtheorem{claim}[theorem]{Claim}

\newtheorem{dfn}[theorem]{Definition}

\newtheorem{conjecture}{Conjecture}

\crefname{figure}{Figure}{Figures}

\newcommand\drop[1]{}

\def\showlabel#1{}

\algnewcommand{\Break}{\State \textbf{break} }
\algnewcommand{\Continue}{\State \textbf{continue} }
\algnewcommand{\True}{\textbf{true}\xspace}
\algnewcommand{\False}{\textbf{false}\xspace}

\counterwithin{algorithm}{subsection}

\input{commands}

\title{Three-edge-coloring apex cubic graphs}

\author{
Yuta Inoue\thanks{The University of Tokyo, Tokyo, Japan, \texttt{yutainoue@is.s.u-tokyo.ac.jp}.  Supported by JSPS Kakenhi 26K21777 and JP25K24465, by JST ASPIRE JPMJAP2302 and by Hirose Foundation Scholarship.}
\and
Ken-ichi Kawarabayashi\thanks{National Institute of Informatics \& The University of Tokyo, Tokyo, Japan, \texttt{k\_keniti@nii.ac.jp}.  Supported by JSPS Kakenhi 26K21777 and JP25K24465 and by JST ASPIRE JPMJAP2302.}
\and
Ritarou Matsuo\thanks{The University of Tokyo, Tokyo, Japan, \texttt{rin2004@g.ecc.u-tokyo.ac.jp} Supported by JSPS Kakenhi 26K21777 and JP25K24465 and by JST ASPIRE JPMJAP2302.}
\and
Atsuyuki Miyashita\thanks{The University of Tokyo, Tokyo, Japan, \texttt{miyashita-atsuyuki869@is.s.u-tokyo.ac.jp}. Supported by JSPS Kakenhi 26K21777 and JP25K24465 and by JST ASPIRE JPMJAP2302.}
\and
Bojan Mohar\thanks{Simon Fraser University, Burnaby, BC, Canada \& FMF, University of Ljubljana, Slovenia, \texttt{mohar@sfu.ca}. Supported in part by NSERC Discovery Grant R832714 (Canada), by the ERC Synergy grant (European Union, ERC, KARST, project number 101071836), and by the Research Core Grant P1-0297 of ARIS (Slovenia).}
\and
Tomohiro Sonobe\thanks{National Institute of Informatics \& The University of Tokyo, Tokyo, Japan, \texttt{tomohiro\_sonobe@nii.ac.jp}.  Supported by JSPS Kakenhi 26K21777 and JP25K24465 and by JST ASPIRE JPMJAP2302.}
 }
\date{\today}

\begin{document}
\maketitle
\begin{abstract}
A graph $G$ is \emph{apex} if $G$ has a vertex $v$ such that $G-v$ is planar. 
We prove that every $2$-connected apex cubic graph is three-edge-colorable. This result gives the final piece of the proof for the well-known Tutte's three-edge-coloring conjecture from 1966 \cite{tutte}.
The proof, as well as the result, generalizes that of the Four Color Theorem, which requires computer checks. 

As in \cite{inoue2026four, RSST}, the proof is constructive. More precisely, given a $2$-connected apex cubic graph $G$ on $n$ vertices, our reducibility and discharging procedure yields a three-edge-coloring of $G$ in $O(n^2)$ time.

As an additional reproducibility check for our computer checks, independent implementations reconstructed from the detailed pseudocode (given in the appendix, as in \cite{inoue2026four}) using generative-AI systems reproduced the required computational results. These reconstructions are not part of the mathematical justification of the theorem, but provide additional evidence for the reproducibility of the computations.
\end{abstract}

\subfile{apex_intro}

\subfile{apex_proofs}

\bibliographystyle{alpha}
\bibliography{4CT}

\appendix

\subfile{apex_connectivity}

\section{Pseudocode}

\subfile{pseudo/reducibility_check}

\subfile{pseudo/free_homomorphism}

\subfile{pseudo/cartwheel_algorithm}

\subfile{pseudo/conf_hom}

\end{document}

%% file: commands.tex
\newcommand{\nil}{\textit{nil}\xspace}
\newcommand{\head}{\textit{head}\xspace}
\newcommand\tail{\textit{tail}\xspace}
\newcommand\reverse{\textit{rev}\xspace}
\newcommand\successor{\textit{succ}\xspace}
\newcommand\predecessor{\textit{pred}\xspace}

\newcommand{\deltaout}{\delta^{\textsf{out}}\xspace}

\newcommand{\efirst}{e_\textsf{first}\xspace}
\newcommand{\elast}{e_\textsf{last}\xspace}

\newcommand{\neverApply}{\textsf{neverApply}\xspace}

\newcommand{\CARTWHEELDEGREES}{\texttt{CARTWHEEL\_DEGREES}\xspace}

\newcommand{\allHomImages}{\textsf{allHomImages}\xspace}
\newcommand{\makeOuterExtension}{\textsf{makeOuterExtension}\xspace}
\newcommand{\isPlanar}{\textsf{isPlanar}\xspace}

\newcommand{\hasSeparatingCycle}{\textsf{hasSeparatingCycle}\xspace}
\newcommand{\getWalks}{\textsf{getWalks}\xspace}

\newcommand{\addBoundaryDartsDirectly}{\textsf{addBoundaryDartsDirectly}\xspace}
\newcommand{\linkIncidenceListEnds}{\textsf{linkIncidenceListEnds}\xspace}

\newcommand{\id}[1]{\operatorname{id}_{#1}\xspace}

\newcommand{\eufirst}{e_{u,\textsf{first}}\xspace}
\newcommand{\ewlast}{e_{w,\textsf{last}}\xspace}

\newcommand{\cRstarnoK}{\mathcal{R}^{*-\mathcal{K}}\xspace}
\newcommand{\cRstar}{\mathcal{R}^{*}\xspace}

\newcommand{\Ksmaller}{\mathcal{K}_\textsf{smaller}\xspace}

\newcommand{\Rauxiliary}{\mathcal{R}_\textsf{auxiliary}\xspace}

\newcommand{\Ca}{\mathcal{C}_\textsf{all}\xspace}

%% file: apex_intro.tex
\section{Introduction}
\label{sect:intro}
\showlabel{sect:intro}

The Four Color Theorem (4CT) states that every loopless planar graph is 4-colorable. 
It was first shown by Appel and Haken \cite{4ct1,4ct2} in 1977, and a simplified version of the proof was given in 1997 by Robertson et al.~\cite{RSST}. 
The same authors used the latter proof \cite{RSST-STOC} to obtain a quadratic-time algorithm for 4-coloring planar graphs. 

From the viewpoint of vertex-coloring, it seems extremely difficult to extend the 4CT to larger families of graphs from planar graphs.
Indeed, it has been known that for every surface with positive Euler genus there are infinitely many \emph{$5$-color-critical graphs}\footnote{A graph $G$ is said to be \emph{$k$-color-critical} if $G$ is not $(k-1)$-colorable, but every proper subgraph of $G$ is.} that can be embedded in the surface \cite{MT}. 
This means that while there are no 5-color-critical planar graphs, there are infinitely many 5-color-critical graphs on the projective plane.
Regarding vertex-coloring, there is a major gap between planar graphs and graphs of higher genus.

On the other hand, the 4CT has an equivalent formulation using planar duality:

\begin{theorem}\label{thm:3ECplanar}
   Every $2$-connected cubic planar graph is three-edge-colorable. 
\end{theorem}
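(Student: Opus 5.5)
The plan is to deduce the statement from the Four Color Theorem through Tait's classical equivalence, taking the 4CT as given in the form recalled above: every loopless planar graph is 4-colorable.

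Let $G$ be a $2$-connected cubic planar graph and fix a plane embedding. Since $G$ is $2$-connected, it has no bridges. Hence every edge lies on the boundary of two \emph{distinct} faces, and every face boundary is a cycle.

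Form the planar dual $G^*$. Its vertices are the faces of $G$, with one dual edge crossing each edge of $G$. Because $G$ is bridgeless, no edge separates a face from itself, so $G^*$ is loopless. It may have parallel edges, but these do not affect vertex coloring. Since $G$ is cubic, $G^*$ is a triangulation: every face of $G^*$ is a triangle.

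By the 4CT, $G^*$ admits a proper vertex coloring $c$ with colors taken from the Klein four-group $\mathbb{Z}_2\times\mathbb{Z}_2=\{0,a,b,a+b\}$. In other words, the faces of $G$ are colored so that adjacent faces receive different elements.

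Color each edge $e$ of $G$ by $c(f)+c(f')$, where $f,f'$ are the two faces incident with $e$. Since $f\neq f'$ are adjacent, $c(f)\neq c(f')$, so this sum is nonzero and lies in $\{a,b,a+b\}$. That gives three colors.

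The main step is to check that the coloring is proper at every vertex $v$. The three faces around $v$, say $f_1,f_2,f_3$, are pairwise adjacent, because the three edges at $v$ separate them in pairs. In a cubic $2$-connected plane graph these three faces are distinct, so they receive three distinct colors $x,y,z$. The three edges at $v$ then get the colors $x+y$, $y+z$, $x+z$.

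These three colors are pairwise distinct. For instance, $x+y=y+z$ would force $x=z$, which is impossible; the other two cases are identical. So every vertex sees three different edge colors, and we obtain a three-edge-coloring of $G$.

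I expect no real obstacle beyond the 4CT itself. The only care needed is the degenerate-case bookkeeping: that $2$-connectivity, and not merely bridgelessness, guarantees the three faces at each vertex are distinct and that $G^*$ is loopless. Since $2$-connectivity implies bridgelessness in a cubic graph, this is immediate from standard facts about plane embeddings.
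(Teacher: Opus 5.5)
Your proof is correct and follows essentially the paper's route: the paper obtains this statement from the 4CT via Tutte's coloring--flow duality, plus the observation that a nowhere-zero $\mathbb{Z}_2\times\mathbb{Z}_2$-flow on a cubic graph is a three-edge-coloring, and your face-sum coloring $c(f)+c(f')$ is that duality made explicit. One small correction to your closing remark: the three faces at a vertex $v$ are distinct by bridgelessness alone, not by $2$-connectivity, since each pair of them lies on the two sides of one of the edges at $v$.
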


While the vertex-coloring formulation of the Four Color Theorem has no extension beyond planar graphs, the edge-coloring formulation in Theorem \ref{thm:3ECplanar} has some chance to extend.
This equivalence is a consequence of Tutte's coloring-flow duality theorem. This duality uses the notion of nowhere-zero flows.
A \emph{nowhere-zero $4$-flow} on a graph $G$ is a function $\phi: E(G)\to \{(0,1),(1,0),(1,1)\}$, where the three values in the range of $\phi$ are viewed as the nonzero elements of the group $\mathbb{Z}_2\times\mathbb{Z}_2$, satisfying the \emph{flow condition}: $\sum_{e\in C}\phi(e)=(0,0)$ for every edge-cut $C\subseteq E(G)$. 
Nowhere-zero $k$-flows can be defined for any integer $k\ge2$ by using nonzero elements of any abelian group of order $k$. For a precise definition, we refer the reader to the textbook by Diestel \cite{Diestel_book}. 

\begin{theorem}[Tutte \cite{tutte}]\label{thm:coloring-flow duality}
   Let $k\ge2$ be an integer and $G$ be a plane graph. Then $G$ is $k$-colorable if and only if its dual graph $G^*$ admits a nowhere-zero $k$-flow. 
\end{theorem}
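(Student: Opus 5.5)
The plan is to build explicit bijections in both directions between proper $k$-colorings of $G$ and nowhere-zero flows on $G^*$ with values in a fixed abelian group $A$ of order $k$, say $A=\mathbb{Z}_k$. First I fix orientations. Orient the edges of $G^*$ arbitrarily. Each edge $e^*$ of $G^*$ crosses a unique edge $e=uv$ of $G$, and the plane embedding lets us call one endpoint of $e$ the left side of $e^*$ and the other the right side, say $u$ on the left and $v$ on the right. I may assume $G$ has no loops: a graph with a loop has no proper coloring, and the dual edge of a loop is a bridge of $G^*$, which carries no nowhere-zero flow, so both sides of the equivalence fail. Since $k$-colorability is decided componentwise, I may also reduce to the case that $G$ is connected.

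\emph{Colorings to flows.} Given a proper coloring $c:V(G)\to A$, set $\phi(e^*)=c(u)-c(v)$, with $u$ the left and $v$ the right endpoint as above. This value is nonzero exactly because $c$ is proper. For the flow condition, I use that edge-cuts of $G^*$ correspond to cycles of $G$: a minimal edge-cut of $G^*$ is the set of duals of the edges of a cycle $C$ of $G$. Summing $\phi$ with signs according to the direction of crossing then gives a telescoping sum $\sum_{i}\bigl(c(v_i)-c(v_{i+1})\bigr)=0$ around $C$. A general edge-cut is a disjoint union of minimal edge-cuts, so the condition holds for every cut.

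\emph{Flows to colorings.} Conversely, let $\phi$ be a nowhere-zero $A$-flow on $G^*$. Define a tension on $G$ by $\tau(uv)=\phi(e^*)$, oriented consistently with the convention above. For every cycle $C$ of $G$, the dual edge set is an edge-cut of $G^*$, and the flow condition says that the signed sum of $\tau$ around $C$ is zero. Now fix a root $r$ with $c(r)=0$, and define $c(x)$ as the signed sum of $\tau$ along any $r$--$x$ path. This is well defined because any two such paths differ by a sum of cycles, on each of which $\tau$ sums to zero. Then $c(u)-c(v)=\tau(uv)\neq 0$ for every edge $uv$, so $c$ is a proper coloring using $k$ colors.

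\emph{Main obstacle.} The main point needing care is the cycle--cut duality in the plane: minimal edge-cuts of $G^*$ are exactly the duals of cycles of $G$. This follows from the Jordan curve theorem; for a nonseparating cycle it corresponds to the set of faces inside versus outside $C$. With it, I must also check that the sign conventions agree, so that crossing direction matches traversal direction. Finally, I will remark that the definition of flow via cuts with $\mathbb{Z}_2\times\mathbb{Z}_2$ fits this argument, since $-1=1$ in that group and orientations become irrelevant. That case, $k=4$ with $\mathbb{Z}_2\times\mathbb{Z}_2$, is the one used to derive Theorem~\ref{thm:3ECplanar}: a nowhere-zero $\mathbb{Z}_2\times\mathbb{Z}_2$-flow on a cubic graph is precisely a $3$-edge-coloring, because at each vertex the three values must be distinct nonzero elements summing to zero.
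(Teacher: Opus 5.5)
The paper gives no proof of this statement. It is quoted from Tutte, with Diestel's textbook cited for the definitions, and only the case $k=4$, $A=\mathbb{Z}_2\times\mathbb{Z}_2$ is used afterwards. Your argument is the standard tension--flow duality proof (as in Diestel), and it is correct. The sign bookkeeping works out: with the faces inside $C$ on the left, each edge $v_iv_{i+1}$ of $C$ contributes exactly $c(v_{i+1})-c(v_i)$ to the net outflow, whichever way its dual is oriented. Nothing in the argument uses that $A$ is cyclic, so you obtain the statement for every abelian group of order $k$ at once.

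Two minor points.

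\emph{Connectedness.} Your reduction to connected $G$ is justified only on the coloring side, since the dual of a disconnected plane graph is not the disjoint union of the duals of its components. You do not need the reduction, however. The cycle-to-cut direction holds for every plane graph by the Jordan curve theorem, so in the flows-to-colorings step you can simply fix one root in each component.

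\emph{A shortcut in the colorings-to-flows step.} You can avoid both the harder half of cycle--bond duality (every bond of $G^*$ is dual to a cycle of $G$) and the decomposition of a cut into bonds. The decomposition would also require checking that the bonds can be oriented compatibly with $\delta(X)$. Instead, observe that the cut condition is equivalent to conservation at each vertex of $G^*$, obtained by summing the vertex conditions over $X$. At a vertex $f$ of $G^*$, the net outflow telescopes along the boundary walk(s) of the face $f$ of $G$. An edge with $f$ on both sides is traversed twice in opposite directions and cancels, which matches the fact that its dual is a loop at $f$.
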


Note that a cubic graph is three-edge-colorable if and only if it admits a nowhere-zero 4-flow.
Thus, Theorem \ref{thm:3ECplanar} is equivalent to the 4CT.
Motivated by this formulation, Tutte proposed his famous \emph{$4$-Flow Conjecture}.

\begin{conjecture}[Tutte \cite{tutte}]\label{conj:Tutte4Flow}
   Every 2-connected graph without Petersen graph as a minor admits a nowhere-zero $4$-flow. 
\end{conjecture}

This conjecture remains open.
The special case of this conjecture restricted to cubic graphs is known as \emph{Tutte's three-edge-coloring conjecture} and it predicts the following.

\begin{conjecture}[Tutte \cite{tutte}]\label{conj:Tutte3edge}
   Every $2$-connected cubic graph without Petersen graph as a minor is three-edge-colorable.
\end{conjecture}

In 1997, Robertson, Seymour, and Thomas \cite{3edgecoloring} showed the following, with the results in \cite{excludedcubic,cyclically5}. 

\begin{theorem}\label{thm;equi}\showlabel{equi}
    Conjecture \ref{conj:Tutte3edge} follows if both of the following statements hold:
    \begin{itemize}
        \item Every 2-connected doublecross cubic graph is three-edge-colorable.
        \item Every 2-connected apex cubic graph is three-edge-colorable.
    \end{itemize}
\end{theorem}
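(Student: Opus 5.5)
The plan is to reproduce the Robertson--Seymour--Thomas reduction. Take a minimal counterexample: a $2$-connected cubic graph $G$ with no Petersen minor that is not three-edge-colorable and has as few vertices as possible. The argument then has three stages: establish strong connectivity for $G$, invoke a structure theorem for Petersen-minor-free graphs of that connectivity, and observe that each resulting outcome is covered by one of the two hypotheses or is colorable directly.

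\textbf{Connectivity.} First I show that $G$ is cyclically $5$-edge-connected. This is done by the standard reductions across small edge-cuts.
\begin{itemize}
\item If $G$ has a $2$-edge-cut, contract each side to an edge. Both smaller graphs are $2$-connected, cubic, and Petersen-minor-free, since they are minors of $G$. Their colorings can be permuted to agree on the cut edges and then glued.
\item If $G$ has a cyclic $3$-edge-cut, contract each side to a single vertex. The two colorings assign distinct colors to the three cut edges, so they can be matched and glued.
\item If $G$ has a triangle, contract it; coloring the smaller graph extends back.
\item A cyclic $4$-edge-cut, or a $4$-cycle, is handled by the usual parity argument. Replace one side by two edges (a digon reduction) in both possible pairings. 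Since the number of cut edges of each color is even, every coloring of a side pairs the four cut edges into two equal-colored pairs. One checks that the resulting minors remain Petersen-minor-free and that some pairing yields compatible colorings.
\end{itemize}
These reductions only take minors, which preserves Petersen-minor-freeness, and they preserve $2$-connectivity and cubicity. So minimality forces $G$ to be cyclically $5$-edge-connected and in particular of girth at least $5$.

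\textbf{Structure theorem.} I would then use the structure results of \cite{excludedcubic,cyclically5}. A cyclically $5$-connected cubic graph with no Petersen minor is one of three kinds: apex, doublecross (drawable in the plane with two crossings on a common face boundary), or a member of a small explicitly described exceptional family, such as a Möbius-ladder-like starfish. The apex and doublecross cases contradict the two hypotheses of Theorem~\ref{thm;equi}. The exceptional graphs are colored directly; for instance Möbius ladders have an obvious perfect matching whose complement is a Hamiltonian cycle of even length.

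\textbf{Main obstacle.} The reductions are routine, except that the $4$-edge-cut case needs care: the contracted graphs must stay $2$-connected and the colorings must glue. The real weight lies in the structure theorem for cyclically $5$-connected Petersen-minor-free cubic graphs. That theorem is a long excluded-minor argument, and I would cite it from \cite{excludedcubic,cyclically5} rather than reprove it. Proving it would mean growing a Petersen-free subdivision and analysing the bridges, which is beyond the scope of this paper.
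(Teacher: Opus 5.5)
The paper does not reprove this statement. It cites \cite{3edgecoloring} together with \cite{excludedcubic,cyclically5}, and your outline follows that route. There is, however, a genuine gap where your connectivity stage hands off to the structure theorem.

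The theorem of \cite{excludedcubic} is not a statement about all cyclically $5$-edge-connected Petersen-minor-free cubic graphs. Its hypothesis is theta-connectivity: girth at least five, and at least six edges across every partition whose two sides are both large (see the definition in Section~\ref{sect:minimal}). Its conclusion is apex, doublecross, or one particular $20$-vertex graph. It is not a family of M\"obius-ladder-like graphs; M\"obius ladders contain $4$-cycles, so they are not even cyclically $5$-edge-connected. Your reductions stop at cyclic $5$-edge-connectivity. A cyclic $5$-edge cut with both sides large is therefore never excluded, and the theorem cannot be invoked.

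Excluding such cuts is the substantive step of \cite{3edgecoloring}, and it is not a routine minor reduction like your cuts of size at most $4$. To reduce across a cyclic $5$-edge cut, you must replace one side by a small gadget, such as a $5$-cycle or the gadgets $G^C_{ij}$, $G^D_{ij,kl}$ of Section~\ref{sec:con}. You then need two things:
\begin{itemize}
\item the ten classes of cut colorings glue across the cut;
\item the new graph is still Petersen-minor-free.
\end{itemize}
The second requires the gadget to be realized as a minor of the deleted side, with the correct cyclic order of the cut edges, and this need not be possible.

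What \cite{3edgecoloring} actually proves, as recalled in Section~\ref{sect:minimal}, is that a minimal counterexample is theta-connected \emph{unless it is apex}. So the apex alternative must come out of the connectivity analysis itself, not only out of the structure theorem. In the apex setting this is unavoidable: Lemma~\ref{lem:Gmc-connected} cannot rule out a $7$-vertex domino behind a $5$-edge cut, and that already violates theta-connectivity. As written, your argument only reduces the conjecture to cyclically $5$-edge-connected graphs, which is not enough to apply \cite{excludedcubic}.
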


A graph $G$ is \emph{doublecross} if it can be drawn in the plane with only two crossings, both on the same region.
A graph $G$ is \emph{apex} if $G$ has a vertex $v$ such that $G-v$ is planar.
While the first result was already published in \cite{doublecross} 10 years ago, the second part has never been published, although a proof was announced by Sanders and Thomas around 30 years ago.

The main theorem of this paper is to prove the apex case:
\begin{theorem}\label{mainth}\showlabel{mainth}
Every 2-connected apex cubic graph is three-edge-colorable.  
\end{theorem}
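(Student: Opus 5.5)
We will argue by a minimal counterexample, following the unavoidability/reducibility scheme of the Four Color Theorem proof of Robertson, Sanders, Seymour and Thomas. Let $G$ be a counterexample to Theorem~\ref{mainth} with $|V(G)|$ minimum, and let $v$ be an apex vertex, so that $G-v$ is planar. Since $G$ is cubic, $v$ has exactly three neighbours $a_1,a_2,a_3$. The first step is to reduce connectivity. Using standard edge-cut arguments, a minimal counterexample must be cyclically $5$-edge-connected, since colourings of the two sides of a small cut can be glued by permuting colours. We also need to exclude small cycles through $v$, and to check that the reductions stay inside the apex class. This last point holds because contracting one side of a $2$- or $3$-edge-cut to a vertex preserves apexness provided the contracted side does not contain $v$ in an essential way. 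These arguments are the ones we expect to place in the connectivity appendix.

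Next, delete $v$ and consider $H=G-v$. This is a plane graph in which every vertex has degree $3$ except $a_1,a_2,a_3$, which have degree $2$. Equivalently, we view $G$ as a planar near-triangulation (the dual of $H$) together with three marked faces $f_1,f_2,f_3$ that carry the edges to $v$. Three-edge-colouring $G$ is then equivalent to $4$-colouring this dual triangulation under a boundary condition: the three edges at $v$ receive distinct colours, which by Tait/Tutte duality (Theorem~\ref{thm:coloring-flow duality}) amounts to a parity constraint on the Kempe structure around the $f_i$. We next choose an unavoidable set of configurations. The 4CT discharging rules, applied to the dual triangulation of $H$, produce a reducible configuration somewhere. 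The only places where the planar argument fails are neighbourhoods of the three special faces. So we will prove a discharging lemma stating that either a configuration from the 4CT unavoidable set appears at distance at least some constant $r$ from $f_1,f_2,f_3$, or the positive charge concentrates near the special faces. Because the total charge in Euler's formula is fixed at $120$ in the RSST normalisation while only three special faces exist, a counting argument should force a good configuration far from them unless $G$ is small.

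For configurations far from $\{f_1,f_2,f_3\}$, D-reducibility and C-reducibility carry over verbatim: the boundary ring colourings extend as before, because the colouring outside the ring is an arbitrary three-edge-colouring of a smaller apex cubic graph. Here we must check that the reduced graph is still $2$-connected and apex, which the cyclic $5$-connectivity ensures. The main obstacle is the configurations that necessarily touch the special faces, namely the region around $v$, which the planar discharging cannot control. For these we will build a separate, computer-checked family of configurations containing the apex. In such configurations the ring includes the edges to $v$, and reducibility must be verified for ring colourings subject to the additional constraint imposed at $v$. We will first try to enlarge the discharging rules with extra rules sending charge away from the special faces. We will then run the cartwheel enumeration (as in the pseudocode appendix) around each $f_i$, and show that every cartwheel either contains one of the new apex-reducible configurations or has nonpositive charge. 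Finally, we will verify that reductions are constructive, giving the stated $O(n^2)$ algorithm.
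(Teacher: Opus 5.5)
Your skeleton (minimal counterexample, delete the apex, discharge on the dual with a charge budget for the special faces, extra computer-checked configurations near them) matches the paper's. The step you treat as routine is where the argument breaks: the claim that away from $f_1,f_2,f_3$ the RSST configurations and their D-/C-reducibility certificates ``carry over verbatim''. It fails for two independent reasons.

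First, the Kempe-chain structure outside an island is no longer planar. The complement of the island contains $v$, so a $\{c_1,c_2\}$-chain can leave a ring edge, reach some $a_i$, pass through $v$ and re-enter the plane at $a_j$. The induced pairing of ring edges may therefore cross the other chains. Equivalently, in $H=G-v$ chains may end at the degree-two vertices, leaving ring edges unpaired. RSST's consistency notion, and hence every certificate they computed, concerns only non-crossing pairings, so the certificates do not transfer. Worse, if you work on $H$ with such half-chains, the set of parity-violating ring colourings is closed under switching: pair consecutive chain ends and leave at most one singleton. So an island with no degree-two vertex, which is exactly what a configuration far from the special faces yields, is \emph{never} D-reducible in that framework. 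Reducibility has to be redefined (matchings with singletons, all $3^r$ ring colourings, deletable edge sets whose reduction plus an apex is again a smaller $2$-connected apex cubic graph), and every island, old RSST ones included, has to be rechecked. Relatedly, your reformulation as a $4$-colouring of the dual with a constraint at $v$ is not correct. At a degree-two vertex both edges separate the same pair of faces, so Tait duality fails. Meanwhile the condition at $v$ is no constraint at all, since every $3$-edge-colouring of $H$ extends to $G$ by parity.

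Second, the dual of $H$ is not internally $6$-connected, and this failure is not local to the special faces. The best one can prove is that a separating $4$-cycle encloses at most a tiny disk (a degree-$5$ vertex on a digon, or a domino). A separating $5$-cycle either has at most one vertex on one side or has a digon on each side, with arbitrarily many vertices on both. These $5$-cycles come from $6$-edge cyclic cuts of $G$ containing one edge at $v$, which cyclic $5$-edge-connectivity does not exclude, and such a cycle can lie arbitrarily far from every digon. Hence a configuration found by discharging far from $f_1,f_2,f_3$ can still have a non-injective image: non-induced, or self-intersecting around a domino. In that case the island you certified does not appear in $G$. The paper handles this by working with homomorphisms rather than embeddings:
\begin{itemize}
\item it enumerates all free homomorphic images of each configuration's outer extension;
\item it prunes them by planarity and by the forbidden short separating cycles;
\item it converts the survivors into multi-boundary islands, whose distinct boundary faces cannot be joined by Kempe chains, and verifies those.
\end{itemize}
For the same reason the cartwheels in the discharging are built on the fly. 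Finally, your charge count is off. With $k$ digons the total is $120-20k$, i.e.\ $60$ or $80$, so each special face costs $20$, and the excess you tolerate near it must be shown to be strictly below $20$. The paper achieves this with new rules and configurations containing digons (degree-two vertices of $H$). It does not use configurations containing $v$: the three neighbours of $v$ may be far apart, so there is no local ``region around $v$''.
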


Together with Theorem \ref{thm;equi} and \cite{doublecross}, our result finally proves Tutte's three-edge-coloring conjecture. Our proof is based on several computer-assisted tasks. 
Source code and data used in this paper are available on GitHub\footnote{\url{https://github.com/three-edge-coloring-apex-cubic-graphs}}.

A graph is \emph{cyclically $k$-edge-connected} if, after deleting fewer than $k$ edges, there is at most one component that contains a cycle.
A minimal counterexample of Theorem \ref{mainth} can be reduced to cyclically 4-edge-connected graphs with girth of at least 5.
Cyclically 4-edge-connected cubic graphs with girth of at least 5 that are not three-edge-colorable are called \emph{snarks}. 
The most famous snark is the Petersen graph, which is the first snark discovered in \cite{petersen-1898}.
Theorem \ref{mainth} says, in particular, that no snark is apex.

\paragraph{Algorithmic consequence:}  As in the proof of \cite{inoue2026four, RSST}, the proof is constructive. More precisely, given a two-edge-connected apex cubic graph $G$ on $n$ vertices, our reducibility and discharging procedure yield a three-edge-coloring of $G$ in $O(n^2)$ time. The finite reducibility and unavoidability checks are precomputed once and used as constant-size lookup tables. More details will be given in Section \ref{sec:alg}. 

\paragraph{Detailed pseudocode and reproducibility:} 
In the appendix, following the approach in \cite{inoue2026four}, we provide detailed pseudocode specifying the computer-assisted parts of the proof.
Each pseudocode is linked to the corresponding function in the source code in GitHub, allowing the reader to verify that it has been implemented correctly.
The specification is sufficiently precise that independent implementations were reconstructed from it and reproduced the authors’ computational results, as described below.

\paragraph{The Use of Generative AI:}
GitHub Copilot was used to assist in implementing parts of the authors' verification software.
All generated code was reviewed by the authors, who take responsibility for the correctness of the implementation and its correspondence with the pseudocode in the appendices.
ChatGPT Pro and Gemini Deep Think were also used to assist with exposition and to review the consistency between the mathematical descriptions and the pseudocode.

As an additional reproducibility check, independent implementations were reconstructed by ChatGPT Pro and Claude Code, directly from the pseudocode (together with the referenced pseudocode in \cite{inoue2026four}) and the public input data, without access to the authors' source code.
Both independently reproduced the computational results required in this paper, including the verifications of Lemmas \ref{comp:lem:combined_rules}, \ref{comp:lem:deg7-11}, and \ref{comp:lem:reducible}.
The instructions for checking reproducibility and the link to the generated source code are available on GitHub\footnote{\url{https://github.com/three-edge-coloring-apex-cubic-graphs/instructions-for-checking-reproducibility}}.

\subsection{Sketch of the proof}
\label{subsect:sketch}
\showlabel{subsect:sketch}

The basic outline of our proof of Theorem \ref{mainth} is similar to the proof of the 4CT \cite{RSST} (or a recent generalization \cite{inoue2026four}) and its extension to the doublecross case in \cite{doublecross}.

First, suppose that a minimal counterexample of \cref{mainth}, say $G_{mc}$, exists.
$G_{mc}$ is apex, so it cannot be embedded in the plane.
To focus on the planar structure of $G_{mc}$, we remove the apex vertex of $G_{mc}$ to obtain a subcubic planar graph $G$ (the precise operation to get $G$ from $G_{mc}$ is described in Section \ref{sect:minimal}).
Since $G_{mc}$ is not three-edge-colorable, $G$ is not three-edge-colorable either by the standard parity argument.
We also use $G^*$, which is the planar dual of $G$.
We aim to apply proof techniques similar to those used in \cite{RSST}, \cite{inoue2026four}, and \cite{doublecross}. 
The proof has two main parts:

\medskip
\noindent
{\bf (Reducibility).}
    The crucial ingredient of our proof is the notion of a \emph{reducible} subgraph\footnote{This subgraph is called \emph{a multi-boundary island} in Section \ref{subsect:island}.}.
    A reducible subgraph has the following property: if this subgraph is contained in $G$, then we can replace it in $G$ with a smaller subgraph without changing the non-three-edge-colorability of $G$ (and with keeping the property that the graph was obtained from an apex graph by removing the apex vertex).
    By this property, if a reducible subgraph is contained in the graph $G$, then a smaller non-three-edge-colorable apex graph exists.
    This contradicts that $G$ is obtained from a minimal counterexample $G_{mc}$.
    Thus, a reducible subgraph cannot be contained in $G$.

    When checking the reducibility of a subgraph $I$ of $G$, we show that, for any possible edge-coloring of $\delta(V(I))$\footnote{For a set of vertices $Y$, $\delta(Y)$ denotes the set of edges with one endpoint in $Y$ and the other endpoint not in $Y$.}, by changing this coloring using \emph{Kempe chains} outside $I$, we necessarily find a coloring of $\delta(V(I))$ that can be extended to $I$.
    The precise definition is in Definitions \ref{dfn:D-reducible} and \ref{dfn:C-reducible}.
    
\medskip
\noindent
{\bf (Unavoidability).}
    We aim to construct a set of reducible subgraphs such that $G$ must contain at least one of the reducible subgraphs in this set.
    This property is called \emph{unavoidability}.
    If we succeed in constructing an unavoidable set of reducible subgraphs, this would contradict the minimality of $G$ (and thus $G_{mc}$), thereby proving Theorem \ref{mainth}.

    The standard technique to show unavoidability is the \emph{discharging method}.
    This method has traditionally been used on triangulations, which is $G^*$ in our case, and we adhere to this established approach for consistency and clarity.
    In the discharging method, every vertex in $G^*$ has \emph{initial charge} depending on its degree, and the initial charges are assigned so that the total charge is positive.
    The charge is redistributed by the predetermined \emph{discharging rules}.
    If the final charge of some vertex is positive after redistribution, a desired structure exists in the vicinity of $v$ in $G^*$.
    This desired structure in $G^*$ leads to finding a reducible subgraph in $G$.
\medskip

The main difference from the previous work is that since $G$ has two\footnote{In this paper, we prove both cases where the number of vertices of degree two is two or three; indeed, when there are exactly two vertices of degree two, the result follows from the face-width one case in the torus \cite{torus2024}.} or three vertices of degree two, it is not a cubic graph.
In the dual view, $G^*$ has two or three faces of size two, which are called \emph{digons}, so it is not a triangulation.
We call such a plane graph with all faces triangles or digons \emph{a triangulation with digons}.
This differs significantly from \cite{RSST} and \cite{doublecross}, and it causes a lot of trouble, as we list below:
 
\begin{enumerate}
    \item Digons may mess up many structures in a triangulation. 
    \item $G^*$ may have short contractible cycles. This may mess up reducible configurations, as well as the local structure to determine the final charge.
    \item  We want to find a reducible configuration in $G^*$ that is not close to digons.    
\end{enumerate}

The first issue is that the embedding of ``configurations" in  $G^*$ is more flexible than the embedding into a minimal counterexample for 4CT.
A \emph{configuration} is, informally, a subgraph in which each vertex has a specified degree; see Figure \ref{fig:75555}. It is exactly the desired structure found by the discharging method.
The left figure represents a configuration and its embedding.
A vertex represented by a solid black circle has degree 5, and a vertex represented by a black-outlined open circle has degree 7.
If this configuration is contained in the plane graph $H$, where every face of $H$ is a triangle, then the embedding of the configuration in $H$ is uniquely determined (by Whitney's theorem), as shown in the part of the figure.
This is the case for 4CT or the doublecross case.
This makes it easy to take the dual graph of a configuration and find a reducible subgraph when a configuration is contained in a minimal counterexample of 4CT.
However, in our case, $G^*$ also has digons.
This makes the embedding flexible, as shown in Figure \ref{fig:75555-flexible}.
A double edge represents a pair of parallel edges bounding a digon.
An edge incident to the exterior of a configuration may actually be a pair of parallel edges bounding a digon, and the same can occur for an edge with both endpoints in the configuration.
Thus, even if a configuration is contained in $G^*$, the graph structure here is flexible.
This makes it difficult to find a reducible subgraph in $G$ after finding a desired configuration by applying the discharging method to $G^*$.
We handle these difficulties in Section \ref{sect:hom-conf}.

\begin{figure}[htbp]
    \centering
    \begin{minipage}[b]{0.47\columnwidth}
        \centering
        \includegraphics[width=0.8\columnwidth]{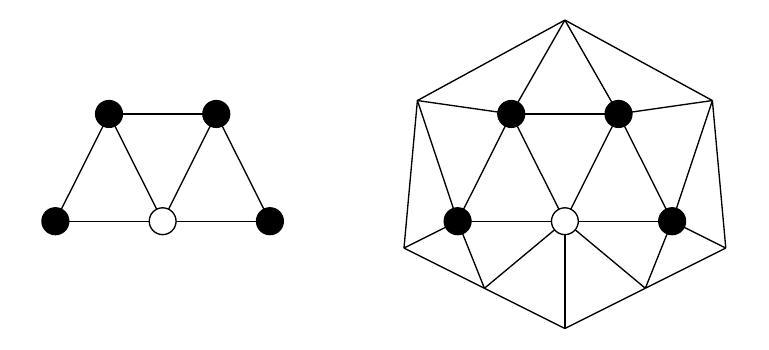}
        \caption{An example of a configuration and its completion where every face is a triangle.}
        \label{fig:75555}
    \end{minipage}
    \quad
    \begin{minipage}[b]{0.49\columnwidth}
        \centering
        \includegraphics[width=0.9\columnwidth]{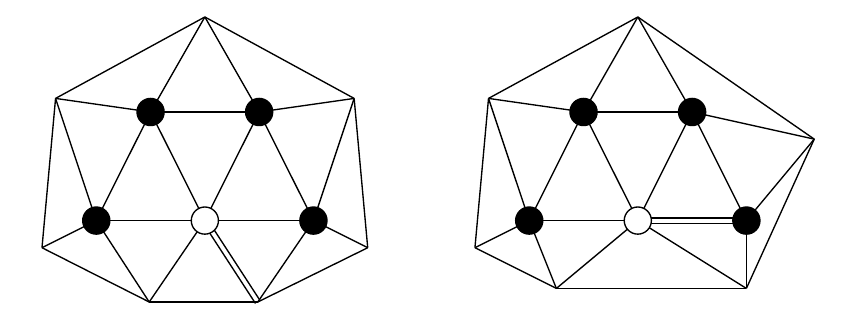}
        \caption{Examples of completions of the configuration in Figure \ref{fig:75555} containing digons.}
        \label{fig:75555-flexible}
    \end{minipage}
\end{figure}

The second issue is that $G$ is not highly connected compared to a minimal counterexample for 4CT and to the doublecross case.
A minimal counterexample for 4CT is an \emph{internally 6-connected triangulation}: for a cycle $C$ of length at most 5 that bounds two open disks, if $|C| \leq 4$, one of them contains no vertex, and if $|C|=5$, one of them contains at most one vertex.
On the other hand, for the apex case, we can only show the following for the triangulation with digons $G^*$: for every cycle $C$ of length five or less in the dual graph $G^*$ that  bounds two open disks, the following holds: 
\begin{itemize}
    \item If $|C| \leq 3$, then one of the two disks contains no vertices.
    \item If $|C| = 4$, then one of them contains at most two vertices.
    \item If $|C| = 5$, then either one of them contains at most one vertex, or each of them contains at least one digon.
\end{itemize}
See the precise statement in Lemma \ref{lem:triangulation}.
This weaker connectivity condition causes two troublesome differences.

First, the embedding of a configuration in $G^*$ may not be induced or may be self-intersecting.
As in \cite{RSST} or \cite{doublecross}, we only use a configuration whose diameter is at most 4.
Internal 6-connectivity easily ensures that the embedding of a configuration is induced.
However, in our case, $G^*$ may have a cycle $C$ of length 5, where both disks separated by $C$ have arbitrarily many vertices, so the embedding of a configuration may not be induced.
Also, $G^*$ may have a cycle $C$ of length 4, where each of the two disks separated by $C$ has at least two vertices, so the embedding of a configuration may be self-intersecting.
This issue is handled by a configuration with ``more than one" boundary, which we call a \emph{multi-boundary configuration} (the precise description is in Definition \ref{dfn:conf}).
If the embedding of a configuration is self-intersecting or not induced, it is considered the embedding of a multi-boundary configuration.
In Figure \ref{fig:multi-boundary-configuration}, the graph on the left represents a configuration, where plain vertices have degree 6.
The center figure describes a multi-boundary configuration obtained if the left configuration is not induced.
We prove that we can find a reducible subgraph in $G$ also when such a multi-boundary configuration is contained in $G^*$.

\begin{figure}[htbp]
    \centering
    \includegraphics[width=0.9\linewidth]{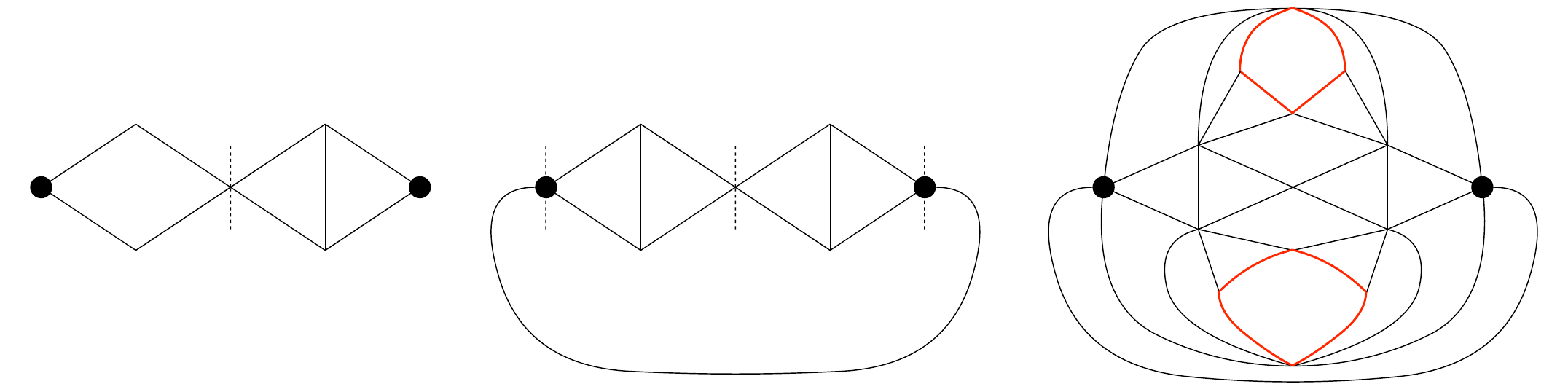}
    \caption{The left figure: a configuration (dotted lines represent edges between the configuration and its exterior); the center figure: a multi-boundary configuration obtained if the left configuration is not induced; the right figure: the completion of a multi-boundary configuration if every face is a triangle.}
    \label{fig:multi-boundary-configuration}
\end{figure}

Second, the local structure of a vertex $v$ in $G^*$ that determines the final charge of $v$
need not be ``well-behaved".
Let $B_2(v)$ be the subgraph induced by vertices within distance at most 2 from $v$.
The final charge of $v$ is determined by the degrees and the structure of faces in $B_2(v)$ (this is explicitly mentioned in \cite{inoue2026four}).
Internal 6-connectivity ensures that $B_2(v)$ is ``well-behaved", i.e., $B_2(v)$ consists of $v$ and two cycles $C_1, C_2$ such that $C_1$ is obtained from the first neighbors of $v$, and $C_2$ is obtained from the second neighbors of $v$. 
However, in the apex case, $B_2(v)$ in $G^*$ does not behave this way.
In fact, the faces in $B_2(v)$ may contain digons or triangles, so there are many more face structures compared to those in a minimal counterexample to 4CT.
These issues prevent us from using the same computer programs to check unavoidability with the discharging algorithm as in \cite{RSST} and \cite{doublecross}.
We handle this issue in Section \ref{sect:cartwheel}.

\medskip

The third issue is to avoid using too many configurations with digons.
Using the discharging method, we can find a desired configuration in the vicinity of a vertex with a positive final charge.
Although $G^*$ has two or three digons, these digons may be very close to a vertex with a positive final charge.
Considering all degree combinations and the structure of digons, we would need many configurations with digons in order to prove unavoidability. 
To this end, we relax the condition to find a desired configuration around digons.
Roughly, for a vertex close to a digon, we do not need to find a desired configuration even if its final charge is positive.
Instead, if the final charge of such a vertex exceeds a positive constant, we ensure that a desired configuration exists.
For this method to work, the sum of the values should be less than the total initial charge.
This is possible because the number of digons is either 2 or 3.
The more detailed discussion is in Section \ref{sect:discharging}.

\medskip

There are other differences from the proof of 4CT.
For a cubic graph with a three-edge-coloring, the three-edge-coloring induced on any edge-cut $\delta(Y)$ satisfies the parity condition, i.e., the parity of the numbers of edges colored 1, 2, or 3 is the same.
However, $G$ has vertices of degree 2, so the parity condition no longer holds.
This makes a difference in checking reducibility.
When checking reducibility of a subgraph $I$ of $G$, we need to show that, for all possible colorings for $\delta(V(I))$, by recoloring it using \emph{Kempe chains}, we find a coloring of $\delta(V(I))$ that is extendable to $I$.
This is time-consuming because of the loss of the parity condition.
Another difference regarding reducibility is that Kempe chains behave differently from 4CT because of vertices of degree 2.
This is described in Section \ref{subsect:reducibility}.

\paragraph{Organization.}
The organization of our proof of Theorem \ref{mainth} is as follows.
First, we discuss the connectivity of $G_{mc}$ (and $G, G^*$) in Section \ref{sect:minimal} and Section \ref{sec:con}.

Second, using the discharging method, we show that some desired configuration is contained in $G^*$ in Theorem \ref{thm:K-homomorphism-G} in Section \ref{sect:discharging}.
To describe the discharging method, we define multi-boundary configurations in Section \ref{sect:conf} and describe the dart representations in Section \ref{sect:dart}, which are the embedding model used in the discharging method.

Third, we show that this desired configuration in $G^*$ implies the existence of a reducible subgraph in $G$ in Theorem \ref{thm:hom-imply-reducible} in Section \ref{sect:hom-conf}.
We discuss reducibility and multi-boundary islands in Section \ref{sect:island}.

The second and third steps contradict the minimality of $G_{mc}$, which completes the proof of Theorem \ref{mainth}.

%% file: apex_proofs.tex
\section{A minimal counterexample}
\label{sect:minimal}
\showlabel{sect:minimal}
Let $G_\text{mc}$ be a 2-connected apex cubic graph of smallest order that is not three-edge-colorable.
In other words, $G_{\text{mc}}$ is a \emph{minimal counterexample} for \cref{mainth}, and deriving a contradiction to the minimality of $G_{\text{mc}}$ implies \cref{mainth}.

In order to prove Theorem \ref{mainth}, we remove the nonplanar part of $G_\text{mc}$ and focus on the planar structure.
More precisely, we define a planar graph $G$ as follows: 
Let $v$ be the apex vertex of $G_\text{mc}$.
If, for some edge $e$ incident to $v$, its removal makes the graph planar, let $G = G_\text{mc} - e$; otherwise, let $G = G_\text{mc} - v$.

Embed $G$ in the plane, and let $G^*$ be the dual of $G$.
Since every vertex in $G$ has degree three, except for two or three vertices of degree two, $G^*$ is a plane graph in which every face is a triangle, except for two or three \emph{digons}.
The definition of $G$ ensures that no face in $G$ is incident to more than one vertex of degree two; dually, this implies that no vertex in $G^*$ is incident to more than one digon.
Finally, a standard parity argument allows us to extend any three-edge-coloring of $G$ to a three-edge-coloring of $G_{\text{mc}}$.

A cubic graph $H$ is said to be \emph{theta-connected} if it has girth at least five and $|\delta_H(X)| \geq 6$ for every $X \subseteq V(H)$ such that $|X|, |V(H) \setminus X| \geq 6$.
In \cite{3edgecoloring}, it is proved that every minimal counterexample to Tutte's three-edge-coloring conjecture that is not apex is theta-connected.
This lemma is useful for the doublecross case in \cite{doublecross}, but not useful for the apex case.
Here, we prove that $G_{mc}$ is also highly connected as in the following lemma, but unfortunately the connectivity of $G_{mc}$ is weaker than theta-connectivity.
A \emph{cyclic cut} of $G_{mc}$ is an edge-set $F$ such that $G_{mc}-F$ has two connected components, each containing a cycle.

\begin{lem}
\label{lem:Gmc-connected}
\showlabel{lem:Gmc-connected}
    Let $F$ be a cyclic edge-cut in $G_{mc}$. Then $|F| \geq 5$. Moreover, if $|F|=5$, then one of the components of $G_{mc} - F$ is a 5-cycle or domino graph (see Figure \ref{fig:domino-first}) whose vertices do not contain the apex vertex of $G_{mc}$.
\end{lem}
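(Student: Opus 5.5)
We argue by the standard cut-splitting technique for minimal counterexamples to edge-coloring statements. Let $F$ be a cyclic edge-cut of $G_{mc}$, and let $A$ and $B$ be the two components of $G_{mc}-F$, each containing a cycle. The apex vertex $v$ lies in at most one of them, say not in $A$ (if $v$ is incident to $F$, treat it as lying on the side of its other neighbours). Cubic parity forces $|F|\equiv |V(A)|$ mod 2. Also $F$ is a matching whenever $A$ and $B$ are chosen as components.

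\textbf{Small cuts.} For $|F|\le 3$ we use the classical reduction. If $|F|=2$, replace each side by a single edge joining the two cut-ends. If $|F|=3$, contract each side to a single vertex. Both resulting graphs $G_A$ and $G_B$ are smaller, 2-connected and cubic. They are also apex: contracting a side to a vertex is a minor operation, and contracting $B\ni v$ to a vertex $w$ leaves $A\cup\{w\}$, which is planar once $w$ is deleted, so $w$ serves as the apex. The other side keeps $v$ as its apex, because contraction within $G_{mc}-v$ preserves planarity. By minimality both are three-edge-colorable. After permuting colors, the colorings agree on $F$ and glue. For $|F|=4$ this fails directly, since a 4-cut admits colorings of type $aabb$ or $abab$. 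Here we use the Kempe/parity trick of Robertson--Seymour--Thomas. Replace $B$ by two adjacent vertices, or alternatively by two parallel edges joined in the two possible pairings; this gives smaller apex cubic graphs. We must check 2-connectivity, planarity after deleting the apex, and, since the added gadget may create parallel edges, that minimality allows multigraphs. Cyclicity of the cut guarantees that each gadget is strictly smaller. We then argue that the sets of boundary colorings extendable to $A$ and to $B$ must intersect: each side realizes at least two of the three pairing types, by the usual Kempe-chain switching on a 4-cut. Gluing gives a contradiction, so $|F|\ge5$.

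\textbf{The case $|F|=5$.} Replace the side $A$ (the one avoiding $v$) by a 5-cycle, attaching one edge of $F$ to each cycle vertex in the cyclic order induced by a planar embedding of $G_{mc}-v$. Because $A$ is planar and $v\notin A$, the cut ends can be taken on one face, so the result is still apex. If $|V(A)|>5$, this graph is smaller and 2-connected, hence colorable. In any three-edge-coloring of a 5-cycle with pendant edges, the pendant colors have the form $aabbc$ up to rotation, and conversely every such pattern whose cyclic position is realizable extends. Replacing $B$ by a single vertex of a suitable gadget, as in the 4-cut case, shows that $A$ extends colorings of at least one of these patterns. The main obstacle is showing that $A$ extends enough of them to match whatever the 5-cycle replacement forces on $B$. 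We plan to use Kempe chains inside $B$ to rotate the pattern and reach one that $A$ accepts. This is the Birkhoff-style argument that a ring of size 5 is reducible unless the interior is very small.

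Finally, we carry out a finite case analysis of small interiors. We show that if $A$ is neither a 5-cycle nor the domino, then $A$ contains a shorter cyclic cut, contradicting the first part, or a cycle of length at most 4, which yields a cyclic cut of size at most 4. Girth at least 5 itself follows from the first part, since a cycle of length at most 4 spans a cyclic cut of size at most 4 unless the graph is tiny. Counting the edges leaving $A$ as $3|V(A)|-2|E(A)|=5$, with $A$ cyclically 5-connected internally and of girth 5, leaves $|V(A)|\in\{5,7,\dots\}$. The small cases reduce to the 5-cycle and, for $|V(A)|=7$ and beyond, to the domino; larger $A$ are excluded by the rotation argument above.
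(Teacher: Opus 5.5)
Your reductions for $|F|\le 3$ are fine. The case $|F|=5$, however, is the real content of the lemma, and it is not proved. You explicitly leave ``the main obstacle'' open and only announce a Kempe ``rotation'' plan, and that plan starts from the wrong pattern set. Parity forces the color counts on a $5$-edge cut to be $1,1,3$, not $aabbc$. So there are ten classes $\varphi_{ij}$, with $f_i,f_j$ the two singletons, and a $5$-cycle accepts exactly the five classes with $i,j$ attached to adjacent cycle vertices.

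The missing idea, used in Section~\ref{sec:con}, is to encode the Kempe-closed non-extendable classes of the disk side as a graph $CG$ on $\{1,\dots,5\}$, with edge $ij$ for $\varphi_{ij}$:
\begin{itemize}
\item consistency shows $CG$ has no vertex of degree one;
\item colorability of the planar or single-cross gadgets $I+v_iv_j$ plus a claw on $v_k,v_l,v_m$ makes $CG$ triangle-free;
\item colorability of the gadgets with $u\sim v_i,v_j$, $w\sim v_k,v_l$ and a third new vertex adjacent to $u,w,v_m$ makes $CG$ free of $4$-cycles.
\end{itemize}
Hence $CG$ is empty (D-reducible) or a chordless $5$-cycle. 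In the latter case a $5$-cycle attached in the complementary cyclic order is a smaller replacement (C-reducible).

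Without this, your last paragraph fails. It is not true that an $A$ other than $C_5$ or the domino must contain a shorter cyclic cut or a short cycle. The dodecahedron minus the vertices of one face has a $5$-ring, girth $5$ and no such cut. Large $A$ can only be excluded by reducibility.

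The second gap is the case where some $f_0\in F$ is incident to the apex $v$. Putting $v$ ``on the other side'' and treating $A$ as a disk island whose gadget completions are all apex is exactly what breaks here. With $f_0$ going to $v$, some gadgets are no longer apex. In the paper these are the tree gadget pairing $\{f_1,f_3\}$ against $\{f_2,f_4\}$ on the apex-free side, and the claw gadgets with $\{i,j\}=\{1,3\}$ or $\{2,4\}$ on the apex side. So neither side is reducible on its own.

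The paper instead compares the extendability graphs $CG(I_1)$ and $CG(I_2)$ jointly. They are edge-disjoint because $G_{mc}$ is not colorable, nonempty by minimality, and have no degree-one vertex by Kempe chains. So one of them is a $3$-, $4$- or $5$-cycle, and each case produces a smaller non-colorable apex graph. In the exceptional $4$-cycle case this is done by replacing $I_2$ with a domino. This is where the domino exception comes from, and your proposal has no mechanism producing it: ``for $|V(A)|=7$ \dots\ to the domino'' is asserted, not derived.

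Finally, in your $4$-cut step, the claim that each side realizes at least two of the three pairing types does not follow from Kempe switching. From a monochromatic boundary, the $\{a,b\}$- and $\{a,c\}$-chains may induce the same pairing. The claim follows from colorability of the three two-adjacent-vertex gadgets. On the side containing $v$, the gadget crossing the cyclic order need not be apex, so your pigeonhole gluing can fail. Argue instead, as the paper does, that on the apex-free disk side the extendable set contains at least three of the four classes and no single class is Kempe-closed. That side is then D-reducible, and one coloring of the other side suffices. Note also that ``$A$ is planar and $v\notin A$'' does not by itself put the ring ends of $A$ on one face of $G_{mc}-v$.
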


The proof of Lemma \ref{lem:Gmc-connected} is given in the Appendix, see Section \ref{sec:con}.
The domino graph violates the theta-connectivity.

\begin{figure}[htbp]
    \centering
    \includegraphics[width=0.16\linewidth]{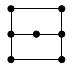}
    \caption{The domino graph}
    \label{fig:domino-first}
\end{figure}

When considering planar graphs, we consider their embeddings in the plane as embeddings in the 2-dimensional sphere. In this way, the outer face is also viewed as a topological disk. In particular, having a cycle in a plane graph, we consider the interior and exterior of $C$ as two disks bounded by the cycle.

We say that an open disk bounded by a cycle of length four in $G^*$ is \emph{tiny} if its interior consists of only one of the following configurations:
\begin{enumerate}
    \item A single vertex of degree five incident to a digon.
    \item Two vertices of degree five connected via a pair of multiple edges surrounding a digon.
\end{enumerate}
These structures are illustrated in Figure \ref{fig:tiny-4cycle}. 
Following the terminology in \cite{3edgecoloring}, we also refer to the latter configuration as a \emph{domino}.
This graph corresponds to the dual of the domino graph in Figure \ref{fig:domino-first}.
Translating Lemma \ref{lem:Gmc-connected} from $G$ to $G^*$, we have the following lemma.

\begin{figure}[htbp]
    \centering
    \includegraphics[width=0.35\linewidth]{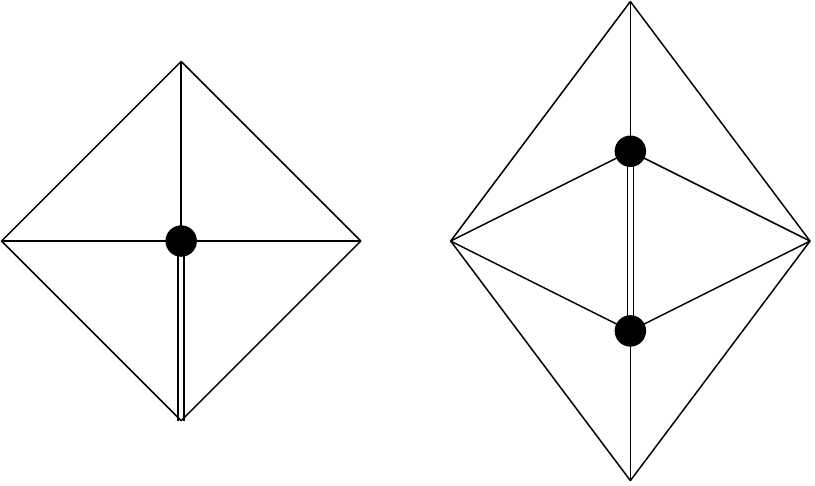}
    \caption{Tiny disks. The figure on the right represents a domino in $G^*$.}
    \label{fig:tiny-4cycle}
\end{figure}

\begin{lem}
\label{lem:triangulation}
\showlabel{lem:triangulation}
Let $G_\text{mc}, G, G^*$ be defined as above.
Then, the following holds for $G^*$ and $G$.
\begin{enumerate}
    \item $G^*$ is a loopless graph in which the only non-triangular faces are digons. There are only two or three digons, and every vertex is incident with at most one digon.
    \item The minimum degree of $G^*$ is at least five.
    \item For every cycle $C$ of length five or less in $G^*$, bounding two open disks,
    \begin{itemize}
        \item If $|C| \leq 3$, then one of the disks is a face.
        \item If $|C| = 4$, then one of them contains no vertices, or consists of a tiny disk.
        \item If $|C| = 5$, then either one of them contains at most one vertex, or both contain at least one digon.
    \end{itemize}
\end{enumerate}
\end{lem}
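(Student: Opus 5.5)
The plan is to translate Lemma \ref{lem:Gmc-connected} through planar duality, item by item. The one fact used throughout is that a cycle $C$ of length $k$ in $G^*$ that bounds two disks corresponds to an edge-cut $\delta_G(X)$ of size $k$ in $G$. Here $X$ is the set of faces of $G^*$ (vertices of $G$) inside one disk. We then lift this cut to $G_{mc}$. Each degree-two vertex of $G$ lies on a digon of $G^*$ and is joined to the removed apex $v$ (or is an endpoint of the removed edge $e$). So the cut $\delta_G(X)$ becomes a cut in $G_{mc}$ of size $k$ plus the number of digons on the side not containing $v$. We may always put $v$ on the other side, which gives size $k+d$, where $d$ is the number of digons in the disk chosen to hold $X$.

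\textbf{Step 1 (item 1).} $G$ is planar, and every vertex of $G$ has degree $3$ except the two or three vertices adjacent to $v$ (or the two endpoints of $e$). Since $G_{mc}$ is cubic, $v$ has degree three, which gives the count of digons. Loops in $G^*$ would be bridges in $G$; these are excluded by Lemma \ref{lem:Gmc-connected}, since a bridge of $G$ yields a cyclic cut of size at most $2$ in $G_{mc}$, or else $G_{mc}$ is not 2-connected. The paper already notes that no vertex meets two digons, as a consequence of the definition of $G$. To justify this, note that if one face of $G$ contained two degree-two vertices, we could re-embed $v$ (or the corresponding edge) inside that face. That would make $G_{mc}-e$ planar for some edge $e$ at $v$, contradicting the choice of $G=G_{mc}-v$. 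In the case $G=G_{mc}-e$, there are exactly two degree-two vertices; if they shared a face, $G_{mc}$ itself would be planar, and then the 4CT (Theorem \ref{thm:3ECplanar}) contradicts its being a counterexample.

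\textbf{Step 2 (item 2).} A vertex of $G^*$ of degree at most $4$ is a face of $G$ of length at most $4$. Its boundary gives a short cycle in $G_{mc}$, and the edges leaving it form a cyclic cut of size at most $4$ in $G_{mc}$. The leftover cases, where the complement is acyclic, are excluded by counting vertices. This contradicts Lemma \ref{lem:Gmc-connected}.

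\textbf{Step 3 (item 3).} For $|C|\le 3$, if both disks contain vertices of $G^*$, then both sides of the cut in $G$ contain faces, and hence cycles (after ruling out trees via degrees). With $v$ added to one side, this gives a cyclic cut of size at most $3$ in $G_{mc}$, which is impossible.

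For $|C|=4$, we choose the side $X$ that avoids $v$, or the side with fewer digons. The lifted cut has size $4+d$. If $d=0$, this is a cyclic cut of size $4$, a contradiction. If $d=1$ and both sides are cyclic, we get a cut of size $5$. Lemma \ref{lem:Gmc-connected} then says that the side without $v$ is a $5$-cycle or a domino graph. Dually, one disk contains a single degree-$5$ vertex with a digon (the $5$-cycle case) or the domino of Figure \ref{fig:tiny-4cycle}, and this is exactly a tiny disk. If every choice gives $d\ge2$ on both sides, the total number of digons would be at least $4>3$, which is impossible.

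For $|C|=5$, suppose one disk contains no digon. The lifted cut then has size $5$, so by Lemma \ref{lem:Gmc-connected} that side is a $5$-cycle or a domino graph. This forces that disk to contain at most one vertex of $G^*$: a $5$-cycle side is one face, and a domino side would require $d\ge1$, since the domino graph's cut of size five needs degree-two vertices in $G$.

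\textbf{Main obstacle.} The delicate part is the bookkeeping in Step 3. We must show that each side of the cut is genuinely cyclic in $G_{mc}$, handle the cases where $C$ is not induced or $X$ is degenerate, and confirm that the domino and $5$-cycle outcomes correspond exactly to the two tiny configurations.
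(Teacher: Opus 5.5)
You are following the paper's route. The paper justifies this lemma only by saying it is Lemma~\ref{lem:Gmc-connected} translated to $G^*$, and your lift of $\delta_G(X)$ to $G_{mc}$ (adding the apex edges from the digons on the apex-free side) is exactly what that translation needs. Items 1, 2 and the cases $|C|=4,5$ work along the lines you outline, modulo the repairs below.

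\textbf{The case $|C|\le 3$.} Here your argument does not deliver the statement as written. You get that one disk contains no vertex of $G^*$, not that it is a face, and for $|C|=3$ the ``face'' version is in fact false. Let $g_1,g_2$ be the two edges of a digon between $p$ and $q$, and let $pqr$ be the face on the other side of $g_2$; it is a triangle because $p$ meets only one digon. Then $C=(g_1,qr,rp)$ is a $3$-cycle. One of its disks contains the digon, the edge $g_2$ and that triangle but no vertex; the other contains the rest of $G^*$. Dually, take $X=\{x,y\}$ with $x$ of degree two and $y$ a neighbour of $x$. Then $|\delta_G(X)|=3$ and $X$ stays acyclic in $G_{mc}$, so Lemma~\ref{lem:Gmc-connected} says nothing about it and no degree count excludes it. Cyclicity of a side therefore has to come from a vertex of $G^*$ inside its disk: such a vertex is a face of $G$ whose boundary cycle lies in $G[X]$. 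The conclusion you should state is ``one disk contains no vertex''. This is the form given in the introduction and the one \textsf{hasSeparatingCycle} tests. It does upgrade to ``is a face'' when $|C|=2$, because a vertex-free disk bounded by two parallel edges consists only of digons, and two of them would meet at $p$.

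\textbf{Smaller repairs.}
\begin{itemize}
\item Put $v$ on the side holding at least two digons. The other side may still hold one, so for $|C|\le 3$ the lifted cut has size at most $|C|+1\le 4$, not $3$. This still contradicts Lemma~\ref{lem:Gmc-connected}.
\item When $G=G_{mc}-e$ with $e=vu$, the vertex $v$ lies in $G$ and cannot be placed. The lifted cut is $\delta_G(X)$ or $\delta_G(X)\cup\{e\}$ according to whether $e$ crosses it, and the side containing $u$ plays the role of the apex-free side.
\item In every case of item 3, first reduce to both disks containing a vertex; this makes both sides cyclic.
\item For $|C|=5$, the reason a domino side must contain a digon is planarity. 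The other side of the bond is connected, so all edges of $\delta_G(X)$ lie in one face of the domino. No face of the domino contains all five of its degree-two vertices, so one of them has degree two in $G$.
\item The same argument for $|C|=4$ shows that the degree-two vertex of a domino side is the middle vertex of the path shared by its two $5$-cycles. That is what makes it the second tiny configuration.
\end{itemize}
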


\section{Islands and Reducibility}
\label{sect:island}
\showlabel{sect:island}

\subsection{Islands}
\label{subsect:island}
\showlabel{subsect:island}

We define the structure we want to find in $G$ as follows:
\begin{dfn}[multi-boundary island, island]
\label{dfn:island}
\showlabel{dfn:island}
     Let $I$ be a connected plane subcubic graph. Let $E_R(I)$ be the set of pendant edges in $I$, that is the edges incident to a vertex of degree one, and let $F_R(I)$ be the set of faces whose facial walk contains at least one edge in $E_R(I)$. 
     If every edge not in $E_R(I)$ is incident to at least one face that is not in $F_R(I)$, then $I$ is said to be a \emph{multi-boundary island} and each face in $F_R(I)$ is called a \emph{boundary face} of $I$.
     An \emph{island} is a multi-boundary island with $|F_R(I)|=1$.
\end{dfn}
A similar notion of islands was introduced in \cite{doublecross}; however, while \cite{doublecross} does not allow vertices of degree two, our definition does.
Another distinction is that we allow for more than one boundary face in order to handle weaker connectivity assumptions than those in \cite{doublecross} and \cite{RSST}, see Section \ref{sect:hom-conf}.
Throughout this paper, we only consider multi-boundary islands with 0, 1, 2, or 3 boundary faces.
We show two examples of islands: in Figure \ref{fig:65555}, we have an island whose boundary face is the outer face in the figure; similarly, Figure \ref{fig:65555-2} shows an example of a multi-boundary island with two boundary faces.

For a multi-boundary island \(I\), we denote by \(I-E_R(I)\) the plane graph obtained from \(I\) by removing all leaves and all edges in \(E_R(I)\). 

\begin{figure}[htbp]
    \centering
    \begin{minipage}[b]{0.49\columnwidth}
        \centering
        \includegraphics[width=0.8\columnwidth]{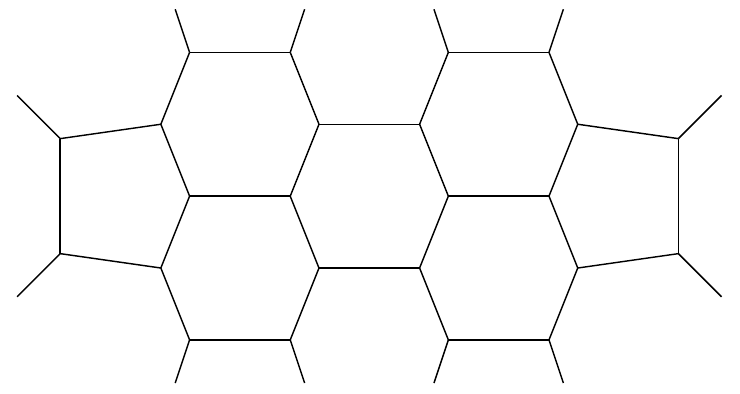}
        \caption{An example of an island}
        \label{fig:65555}
    \end{minipage}
    \begin{minipage}[b]{0.49\columnwidth}
        \centering
        \includegraphics[width=0.8\columnwidth]{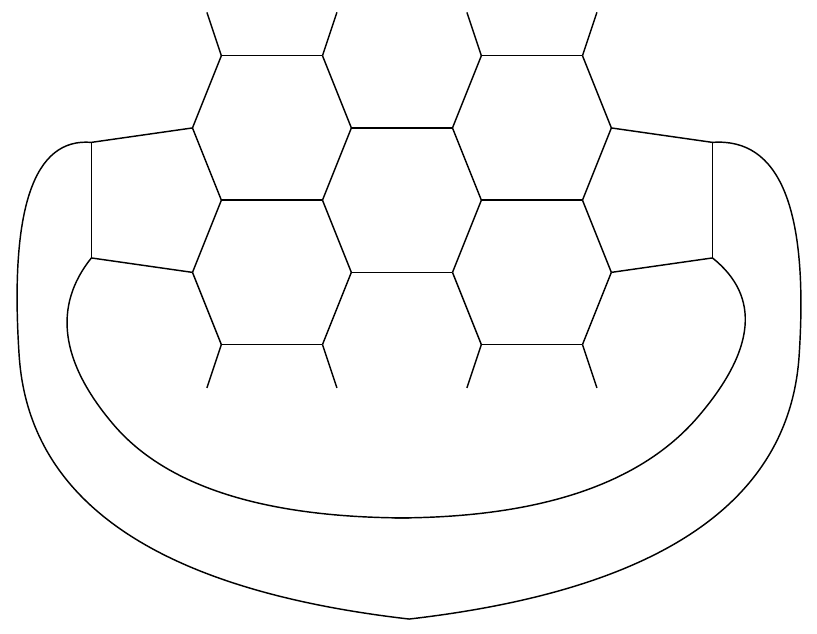}
        \caption{An example of a multi-boundary island with two faces in $F_R(I)$}
        \label{fig:65555-2}
    \end{minipage}
\end{figure}

\begin{dfn}[appear]
\label{dfn:island-appear}
\showlabel{dfn:island-appear}
    Let $I$ be a multi-boundary island and $G$ be a $2$-connected subcubic plane graph.
    We say that $I$ \emph{appears} in $G$ if $I-E_R(I)$ is a subgraph of $G$ and every vertex whose degree is two or three in $I$ has the same degree in $G$, and every face not in $F_R(I)$ is also a face of $G$.
\end{dfn}

The condition on the degrees in Definition \ref{dfn:island-appear} implies that no edge in $E(G)\setminus E(I)$ is incident with any vertex in $I-E_R(I)$. As a corollary,
note that a multi-boundary island $I$ having a boundary face incident with only one degree-one vertex cannot appear in $G$ since $G$ is bridgeless.
Also note that if a multi-boundary island $I$ with $|F_R(I)|=0$ appears in $G$, then $G$ is isomorphic to $I$.

\subsection{Reducibility}
\label{subsect:reducibility}
\showlabel{subsect:reducibility}

Informally, a multi-boundary island $I$ is said to be \emph{reducible} if we can construct a smaller non-colorable graph $G'$ from $G$ by modifying the structure of $I$ if $I$ appears in $G$.

We consider the case where $I$ appears in $G$, and all edges in $G$ except those in $I - E_R(I)$ are three-edge-colored.
Fix two colors, say $c_1$ and $c_2$, and consider how edges in $E_R(I)$ colored with either $c_1$ or $c_2$ are connected outside $I$ by alternating paths of these two colors.
Starting from an edge in $E_R(I)$ with color $c_1$ or $c_2$, we trace the path by alternately following edges of these two colors.
If we encounter a vertex of degree two in $G$ that is incident to an edge of a third color, we cannot follow an edge colored with $c_1$ or $c_2$, so we terminate the process.
Otherwise, the path must eventually reach another edge in $E_R(I)$ with color $c_1$ or $c_2$.
Thus, the edges in $E_R(I)$ colored with $c_1$ or $c_2$ are partitioned into singletons or pairs corresponding to the $\{c_1,c_2\}$-colored paths.

Such $\{c_1,c_2\}$-colored paths are called \emph{Kempe chains} (for colors $c_1$ and $c_2$). By deleting $I - E_R(I)$ from $G$, the remaining subgraphs within different faces in $F_R(I)$ become mutually disconnected; hence, two edges incident to different faces in $F_R(I)$ cannot be connected by a Kempe chain.
Furthermore, even for two edges incident to the same face, the planarity of the embedding strictly restricts the topological patterns of these connections; specifically, it forbids Kempe chains from crossing.

To formally define it, we index the faces in $F_R(I)$ as $\{F_1, \ldots, F_{|F_R(I)|}\}$ in an arbitrary order.
Since all degree-one vertices of $I$ are incident to exactly one face in $F_R(I)$, we label the edges in $E_R(I)$ as $r_{i,j}$, where $1 \leq i \leq |F_R(I)|$ and $j \geq 1$, such that $r_{i,j}$ is incident to $F_i$. The second index $j$ enumerates the edges in $E_R(I)$ incident to $F_i$ in a clockwise order around $F_i$. Finally, we define the set $R\subseteq \mathbb{N} \times \mathbb{N}$ as the set $R = \{(i,j)\mid r_{i,j}\in E_R(I)\}$ that corresponds to the enumeration of $E_R(I)$.
The definition is as follows.
\begin{dfn}[overlap, semi-matching]
\label{dfn:semi-matching}
\showlabel{dfn:semi-matching}
    \hfill
    \begin{itemize}
        \item A \emph{match} is an unordered pair of distinct elements in $\mathbb{N} \times \mathbb{N}$ with the same first coordinate. That is, a match is of the form $\{(i, a), (i, b)\}$ where $a \neq b$.
        \item Two matches $\{(i, a), (i, b)\}$ and $\{(j, c), (j, d)\}$ with $a<b$ and $c<d$ are said to \emph{overlap} if $i=j$ and, assuming without loss of generality that $a < c$, we have $a < c < b < d$.
        \item A \emph{semi-matching} is a set of pairwise disjoint matches and singletons in $\mathbb{N} \times \mathbb{N}$ such that no two matches overlap.
    \end{itemize}
\end{dfn}
We also define the operation to flip colors of $E_R(I)$ by alternating paths outside $I$.
Since this operation does not inherently depend on the structure of the island $I$ itself, we define the coloring not on $E_R(I)$, but more abstractly on a subset $R \subseteq \mathbb{N} \times \mathbb{N}$.
The set $R$ corresponds to the indices $(i,j)$ of the edges $r_{i,j}\in E_R(I)$.
\begin{dfn}[switch colors by Kempe chains]
\label{dfn:kempe}
\showlabel{dfn:kempe}
    Let $R \subseteq \mathbb{N} \times \mathbb{N}$ and $\phi \colon R \to [3]$ be a map, $x,y \in [3]$ be distinct integers, and $M'$ be a semi-matching whose \emph{support} $\operatorname{supp}(M') = \bigcup_{X \in M'}X$ is contained in $\{(i,j) \in R \mid \phi(i,j) \in \{x, y\}\}$. A map $\phi' \colon R \to [3]$ is said to be obtained by \emph{switching colors $x$ and $y$ on Kempe chains} using the semi-matching $M'$ if  
    \[
    \phi'(r) = \begin{cases}
        \phi(r) & \text{if } r \not \in \operatorname{supp}(M') \\
        x & \text{if } r \in \operatorname{supp}(M') \hbox{ and\, } \phi(r) = y \\
        y & \text{if } r \in \operatorname{supp}(M') \hbox{ and\, } \phi(r) = x. \\
    \end{cases}
    \]
\end{dfn}

For a three-edge-coloring of the edges in $G$ except those in $I - E_R(I)$, we can modify the coloring using Kempe chains.
Our goal is to extend this coloring to the inside of $I$ by repeated modifications.
Thus, it is important to know the set of colorings of $E_R(I)$ that are extendable to $I$, and to determine whether the current coloring can be transformed into one in this set.
To do so, we investigate the property of a set of colorings that is closed under switching colors on Kempe chains.
Given a particular coloring and a choice of two distinct colors $x, y \in [3]$, the connection topology of the Kempe chains is determined by the coloring of the edges of $G$ outside $I$.
All colorings obtained by switching colors by these Kempe chains must also belong to the same set.
This leads to the following concept. 
\begin{dfn}[semi-consistent]
\label{dfn:semi-consistent}
\showlabel{dfn:semi-consistent}
    Let $R \subseteq \mathbb{N} \times \mathbb{N}$, and $\mathcal{D}_R$ be the set of all maps from $R$ to $[3]$.
    A subset $\mathcal{C}$ of $\mathcal{D}_R$ is \emph{semi-consistent} if for every $\phi \in \mathcal{C}$ and for every distinct $x, y \in [3]$, there exists a semi-matching $M$ that is a partition of $\{r \in R \mid \phi(r) \in \{x, y\}\}$ such that every map obtained from $\phi$ by switching colors on Kempe chains on some $M'\subseteq M$ also belongs to $\mathcal{C}$.
\end{dfn}

\begin{dfn}[semi-D-reducible]
\label{dfn:D-reducible}
\showlabel{dfn:D-reducible}
    Let $I$ be a multi-boundary island with $E_R(I)=\{r_{i,j}\}$ labeled as above, and let $R \subseteq \mathbb{N} \times \mathbb{N}$ be the set of these indices.
    Let $\mathcal{D}_R$ be the set of all maps from $R$ to $[3]$.
    Let $\mathcal{C}_I$ be the set of maps $\phi$ in $\mathcal{D}_R$ such that there exists a three-edge-coloring $\eta$ of $I$ with $\eta(r_{i,j})=\phi((i,j))$ for all $(i,j) \in R$.
    A multi-boundary island $I$ is \emph{semi-D-reducible} if the empty set is the only semi-consistent set in $\mathcal{D}_R \setminus \mathcal{C}_I$.
\end{dfn}
Let us observe that the definition here is not restricted to apex graphs. Indeed, this definition is with respect to subcubic planar graphs (i.e., we do not have to bound the number of vertices of degree 2). 
For a semi-D-reducible multi-boundary island, any coloring of $E_R(I)$ can be transformed into a coloring that is extendable inside $I$. 
We can show the following lemma.
\begin{lem}
\label{lem:D-reducible-doesnot-appear}
\showlabel{lem:D-reducible-doesnot-appear}
    Let $I$ be a semi-D-reducible multi-boundary island such that $I - E_R(I)$ has at least one edge, and let $G$ be the subcubic graph introduced in Section \ref{sect:minimal}.
    Then, $I$ does not appear in $G$.
\end{lem}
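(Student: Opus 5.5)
We argue by contradiction: assume $I$ appears in $G$, construct a smaller graph $G'$ that is still obtained from a $2$-connected apex cubic graph in the way described in Section \ref{sect:minimal}, and use minimality of $G_{mc}$ to get a three-edge-coloring of $G'$. We then transfer this coloring to $G$ outside $I$ and use semi-D-reducibility to extend it into $I$.

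\textbf{Constructing $G'$.} Each edge $r_{i,j}\in E_R(I)$ is an edge of $G$ with exactly one end in $I-E_R(I)$. Delete all vertices of $I-E_R(I)$ from $G$; this leaves the ends of the edges $r_{i,j}$ dangling.

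\begin{itemize}
\item Around each boundary face $F_i$, reconnect these dangling ends in a simple planar way inside the region formerly occupied by $I$. Since $I-E_R(I)$ has at least one edge, this region contains at least two vertices, so there is room to shrink. For example, pair consecutive ends by edges, or join all ends of $F_i$ to a small gadget.
\item The easiest choice that keeps $G'$ subcubic, planar, and with at most the allowed number of degree-two vertices is to contract $I-E_R(I)$ and re-expand minimally. Since $G'$ need not be cubic-with-degree-two-vertices of the right number, I would pass back to the apex cubic setting instead: form $G'_{mc}$ from $G'$ by re-adding the apex vertex (or edge), suppressing degree-two vertices as needed.
\item $G'_{mc}$ has fewer vertices than $G_{mc}$, and it is $2$-connected by Lemma \ref{lem:Gmc-connected} and bridgelessness.
\end{itemize}

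By minimality, $G'_{mc}$ is three-edge-colorable. Restricting this coloring gives a three-edge-coloring $\psi$ of all edges of $G$ not in $I-E_R(I)$, including the edges $r_{i,j}$. It need not extend into $I$.

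\textbf{Extending into $I$ via semi-consistency.} Let $\mathcal{C}$ be the set of colorings $\phi\in\mathcal{D}_R$ such that $\phi$ is the restriction to $E_R(I)$ of some three-edge-coloring of $G-(I-E_R(I))$ that agrees on degree constraints. Here "three-edge-coloring" means a proper coloring in which degree-two vertices of $G$ outside $I$ receive two distinct colors. Then $\psi|_{E_R(I)}\in\mathcal{C}$, so $\mathcal{C}\neq\emptyset$. I claim $\mathcal{C}$ is semi-consistent.

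\begin{itemize}
\item Given $\phi\in\mathcal{C}$ realized by a coloring $\eta$, and distinct colors $x,y$, consider the $\{x,y\}$-subgraph of $\eta$ outside $I$. Its components are paths and cycles; paths end at $r$-edges or at degree-two vertices.
\item Let $M$ pair two $r$-edges when they are the ends of a single path. Make an $r$-edge a singleton when its path ends at a degree-two vertex, where the chain terminates as described before Definition \ref{dfn:semi-matching}.
\item Pairs lie in a common face $F_i$, since different faces are disconnected after removing $I-E_R(I)$. Pairs do not overlap, since disjoint paths in the disk $F_i$ connecting boundary points cannot cross.
\item Swapping $x,y$ on any subfamily of these paths yields another proper coloring. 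Its restriction is exactly the map obtained by switching on the corresponding $M'\subseteq M$.
\end{itemize}

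Hence $\mathcal{C}$ is semi-consistent. Then $\mathcal{C}\setminus\mathcal{C}_I$ is also semi-consistent, because closure is witnessed pointwise: the same switches from $\phi\notin\mathcal{C}_I$ stay in $\mathcal{C}$. By semi-D-reducibility, $\mathcal{C}\setminus\mathcal{C}_I=\emptyset$. Since $\mathcal{C}\neq\emptyset$, some $\phi\in\mathcal{C}\cap\mathcal{C}_I$ exists. Gluing the outside coloring realizing $\phi$ with the inside coloring of $I$ extending $\phi$ gives a three-edge-coloring of $G$. By the parity argument of Section \ref{sect:minimal}, this extends to a three-edge-coloring of $G_{mc}$, a contradiction.

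\textbf{Step to check: that $\mathcal{C}\setminus\mathcal{C}_I$ is semi-consistent.} The subtle point is the $M'$ condition, and I would argue it directly. If $\phi\in\mathcal{C}\setminus\mathcal{C}_I$, some switch $\phi'$ could a priori land in $\mathcal{C}_I$. In that case we are already done, since that $\phi'$ is realizable outside and extendable inside. So it suffices to argue: either some switch reaches $\mathcal{C}\cap\mathcal{C}_I$, or the set $\mathcal{C}\setminus\mathcal{C}_I$ is closed under all switches. In the latter case it is a nonempty semi-consistent subset of $\mathcal{D}_R\setminus\mathcal{C}_I$, contradicting the definition.

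\textbf{Main obstacle.} The hardest part is the construction of the smaller graph: guaranteeing that it is again a $2$-connected apex cubic graph with fewer vertices, uniformly for every $I$ with an edge. I would first try the simplest option: take $G'=G$ with the dangling ends of each face $F_i$ joined to a single new cubic tree, or to a cycle through new degree-three vertices. I would verify that the vertex count drops only by choosing the option of fewest vertices. If that fails for small $I$, I would instead argue via edge counts or treat small $I$ as a separate case.
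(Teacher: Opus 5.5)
You have the right overall architecture (color a smaller graph, restrict to the exterior of $I$, note that the boundary colorings reachable by Kempe switches outside $I-E_R(I)$ form a semi-consistent set, then invoke semi-D-reducibility), and your Kempe-chain paragraph is fine. The genuine gap is the reduced graph: you never fix one, and the options you sketch fail in concrete ways.
\begin{itemize}
\item \textbf{Changing the number of degree-two vertices.} Replacing or contracting $I-E_R(I)$ can change this count. If $I$ contains one of the two or three degree-two vertices of $G$, a cubic gadget removes it. A graph (or component) with exactly one degree-two vertex is not three-edge-colorable by parity. More than three degree-two vertices prevents re-adding a single apex vertex.
\item \textbf{Suppressing degree-two vertices.} Doing this ``as needed'' breaks the transfer step. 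Both edges at a suppressed vertex then receive the same color, so $\psi$ would not be proper at degree-two vertices of $G$ outside $I$.
\item \textbf{Tree gadget.} A cubic tree on $k$ ends has $k-2$ vertices. This need not be fewer than $|V(I-E_R(I))|$; equality holds when $I-E_R(I)$ is a tree of degree-three vertices.
\item \textbf{Connectivity.} Reconnecting each boundary face separately can disconnect the graph or create bridges. Lemma \ref{lem:Gmc-connected} is about $G_{mc}$ and says nothing about your new graph.
\end{itemize}

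The missing idea is that nothing needs to replace $I$. Semi-D-reducibility only uses \emph{some} proper coloring of the part of $G$ outside $I-E_R(I)$, so any reduction that leaves that part intact suffices. The paper proceeds as follows.
\begin{itemize}
\item It deletes a single edge $e$ of $I-E_R(I)$; this is exactly where the hypothesis that such an edge exists is used.
\item It suppresses each endpoint of degree three.
\item It deletes an endpoint of degree two together with its other edge. The far end of that edge had degree three in $G$, since no face of $G$ meets two degree-two vertices, so it becomes the new degree-two vertex.
\end{itemize}
The resulting $H$ is smaller and has the same number of degree-two vertices as $G$. It is bridgeless by the third part of Lemma \ref{lem:triangulation}. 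Adding an edge between its two degree-two vertices, or a new vertex adjacent to its three, gives a smaller $2$-connected apex cubic graph, which is colorable by minimality. That coloring transfers back to all edges of $G$ outside $I-E_R(I)$.

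A minor point: your claim that $\mathcal{C}\setminus\mathcal{C}_I$ is semi-consistent ``because closure is witnessed pointwise'' is false in general, since a switch of some $\phi\notin\mathcal{C}_I$ may land in $\mathcal{C}_I$. The clean argument is that $\mathcal{C}$ itself is nonempty and semi-consistent, so it cannot lie inside $\mathcal{D}_R\setminus\mathcal{C}_I$. Your ``Step to check'' effectively recovers this.
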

\begin{proof}
    Suppose $I$ appears in $G$.
    In $G$, we choose an edge $e \in I - E_R(I)$, and delete it from $G$.

    Let $u_1,u_2$ be the endpoints of $e$.
    For $i=1,2$, if $d_G(u_i)=3$, then the degree becomes 2 after deleting $e$, so we suppress it, i.e., replace the two incident edges by a single edge.
    If $d_G(u_i)=2$, then the degree becomes 1 after deleting $e$, so we delete $u_i$ and its incident edge.
    After this operation, the neighbor of $u_i$ in $G-e$ has degree 2 since the degree of this vertex is 3 in $G$ by the construction of $G$.
    
    After this operation, we have a smaller subcubic graph than $G$ with the same number of vertices of degree 2.
    We denote it by $H$.
    By the third claim in Lemma \ref{lem:triangulation}, $H$ is bridgeless.
    Then, $H$ is three-edge-colorable because we obtain a smaller apex cubic graph than $G$ by adding an edge between vertices of degree 2 if $H$ has two vertices of degree 2, otherwise adding a new apex vertex adjacent to three vertices of degree 2.
    This three-edge-coloring of $H$ is transformed into three-edge-coloring of $G$ except edges in $I - E_R(I)$.
    By definition, the set of all colorings of $E_R(I)$ that can be obtained from this three-edge-coloring using Kempe chains outside $I - E_R(I)$ is semi-consistent.
    This semi-consistent set is non-empty, so by semi-D-reducibility of $I$, this set has a three-edge-coloring in $\mathcal{C}_I$.
    Every three-edge-coloring in $\mathcal{C}_I$ is extendable to $I$, so we can color all edges of $G$ with three colors.
    This contradicts that $G$ is obtained from the minimal counterexample $G_{mc}$ of Theorem \ref{mainth}.
\end{proof}

We can consider a multi-boundary island $I$ such that $E_R(I)=\emptyset$.
For this island $I$, the set of indices $R$ is $\emptyset$ and $\mathcal{D}_R$ only contains the empty function from $\emptyset$ to $[3]$.
If $I$ itself is 3-edge-colorable, $\mathcal{C}_I$ consists of the empty function.
Otherwise, $\mathcal{C}_I$ is the empty set.
Thus, $I$ is semi-D-reducible if and only if $I$ itself is 3-edge-colorable.
If such an island $I$ appears in $G$, then $G$ is isomorphic to $I$, which contradicts Lemma \ref{lem:D-reducible-doesnot-appear}.

In addition to semi-D-reducibility, we introduce a standard notion of reducibility termed \emph{C-reducibility}.
For a multi-boundary island $I$ that is not semi-D-reducible, certain colorings of $E_R(I)$ cannot be transformed to any coloring extendable to the inside of $I$.
A semi-C-reducible multi-boundary island $I$ eliminates the possibility of encountering such colorings by restricting the valid colorings on $E_R(I)$; this is achieved by specifying a particular edge-set to be deleted during the inductive step.

\begin{dfn}[deletable]
\label{dfn:deletable}
\showlabel{dfn:deletable}
    Let $I$ be a multi-boundary island.
    A subset $F \subseteq E(I) \setminus E_R(I)$ is said to be \emph{deletable} if the following conditions hold:
    \begin{enumerate}
        \item No vertex in $I$ is incident to exactly two edges of $F$.
        \item $1 \leq |F| \leq 3$, or $|F| = 4$ and either some face $f \notin F_R(I)$ is incident to at least three edges of $F$, or there exist two distinct faces $f_1, f_2 \notin F_R(I)$ such that some edge of $I$ is incident to both $f_1$ and $f_2$, and every edge of $F$ is incident to at least one of them\footnote{This criterion is the same as that used in \cite{doublecross}.}. 
    \end{enumerate}
    A \emph{three-edge-coloring modulo $F$ of $I$} is a map $\phi \colon E(I) \setminus F \to [3]$ such that for all distinct edges $e, f \in E(I) \setminus F$ sharing a common end-vertex $v$, we have $\phi(e) = \phi(f)$ if and only if $v$ is incident to exactly one edge in $F$.
\end{dfn}

In general, when $G$ is a subcubic graph, $G \dotdiv F$ is the subcubic graph obtained from $G$ by deleting $F$ and performing the following changes to endpoints of edges in $F$:
\begin{itemize}
    \item if its degree becomes 0, remove it,
    \item if its degree becomes 2, suppress it (i.e., replace the two incident edges $uv,wv$ by the single edge $uw$).
\end{itemize}
After the operation, repeatedly remove a vertex of degree one together with its incident edge until no vertex of degree one remains.
When $I$ appears in $G$, a three-edge-coloring modulo $F$ of $I$ corresponds to a coloring on $E(G) \setminus F$, which is obtained from a three-edge-coloring of $G \dotdiv F$.

\begin{dfn}[semi-C-reducible]
\label{dfn:C-reducible}
\showlabel{dfn:C-reducible}
    Let $I$ be a multi-boundary island and let $E_R(I), R, \mathcal{D}_R, \mathcal{C}_I$ be defined as in Definition \ref{dfn:D-reducible}, and $F$ be a deletable edge set of $I$.
    Let $\mathcal{C}_{I \dotdiv F}$ be the set of maps $\phi$ in $\mathcal{D}_R$ such that there exists a three-edge-coloring modulo $F$ of $I$, say $\eta$, with $\eta(r_{i,j})=\phi((i, j))$ for all $(i, j) \in R$.
    A multi-boundary island $I$ is \emph{semi-C-reducible by $F$} if every semi-consistent set in $\mathcal{D}_R \setminus \mathcal{C}_{I}$ is disjoint from $\mathcal{C}_{I \dotdiv F}$.
\end{dfn}

We remark that an island $I$ with no degree-two vertices is never
semi-D-reducible.
Let $\mathcal{C}_{\textsf{nopar}}$ be the set of colorings of $E_R(I)$
that do not satisfy the parity condition.
Then $\mathcal{C}_{\textsf{nopar}}$ is semi-consistent.
Indeed, fix $\varphi\in\mathcal{C}_{\textsf{nopar}}$ and two colors
$x,y$. List the edges colored $x$ or $y$ in their boundary order.
If their number is even, pair them consecutively.
If it is odd, pair them consecutively except for one edge, which is
taken as a singleton.
This gives a semi-matching partition of the edges colored $x$ or $y$.
Switching colors on a pair preserves all parities, while switching on
the singleton changes the parities for both $x$ and $y$.
In either case, the resulting coloring still does not satisfy the
parity condition.
Every coloring in $C_I$ satisfies the parity condition, and hence
$\mathcal{C}_{\textsf{nopar}}\cap C_I=\emptyset$.
Therefore, $I$ is not semi-D-reducible.

On the other hand, for any deletable edge set $F$, every coloring in
$C_{I\dotdiv F}$ satisfies the parity condition.
Hence,
$\mathcal{C}_{\textsf{nopar}}\cap C_{I\dotdiv F}=\emptyset$.
Thus, although $I$ cannot be semi-D-reducible, this parity obstruction
does not prevent $I$ from being semi-C-reducible.

Similarly, if $I$ has exactly one degree-two vertex, the set of
colorings satisfying the parity condition is a semi-consistent set
disjoint from $C_I$. Hence, such an island is also never
semi-D-reducible.

\begin{lem}
\label{lem:C-reducible-doesnot-appear}
\showlabel{lem:C-reducible-doesnot-appear}
    Let $I$ be a semi-C-reducible multi-boundary island by $F$, and $G$ be the subcubic graph introduced in Section \ref{sect:minimal}.
    Then, $I$ does not appear in $G$.
\end{lem}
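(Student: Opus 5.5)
We argue by contradiction, following the proof of Lemma \ref{lem:D-reducible-doesnot-appear}, but deleting the whole set $F$ instead of a single edge. Suppose $I$ appears in $G$, and form $H = G \dotdiv F$.

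\textbf{Step 1: $H$ is a smaller admissible graph.} We show three things.
\begin{enumerate}
\item $H$ is strictly smaller than $G$. This holds because $|F|\ge 1$ and every endpoint of an edge of $F$ is either removed or suppressed.
\item $H$ is bridgeless.
\item $H$ still has at most three vertices of degree two, and adding back an apex vertex (or an edge) yields a $2$-connected apex cubic graph smaller than $G_{mc}$.
\end{enumerate}
The degree-two count uses condition~1 of Definition \ref{dfn:deletable}: no vertex of $I$ meets exactly two edges of $F$, so every vertex loses $0$, $1$ or $3$ of its edges. A degree-three vertex therefore becomes degree $2$ and is suppressed, or degree $0$ and is removed. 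A degree-two vertex of $G$ loses one edge, becomes a leaf, and is pruned. We must check that this pruning leaves exactly as many degree-two vertices as before, or that the surviving degree-two vertices can still be joined through an apex. Bridgelessness, and hence $3$-edge-colorability of $H$ via minimality, is where the connectivity from Lemma \ref{lem:triangulation} (equivalently Lemma \ref{lem:Gmc-connected}) and condition~2 on $|F|$ enter.

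\textbf{Step 2: Bridgelessness (main obstacle).} A bridge in $H$ would come from a small edge cut of $G$. That cut would consist of at most $|F|+1\le 5$ edges near $I$, or of edges together with the faces $f,f_1,f_2$ that contain the edges of $F$. In the dual $G^*$, such a cut corresponds to a short cycle of length at most $5$ passing through the vertices dual to those faces. Lemma \ref{lem:triangulation}(3) forbids these unless one side is trivial, i.e.\ a face, a tiny disk, or a single vertex. In the trivial cases the pruning only removes a small piece, and no bridge remains.

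The case $|F|=4$ is the delicate one. There the cut has size up to $5$, and the extra face condition in Definition \ref{dfn:deletable} is what forces the corresponding dual cycle to have length at most $5$. We then rule out the situation where both sides of that cycle contain digons, using the fact that $G$ has only two or three digons and that the faces involved are faces of $I$ not in $F_R(I)$. I expect this case analysis to be the hardest part. I would first try to reduce it to the cut argument already used in the doublecross paper \cite{doublecross}, which uses the same criterion.

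\textbf{Step 3: Transfer and finish.} Take a $3$-edge-coloring of $H$ and lift it to a coloring of $E(G)\setminus F$. Suppressed edges get the color of the merged edge, and pruned pieces are recolored consistently. Restricted to $I$, this gives a three-edge-coloring modulo $F$ of $I$, so its restriction $\phi$ to $E_R(I)$ lies in $\mathcal{C}_{I\dotdiv F}$. The edges outside $I-E_R(I)$ are properly colored in $G$.

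Now let $\mathcal{S}$ be the set of all colorings of $E_R(I)$ reachable from this exterior coloring by Kempe chain switches outside $I-E_R(I)$. As in Lemma \ref{lem:D-reducible-doesnot-appear}, $\mathcal{S}$ is semi-consistent: planarity makes the chains non-overlapping, and chains cannot pass between different boundary faces. Also $\phi\in\mathcal{S}$.

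If $\mathcal{S}\cap \mathcal{C}_I=\emptyset$, then $\mathcal{S}$ is a semi-consistent subset of $\mathcal{D}_R\setminus\mathcal{C}_I$ that meets $\mathcal{C}_{I\dotdiv F}$ at $\phi$. This contradicts semi-C-reducibility by $F$. Hence some coloring in $\mathcal{S}$ extends into $I$, which gives a $3$-edge-coloring of $G$ and contradicts the choice of $G_{mc}$.
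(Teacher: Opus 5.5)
\textbf{There is a genuine gap in Steps~1--2.} Your overall architecture matches the paper's. You delete $F$, color $G\dotdiv F$ by minimality, lift to $E(G)\setminus F$, and use that the Kempe-reachable set is semi-consistent and meets $\mathcal{C}_{I\dotdiv F}$. Step~3 matches the paper's final paragraph. But Steps~1--2 are where essentially all of the paper's work lies, and there you aim at a statement that need not be true.

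You try to prove that $H=G\dotdiv F$ is bridgeless, and to rule out short cuts with digons on both sides. Lemma~\ref{lem:triangulation} explicitly allows $5$-cycles in $G^*$ with digons on both sides, i.e.\ $5$-edge cuts of $G$ with degree-two vertices on both sides. If four of those five edges lie in $F$, the fifth can survive as a bridge of $G\dotdiv F$. Bridgelessness is therefore not available, and the paper does not claim it. It only proves (Claim~\ref{claim:G-F}(iii)) that each side of any bridge of $G\dotdiv F$ contains a degree-two vertex, so the bridge disappears once the degree-two vertices are joined by an edge or a new vertex. The paper also works componentwise, coloring components with no degree-two vertex by the Four Color Theorem.

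\textbf{The missing obstruction.} A component of $G\dotdiv F$ with exactly one degree-two vertex is never three-edge-colorable, by parity, and cannot be completed by an edge or an apex. Pruning a leaf creates a new degree-two vertex at its neighbour, and several pruning cascades can merge. So the count can drop: ``at most three'' is not enough, and ``exactly as many as before'' need not hold.

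The paper excludes this with two cut claims, both proved from Lemma~\ref{lem:triangulation} and the two conditions of Definition~\ref{dfn:deletable}.
\begin{itemize}
\item No $Y$ consisting of degree-three vertices has $|\delta_G(Y)\setminus F|=1$. Condition~1 kills every case except $G[Y]$ a $5$-cycle with four of its five outgoing edges in $F$. Condition~2 then fails: each outside face along that cycle meets only two of these edges, and faces two apart share no edge.
\item No $Y$ with $\delta_G(Y)\subseteq F$ contains exactly one degree-two vertex. This reduces to a $5$-cycle or domino cut off by $F$, which is excluded the same way.
\end{itemize}
From these the paper derives three things. First, the number of degree-two vertices does not increase; you also need this for the vertex count, since when $G=G_{mc}-e$ a component with three degree-two vertices requires adding a new vertex. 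Second, no component has exactly one degree-two vertex. Third, the bridge property above.

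Note also that condition~2 is used to exclude this specific $5$-cycle configuration. It does not force the dual cycle to have length at most $5$; that comes from $|F|\le 4$.
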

\begin{proof}
    Suppose $I$ appears in $G$.
    First, we show the following claims:
    \begin{claim}
    \label{claim:cubic-bridge}
    \showlabel{claim:cubic-bridge}
        There is no set $Y \subsetneq V(G)$ such that $|\delta_G(Y) \setminus F|=1$ and every vertex in $Y$ has degree $3$.
    \end{claim}
    \begin{proof}
        Suppose that there exists $Y \subsetneq V(G)$ such that $|\delta_G(Y) \setminus F| = 1$ and every vertex in $Y$ has degree 3.
        By Lemma \ref{lem:triangulation} and the fact that $|F| \leq 4$, this is possible when $|\delta_G(Y)|=3,~|Y|=1$ or $|\delta_G(Y)|=4,~|Y|=2$ or 
        $|\delta_G(Y)|=5,~|Y| \leq 5$.
        By the first condition of Definition \ref{dfn:deletable}, the only possibility is the case where $|\delta_G(Y)|=5$ and $|Y|=5$.
        Then, $G[Y]$ is a 5-cycle and $|F|=4$.
        Let $w_i$ ($1 \leq i \leq 5$) be the vertices of a cycle in $G[Y]$ in this order, and $h_i$ ($1 \leq i \leq 5$) be an edge in $\delta_G(Y)$ that is incident to $w_i$, and assume $F=\{h_1,h_2,h_3,h_4\}$.
        Let $r_i$ ($1 \leq i \leq 5$) be the region that is incident to $w_iw_{i+1}$, $h_i$, and $h_{i+1}$.
        Since $G$ has no bridge and by the connectivity of $G$ in Lemma \ref{lem:triangulation}, $r_i \neq r_j$ if $i \neq j$.
        By the connectivity of $G$ in Lemma \ref{lem:triangulation}, $r_i, r_{i+2}$ are not incident to the same edge.
        Considering the second condition in Definition \ref{dfn:deletable}, $F$ cannot satisfy all requirements.
        This contradiction shows that no such $Y$ exists.
    \end{proof}
    \begin{claim}
    \label{claim:one-degree2-vertex}
    \showlabel{claim:one-degree2-vertex}
        There is no set $Y \subsetneq V(G)$ such that $\delta_G(Y) \subseteq F$ and $Y$ contains exactly one vertex of degree two in $G$.
    \end{claim}
    \begin{proof}
        Second, suppose that there exists $Y \subsetneq V(G)$ such that $\delta_G(Y) \subseteq F$ and exactly one vertex in $Y$ has degree 2.
        By Lemma \ref{lem:triangulation} and the fact that $|F| \leq 4$, this is possible when $|\delta_G(Y)|=2,|Y|=1$, or
        $|\delta_G(Y)|=3,|Y|=2$, or
        $|\delta_G(Y)|=4,|Y| \leq 7$.
        By the first condition of Definition \ref{dfn:deletable}, it is possible only when $|\delta_G(Y)|=4$, and $G[Y]$ is a 5-cycle or a domino. 
        Then, $F=\delta_G(Y)$.
        By the same proof as above, $F$ cannot satisfy the requirements of the second condition in Definition \ref{dfn:deletable}.
        This contradiction shows that no such $Y$ exists.
    \end{proof}

    \begin{claim}
    \label{claim:G-F}
    \showlabel{claim:G-F}
        $G \dotdiv F$ satisfies each of the following:
        \begin{itemize}
            \item[(i)] The number of degree-two vertices in $G \dotdiv F$ is at most the number of degree-two vertices in $G$.
            \item[(ii)] No connected component of $G \dotdiv F$ contains exactly one degree-two vertex.
            \item[(iii)] For every bridge $e$ in $G \dotdiv F$, each component of $(G \dotdiv F)-e$ contains a vertex whose degree in $G \dotdiv F$ is two.
        \end{itemize}
    \end{claim}
    \begin{proof}
        By construction of $G \dotdiv F$ from $G$, every degree-two vertex
        in $G \dotdiv F$ is either an original degree-two vertex of $G$,
        or is created during the deletion of degree-one vertices.
    
        By the first condition of Definition \ref{dfn:deletable}, every
        degree-one vertex created immediately after deleting $F$ originates
        from a degree-two vertex of $G$.
        When a degree-one vertex is deleted, the degree of its unique
        neighbor decreases by one.
        If the neighbor becomes degree-one, the deletion process continues;
        if it becomes degree-two, the process stops there.
        Thus, a deletion process starting from one degree-two vertex of $G$
        can produce at most one degree-two vertex of $G \dotdiv F$.
        Moreover, if two deletion processes meet, the number of resulting
        degree-two vertices can only decrease.
    
        Hence the number of degree-two vertices in $G \dotdiv F$ is at most
        the number of degree-two vertices in $G$.
        This proves (i).

        We next prove (ii).
        Suppose that some connected component $H$ of $G \dotdiv F$ contains exactly one degree-two vertex.
        Let $K$ be the component of $G - F$ from which $H$ is obtained, and let $Y \coloneq V(K)$.
        Then, $\delta_G(Y) \subseteq F$ holds.
        Since $H$ contains a degree-two vertex, $Y$ contains at least one degree-two vertex of $G$.
        If $Y$ does not contain all degree-two vertices of $G$, then, since $G$ has only two or three degree-two vertices, either $Y$ or $V(G) \setminus Y$ contains exactly one degree-two vertex.
        This contradicts Claim \ref{claim:one-degree2-vertex}.

        Hence, $Y$ contains all degree-two vertices of $G$.
        If the unique degree-two vertex of $H$ is an original degree-two vertex of $G$ that survives, then all the other original degree-two vertices are deleted completely.
        The vertices involved in these deletion processes form a set $Z'$ with
        $\delta_G(Z') \subseteq F$; hence either $Z'$ or its complement contains
        exactly one degree-two vertex of $G$, contradicting
        Claim \ref{claim:one-degree2-vertex}.
        Therefore, the unique degree-two vertex in $H$ is created as follows.
        Some of the corresponding deletion processes may disappear completely, while all the other processes merge and produce the unique degree-two vertex of $H$.
        Let $Z$ be the union of all vertices deleted or suppressed in these processes.
        Then $Z$ contains all degree-two vertices of $G$, and exactly
        one edge of $\delta_G(Z)$ does not belong to $F$, namely the
        edge through which the unique degree-two vertex of $H$ is
        produced.
        Thus, $|\delta_G(Z) \setminus F|=1$.
        Therefore, every vertex of $V(G) \setminus Z$ has degree three, and $|\delta_G(V(G) \setminus Z) \setminus F|=1$, contradicting Claim \ref{claim:cubic-bridge}.

        We finally prove (iii).
        Let $e$ be a bridge of $G \dotdiv F$, and suppose that one side of $e$ contains no vertex whose degree in $G \dotdiv F$ is two.
        Let $X \subsetneq V(G)$ be the corresponding side before the suppressions and deletions.
        Then $|\delta_G(X) \setminus F|=1$.
        If $X$ contains no degree-two vertex of $G$, this contradicts Claim \ref{claim:cubic-bridge}.
        Otherwise, every degree-two vertex of $G$ in $X$ is deleted by the deletion process of degree-one vertices, which follows the deletion of $F$ and suppression.
        Hence, these vertices are contained in a set $Z \subseteq X$ with $\delta_G(Z) \subseteq F$.
        Since $G$ has only two or three degree-two vertices,
        Claim~\ref{claim:one-degree2-vertex}, applied to $Z$ or to
        $V(G)\setminus Z$, implies that $Z$ contains all degree-two
        vertices of $G$.
        Then, $V(G) \setminus X$ contains no degree-two vertex of $G$ and $|\delta_G(V(G) \setminus X) \setminus F|=1$, contradicting Claim \ref{claim:cubic-bridge}.
    \end{proof}
    \begin{claim}
    \label{claim:G-F-is-colorable}
    \showlabel{claim:G-F-is-colorable}
        $G \dotdiv F$ is three-edge-colorable.
    \end{claim}
    \begin{proof}
        Let $H$ be a connected component of $G \dotdiv F$.
        By Claim \ref{claim:G-F} (i), (ii), the number of degree-two vertices in $H$ is zero, two, or three.
        
        If $H$ has no degree-two vertex, then $H$ is cubic.
        By Claim \ref{claim:G-F} (iii), $H$ has no bridge, so it is a bridgeless planar cubic graph.
        $H$ is three-edge-colorable by the Four Color Theorem.
    
        Suppose now that $H$ has two or three degree-two vertices.
        Construct a cubic graph $\widehat H$ as follows: 
        if the degree-two vertices are $u_1$, $u_2$, add the edge $u_1u_2$.
        If the degree-two vertices are $u_1$, $u_2$, and $u_3$, add a new vertex adjacent to all $u_1$, $u_2$ and $u_3$.
        By Claim \ref{claim:G-F} (iii), a bridge of $H$ is turned into a non-bridge in $\widehat H$.
        Moreover, every newly added edge is not a bridge.
        Thus, $\widehat H$ is two-edge-connected.
        $\widehat H$ is apex.
        Since $F \neq \emptyset$, $G \dotdiv F$ has strictly smaller number of vertices than $G$ has.
        If $H$ has three degree-two vertices, then we add a new vertex to get $\widehat H$.
        In this case, by Claim \ref{claim:G-F} (i), $G$ also has three degree-two vertices, so $G$ is obtained from $G_{mc}$ by deleting its apex vertex.
        If $H$ has two degree-two vertices, then we add no vertex to get $\widehat H$.
        In every case, $|V(\widehat H)| < |V(G_{mc})|$.
        By minimality of $G_{mc}$, $\widehat H$ is three-edge-colorable.
        Restricting such a coloring to $H$ gives a three-edge-coloring of $H$.
        Therefore, every connected component of $G \dotdiv F$ is three-edge-colorable, and hence $G \dotdiv F$ is three-edge-colorable.
    \end{proof}

    A three-edge-coloring of $G \dotdiv F$, obtained by Claim \ref{claim:G-F-is-colorable}, is transformed into three-edge-coloring of $G$, except edges in $I-E_R(I)$, such that the coloring in $E_R(I)$ belongs to $\mathcal{C}_{I \dotdiv F}$.
    By definition, the set of all colorings of $E_R(I)$ that can be obtained from this three-edge-coloring using Kempe chains outside $I - E_R(I)$ is semi-consistent.
    By the semi-C-reducibility of $I$ by $F$, this set has a three-edge-coloring in $\mathcal{C}_I$.
    This contradicts that $G$ is obtained from the minimal counterexample $G_{mc}$ of Theorem \ref{mainth}.
\end{proof}

Note that Lemma \ref{lem:C-reducible-doesnot-appear} holds for any triangulation with digons satisfying Lemma \ref{lem:triangulation}.
A multi-boundary island is called \emph{semi-reducible} if it is either semi-D-reducible or semi-C-reducible.

\subsection{Reducibility under deleting boundary edges}

In this subsection, we show Lemma \ref{lem:imply}.
A multi-boundary island $I'$ is called a \emph{trimmed multi-boundary island of a multi-boundary island $I$} if $I'$ is obtained from $I$ by deleting some degree-one vertices together with their incident edges, and $F_R(I')$ is obtained from $F_R(I)$ by deleting faces $F \in F_R(I)$ that are incident to no degree-one vertex after this operation.
We can show the following Lemma \ref{lem:imply}.
This lemma helps to reduce the number of multi-boundary islands whose reducibility is to be checked.
The role of the lemma is detailed in Section \ref{sect:hom-conf}.

\begin{lem}
\label{lem:imply}
\showlabel{lem:imply}
    Let $I$ be a multi-boundary island, $I'$ be a trimmed multi-boundary island of $I$, and $F$ be a deletable edge set of $I$.
    Then, the following holds:
    \begin{itemize}
        \item if $I$ is semi-D-reducible, then $I'$ is semi-D-reducible, and
        \item if $I$ is semi-C-reducible by $F$, then $I'$ is semi-C-reducible by $F$.
    \end{itemize}
\end{lem}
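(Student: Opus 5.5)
The idea is to prove the contrapositive at the level of semi-consistent sets, via a lifting map. Let $R$ be the index set of $E_R(I)$ and $R'\subseteq R$ the indices of the pendant edges that survive in $I'$; we may keep the same indices $(i,j)$, since deleting leaves does not change the cyclic order around each boundary face. Faces of $F_R(I)$ that lose all their leaves become ordinary faces of $I'$. Given a nonempty semi-consistent set $\mathcal{S}'\subseteq \mathcal{D}_{R'}\setminus\mathcal{C}_{I'}$ (respectively one meeting $\mathcal{C}_{I'\dotdiv F}$), we build a semi-consistent set $\mathcal{S}\subseteq\mathcal{D}_R\setminus\mathcal{C}_I$. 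It is nonempty (respectively meets $\mathcal{C}_{I\dotdiv F}$), which contradicts the reducibility of $I$.

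\textbf{Construction.} Take $\mathcal{S}$ to be the set of all $\phi\in\mathcal{D}_R$ with $\phi|_{R'}\in\mathcal{S}'$.

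\textbf{Avoiding $\mathcal{C}_I$.} Any coloring $\eta$ of $I$ witnessing $\phi\in\mathcal{C}_I$ restricts to a coloring of $I'$, since the deleted leaves impose no constraints inside $I'$. Hence $\phi|_{R'}\in\mathcal{C}_{I'}$, which is impossible. So $\mathcal{S}\cap\mathcal{C}_I=\emptyset$.

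\textbf{Meeting $\mathcal{C}_{I\dotdiv F}$.} We need the converse direction: a coloring modulo $F$ of $I'$ extends to one of $I$. Note $F\subseteq E(I)\setminus E_R(I)=E(I')\setminus E_R(I')$, so $F$ is still a legal edge set for $I'$; I would check that it remains deletable in $I'$. In $I$, each deleted leaf edge is attached to a vertex $v$ that is either of degree $3$ in $I$ or of degree $2$ in $I'$. Its color is forced by the two other edges at $v$, according to whether $v$ meets exactly one edge of $F$. I would check that a valid value always exists: if $v$ meets one edge of $F$, the two remaining edges are equal and we use that color; if it meets none, we use the third color. The constraints at different leaves are independent, since each leaf edge has only one non-leaf endpoint. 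So every element of $\mathcal{C}_{I'\dotdiv F}$ lifts to an element of $\mathcal{C}_{I\dotdiv F}$, and that lift lies in $\mathcal{S}$.

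\textbf{Semi-consistency of $\mathcal{S}$.} This is the main step. Fix $\phi\in\mathcal{S}$ and colors $x,y$. Semi-consistency of $\mathcal{S}'$ gives a semi-matching $M$ partitioning the $\{x,y\}$-colored indices of $R'$. Extend it to $\widehat M$ on $R$ by adding every $\{x,y\}$-colored index in $R\setminus R'$ as a singleton. Adding singletons creates no overlaps, so $\widehat M$ is still a semi-matching and partitions the $\{x,y\}$-colored part of $R$. Now take any $\widehat M'\subseteq\widehat M$ and switch along it to get $\psi$. Its restriction to $R'$ is the switch of $\phi|_{R'}$ along $\widehat M'\cap M$, which lies in $\mathcal{S}'$. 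Values on $R\setminus R'$ are unconstrained, so $\psi\in\mathcal{S}$.

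The only delicate points are bookkeeping. One must confirm that the semi-matching and overlap notions in Definition \ref{dfn:semi-matching} are unaffected by re-indexing; we avoid this by keeping the original indices, since overlap only compares the order of second coordinates. One must also confirm that the definitions do not require indices to be consecutive. With these checked, both bullets of the lemma follow.
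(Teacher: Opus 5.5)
Your proof is correct and is essentially the paper's argument. The paper likewise takes all extensions of the colorings in a semi-consistent set $\mathcal{C}'$ by arbitrary colors on the deleted indices, adds those indices as singletons to the semi-matching, and uses restriction of colorings to get disjointness from $\mathcal{C}_I$. The differences are cosmetic: you trim all leaves at once rather than one at a time, and you explicitly check that a coloring modulo $F$ of $I'$ lifts to one of $I$ (using that deleted pendant edges are not in $F$, together with condition~1 of deletability), a step the paper only asserts as ``the same discussion as above.''
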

\begin{proof}
    It suffices to consider the case where $I'$ is obtained by deleting one degree-one vertex together with its incident edge, because the claims where more than one degree-one vertex are deleted follow by repeatedly applying the claim for the case where one degree-one vertex is deleted.

    As described above, we index the faces in $F_R(I)$ as $\{F_1,\ldots,F_{|F_R(I)|}\}$.
    Assume that $I'$ is a multi-boundary island obtained from $I$ by deleting one edge $r_{i,j}\in E_R(I)$ and its incident degree-one vertex.
    By shifting the clockwise ordering of edges in $F_i$, if necessary, we assume w.l.o.g. that $j$ is the last index of edges in $E_R(I)$ that are incident to the face $F_i$.
    Let $R, R'$ be the set of indices for $I,I'$ respectively, then $R = R' \cup \{(i, j)\}$.

    Let $\mathcal{C}'$ be semi-consistent set in $\mathcal{D}_{R'}$.
    For each $\phi' \colon R' \to [3]$ and 
    each $c \in [3]$, define $\phi_c \colon R \to [3]$ as follows:
    $$
    \phi_c((k, l)) =
    \begin{cases}
        \phi'((k, l)) & \text{if } (k, l) \in R' \\
        c & \text{if } (k,l)=(i,j)
    \end{cases}
    $$
    We define $\mathcal{C}$ by $\mathcal{C} = \bigcup_{\phi' \in \mathcal{C}'} \{\phi_1, \phi_2, \phi_3\}$.
    \begin{claim}
        \label{claim:imply-consistent}
        \showlabel{claim:imply-consistent}
        If $\mathcal{C}'$ is a semi-consistent set in $\mathcal{D}_{R'}$, then $\mathcal{C}$ is a semi-consistent set in $\mathcal{D}_R$.
    \end{claim}
    \begin{proof}[Proof of Claim \ref{claim:imply-consistent}]
        Since $\mathcal{C}'$ is semi-consistent,
        for every $\phi' \in \mathcal{C}'$, for every distinct $x, y \in [3]$, there exists a semi-matching $M_{\phi',x,y}$ that is also a partition of $\{r' \in R' \mid \phi(r') \in \{x, y\}\}$ such that every map obtained from $\phi'$ by switching colors by Kempe chains $M_{\phi',x,y}$ also belongs to $\mathcal{C}'$.

        For $\phi_c \in \mathcal{C}$, and for every distinct $x,y \in [3]$, we specify a semi-matching $M_{\phi_c,x,y}$ by 
        \[
        M_{\phi_c,x,y} = \begin{cases}
            M_{\phi',x,y} \cup \{\{(i, j)\}\} & \text{if } c \in \{x, y\} \text{ (iff }\phi_c((i,j)) \in \{x,y\}) \\
            M_{\phi',x,y} & \text{otherwise.}
        \end{cases}
        \]
        $M_{\phi_c,x,y}$ is a partition of $\{r \in R \mid \phi_c(r) \in \{x, y\}\}$.
        We need to show that every map obtained from $\phi_c$ by switching colors by Kempe chains $M_{\phi_c,x,y}$ also belongs to $\mathcal{C}$.

        Let $M \subseteq M_{\phi_c,x,y}$ and a map $\phi_\textsf{changed}$ is obtained from $\phi_c$ by switching colors of $M$.
        $\phi_\textsf{changed}$ is expressible as $\omega_d$ for some $d \in [3]$ where $\omega'$ is a map obtained from $\phi'$ by switching colors of the component of $M_{\phi',x,y}$ corresponding to $M \setminus \{(i, j)\}$.
        By the semi-consistency of $\mathcal{C}'$, $\omega'$ is in $\mathcal{C}'$.
        Hence, $\phi_\textsf{changed}=\omega_d$ is in $\mathcal{C}$, which confirms the claim.
    \end{proof}
    If $\mathcal{C}'$ is non-empty, then $\mathcal{C}$ is non-empty.
    If we cannot color $I'$ with the assignment of edges of $E_R(I')$ being $\phi'$, for all $c \in [3]$, we cannot color $I$ with the assignment of edges of $E_R(I)$ being $\phi_c$.
    This means that if $\phi' \notin \mathcal{C}_{I'}$, then for all $c \in [3]$, $\phi_c \notin \mathcal{C}_I$.
    Then, if $\mathcal{C}' \cap \mathcal{C}_{I'} = \emptyset$, then $\mathcal{C} \cap \mathcal{C}_I = \emptyset$.

    We show the contraposition of the first claim.
    If $I'$ is not semi-D-reducible, then a non-empty semi-consistent set $\mathcal{C}' \subseteq \mathcal{D}_{R'} \setminus \mathcal{C}_{I'}$.
    The set $\mathcal{C}$ constructed from $\mathcal{C}'$ is a non-empty semi-consistent set in $\mathcal{D}_{R} \setminus \mathcal{C}_{I}$, so $I$ is not semi-D-reducible.

    By definition, if $F$ is deletable for $I$, then $F$ is deletable for $I'$.
    By the same discussion as above, we can show that $\mathcal{C}' \cap \mathcal{C}_{I' \dotdiv F} \neq \emptyset$, so $\mathcal{C} \cap \mathcal{C}_{I \dotdiv F} \neq \emptyset$.

    We show the contraposition of the second claim.
    If $I'$ is not semi-C-reducible by $F$, then some semi-consistent set $\mathcal{C}'$ in $\mathcal{D}_{R'} \setminus \mathcal{C}_{I'}$ has intersection with $\mathcal{C}_{I' \dotdiv F}$.
    The set $\mathcal{C}$ constructed from $\mathcal{C}'$ is a semi-consistent set in $\mathcal{D}_R \setminus \mathcal{C}_I$ that has intersection with $\mathcal{C}_{I \dotdiv F}$, so $I$ is not semi-C-reducible by $F$.
\end{proof}

\subsection{Checking Semi-reducibility}

The semi-D/C-reducibility of a (possibly multi-boundary) island is checked using a variant of existing reducibility-checking algorithms for cubic graphs, for example, \cite{doublecross}, \cite{proj2024}, and \cite{torus2024}.

The main difference between our algorithm and those used in previous work lies in the set of Kempe chains we employ.
All previous work considered 3-edge-colorings of cubic graphs, so any Kempe chain inside (or outside) an island must have both endpoints on the island's ring.
This is not the case in our setting, where some vertices might have degree 2, and therefore Kempe chains could have their endpoints at a degree-2 vertex inside (or outside) the island.
We call such Kempe chains \emph{half-chains} in the context of reducibility checking (in contrast to the "full-chains" connecting a pair of ring edges).
Another subtle difference is that we need to consider all $3^r$ ring colorings for an island of ring size $r$. In contrast, previous work enumerates only the ring colorings that satisfy the parity criteria (the number of ring edges that are colored with a certain color must have the same parity for all three colors).

In the algorithm we implement, we compute the set of all planar Kempe chain combinations (including half-chains) and determine all feasible colorings from this set.

We also provide pseudocode to enumerate deletable edge sets.

The relevant pseudocode is given in the Appendix, see Section \ref{appreduce}.
However, we provide pseudocode only for the newly introduced parts that differ from the existing implementation.
To perform the complete semi-D- and semi-C-reducible checks, one must additionally implement the standard iterative procedure used in existing reducibility checkers to compute the maximal semi-consistent subset of the nonextendable colorings.

\section{Configurations}
\label{sect:conf}
\showlabel{sect:conf}

Our goal is to prove unavoidability of semi-reducible multi-boundary islands.
To achieve this, we use the \emph{discharging method}.
When applying the discharging method, it is standard to use the triangulation with digons $G^*$, which is the dual of $G$, in Lemma \ref{lem:triangulation}.
Thus, we define the notion of a \emph{(multi-boundary) configuration} that is the dual concept of a (multi-boundary) island.

\begin{dfn}[multi-boundary configuration, configuration]
\label{dfn:conf}\showlabel{dfn:conf}
    A \emph{multi-boundary configuration} $K$ is a tuple $(G(K), F_R(K), \delta_K, \delta^{\textsf{out}}_K)$ such that the following holds:
    \begin{itemize}
        \item $G(K) = (V(K), E(K))$ is a connected plane graph, and $F_R(K)$ is a specified subset of the faces of $G(K)$, where every face not in $F_R(K)$ is a digon or a triangle.
        \item $\delta_K \colon V(K) \to \mathbb{Z}_+$.
        \item We replace each edge by two oppositely oriented edges. For each $F \in F_R(K)$, let $W_F$ be a closed directed walk bounding $F$ in clockwise order, and let $D$ be the set of oriented edges in these walks.
        $\deltaout_K: D \to \mathbb{Z}_+$ is a map satisfying,
        for every vertex $v \in V(K)$,
        \[
            \delta_K(v) = d_{G(K)}(v) + \sum_{\substack{e \in D: \text{the head of }e\text{ is }v}} \deltaout_K(e)
        \]
    \end{itemize}
    A \emph{configuration} is a multi-boundary configuration with $|F_R(K)|=1$.
\end{dfn}

We distinguish two opposite orientations of an edge in $D$.
Since all facial walks are oriented clockwise, if an edge is incident to two faces in $F_R(K)$, then it appears in the two corresponding walks with opposite orientations.
Thus, each directed edge belongs to exactly one closed walk.

We define \emph{normality} only for configurations, not for general multi-boundary configurations, as follows.
\begin{dfn}[normal]
\label{dfn:normal}
\showlabel{dfn:normal}
    A configuration $K$ is \emph{normal} if the following holds:
    \begin{itemize}
        \item For every vertex $v \in V(K)$, $G(K) - v$ has at most two connected components. $K$ has at most one cut-vertex.
        \item For every $e \in D$, if the head of $e$ is a cutvertex of $G(K)$, then $\deltaout_K(e)=1$.
    \end{itemize}
\end{dfn}

Similar to a multi-boundary island, we only consider multi-boundary configurations with $|F_R(K)| \in \{0,1, 2,3\}$.
Informally, $\delta_K(v)$ means the degree of $v$ when $K$ ``appears" in a larger triangulation with digons, typically $G^*$.
For a directed edge $e \in D$ with head $v$, $\deltaout_K(e)$ corresponds to the number of edges incident to $v$ that lie between $e$ and the immediately succeeding edge of the boundary walk in the counterclockwise rotation around $v$.
Under this topological interpretation, the equation in Definition \ref{dfn:conf} is naturally satisfied.

Throughout the paper, every configuration we use is normal. Therefore, hereafter the term "configuration" will always mean a normal configuration unless otherwise stated.
Except for allowing digons inside configurations, normal configurations follow the standard definition of configurations used in \cite{RSST} and \cite{doublecross}.
For normal configurations, $\deltaout_K$ can be uniquely determined from $K$ and $\delta_K$.
Specifically, for a directed edge $e \in D$, if the head of $e$, say $v$, is a cut-vertex, then the value of $\deltaout_K(e)$ is $1$.
Otherwise, the boundary walk of the unique face in \(F_R(K)\) visits \(v\) only once.
Hence \(e\) is the only directed edge in \(D\) whose head is \(v\), and so $\deltaout_K(e)=\delta_K(v)-d_{G(K)}(v)$.
Therefore, we do not explicitly specify $\deltaout_K$.

To describe multi-boundary configurations, the shape of the vertices represents the value of $\delta_K$, as shown in Figure \ref{fig:degree-representation}.
In Section \ref{sect:intro}, we already showed the figure of configurations represented in this way.
\begin{figure}[htbp]
    \centering
    \includegraphics[width=0.8\linewidth]{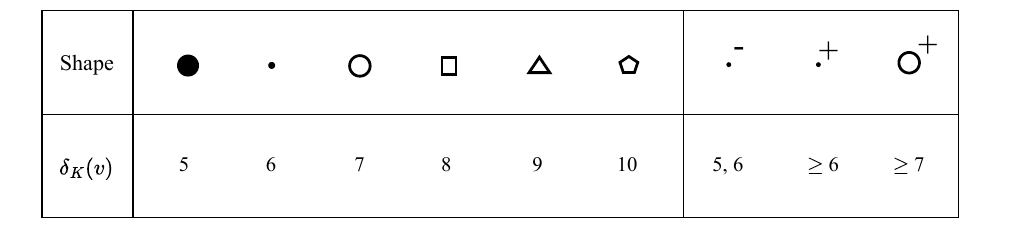}
    \caption{The shape of vertices}
    \label{fig:degree-representation}
\end{figure}

For a multi-boundary configuration $K$, we define a plane graph $\widehat{K}$ as follows.
Start with $G(K)$.
For each directed edge $e\in D$, let $v=\operatorname{head}(e)$, and let $e^+$ be the directed edge immediately following $e$ in the same clockwise boundary walk.
Thus $e^+$ has tail $v$.
For each $i\in\{1,\ldots,\deltaout_K(e)\}$, add a new vertex $x_{e,i}$ and a new edge $vx_{e,i}$.
Embed these new edges so that the cyclic order around $v$ from $e$ to $e^+$ in the counterclockwise direction is
$e, vx_{e,1}, vx_{e,2},\ldots, vx_{e,\deltaout_K(e)}, e^+$.
No other vertices or edges are added.
The resulting plane graph $\widehat{K}$ is called the \emph{outer extension} of $K$.
The vertices $x_{e,i}$ are called the \emph{outer endpoints} of $K$, and the edges $vx_{e,i}$ are called the \emph{outer edges} of $K$.

\cite{RSST} considered only triangular faces.
Accordingly, a configuration is completed by adding the outer edges as above and then identifying two outer endpoints $x_{e, \deltaout_K(e)}$ and $x_{e^+, 1}$, i.e., the last outer endpoint of $e$ and the first outer endpoint of $e^+$ for every $e$, and joining $x_{e, i}$ and $x_{e, i+1}$ for every $e$ and $i=1,\ldots,\deltaout_K(e)-1$.
This completion is called the \emph{free completion}.
The added edges are intended to form triangular faces.
In our setting, the dual graph $G^*$ contains digons; hence, the corresponding configurations must also include boundary regions that give rise to digons.
For this reason, we use the outer extension $\widehat{K}$ instead of the free completion.

For a walk $W$ bounding a face $F \in F_R(K)$, the sum $\sum_{e \in W} (\deltaout_K(e) - 1)$ becomes the size of the face in the free completion of $K$, originating from $F$.
This value is called \emph{the ring size of $F$}.
If the ring size of $F$ is $0$, all outer endpoints in $F$ are identified.
If the ring size of $F$ is $1$, all outer endpoints in $F$ are identified, and one loop bounds a face.

\subsection{Multi-boundary islands associated with a multi-boundary configuration}
\label{subsect:conf-2-island}
\showlabel{subsect:conf-2-island}
From a multi-boundary configuration $K$, we obtain a multi-boundary island as follows.
First, take the free completion of $K$, and then take its plane dual.
For each face $F \in F_R(K)$ with the positive ring size, let $v_F$ be the vertex in the dual corresponding to the face in the free completion, originating from $F$.
We replace $v_F$ by a collection of degree-one vertices, one for each edge originally incident with $v_F$, and make each such edge incident with its corresponding new vertex.
The resulting plane graph is denoted by $I(K)$.
Let $F_R(I(K))$ be the set of faces created by replacing the vertices $v_F$ for $F \in F_R(K)$ having positive ring size.
We call $I(K)$, together with $F_R(I(K))$, the \emph{multi-boundary island associated with $K$}.
Note that if the ring size of some face $F \in F_R(K)$ is 1, $I(K)$ has a face $F \in F_R(I)$ that is incident to only one degree-one vertex.
Since $G$ (obtained in Lemma \ref{lem:triangulation}) is bridgeless, $I(K)$ does not appear in $G$.

We can also obtain multi-boundary islands from the outer extension $\widehat K$ of $K$.
In this case, one multi-boundary configuration yields more than one multi-boundary island.
For each directed boundary edge $e$ of $K$, recall that the outer extension contains the outer edges
$vx_{e,1}, vx_{e,2},\ldots,vx_{e,\deltaout_K(e)}$ where $v=\operatorname{head}(e)$.
For each edge $e$ and each $i=1,\ldots,\deltaout_K(e)-1$, we figure out whether the two consecutive outer endpoints $x_{e,i}$ and $x_{e,i+1}$ are identified.
If they are identified, the corresponding boundary region yields a digon; otherwise, we add an edge between them, and the corresponding boundary region yields a triangle.
Also, we identify two outer endpoints $x_{e,\deltaout_K(e)}, x_{e^+, 1}$ for every $e$.
After making these operations, we complete $\widehat K$ accordingly.
The face corresponding to the face $F \in F_R(K)$ is constructed if at least one edge is added between $x_{e,i},x_{e,i+1}$ for a directed edge $e$ in $F$.
We take the plane dual of the graph, and replace the vertices corresponding to these faces by degree-one vertices, as above.
This yields \emph{a family of multi-boundary islands associated with the outer extension of $K$}.
One of them is $I(K)$, as described above.

Equivalently, these islands are precisely the set of trimmed multi-boundary islands of $I(K)$.
Thus, by Lemma \ref{lem:imply}, if $I(K)$ is semi-reducible, then all multi-boundary islands in the family are semi-reducible.

\section{Darts, homomorphisms}
\label{sect:dart}
\showlabel{sect:dart}
We introduce \emph{dart representations}, which will be used to encode embeddings of outer extensions of multi-boundary configurations and of $G^*$.
A dart representation is useful for defining a \emph{homomorphism} from a multi-boundary configuration to $G^*$.
Theorem \ref{thm:K-homomorphism-G}, which is shown by the discharging method, asserts the existence of a homomorphism from a configuration in the particular set $\mathcal{K}$ to $G^*$.

A dart representation is similar to a combinatorial map, developed by \cite{Klein1879} and \cite{Tutte1971}.
This representation was recently used to formalize the computer check required to develop a near-linear-time algorithm for coloring planar graphs in \cite{inoue2026four}.
We basically follow their definitions, but they consider only triangular faces, whereas we allow digons.
Thus, the definition differs from that in \cite{inoue2026four}.

Formally, our \emph{dart representation} has a set $V$ of vertices and a set $D$ of oriented edges or \emph{darts}. Each dart $e\in D$ has four pointers:
$\head(e)$ to a head vertex $v$ of $e$,
$\reverse(e)\in D$ to a reverse dart which is the \emph{opposite direction} of $e$ unless $\reverse(e)=e$, 
$\successor(e)$ to the succeeding dart with the same head $v$. 
and $\predecessor(e)$ to the preceding dart with the same head $v$. 
Both $\successor(e)$ and $\predecessor(e)$ can be $\nil$, which means that $e$ has no successor or predecessor.
These $\nil$-pointers are essential for our interpretation of a boundary.
The \emph{tail} of $e$ is defined as $\tail(e)=\head(\reverse(e))$.
If $\tail(e)=\head(e)$, then $e$ is a \emph{loop}.

Below, we define the requirements we aim to satisfy.
First, we have some \emph{basic requirements} (M1)--(M4).
\begin{itemize}
     \item[(M1)] There are no isolated vertices, i.e., for each vertex $v\in V$, there is a dart $e$ with $\head(e)=v$.  
     \item[(M2)] $\reverse: D\to D$ is an involution, that is, $e=\reverse(\reverse(e))$.
    \item[(M3)] $\successor$ and $\predecessor$ map $D$ to the set $D\cup\{\nil\,\}$. They are inverse to each other when non-$\nil$, that is, if we have two (non-$\nil$\,) darts $e$ and $f$, then
    $e=\predecessor(f)$ if and only if $f=\successor(e)$.
    \item[(M4)] Heads are consistent in the sense that if $e$ has successor $f\ne\nil$, then $\head(e)=\head(f)$.
\end{itemize}

In \cite{inoue2026four}, they also require that if $\successor(e) \ne \nil$, then there are darts $f$ and $g$ such that $f=\reverse(\successor(e))$, $g=\reverse(\successor(f))$, and $e=\reverse(\successor(g))$.
This reflects that every face is a triangle, but is not suitable for handling digons in our setting, so we do not require it.
Instead, we require the following:
\begin{itemize}
    \item[(M5)] Let $e$ be a dart with $\successor(e)\neq\nil$.
    Starting with $e_0=e$, define a sequence $e_0,e_1,e_2,e_3$ by $e_{i+1}=\reverse(\successor(e_i))$ as long as $\successor(e_i)\neq\nil$.
    Then, either (i) $e_3=e$ or (ii) $e_2=e$ holds.
\end{itemize}
Case (i) corresponds to a triangle, and case (ii) corresponds to a digon.

As in \cite{inoue2026four}, we impose one more requirement.
By (M3), the $\successor$ and $\predecessor$ pointers partition the darts into doubly-linked
lists, each of which may or may not be cyclic. If a list is acyclic, we view it as ordered following successors, starting from the dart $e$ with $\predecessor(e)=\nil$.
By the head consistency requirement (M4), the darts in each list all have the same head. 
(M6) below claims that each vertex corresponds to one doubly-linked list.
\begin{itemize}
    \item[(M6)] \emph{Single-list condition:}
    For each vertex $v$, there is exactly one list with $v$ as its head. This is called the \emph{incidence list} of $v$. The number of darts in the incidence list is the \emph{degree} $d(v)$ of $v$.
    If this list is cyclic, then $v$ is called \emph{an inner vertex}.
    If this list is acyclic, then it has $\nil$ at both ends, and $v$ is called \emph{a boundary vertex}.
\end{itemize}

A dart representation satisfying (M1)--(M4) and (M6) is called \emph{a pseudo-embedding}.
If it satisfies (M5), then we call it a \emph{pseudo-triangulation with digons}.
The \emph{pseudo-triangulations} considered in \cite{inoue2026four} correspond to the special case in which only case (i) in (M5) occurs; that is, every inner facial walk has length three.

$G^*$ can be represented as a pseudo-triangulation with digons.
The vertex set is $V(G^*)$.
Each edge of $G^*$ is replaced by two darts that are reversed relative to each other.
For each vertex $v$ in $G^*$, the darts with its head $v$ are assigned non-\nil \successor, \predecessor pointers induced from their clockwise rotations in the embedding.
It is easy to see that it satisfies (M1)--(M6).

\paragraph{Pseudo-embeddings of outer extensions of multi-boundary configurations}
A pseudo-embedding represents an outer extension of a multi-boundary configuration, but is not representable by a pseudo-triangulation with digons.
Let $\widehat{K}$ be an outer extension of a multi-boundary configuration.
The vertex set is $V(\widehat{K})$.
Each edge of $\widehat{K}$ is converted into two darts, which are reversed with respect to each other.
For each vertex $v$ in $K$, the darts whose head is $v$ are assigned non-\nil \successor, \predecessor pointers induced from their clockwise rotation in the embedding.
Note that this is possible because their clockwise order is completed by adding the outer edges of $K$.
Each dart $e$ whose head is an outer endpoint of $\widehat{K}$ has $\successor(e)=\predecessor(e)=\nil$ because outer endpoints are treated as boundary ends.
It is also easy to see that it satisfies (M1)--(M4) and (M6).
Especially, it does not satisfy (M5); let $e_0$ be a dart whose tail is an outer endpoint, then
$\successor(e_0) \neq \nil$, but neither case (i) nor case (ii) occurs.

A multi-boundary configuration does not necessarily admit a dart representation satisfying (M6), because a vertex may appear more than once on the closed walks bounding the faces in $F_R(K)$.
\paragraph{Homomorphism}
A \emph{homomorphism} from a dart representation $Z$ to another dart representation $Z'$ is a mapping $\phi$ from vertices and darts of $Z$ to vertices and darts in $Z'$ such that for any dart $e$ in $Z$:
\begin{itemize}
\item $\head(\phi(e))=\phi(\head(e))$;
\item $\reverse(\phi(e))=\phi(\reverse(e))$;
\item if $\successor(e)\neq\nil$, then $\successor(\phi(e))=\phi(\successor(e))$;
\item if $\predecessor(e)\neq\nil$, then $\predecessor(\phi(e))=\phi(\predecessor(e))$.
\end{itemize}
Note that, if we have a \emph{connected} dart representation $Z$ satisfying (M6), then its homomorphism $\phi$ into a dart representation $Z'$ is uniquely determined as soon as we have decided the image $e' \in D(Z')$ of a dart $e \in D(Z)$.
The reason is that, starting with $e$, we can trace all of $Z$ using only pointers to heads, reverse darts, successors, and predecessors, and a homomorphism must follow the corresponding pointers in the image.

For a homomorphism from outer extensions of multi-boundary configurations to another dart representation, we require one special condition: \emph{for every vertex $v \in V(K)$, the image vertex $\phi(v)$ has degree $\delta_K(v)$ in the target dart representation.}
This condition ensures that the prescribed number of incident edges at each vertex of $K$ is respected in the target.
In this understanding, we define a homomorphism from a configuration $K$ to another dart representation $Z$ as the restriction of a homomorphism from $\widehat{K}$ to $Z$ on vertices or darts in $K$.
This is a homomorphism from a configuration to $G^*$ ensured by Theorem \ref{thm:K-homomorphism-G}.

\section{Discharging}
\label{sect:discharging}
\showlabel{sect:discharging}
Recall that $G^*$ is the triangulation with digons in Lemma \ref{lem:triangulation}.
The goal of this section is to prove Theorem \ref{thm:K-homomorphism-G} below.
We start by assigning to each vertex $v \in V(G^*)$ the value 
\[
    T_0(v) := 10(6 - d(v)),
\]
which we call the \emph{initial charge at $v$}. It follows from Euler's formula (see Lemma \ref{lem:60} below) that 
\[
    \sum_{v \in V(G^*)} T_0(v) \geq 60.
\]
We redistribute the initial charges by moving charge along edges. In moving charges, we use \emph{rules} defined as follows.
Note that \emph{a near-triangulation with digons} is a plane graph whose faces are triangles or digons except for \emph{the outer face}.
\begin{dfn}[rule]
\label{dfn:rule}
\showlabel{dfn:rule}
      A \emph{rule} is a seven-tuple $R = (G(R), \delta^-_R, \delta^+_R, r(R), s(R), t(R), e(R))$, such that
      \begin{enumerate}
          \renewcommand{\labelenumi}{(\roman{enumi})}
          \item $G(R) = (V(R), E(R))$ is a near-triangulation with digons and for each $v \in V(R)$, $G(R) - v$ is connected.
          \item $\delta^-_R \colon V(R) \rightarrow \mathbb{Z}_+$, $\delta^+_R \colon V(R) \rightarrow \mathbb{Z}_+ \cup \{\infty\}$, such that $d_{G(R)}(v) < \delta^-_R(v) \leq \delta^+_R(v)$ for each $v \in V(R)$.
          \item $r(R)$ is a positive integer.
          \item $e(R) \in E(R)$ is an edge between $s(R), t(R) \in V(R)$.
      \end{enumerate}
\end{dfn}
A rule $R$ is represented as a pseudo-triangulation with digons.
Every edge of $G(R)$ is converted into two darts, which are reversed with respect to each other.
For each vertex $v \in V(R)$, the darts with head $v$ are ordered according to the clockwise order of the corresponding edges around $v$ in the embedding of $G(R)$.
The $\successor$ and $\predecessor$ pointers are assigned according to this cyclic order, except that the order is cut at the outer face.
More precisely, for a dart $e$, if the face lying to the left of $e$, when $e$ is traversed from $\tail(e)$ to $\head(e)$, is the outer face, $\successor(e)$ is $\nil$.
Similarly, if the face lying to the right of $e$ is the outer face, $\predecessor(e)$ is $\nil$.
It is easy to see that this dart representation satisfies (M1)--(M5).
Since $G(R)-v$ is connected for every $v \in V(R)$, cutting the clockwise order at the outer face gives a single incidence list for each vertex. Hence (M6) also holds.

The distinguished edge $e(R)$ has endpoints $s(R)$ and $t(R)$.
Among the two darts corresponding to $e(R)$, we distinguish the dart with tail $s(R)$ and head $t(R)$.
This dart represents the direction in which the rule moves charge, from $s(R)$ to $t(R)$.
We denote this dart by $e_R$.

We say that \emph{a rule $R$ applies to a dart $e$ in $G^*$} if there exists a homomorphism $\phi$ from $R$ to $G^*$ such that $\phi(e_R) = e$ and every vertex $v_R \in V(R)$ satisfies $\delta^-_R(v_R) \leq d_{G^*}(\phi(v_R)) \leq \delta_R^+(v_R)$.
Thus, when a rule $R$ is applied, the degree of vertices in $G^*$ is in the range defined by $\delta^-_R, \delta^+_R$.
It is equivalent to say: first choose $\delta_R(v_R) \in [\delta_R^-(v_R), \delta_R^+(v_R)]$ for each vertex $v_R$ and $v_R$ satisfies $d_{G^*}(\phi(v_R))=\delta_R(v_R)$.

\subfile{tikz/rules-RSST}

In our definition of rules, we add the edge $e(R)$ to the definition in \cite{RSST} because there may be more than one edge between $s(R)$ and $t(R)$ in our setting, due to digons.
In Figure \ref{fig:rules-RSST}, we show the rules used to prove 4CT in \cite{RSST}.
These rules are also used in \cite{doublecross}.
For these rules, there is only one edge between $s(R)$ and $t(R)$, so $e(R)$ is automatically specified.
As a set of rules used in this paper, we use the union of the rules in Figure \ref{fig:rules-RSST} and the rules in Figure \ref{fig:deg5,6-rule-multiedge}.
The symbol $\mathcal{R}$ denotes this set of rules.
We denote the 32 rules in \cite{RSST} by $R(i)$ ($1 \leq i \leq 32$) in the order listed.
Three rules in Figure \ref{fig:deg5,6-rule-multiedge} are denoted by $R(i)$ ($33 \leq i \leq 35$) in the order listed.
Note that, for a non-symmetric rule $R(i)$ around $e(R)$, its mirror rule is also included in $\mathcal{R}$.
We denote each rule drawn in Figure \ref{fig:rules-RSST}, \ref{fig:deg5,6-rule-multiedge} by $R(i, \textsf{L})$, and its mirror is denoted by $R(i, \textsf{R})$.
Specifically, $R(1)$ and $R(12)$ are symmetric, so they constitute a single rule.
$R(10)$, $R(31)$ are special rules.
They have a unique vertex $v$ with $\delta^-(v)=5, \delta^+(v)=\infty$, and if $\delta^-(v)=\delta^+(v)=5$, they become symmetric rules; otherwise, they are non-symmetric.
Thus, each of $R(10)$, $R(31)$ consists of 3 rules; one is a symmetric rule with $\delta^-(v)=\delta^+(v)=5$, and the others are non-symmetric rules that mirror each other when $\delta^-(v)=6, \delta^+(v)=\infty$.
The files representing the discharging rules in $\mathcal{R}$ are available on GitHub\footnote{\url{https://github.com/three-edge-coloring-apex-cubic-graphs/discharging-rules}}.

\begin{figure}[htbp]
    \centering
    \includegraphics[width=8cm]{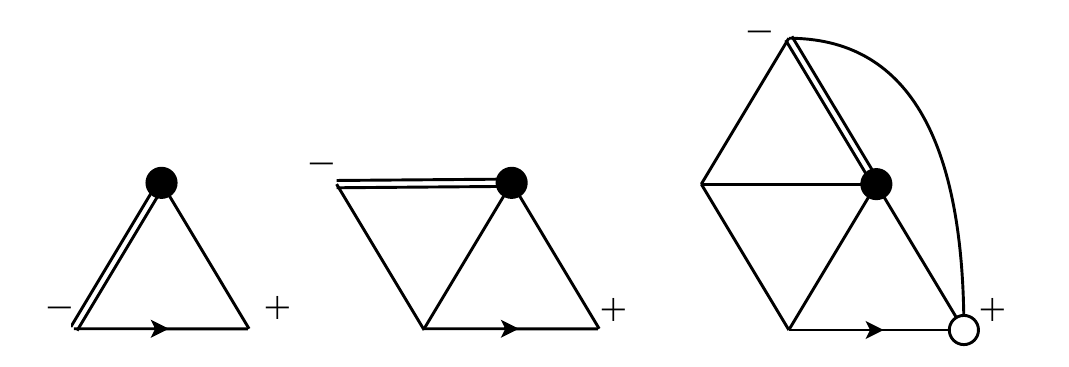}
    \caption{The new rules $R(33)$, $R(34)$, and $R(35)$ in $\mathcal{R}$. A double line represents a pair of parallel edges that bound a digon.}
    \label{fig:deg5,6-rule-multiedge}
\end{figure}

For a dart $e$ in $G^*$, let $\mathcal{R}_e \subseteq \mathcal{R}$ be the set of rules applied to $e$, and define $r(e)$ by the sum of $r(R)$ for $R \in \mathcal{R}_e$.
This value is also called the amount of charge \emph{transferred along $e$}.
For a vertex $u \in V(G^*)$, we set
\[
  T(u) := T_0(u) + \sum_{\substack{(e,f) \\ \head(e)=u, f=\reverse(e)}} r(e) - r(f)
\]
The obtained value is said to be the \emph{final charge} of $u$.
For a vertex $v \in V(G^*)$, the symbol $B_k(v)$ denotes the subgraph induced by the vertices at a distance of at most $k$ from $v$.
The final charge of $v$ is determined by the degrees of vertices in $B_2(v)$ and digons in $B_2(v)$ since for every rule $R \in \mathcal{R}$, every vertex $v \in V(R)$ is at a distance of at most 2 from $s(R)$ and $t(R)$ in $G(R)$.

\begin{lem}
\label{lem:60}
\showlabel{lem:60}
$\sum_{u \in V(G^*)} T(u) \geq 60$. Moreover, $\sum_{u \in V(G^*)} T(u) = 60$ if the number of digons in $G^*$ is three. Otherwise, its value is $80$.
\end{lem}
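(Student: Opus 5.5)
Two facts give the statement: the redistribution conserves the total charge, and Euler's formula computes the total initial charge. For the first, note that every edge of $G^*$ yields two darts $e$ and $f=\reverse(e)$. In the definition of $T(u)$, the pair $(e,f)$ with $\head(e)=u$ adds $r(e)-r(f)$ at $u$. The pair $(f,e)$ with $\head(f)=\tail(e)$ adds $r(f)-r(e)$ at the other endpoint. Summing over all vertices, each such contribution cancels. No loops occur, since $G^*$ is loopless by Lemma \ref{lem:triangulation}, so this bookkeeping is valid. Hence
\[
\sum_{u\in V(G^*)} T(u)=\sum_{u\in V(G^*)} T_0(u)=60|V(G^*)|-10\sum_u d(u)=60|V(G^*)|-20|E(G^*)|.
\]

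Next I count faces. Let $t$ be the number of triangular faces and $k\in\{2,3\}$ the number of digons; by Lemma \ref{lem:triangulation}(1) these are all the faces. Counting edge-face incidences gives $2|E(G^*)|=3t+2k$. The graph $G^*$ is connected, being the dual of a plane graph, so Euler's formula gives $|V|-|E|+t+k=2$. Substituting $t=(2|E|-2k)/3$ yields $|V|=2+|E|/3-k/3$, that is, $3|V|-|E|=6-k$. Therefore
\[
60|V|-20|E|=20(3|V|-|E|)=20(6-k).
\]

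For $k=3$ this equals $60$, and for $k=2$ it equals $80$. Both values are at least $60$. No step here is a real obstacle. The only point needing care is justifying the cancellation of transferred charges. Each unordered pair of reverse darts is counted exactly once at each endpoint with opposite signs. Because $G^*$ is loopless, $\reverse(e)\ne e$, so the two endpoints are distinct.
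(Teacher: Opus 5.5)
Your proof is correct and is essentially the paper's argument: the rules conserve the total charge, and the incidence count $2|E(G^*)|=3|F(G^*)|-k$ together with Euler's formula gives $|E(G^*)|=3|V(G^*)|-6+k$, hence a total charge of $20(6-k)$, which is $60$ or $80$. (As a minor aside, the cancellation of transferred charge does not need looplessness: both orientations of a loop would be counted at the same vertex with opposite signs.)
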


\begin{proof}
By applying the discharging rules, the total charge amount remains the same. Thus, $\sum_{u \in V(G^*)} T(u) = \sum_{u \in V(G^*)} 10(6-d(u))$.
Since $G^*$ is a triangulation with two or three digons, $2|E(G^*)|$ equals  $3|F(G^*)| - 3$ or $3|F(G^*)| - 2$. Combining Euler's formula ($|V(G^*)| - |E(G^*)| + |F(G^*)| = 2$) implies that $|E(G^*)|$ equals $3|V(G^*)| - 3$ or $3|V(G^*)| - 4$. We have:
 \[
 \sum_{u \in V(G^*)} T(u) = \sum_{u \in V(G^*)} 10(6-d(u)) = 60|V(G^*)| - 20|E(G^*)|.
 \]
 Thus, when $G^*$ has three digons, the total sum is $60$, otherwise $80$.
\end{proof}
Lemma \ref{lem:60} ensures that the sum of the final charges over all vertices is at least 60, which is positive.
In the standard discharging method, one aims to show that, in the vicinity of vertices with positive final charges, the desired structure must appear. 
In our context, this desired structure corresponds to the existence of a homomorphism from a configuration in a specified set $\mathcal{K}$, as described in Theorem \ref{thm:K-homomorphism-G}, and the vicinity corresponds to $B_2(v)$.
While we follow this general approach, we modify it for vertices near digons.
Specifically, for a vertex with $B_2(v)$ containing digons, we require a weaker condition. There exists an absolute constant $c > 0$ such that if the final charge exceeds $c$, then there exists a desired homomorphism.
By Lemma \ref{lem:60}, the total additional charge allowed per one digon should be less than 20.
We choose the value $c$ appropriately to satisfy this requirement.
The precise values for $c$ are detailed later.

Here, we introduce the set of configurations used in this paper.
We denote it by $\mathcal{K}$.
$\mathcal{K}$ consists of 915 normal configurations.
It is constructed based on the set of configurations used in \cite{doublecross}, but we have to add configurations with digons.
This set plays an important role, as Theorem \ref{thm:K-homomorphism-G} and Theorem \ref{thm:hom-imply-reducible} suggest.
Although we do not draw all of them in this paper, we show the first 19 configurations in Figure \ref{fig:conf-for-deg5,6}.
The files representing the configurations in $\mathcal{K}$ are available on GitHub\footnote{\url{https://github.com/three-edge-coloring-apex-cubic-graphs/configurations}}.
First, we show the following lemma:
\begin{lem}
\label{lem:deg5,6}
\showlabel{lem:deg5,6}
    Let $v$ be a vertex in $G^*$ with degree 5 or 6.
    If\/ $T(v) > 0$, then there exists a homomorphism from some $K \in \mathcal{K}$ to $B_2(v)$.
\end{lem}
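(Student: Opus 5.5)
We prove Lemma \ref{lem:deg5,6} by a direct local case analysis around $v$, in the spirit of the degree-5 and degree-6 arguments of \cite{RSST} and \cite{doublecross}, adapted to digons. The key observation is that every rule in $\mathcal{R}$ moves charge only from vertices of degree at least $6$ toward vertices of degree $5$ or $6$, and that the final charge $T(v)$ depends only on $B_2(v)$. So $T(v)$ can be written as
\[
10(6-d(v)) + \sum_{u} \bigl( r(u\to v) - r(v\to u) \bigr),
\]
where the sum is over the neighbours $u$ of $v$ (counting parallel darts separately). We then enumerate the possible local pictures.

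\textbf{Degree 6.} First we show that if $d(v)=6$ then $T(v)\le 0$ unless a configuration from $\mathcal{K}$ maps into $B_2(v)$. The initial charge is $0$, so $T(v)>0$ forces $v$ to receive charge along some dart. An inspection of the rules whose head $t(R)$ has degree $6$ shows this happens only when $v$ is adjacent to vertices of degree $\ge 7$ in a specific pattern. In particular this includes the new rules $R(33)$--$R(35)$, whose double edge encodes a digon at $v$. Each such pattern already contains one of the small configurations in Figure \ref{fig:conf-for-deg5,6} (for instance a $5$--$5$--$6$ or a domino-type cluster). Otherwise $v$ is a pure sender whose charge is nonpositive.

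\textbf{Degree 5.} For $d(v)=5$ we have $T_0(v)=10$, and $v$ sends charge to degree-$5$ or degree-$6$ neighbours only through the rules $R(1)$, $R(12)$ and their relatives. We split on the number of neighbours of degree $5$, on the number of neighbours of degree $6$, and on whether $v$ or one of its neighbours is incident with a digon. Lemma \ref{lem:triangulation} lets us assume each vertex touches at most one digon, and that short separating cycles are either faces or tiny disks. This leaves a finite list of rotation systems of $B_1(v)$ together with the degrees on the second neighbourhood. Whenever two degree-$5$ neighbours are consecutive, or whenever a digon sits at $v$ beside a degree-$5$ neighbour, we exhibit a homomorphism from one of the 19 listed configurations. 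Otherwise, by summing the charges $v$ sends out, we bound $T(v)\le 0$.

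\textbf{Homomorphisms.} Throughout, the homomorphism is built by fixing the image of one dart and propagating it along $\successor$, $\predecessor$ and $\reverse$. It is important that homomorphisms need not be injective: the same conclusion then holds even when $B_2(v)$ is self-intersecting or contains digons in unexpected positions.

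The main obstacle is the sheer number of local structures of $B_2(v)$ once digons are allowed. Digons can sit at $v$, at a first neighbour, or on an edge between first and second neighbours, and each position changes which rules apply. By hand this is error-prone. The plan is therefore to enumerate all pseudo-triangulations with digons of radius $2$ around a degree-$5$ or degree-$6$ centre, subject to the degree bounds and the constraints of Lemma \ref{lem:triangulation}. For each such neighbourhood we compute $T(v)$ from $\mathcal{R}$ and, whenever $T(v)>0$, search for a homomorphism from $\mathcal{K}$. This is done mechanically in the same way as the cartwheel check, with the degrees of second neighbours pruned to the intervals relevant to the rules.
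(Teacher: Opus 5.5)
Your sketch runs the discharging in the wrong direction, so the case analysis built on it fails. With $T_0(v)=10(6-d(v))$, only degree-5 vertices start with positive charge, and the rules push it \emph{outward}:
$R(1)$ sends $2$ from every degree-5 vertex to each of its neighbours, and $R(2)$ sends $1$ from a vertex of degree at most $6$ to a neighbour of degree at least $7$ when their common neighbour has degree $5$. Rules $R(8)$--$R(32)$, including the $R(12)$ you cite, act only between vertices of degree at least $7$.

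Hence the typical way a degree-6 vertex gains charge is through $R(1)$ from a degree-5 neighbour. Your claim that receiving charge forces a special pattern of high-degree neighbours, which then contains a configuration, is false. Take $d(v)=6$ with exactly one degree-5 neighbour $u$ and all other neighbours of degree at least $7$. Then $v$ receives $2$ from $u$, and $T(v)=0$ holds not because a configuration appears but because $R(2,\textsf{L})$ and $R(2,\textsf{R})$ forward one unit each to the two neighbours flanking $u$.

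The same problem breaks your degree-5 case. $R(1)$ alone sends out exactly $10$, so the real issue is the $2$ units $v$ receives from each degree-5 neighbour, which ``summing what $v$ sends out'' ignores. Your triggers for configurations are also not the right ones: for instance, a digon joining $v$ to a degree-5 neighbour is generally handled by $R(33)$, not by a configuration.

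The missing idea is exact conservation of charge through degree-5 and degree-6 vertices. The paper proves (Lemma~\ref{lem:I=O for degree 5,6}) that if no configuration of $\mathcal{K}$ maps into $B_2(v)$, then $I(1)=O(2)+O(3)+O(33)$, $I(3)=O(4)$, $I(4)=O(5)+O(6)$, $I(5)=O(7)$, $I(33)=O(34)$ and $I(34)=O(35)$. Here $I(i)$ and $O(i)$ are the amounts $v$ receives and sends by $R(i)$.

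Each identity is a short local check. Start from an incoming dart $e_i$ on which the rule fires, and inspect the next one or two neighbours of $v$ in the rotation, together with nearby digons (clockwise for the \textsf{L}-version, counterclockwise for the \textsf{R}-version). Either the next rule in the chain fires on an outgoing dart, or a digon or low-degree pattern yields one of $K(1)$--$K(19)$.

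Summing gives $T(v)=10+I(1)-O(1)-O(2)-O(3)-O(33)=0$ for $d(v)=5$, using $O(1)=10$. For $d(v)=6$ the corresponding sum telescopes to $0$. This argument is short and computer-free. Your fallback of enumerating radius-2 neighbourhoods by machine is only a plan for an unspecified computation, not a proof, and as written it is organized around the wrong picture of which rules interact.
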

The proof of Lemma \ref{lem:deg5,6} is provided in Section \ref{subsect:final-charge-deg5,6}.
The proof is computer-free.
Next, we have the following lemma.
\begin{lem}
\label{lem:deg12}
\showlabel{lem:deg12}
    Let $v$ be a vertex of degree at least 12 in $G^*$.
    If\/ $T(v) > 0$, then there exists a homomorphism from some $K \in \mathcal{K}$ to $B_2(v)$.
\end{lem}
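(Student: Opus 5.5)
The plan is to show that the hypothesis $T(v)>0$ can never hold for a vertex of degree at least $12$, so the statement of Lemma~\ref{lem:deg12} holds vacuously and no homomorphism needs to be exhibited. The argument follows \cite{RSST}, where high-degree vertices only ever send charge. Concretely, I would show that a vertex $v$ with $d_{G^*}(v)\ge 12$ receives no charge at all. Since every $r(R)$ is a positive integer, the outgoing transfers only decrease $T(v)$, which gives
\[
T(v) \le T_0(v) = 10(6-d_{G^*}(v)) \le -60 < 0 .
\]

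The key step is to check that no rule $R\in\mathcal{R}$ can be applied to a dart $e$ with $\head(e)=v$. By the definition of application, this requires a homomorphism $\phi$ with $\phi(e_R)=e$, hence $\phi(t(R))=v$ and $\delta^-_R(t(R)) \le d_{G^*}(v) \le \delta^+_R(t(R))$. It therefore suffices to verify that $\delta^+_R(t(R)) < 12$ for every $R\in\mathcal{R}$. I would verify this by direct inspection of the finite rule set.
\begin{itemize}
\item For the $32$ rules of \cite{RSST} in Figure~\ref{fig:rules-RSST}, together with their mirrors and the three variants of $R(10)$ and $R(31)$, the target $t(R)$ always has $\delta^-=\delta^+=5$. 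This is the standard fact that in \cite{RSST} charge is only sent to degree-$5$ vertices. The only vertex with $\delta^+=\infty$ in $R(10)$ and $R(31)$ is not $t(R)$, which I would confirm explicitly.
\item For the new rules $R(33)$--$R(35)$ in Figure~\ref{fig:deg5,6-rule-multiedge}, I would check from the figure (or the GitHub rule files) that $t(R)$ has degree bound $5$ or $6$.
\end{itemize}

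The main obstacle is only the bookkeeping in this second step. I must make sure that the mirrored versions and the special rules with an unbounded vertex never place that unbounded vertex at $t(R)$. I also need to confirm that digons in $G^*$ create no new way for charge to reach $v$. They cannot, because every transfer is along a dart that is the image of some $e_R$, and the degree constraint at $\phi(t(R))$ is checked in $G^*$ regardless of multiplicities. Once this is established, $\sum r(e)$ over darts entering $v$ is zero, and the displayed inequality finishes the proof.
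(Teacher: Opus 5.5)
Your key step fails. It is not true that $\delta^+_R(t(R))<12$ for every $R\in\mathcal{R}$; in fact you have the direction of the discharging reversed. With $T_0(v)=10(6-d(v))$, degree-$5$ vertices start positive and high-degree vertices start negative. So in \cite{RSST} charge flows \emph{from} degree-$5$ vertices \emph{toward} vertices of large degree, and $t(R)$ is usually the vertex marked ``$+$'', i.e.\ $\delta^+_R(t(R))=\infty$.

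Concretely, $R(1)$ has $s(R)$ of degree exactly $5$ and $t(R)$ of range $[5,\infty]$. So every degree-$5$ vertex sends $2$ along each of its edges, whatever the degree at the other end; this is why $O(1)=10$ for a degree-$5$ vertex in the proof of Lemma~\ref{lem:deg5,6}. Similarly, $t(R)$ has range $[7,\infty]$ for $R(2)$, $[6,\infty]$ for $R(3)$--$R(5)$, and $[7,\infty]$ or $[8,\infty]$ for many of $R(8)$--$R(32)$. For the new rules, the proof of Lemma~\ref{lem:I=O for degree 5,6} applies $R(33)$, $R(34)$, $R(35)$ to darts whose heads have arbitrary degree $\ge 6$, $\ge 6$, $\ge 7$ respectively.

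So both items of your inspection fail, and the lemma is not vacuous. Suppose $d(v)=12$, $B_2(v)$ contains no digon, and every other vertex of $B_2(v)$ has degree $5$. Then $R(1)$ and both versions of $R(2)$ and of $R(3)$ apply to every dart into $v$. The incoming charge is therefore at least $12\cdot 6=72>60=-T_0(v)$.

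What you need instead is an upper bound on the incoming charge per dart, and this is where the hypothesis that no $K\in\mathcal{K}$ maps into $B_2(v)$ must be used. That bound is Lemma~\ref{lem:send5}. For a dart $e$ into $v$ with tail $u$, no configuration maps into $B_2(u)\cap B_2(v)\subseteq B_2(v)$, hence $r(e)\le 5$. This is not a property of individual rules but of combinations of rules applying to the same dart. The paper establishes it by the computer check that every free combination of rules in $\mathcal{R}$ not blocked by $\mathcal{K}$ carries at most $5$ (Lemma~\ref{comp:lem:combined_rules}). With it, dropping the nonnegative outgoing terms as you intended gives $T(v)\le 10(6-d)+5d=60-5d\le 0$ for $d\ge 12$, which is the contrapositive of the statement.
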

This lemma is proved by using the following result.
\begin{lem}
\label{lem:send5} 
\showlabel{lem:send5}
    Let $e$ be a dart in $G^*$ with $u=\head(e),v=\tail(e)$.
    If there does not exist a homomorphism from any configuration $K \in \mathcal{K}$ to $B_2(v) \cap B_2(u)$, then $r(e) \leq 5$ holds, i.e., the amount of charge transferred by $\mathcal{R}$ along $e$ is at most 5.
\end{lem}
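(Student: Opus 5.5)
The plan is to follow the corresponding argument in \cite{RSST} and \cite{doublecross}, where the analogous statement is proved by a finite computer check over all ways the rules in $\mathcal{R}$ can simultaneously apply to a single dart. First I reduce the claim to a local statement. Fix the dart $e$ with tail $v$ and head $u$. By the observation preceding Lemma~\ref{lem:60}, every vertex of every rule $R\in\mathcal{R}$ lies within distance $2$ of both $s(R)$ and $t(R)$. Hence whether $R$ applies to $e$ depends only on the degrees and the face structure (triangles versus digons) of $B_2(v)\cap B_2(u)$. So $r(e)$ is a function of this common neighbourhood. I then enumerate the subsets $\mathcal{S}\subseteq\mathcal{R}$ of rules that can apply to $e$ at the same time and satisfy $\sum_{R\in\mathcal{S}}r(R)>5$. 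For each such $\mathcal{S}$ the aim is to show that some $K\in\mathcal{K}$ has a homomorphism into $B_2(v)\cap B_2(u)$.

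For each candidate $\mathcal{S}$, the homomorphisms $\phi_R$ (with $R\in\mathcal{S}$) all send $e_R$ to the same dart $e$. Because a rule has a single incidence list per vertex, the homomorphism from a connected dart representation satisfying (M6) is determined by the image of one dart. So the images of all the rules can be glued along $e$ into one combined pseudo-embedding with degree intervals. At each identified vertex, the degree interval is the intersection of the intervals $[\delta^-_R,\delta^+_R]$; if some intersection is empty, the combination is infeasible and is discarded. What remains is a finite family of partially specified neighbourhoods. I refine each of them by splitting degree intervals and by deciding, for every boundary region of the glued picture, whether it closes up as a triangle or as a digon. A refinement is closed as soon as either (a) the total charge sent along $e$ under it is at most $5$, or (b) some configuration of $\mathcal{K}$ maps homomorphically into it, with all degrees of the configuration matched exactly. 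This is the standard ``combined rules'' search. It also produces the computational lemma cited earlier, Lemma~\ref{comp:lem:combined_rules}, which I would invoke for the mechanical part.

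Lemma~\ref{lem:triangulation} is needed to make the enumeration finite and sound. Item~1 gives at most one digon per vertex and only two or three digons in total, which bounds how many digon placements must be tried near $e$. Item~2 gives minimum degree $5$. Item~3 rules out short separating cycles, so glued rule images cannot fold onto themselves in ways that fail to occur in $G^*$. Also, a homomorphism into $G^*$ need not be injective, so (b) is checked against the homomorphic image itself rather than an embedded copy. This matches how Theorem~\ref{thm:K-homomorphism-G} is formulated, and it is why multi-boundary configurations were allowed.

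I expect the main obstacle to be the digon cases. With the $32$ RSST rules alone and no digons, the bound $r(e)\le5$ is exactly the known RSST/doublecross fact. The new work is that digons change which rules fire: the new rules $R(33)$--$R(35)$ can fire, and so can RSST rules whose image now passes through a parallel pair. In these cases a dart may receive more than $5$ unless a configuration containing digons is present. The first thing I would try is to verify that every overloaded digon pattern is covered by one of the digon configurations in $\mathcal{K}$, for example those of Figure~\ref{fig:conf-for-deg5,6}. Where one is missing, I would add a configuration to $\mathcal{K}$, accepting the cost of checking its reducibility via Lemma~\ref{lem:imply}.
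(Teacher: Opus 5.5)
Your proposal is correct and follows essentially the paper's route. The paper forms the free combinations of all subsets of $\mathcal{R}$ glued along the distinguished dart, enforcing single digon incidence and making the triangle-versus-digon choices during boundary completion. It discards those blocked by $\mathcal{K}$ and verifies by computer (Lemma~\ref{comp:lem:combined_rules}) that every surviving combined rule $R^*$ has $r(R^*)\le 5$.

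The differences are minor. The paper's check uses only that $G^*$ is a triangulation with digons in which each vertex meets at most one digon; it does not use item~3 of Lemma~\ref{lem:triangulation} or the bound on the number of digons. Your closing criterion (a) is vacuous once you restrict to combinations whose total charge exceeds $5$.
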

The proof of Lemma \ref{lem:send5} will be given in Section \ref{subsect:free-homomorphism}.
The proof of this lemma is computer-assisted.
Lemma \ref{lem:send5} implies Lemma \ref{lem:deg12} as follows:
\begin{proof}[Proof of Lemma \ref{lem:deg12}, assuming Lemma \ref{lem:send5}]
    We prove by contraposition.
    By Lemma \ref{lem:send5}, for each dart $e$ with head $v$, $r(e) \leq 5$ holds.
    Then, $T(v) \leq 10 \cdot (6 - d(v)) + 5 \cdot d(v)= 60 - 5 \cdot d(v) \leq 0$.
\end{proof}
Finally, we show a similar lemma for vertices of degree $7,8,9,10,11$.
Here, we determine the value of the final charge.
We assign a specific constant to $v$ from each digon in $B_2(v)$, depending on the degrees and the distance from the endpoints of this digon.
The value for $v$ is then defined as the sum of these constants for all digons contained in $B_2(v)$.
Let $u_1,u_2$ be the endpoints of a digon in $B_2(v)$, and let $\textsf{dist}(u_i,v)$ denote the distance from $u_i$ to $v$ in $G^*$ for $i=1,2$.
We may assume that $\textsf{dist}(u_1, v) \leq \textsf{dist}(u_2, v)$, and if equal, the degree of $u_1$ is less than or equal to the degree of $u_2$ (i.e., $d_{G^*}(u_1) \leq d_{G^*}(u_2)$).
The constant assigned to $v$ from this digon is defined as follows:
\begin{enumerate}[label=(\roman*), ref=(\roman*)]
    \item \label{constaint-(i)} The case where $u_1=v$, $\textsf{dist}(u_2,v)=1$:
    the constant assigned to $v$ is $4$ if $7 \le d_{G^*}(u_1) \le 10$, and $0$ otherwise.

    \item \label{constaint-(ii)} The case where $\textsf{dist}(u_1,v)=1$, $\textsf{dist}(u_2,v)=1$:
    the constant assigned to $v$ is $5$ if $d_{G^*}(u_1)=d_{G^*}(u_2)=5$, and
    $3$ if $d_{G^*}(u_1)=5$, $d_{G^*}(u_2)=6$, and
    $2$ otherwise.

    \item \label{constaint-(iii)} The case where $\textsf{dist}(u_1,v)=1$, $\textsf{dist}(u_2,v)=2$:
    the constant assigned to $v$ is $4$ if $d_{G^*}(u_1)=d_{G^*}(u_2)=5$, and
    $2$ if $d_{G^*}(u_1)=5$, $6 \leq d_{G^*}(u_2) \leq 7$ or $d_{G^*}(u_2)=5$, $6 \leq d_{G^*}(u_1) \leq 7$, and
    $0$ otherwise.

    \item \label{constaint-(iv)} The case where $\textsf{dist}(u_1,v)=2$, $\textsf{dist}(u_2,v)=2$: the constant assigned to $v$ is $0$.
\end{enumerate}
For a vertex $v \in G^*$, let $c(v)$ be the sum of these constants for all digons contained in $B_2(v)$.
Note that if no digon is contained in $B_2(v)$, then $c(v)=0$.
\begin{lem}
\label{lem:deg7-11}
\showlabel{lem:deg7-11}
    Let $v$ be a vertex in $G^*$ with degree between $7$ and $11$.
    If $T(v) > c(v)$, then there exists a homomorphism from $K \in \mathcal{K}$ to $B_2(v)$.
\end{lem}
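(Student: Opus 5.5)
The plan is the computer-assisted ``cartwheel'' verification used in \cite{RSST} and \cite{doublecross}, adapted to triangulations with digons. We argue by contraposition. Fix $v$ with $7\le d_{G^*}(v)\le 11$ and suppose that no $K\in\mathcal{K}$ admits a homomorphism into $B_2(v)$. We will show $T(v)\le c(v)$.

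Since every rule of $\mathcal{R}$ has all its vertices within distance 2 of $s(R)$ and $t(R)$, $T(v)$ depends only on the local structure around $v$. More precisely, it depends on the degrees of vertices in $B_2(v)$ and the positions of digons there, up to the darts of the rules hitting $v$. The same holds for $c(v)$. The first step is therefore to abstract this local structure as a finite combinatorial object: a \emph{cartwheel with digons}. This is a pseudo-triangulation with digons in the sense of (M1)--(M6), with hub $v$ of degree $d\in\{7,\dots,11\}$, a first ring and a second ring. Each vertex is labelled with a degree interval $[\delta^-,\delta^+]$, and some edges are marked as digon pairs.

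We use Lemma~\ref{lem:triangulation} to restrict which such objects can occur as (homomorphic images onto) $B_2(v)$. The relevant facts are:
\begin{itemize}
\item at most three digons, and each vertex lies on at most one digon;
\item minimum degree five;
\item short separating cycles only in the tiny or digon-bearing forms.
\end{itemize}
Self-intersections of the ring, permitted by the weaker connectivity, are handled by allowing the cartwheel to map non-injectively. This is harmless, because both the rules and the configurations are matched via homomorphisms, not embeddings.

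The second step is the enumeration, following the standard scheme. Start from the cartwheel with hub degree $d$ and all neighbour degrees in $[5,\infty]$, together with every admissible placement of the at most three digons in $B_2(v)$; by the constraint on digon incidences there are finitely many. Then refine degree intervals recursively. At each node we do two things.
\begin{enumerate}
\item If some configuration of $\mathcal{K}$ is forced, i.e.\ has a homomorphism into every completion compatible with the current intervals, we discard the node.
\item Otherwise we try to bound the charge. We compute an upper bound on what $v$ can receive, using Lemma~\ref{lem:send5}, which caps each dart at 5 when no configuration is present, and the rules forced to apply or forbidden by the intervals. If $10(6-d)$ plus this bound is at most $c(v)$, with $c(v)$ evaluated from the digon data of the node via \ref{constaint-(i)}--\ref{constaint-(iv)}, we discard the node.
\end{enumerate}
If neither applies, we split an interval, for instance $[5,\infty]$ into $\{5\},\{6\},\{7\},[8,\infty]$, and recurse. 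Since $d\ge 7$, the initial charge is at most $-10$. Large neighbour degrees make a neighbour send rather than receive, so the recursion terminates on finitely many leaves.

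As in \cite{RSST}, the charge bound is made sharp by pairing each dart at $v$ with ``overcharged'' neighbouring darts. Specifically, for darts $e$ with $r(e)>5$ along pentagon-rich sectors, Lemma~\ref{lem:send5} and the sector analysis show that adjacent sectors must compensate. This is encoded as a linear bound checked at each leaf.

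The main obstacle is the digon-aware enumeration and the correctness of the abstraction. In the presence of digons and 4-/5-cycles separating digons, $B_2(v)$ can take shapes that have no analogue in \cite{RSST}: parallel edges to a neighbour, a second-ring vertex adjacent to two non-consecutive first-ring vertices, or a tiny disk inside $B_2(v)$. One must argue that every such $B_2(v)$ is the homomorphic image of some enumerated cartwheel, so that the case analysis is exhaustive. The argument should use the face-closure property (M5) and the incidence list structure (M6) to rebuild the cartwheel from the rotation at $v$ dart by dart.

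The remaining, finite work is to run the verification (Lemma~\ref{comp:lem:deg7-11}) and confirm that every leaf is closed by one of the two criteria. There the constants in $c(v)$ are exactly the slack needed for the leaves with digons near $v$. I expect tuning these cases, especially \ref{constaint-(ii)} with two degree-5 neighbours on a digon, to be where the check is tightest.
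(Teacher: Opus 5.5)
Your outline has the right skeleton: contrapositive, then a cartwheel search pruned by $\mathcal{K}$ and by a charge bound compared with $c(v)$. But the places where the digon setting actually matters are either left open or handled incorrectly.

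\textbf{Exhaustiveness.} You preconstruct a full cartwheel (hub, first ring, second ring, every digon placement) and then only refine degrees. So everything rests on the claim that every $B_2(v)$ is a homomorphic image of an enumerated cartwheel. You flag this as the main obstacle but do not prove it. The paper deliberately avoids this step, arguing that digons and the weak connectivity of Lemma~\ref{lem:triangulation} make preconstructing $B_2(v)$ infeasible. Instead it starts only from the wheels in $\mathcal{W}_d$, where Claim~\ref{claim:hom-wheel} is immediate, and grows the second neighbourhood lazily. For $i=1,\dots,d$ it glues each combined rule $R_i^*\in\mathcal{R}^{*-\mathcal{K}}$ onto $e_i$ and takes free homomorphic images enforcing single digon incidence. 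Exhaustiveness then follows from the universal property of free homomorphic images (Lemmas~\ref{lem:free-hom-tri-digon} and \ref{lem:pseud-tri-single-digon}, used in Lemma~\ref{lem:cartwheel-algorithm-is-correct}), with no classification of the shapes of $B_2(v)$. The same construction fixes the exact set $\mathcal{R}_j$ of rules entering along each spoke. This gives the pruning ``some rule outside $\mathcal{R}_j$ always applies to $e_j$''. Also, Lemma~\ref{comp:lem:deg7-11} certifies that particular algorithm, so you cannot invoke it for a different search.

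\textbf{The charge bound.} Your sharpening step is vacuous. Under your hypothesis, Lemma~\ref{lem:send5} already gives $r(e)\le 5$ for every dart $e$ into $v$, since its tail $w$ satisfies $B_2(w)\cap B_2(v)\subseteq B_2(v)$. So there are no darts with $r(e)>5$ to compensate, and a cap of $5$ per spoke only yields $T(v)\le 60-5d>0$. What closes the cases in the paper is the bound of Lemma~\ref{lem:upperbound-charge2}:
\begin{itemize}
\item $r_j$ is the largest $r(R^*)$ over $R^*\in\mathcal{R}^{*-\mathcal{K}}$ that sometimes applies to $e_j$, and it is exact once $R_j^*$ is fixed;
\item one subtracts $s_j$, the total charge of the rules that always apply to the reverse of $e_j$.
\end{itemize}
For $d\in\{7,8\}$ the hub can be the source of rules among $R(8)$--$R(32)$. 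The paper needs an extra refinement step, via the homomorphic covers of $R_{\textsf{aux},27}$ and $R_{\textsf{aux},31}$, precisely to add the second neighbours required to detect these outgoing rules.

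\textbf{The quantity $c(v)$.} Non-injectivity is not harmless here, because $c(v)$ is not defined through homomorphisms. If two digons of a node were identified in $G^*$, evaluating \ref{constaint-(i)}--\ref{constaint-(iv)} on the node would overestimate $c(v)$, and the test $T\le c$ would be unsound. The paper needs Lemma~\ref{lem:charge-given-to-digon}: the relevant distances to the hub are preserved, and distinct relevant digons have distinct images, by Lemma~\ref{lem:triangulation} and single digon incidence. It also evaluates the constants only when the node's degree intervals are contained in the prescribed ranges.
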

The proof of this lemma is shown in Section \ref{sect:cartwheel}.
The proof of this lemma requires an extensive case analysis, which is checked by computer.

We leave the proof of Lemmas \ref{lem:deg5,6}, \ref{lem:send5}, and \ref{lem:deg7-11} for later, but we now show that the combination of these lemmas leads to the proof of Theorem \ref{thm:K-homomorphism-G} below. In Section \ref{sect:hom-conf}, we show that this desired configuration in $G^*$ implies the existence of a reducible multi-boundary island in $G$ in Theorem \ref{thm:hom-imply-reducible}. This completes the proof of Theorem \ref{mainth}.

\begin{theorem}
\label{thm:K-homomorphism-G}
\showlabel{thm:K-homomorphism-G}
    For some configuration $K \in \mathcal{K}$, there exists a homomorphism from $K$ to $G^*$.
\end{theorem}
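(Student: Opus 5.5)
We argue by contradiction. Suppose no configuration $K\in\mathcal{K}$ admits a homomorphism into $G^*$. Since a homomorphism into $B_2(v)$ (or into $B_2(u)\cap B_2(v)$) composes with the inclusion to give a homomorphism into $G^*$, none exists into any of these local subgraphs either. Lemmas \ref{lem:deg5,6}, \ref{lem:deg12} and \ref{lem:deg7-11} then give $T(v)\le 0$ for every $v$ of degree $5$, $6$, or at least $12$, and $T(v)\le c(v)$ for every $v$ of degree between $7$ and $11$. By Lemma \ref{lem:triangulation} the minimum degree of $G^*$ is at least five, so every vertex is covered. Summing gives
\[
\sum_{v\in V(G^*)} T(v)\;\le\;\sum_{7\le d(v)\le 11} c(v)\;\le\;\sum_{v\in V(G^*)} c(v).
\]
Comparing with Lemma \ref{lem:60}, it suffices to show that $\sum_v c(v)<60$ when there are three digons and $\sum_v c(v)<80$ when there are two. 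This would follow from showing that each digon contributes strictly less than $20$ in total.

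To bound a single digon $D$ with endpoints $u_1,u_2$, I would regroup $\sum_v c(v)$ by digons and count, for each case (i)--(iv), how many vertices $v$ can receive a constant from $D$.
\begin{itemize}
\item In case (i), $v$ is an endpoint of $D$, so at most $2$ vertices contribute, each receiving at most $4$.
\item In case (ii), $v$ is a common neighbour of $u_1$ and $u_2$. Since $D$ bounds a digon and the faces on either side are triangles, the two triangles adjacent to the parallel edges give exactly two such $v$. Any further common neighbour $w$ would yield a separating $3$-cycle $u_1u_2w$ or a short separating cycle. This is excluded by Lemma \ref{lem:triangulation}(3), except in tiny/domino situations, which I would treat separately. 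So there are at most $2$ contributions of at most $5$.
\item In case (iii), $v$ is at distance $1$ from one endpoint and $2$ from the other. The constant is nonzero only when $\{d(u_1),d(u_2)\}$ is $\{5,5\}$ or $\{5,6\}$ or $\{5,7\}$. In that case the number of neighbours of a degree-$5$ or degree-$7$ endpoint is bounded, with $5+5-4$ type counts after removing the shared vertices.
\end{itemize}
Moreover, only vertices of degree $7$ through $11$ count in the final sum. So in the $\{5,5\}$ case the case-(ii) common neighbours contribute only if they have degree at least $7$. I expect the case-(iii) contributions to total at most about $6\cdot 4$. That alone is too many, so the key is a coupled count.

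The main obstacle is exactly this per-digon accounting. The constants were evidently tuned so that the worst case stays just below $20$. I would first try the crude bound $2\cdot 4+2\cdot 5+(\text{case (iii) count})\cdot 4$ and see whether it fails. If it does, I would exploit two facts. First, when both endpoints have degree $5$, case (i) contributes $0$ because it requires degree $7$ to $10$. Second, a vertex adjacent to $u_1$ but not $u_2$ is one of at most $d(u_1)-3$ vertices, of which at most a bounded number have degree $7$ to $11$ without creating a reducible configuration. Failing a clean hand count, I would enumerate the possible degree patterns of $u_1,u_2$ and the neighbourhood structure permitted by Lemma \ref{lem:triangulation}, checking that the total is strictly less than $20$ in each pattern (and the $60$ vs $80$ slack for two digons gives extra room). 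Strictness then gives $\sum_v T(v)<60$ or $<80$, which is the desired contradiction.
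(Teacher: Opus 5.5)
Your overall framework matches the paper's proof. The paper shows $\sum_v T(v)>\sum_v c(v)$, picks $v$ with $T(v)>c(v)\ge 0$, and applies Lemma~\ref{lem:deg5,6}, \ref{lem:deg12} or \ref{lem:deg7-11}. Your contradiction form is equivalent, and using Lemma~\ref{lem:60} to reduce everything to ``each digon contributes less than $20$'' is exactly the right target.

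The gap is that this per-digon bound, the only substantive step, is never established. You conclude that the direct count fails because of a miscount. Each endpoint $u_i$ spends two units of its degree on the parallel pair and two more on the edges to the apexes $a,b$ of the two triangles flanking the digon. So $u_i$ has at most $d(u_i)-4$ further neighbours, not $d(u_1)-3$, and the total is not of ``$5+5-4$'' type. Hence at most $d(u_1)+d(u_2)-8$ vertices fall under case (iii). For two degree-$5$ endpoints that is $2$ vertices, not about $6$. Your fallback ideas are not needed: the ``coupled count'' bounding degree-$7$--$11$ neighbours ``without creating a reducible configuration'', and the enumeration of neighbourhood structures. The first also relies on a reducibility fact that none of the available lemmas provides.

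With the correct count, a direct case split on $d(u_1)\le d(u_2)$ finishes the proof:
\begin{itemize}
\item $(5,5)$: case (i) gives $0$, so the total is at most $2\cdot 5+2\cdot 4=18$.
\item $(5,6)$: $2\cdot 3+3\cdot 2=12$.
\item $(5,7)$: $4+2\cdot 2+4\cdot 2=16$.
\item $(5,\ge 8)$: $4+2\cdot 2=8$.
\item Both endpoints of degree at least $6$: case (iii) gives $0$, so $2\cdot 4+2\cdot 2=12$.
\end{itemize}
Every total is at most $18<20$. Hence $\sum_v c(v)\le 3\cdot 18<60$ with three digons, or $\sum_v c(v)\le 2\cdot 18<80$ with two.

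In case (ii) there is also no tiny/domino exception to handle. Lemma~\ref{lem:triangulation} forbids separating cycles of length at most $3$ unconditionally. A further common neighbour $w\notin\{a,b\}$ of $u_1,u_2$ would give a $3$-cycle through $w$ and one of the parallel edges, with $a$ strictly on one side and $b$ strictly on the other. That contradicts the lemma.
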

\begin{proof}[Proof of Theorem \ref{thm:K-homomorphism-G}, assuming Lemmas \ref{lem:deg5,6}, \ref{lem:send5}, and \ref{lem:deg7-11}]
    \hfill
    
    We show the following claim:
    \begin{itemize}
        \item[(1)] $\sum_{v \in V(G^*)} T(v) > \sum_{v \in V(G^*)} c(v)$.
    \end{itemize}
    Before we show (1), we see that (1) implies this Theorem.
    By (1), at least one vertex $v \in V(G^*)$ has $T(v) > c(v) \geq 0$.
    Depending on the degree of $v$, we apply either of Lemmas \ref{lem:deg5,6}, \ref{lem:deg12}, and \ref{lem:deg7-11}.
    This implies the claim of Theorem  \ref{thm:K-homomorphism-G}.
    
    It remains to show (1).
    $c(v)$ is defined by the sum of constants for each digon contained in $B_2(v)$, so a digon in $G^*$ adds some constant to $c(v)$ for vertices near it.
    We calculate the sum of added constants by one digon and ensure that it is less than 20.
    It implies (1) since, by Lemma \ref{lem:60}, $\sum_{v \in V(G^*)} T(v) \geq 60$ and the number of digons in $G^*$ is at most 3.

    Let $u_1,u_2$ be the endpoints of a digon.    
    A vertex $v$ that forms a facial triangle with $u_1,u_2$ is of distance 1 from both $u_1$ and $u_2$.
    No other vertex is of distance 1 from both $u_1$ and $u_2$ since it contradicts the connectivity of $G^*$ in Lemma \ref{lem:triangulation}.
    Thus, there are only 2 vertices such that $\textsf{dist}(u_1, v)=\textsf{dist}(u_2, v)=1$.
    Similarly there are $d_{G^*}(u_1)+d_{G^*}(u_2)-8$ vertices such that $\textsf{dist}(u_1, v)=1, \textsf{dist}(u_2, v)=2$ or $\textsf{dist}(u_1, v)=2, \textsf{dist}(u_2, v)=1$.

    We divide into the following cases, depending on the degrees of $u_1,u_2$.
    We assume w.l.o.g., that $d_{G^*}(u_1) \leq d_{G^*}(u_2)$.
    \\
    \textbf{Case 1:} when $d_{G^*}(u_1)=d_{G^*}(u_2)=5$, by considering \ref{constaint-(ii)}, \ref{constaint-(iii)}, the distributed sum is at most $5 \cdot 2 + 4 \cdot 2 = 18$.
    \\
    \textbf{Case 2:} when $d_{G^*}(u_1)=5, d_{G^*}(u_2)=6$, by considering \ref{constaint-(ii)}, \ref{constaint-(iii)}, the distributed sum is at most $3 \cdot 2 + 2 \cdot 3 = 12$.
    \\
    \textbf{Case 3:} when $d_{G^*}(u_1)=5, d_{G^*}(u_2)=7$, by considering \ref{constaint-(i)}, \ref{constaint-(ii)}, \ref{constaint-(iii)}, the distributed sum is at most $4 \cdot 1 + 2 \cdot 2 + 2 \cdot 4=16$.
    \\
    \textbf{Case 4:} when $d_{G^*}(u_1)=5, d_{G^*}(u_2)\geq8$, by considering \ref{constaint-(i)}, \ref{constaint-(ii)}, the distributed sum is at most $4 \cdot 1 + 2 \cdot 2=8$.
    \\
    \textbf{Case 5:} when $d_{G^*}(u_1)\geq6, d_{G^*}(u_2)\geq6$, by considering \ref{constaint-(i)}, \ref{constaint-(ii)}, the distributed sum is at most $4 \cdot 2 + 2 \cdot 2 = 12$.

    This shows the sum per one digon is less than 20, implying (1).
\end{proof}
It remains to show Lemmas \ref{lem:deg5,6}, \ref{lem:send5}, and \ref{lem:deg7-11}.
In the subsequent subsections, we prove each of them.

\subsection{The final charge of vertices of degree 5 or 6}
\label{subsect:final-charge-deg5,6}
\showlabel{subsect:final-charge-deg5,6}
In this subsection, we prove Lemma \ref{lem:deg5,6}.
The set of configurations drawn in Figure \ref{fig:conf-for-deg5,6}, including their mirror image, is included in $\mathcal{K}$.
We use them to prove Lemma \ref{lem:I=O for degree 5,6} below.
The configurations in Figure \ref{fig:conf-for-deg5,6} are denoted by $K(i)$ for $1 \leq i \leq 19$ in the natural order listed.
Lemma \ref{lem:deg5,6} is proved by using the following lemma.
Refer to the similarity of Lemma (4.5) in \cite{RSST}.

\begin{figure}[htbp]
    \centering
    \includegraphics[width=0.9\linewidth]{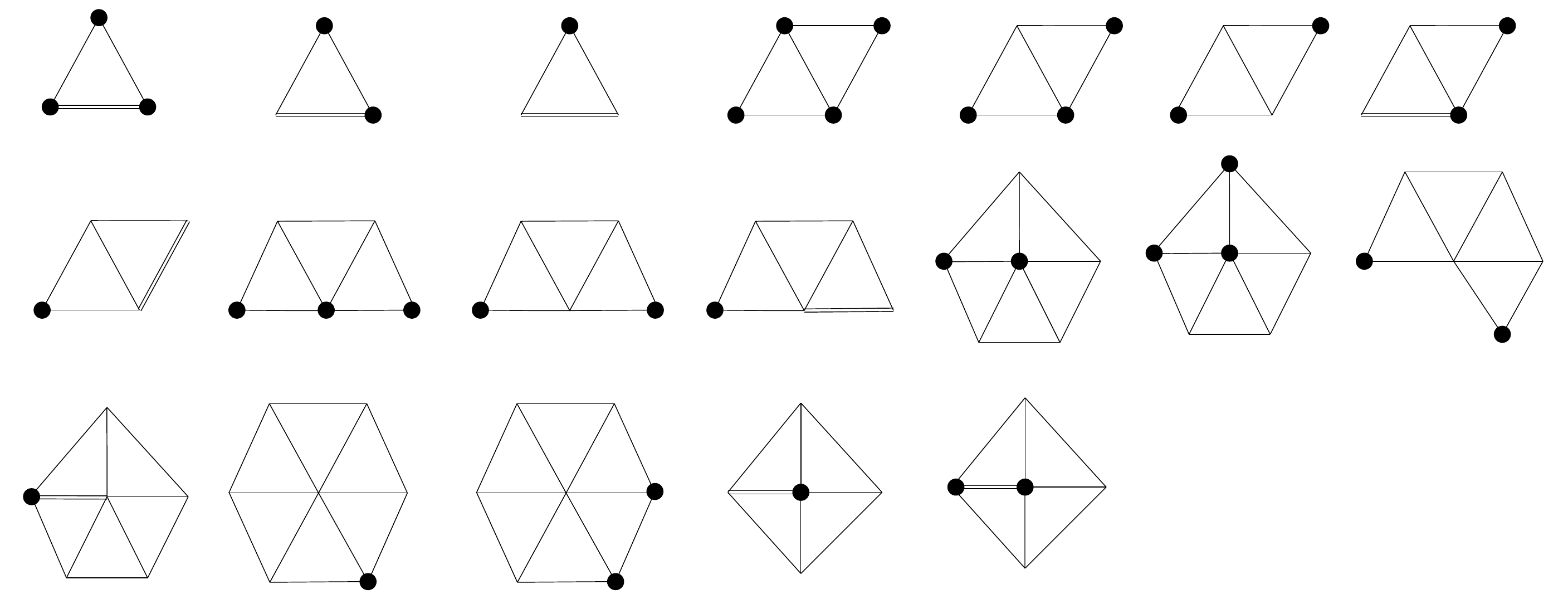}
    \caption{The subset of configurations $K(1)$--$K(19)$ in $\mathcal{K}$.}
    \label{fig:conf-for-deg5,6}
\end{figure}

\begin{lem}
\label{lem:I=O for degree 5,6}
\showlabel{lem:I=O for degree 5,6}
    Let $v$ be a vertex of degree 5 or 6 in $G^*$.
    Let $I(i), O(i)$ be the amount of charge $v$ receives (resp., sends) by rule R($i$) for $1 \leq i \leq 35$.
    If there does not exist any homomorphism from $K \in \mathcal{K}$ to $B_2(v)$, then
    \begin{itemize}
        \item[(1)] $I(1)=O(2)+O(3)+O(33)$,
        \item[(2)] $I(3)=O(4)$,
        \item[(3)] $I(4)=O(5)+O(6)$,
        \item[(4)] $I(5)=O(7)$,
        \item[(5)] $I(33)=O(34)$, and
        \item[(6)] $I(34)=O(35)$ holds.
    \end{itemize}
\end{lem}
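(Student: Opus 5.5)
We would follow the strategy of Lemma (4.5) in \cite{RSST}: build an explicit bijection between the applications of the ``incoming'' rule and those of the ``outgoing'' rules at $v$. Each identity has the form $I(i)=\sum_j O(j)$, where the right-hand rules are exactly those whose sink $t$ or source $s$ plays the role of $v$ in a chain of rules. The rules $R(1),R(3),R(4),R(5),R(33),R(34)$ transfer charge into a vertex of degree $5$ or $6$. The corresponding outgoing rules $R(2),R(3),R(33),R(4),R(5),R(6),R(7),R(34),R(35)$ are designed so that each outgoing application is obtained from an incoming one by ``extending'' the rule graph by one step.

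For each identity we would fix an application of the incoming rule at $v$. This is a homomorphism $\phi$ from $G(R(i))$ into $B_2(v)$ with $\phi(t(R(i)))=v$, together with the degrees of the neighbours involved. We then read off the degrees of the next vertices around $v$ in the rotation. Either these degrees force exactly the matching outgoing rule (with the same value $r$) to apply along the next dart out of $v$, or they produce one of the configurations $K(1)$--$K(19)$ (or a mirror image) in $B_2(v)$, which is excluded by hypothesis. The converse direction is symmetric: an outgoing application forces the corresponding incoming application. The reason is that the outgoing rule's graph contains the incoming rule's graph with $v$ relocated.

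Because all rules and configurations involved have diameter at most $2$ around $v$, every check takes place inside $B_2(v)$. There $\deg v\in\{5,6\}$ and the neighbours have bounded degree, so the check is a finite analysis over the degree sequences around $v$, organized rule by rule as in (1)--(6).

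The first new ingredient compared with \cite{RSST} is the digons. A neighbour of $v$ may be joined to $v$ by a double edge, and, by Lemma~\ref{lem:triangulation}, each vertex is incident to at most one digon. We would therefore split each case according to whether $v$, or the neighbours appearing in the rule, lie on a digon. When $v$ is incident to a digon, the rules $R(33)$--$R(35)$ take over the role of $R(2)$--$R(7)$. This is why $O(33)$ appears in (1) and why (5) and (6) form a separate chain: charge that $R(1)$ sends into a degree-5 vertex adjacent to a digon is passed on through $R(33)\to R(34)\to R(35)$.

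The second ingredient is that $B_2(v)$ need not be well-behaved. For example, two neighbours of $v$ at distance $2$ could be identified. We would rely on homomorphisms rather than embeddings, so that identifications do not break the argument. Lemma~\ref{lem:triangulation}(3) (no separating short cycles except around tiny disks) then excludes the degenerate identifications that would make a rule apply twice along the same dart, or not apply at all.

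The main obstacle is showing that every case left unmatched by the rules yields a configuration in $\mathcal{K}$, and that this holds under homomorphic images rather than only embeddings. We would handle this by enumerating, for each identity, the finitely many degree and digon patterns on the second neighbourhood that the rule graph touches, and checking each against $K(1)$--$K(19)$.
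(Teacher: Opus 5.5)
Your plan is essentially the paper's proof. For an incoming application at a dart $e_i$ into $v$, the paper inspects $\tail(e_{i+1})$ and $\tail(e_{i+2})$: their degrees, and whether consecutive darts bound a digon, using that every vertex lies on at most one digon. It then either exhibits the next rule of the chain along $\reverse(e_{i+1})$ or $\reverse(e_{i+2})$, or finds a homomorphic image of one of $K(1)$--$K(19)$ in $B_2(v)$. Your converse via containment of rule graphs makes explicit the reverse inequality that the paper leaves implicit; only $I\le O$ is needed for $T(v)\le 0$.

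Two details need fixing when you write it out:
\begin{itemize}
\item In (1), an $R(1)$ application carries charge $2$. It is matched with two outgoing charge-$1$ applications: one of $R(2,\textsf{L}),R(3,\textsf{L}),R(33,\textsf{L})$ clockwise, and one of their mirrors counterclockwise. It is not matched with a single rule of the same value.
\item $R(34)$ and $R(35)$ do not concern a digon at $v$. In (5) and (6) the digon sits at the neighbour $\tail(e_{i+1})$, which is why $\tail(e_{i+1})\neq\tail(e_{i+2})$ there. Also, when $v$ itself lies on a digon, $R(2)$--$R(7)$ still act in the other rotational direction.
\end{itemize}
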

\begin{proof}
    Let $e_1,\ldots,e_d$ ($d=5$ or $6$) be darts whose head is $v$ in clockwise order in the embedding.
    Recall that for each non-symmetric rule in Figure \ref{fig:rules-RSST} or \ref{fig:deg5,6-rule-multiedge}, the drawing defines the rule $R(i, \textsf{L})$, and its mirror defines $R(i, \textsf{R})$.
    For such a rule $R(i)$, $I(i)$ and $O(i)$ are computed by the amount transferred by both $R(i, \textsf{L})$ and $R(i, \textsf{R})$. 
    We also note that, from Lemma \ref{lem:triangulation}, every vertex in $G^*$ is incident to at most one digon, and the minimum degree is $5$.
    
    \textbf{Proof of (1):}
    Let $e_i$ be a dart with $d_{G^*}(\tail(e_i))=5$.
    Then, $R(1)$ applies to $e_i$.
    For the clockwise direction, we show that either of $R(2, \textsf{L}), R(3, \textsf{L}), R(33, \textsf{L})$ applies to the reverse of some dart whose head is $v$.
    For the counter-clockwise direction, we show that either of $R(2, \textsf{R}), R(3, \textsf{R}), R(33, \textsf{R})$ applies to the reverse of some dart whose head is $v$.
    This shows (1) since the discharge of $R(1)$ is $2$, while the discharge of $R(2),R(3),R(33)$ is $1$.
    Since the proof is the same, we only consider the clockwise direction.

    First, we consider the case where $\tail(e_i) \neq \tail(e_{i+1})$.
    If $d_{G^*}(\tail(e_{i+1}))\geq 7$, then $R(2, \textsf{L})$ applies to $\reverse(e_{i+1})$.
    Thus, assume that this is not the case, then $d_{G^*}(\tail(e_{i+1})) \leq 6$.
    If $\tail(e_{i+1})=\tail(e_{i+2})$, then a homomorphism from $K(1)$, $K(2)$ or $K(3)$ to $B_2(v)$ exists.
    If $\tail(e_{i+1}) \neq \tail(e_{i+2})$ and $d_{G^*}(\tail(e_{i+2})) \geq 6$, then $R(3, \textsf{L})$ applies to $\reverse(e_{i+2})$.
    If $\tail(e_{i+1}) \neq \tail(e_{i+2})$ and $d_{G^*}(\tail(e_{i+2})) = 5$, then a homomorphism from $K(4)$, $K(5)$, or $K(6)$ to $B_2(v)$ exists.

    Second, we consider the case where $\tail(e_i) = \tail(e_{i+1})$, then $\tail(e_{i+1}) \neq \tail(e_{i+2})$.
    If $d_{G^*}(\tail(e_{i+2})) = 5$, a homomorphism from $K(1)$ or $K(2)$ to $B_2(v)$ exists.
    If $d_{G^*}(\tail(e_{i+2})) \geq 6$, $R(33, \textsf{L})$ applies to $\reverse(e_{i+2})$.
    Thus (1) holds.
    
    \textbf{Proof of (2):}
    Consider the case where $R(3, \textsf{L})$ applies to $e_i$.
    If $\tail(e_{i+1})=\tail(e_{i+2})$, then a homomorphism from $K(2)$, $K(3)$, $K(7)$, or $K(8)$ to $B_2(v)$ exists.
    Then, we consider that $\tail(e_{i+1}) \neq \tail(e_{i+2})$.
    If $d_{G^*}(\tail(e_{i+2})) = 5$, then a homomorphism from $K(5)$, $K(6)$, $K(9)$, or $K(10)$ to $B_2(v)$ exists.
    If $d_{G^*}(\tail(e_{i+2})) \geq 6$, then $R(4, \textsf{L})$ applies to $\reverse(e_{i+2})$.
    
    The case where $R(3, \textsf{R})$ applies to $e_i$ is shown in the same way. In the following proof, we only consider $R(i, \textsf{L})$ for every $i$ since the proof for $R(i, \textsf{R})$ is the same as that for $R(i, \textsf{L})$.
    
    \textbf{Proof of (3):}
    Consider the case where $R(4, \textsf{L})$ applies to $e_i$.
    If $\tail(e_{i+1})=\tail(e_{i+2})$, then a homomorphism from $K(2)$, $K(8)$, or $K(11)$ to $B_2(v)$ exists.
    Then, we consider that $\tail(e_{i+1}) \neq \tail(e_{i+2})$.

    First, we consider the case where $d_{G^*}(\tail(e_{i+1}))=5$.
    If $d_{G^*}(\tail(e_{i+2})) \leq 6$, then a homomorphism from $K(4)$, $K(12)$, or $K(13)$ to $B_2(v)$ exists.
    If $d_{G^*}(\tail(e_{i+2})) \geq 7$, then $R(6, \textsf{L})$ applies to $\reverse(e_{i+2})$.
    
    Second, we consider the case where $d_{G^*}(\tail(e_{i+1}))=6$.
    If $d_{G^*}(\tail(e_{i+2})) = 5$, then a homomorphism from $K(10)$ or $K(14)$ to $B_2(v)$ exists.
    If $d_{G^*}(\tail(e_{i+2})) \geq 6$, then $R(5, \textsf{L})$ applies to $\reverse(e_{i+2})$.
    
    \textbf{Proof of (4):}
    Consider the case where $R(5, \textsf{L})$ applies to $e_i$.
    If $\tail(e_{i+1}) = \tail(e_{i+2})$, a homomorphism from $K(3)$ to $B_2(v)$ exists.
    Then, we assume that $\tail(e_{i+1}) \neq \tail(e_{i+2})$.

    If $\predecessor(\reverse(e_{i+1}))$ forms a digon with its predecessor, then a homomorphism from $K(2)$ or $K(15)$ to $B_2(v)$ exists.
    Then, we assume that this is not the case.
    If $d_{G^*}(\tail(e_{i+2})) \leq 6$, then a homomorphism from $K(5)$, $K(16)$ or $K(17)$ to $B_2(v)$ exists.
    If $d_{G^*}(\tail(e_{i+2})) \geq 7$, then $R(7, \textsf{L})$ applies to $\reverse(e_{i+2})$.
    
    \textbf{Proof of (5):}
    Consider the case where $R(33, \textsf{L})$ applies to $e_i$.
    Since $\tail(e_{i+1})$ is incident to at most one digon, then $\tail(e_{i+1}) \neq \tail(e_{i+2})$.
    If $d_{G^*}(\tail(e_{i+2}))=5$, then a homomorphism from $K(5)$ or $K(7)$ to $B_2(v)$ exists.
    If $d_{G^*}(\tail(e_{i+2})) \geq 6$, then $R(34, \textsf{L})$ applies to $\reverse(e_{i+2})$.
    
    \textbf{Proof of (6):}
    Consider the case where $R(34, \textsf{L})$ applies to $e_i$.
    Since $\tail(e_{i+1})$ is incident to at most one digon, then $\tail(e_{i+1}) \neq \tail(e_{i+2})$.
    If $d_{G^*}(\tail(e_{i+2})) \leq 6$, then a homomorphism from $K(1)$, $K(2)$, $K(18)$, or $K(19)$ to $B_2(v)$ exists.
    If $d_{G^*}(\tail(e_{i+2})) \geq 7$, then $R(35, \textsf{L})$ applies to $\reverse(e_{i+2})$.
\end{proof}

\begin{proof}[Proof of Lemma \ref{lem:deg5,6}]
    Rules in $\mathcal{R}$ except for R($1$)--R($7$), R($33$)-R($35$) do not affect the final charge of $v$ since such a rule $R$ satisfies $\delta_R^-(s(R)) > 6$ and $\delta_R^-(t(R)) > 6$.
    If there is no homomorphism from configurations in $\mathcal{K}$ to $B_2(v)$, Lemma \ref{lem:I=O for degree 5,6} implies the following:
    if the degree of $v$ is $5$, then $T(v)=10+I(1)-O(1)-O(2)-O(3)-O(33)=0$ and,
    if the degree of $v$ is $6$, then $T(v)=I(1)+I(3)+I(4)+I(5)+I(33)+I(34)-O(2)-O(3)-O(4)-O(5)-O(6)-O(7)-O(33)-O(34)-O(35)=0$.
\end{proof}

\subsection{Free combination of discharging rules}
\label{subsect:free-homomorphism}
\showlabel{subsect:free-homomorphism}
In this subsection, we provide a computer-assisted proof of Lemma \ref{lem:send5}.
This proof is analogous to the proof of Lemma 7.2 in \cite{inoue2026four}, and the verification program is nearly the same, except that we have to handle digons. 
We now briefly explain the approach in \cite{inoue2026four}.

Recall that $\mathcal{R}_e \subseteq \mathcal{R}$ is the set of rules applied to $e$.
For every rule $R$ in $\mathcal{R}_e$, a homomorphism from $R$ to $G^*$ exists such that $\phi(e_R)=e$ and the vertices in the image have degrees in the specified degree ranges.
We need to check whether these homomorphisms are realizable. 
We explain how to check it. Below, we present the notation in \cite{inoue2026four}.

\paragraph{Dart representation with degree functions}
We shall consider a dart representation $Z=(V, D)$ equipped with a degree function, $\delta \colon V \to \mathbb{Z}_+$.
For example, $G^*$ is naturally equipped with the degree function determined by its degree.
We also consider a dart representation with degree-range functions $\delta^-, \delta^+ \colon V \to \mathbb{Z}_+ \cup \{\infty\}$.
This represents the set of all dart representations with degree function, where each vertex has a fixed degree in the specified degree range.
Here the notation $(Z,\delta)\in (Z,\delta^-,\delta^+)$ means that the underlying dart representation is fixed and only the degree function varies within the prescribed ranges.
A dart representation of a rule is naturally equipped with its degree-range functions.
Also, we can consider that the dart representation of $G^*$ has degree ranges with both lower and upper bounds equal to $d_{G^*}(v)$.

Besides the basic condition of homomorphisms, a homomorphism $\phi$ from $(Z, \delta)$ to $(Z', \delta')$, i.e., with a degree function, satisfies $\delta'(\phi(v))=\delta(v)$ for every vertex $v \in Z$.
A homomorphism $\phi$ from $(Z, \delta^-, \delta^+)$ to $(Z', \delta^{'-}, \delta^{'+})$, i.e., with degree-range functions, has several variants.
One typical condition is that $[\delta^{'-}(\phi(v)), \delta^{'+}(\phi(v))] \subseteq [\delta^{-}(v), \delta^{+}(v)]$ for every vertex $v \in Z$.
This means that for every $(Z', \delta') \in (Z', \delta^{'-}, \delta^{'+})$, $\phi$ is a homomorphism from some $(Z, \delta) \in (Z, \delta^-, \delta^+)$ to $(Z', \delta')$.
In the following paragraph, we use this typical condition.

\paragraph{Free homomorphisms of pseudo-embeddings with a degree function}
Let $(Z, \delta)$ be a pseudo-embedding with a degree function.
An \emph{identification request} is a pair of darts $(e,f)$ of $Z$.
Let $S=\{(e_1,f_1),\ldots,(e_k,f_k)\}$ be a set of identification requests.
A homomorphism $\phi':(Z, \delta) \to (Z', \delta')$ \emph{respects} $S$ if $\phi(e_i)=\phi(f_i)$ for all $i=1,\ldots,k$.

A homomorphism $\phi^*:(Z, \delta) \to (Z^*, \delta^*)$ respecting $S$ is called a \emph{free homomorphism respecting $S$} if it satisfies the following property:
for every homomorphism $\phi':(Z, \delta) \to (Z', \delta')$ respecting $S$, there exists a homomorphism $\phi^{*'} \colon (Z^*, \delta^*) \to (Z', \delta')$ such that
$\phi'=\phi^{*'} \circ \phi^*$.
In this case, $(Z^*, \delta^*)$ is called the \emph{free homomorphic image of $(Z, \delta)$ respecting $S$}.
It is the most general homomorphic image with respect to $S$.

We can also define a free homomorphism and free homomorphic image of a pseudo-embedding without a degree function in the same way, but it is unnecessary here.

\paragraph{Free homomorphism of pseudo-embeddings with degree-range functions}
A pseudo-embedding with degree-range function $(Z, \delta^-, \delta^+)$ is considered as the set of pseudo-embeddings with degree function, so free homomorphisms and free homomorphic images become sets.
Namely, for a pseudo-embedding with degree-range function $(Z, \delta^-, \delta^+)$ with a set of identification requests $S$, a set $\mathcal{Z}^*$ of pairs of a pseudo-embedding and a homomorphism from $(Z, \delta^-, \delta^+)$ to it is called a \emph{set of free homomorphic images with free homomorphisms} if $(Z^*, \delta^*) \in ((Z^*, \delta^{*-}, \delta{*+}), \phi^*) \in \mathcal{Z}^*$ if and only if $(Z^*, \delta^*)$ is a free homomorphic image of some $(Z, \delta) \in (Z, \delta^-, \delta^+)$ under free homomorphism $\phi^*$.  

\paragraph{Free homomorphisms of pseudo-triangulations with digons with a degree function.}
In this paper, we compute a free homomorphic image of
\begin{itemize}
    \item an outer extension of a multi-boundary configuration (in Section \ref{sect:hom-conf}), or
    \item a pseudo-triangulation with digons, e.g., a rule, or a cartwheel, which is described later.
\end{itemize}
When computing free homomorphic images of pseudo-triangulations with digons, we require the resulting free homomorphic images also to be pseudo-triangulations with digons.
By this requirement, a free homomorphic image is not uniquely determined.
This is because there are multiple choices of the size of a face, i.e., a digon or a triangle.
Thus, when determining the size of a face during computing free homomorphic images, there are multiple choices.

Then, we define a \emph{set} of free homomorphic images of a pseudo-triangulation with digons $(Z, \delta)$ respecting a set of identification requests $S$.
A set $\mathcal{Z}^*$ of pairs of a pseudo-triangulation with digons and a homomorphism from $(Z,\delta)$ to it is \emph{a set of free homomorphic images} if, for every homomorphism $\phi' \colon (Z,\delta) \to (Z',\delta')$ respecting $S$, there exists $((Z^*, \delta^*), \phi^*) \in \mathcal{Z}^*$ such that a homomorphism $\phi^{*'} \colon (Z^*, \delta^*) \to (Z', \delta')$ exists such that $\phi' = \phi^{*'} \circ \phi^*$, and $\mathcal{Z}^*$ is minimal with this property.

We have the following two remarks.
First, $\mathcal{Z}^*$ is a non-empty set if and only if a homomorphism from $(Z, \delta)$ to some pseudo-triangulation with digons respecting $S$ exists.
Second, a target $Z'$ is also restricted to be a pseudo-triangulation with digons.
In this paper, a target $Z'$ is typically a graph embedded in some surface, or a subgraph with its induced embedding.
For instance, it is the plane graph $G^*$, or its subgraph $B_2(v)$.
Then, they are pseudo-triangulations with digons.

\paragraph{Free homomorphisms of pseudo-triangulations with digons with degree-range functions}
For a pseudo-triangulation with digons with degree-range functions $(Z,\delta^-,\delta^+)$, consider a set $\mathcal{Z}^*$ of pairs of a pseudo-triangulation with digons with degree-range functions and a homomorphism from $(Z,\delta^-,\delta^+)$ to it. 
This set $\mathcal{Z}^*$ is \emph{a set of free homomorphic images} if, $(Z^*, \delta^*) \in (Z^*, \delta^{*-}, \delta^{*+})$ for some $((Z^*, \delta^{*-}, \delta^{*+}), \phi^*) \in \mathcal{Z}^*$ if and only if there is a $(Z, \delta) \in (Z, \delta^-, \delta^+)$ such that $((Z^*, \delta^*), \phi^*)$ belongs to a set of free homomorphic images of $(Z, \delta)$ respecting $S$.

We have the same remark as above.
First, $\mathcal{Z}^*$ is a non-empty set if and only if for some $(Z, \delta) \in (Z, \delta^-, \delta^+)$, there exists a homomorphism from $(Z, \delta)$ to some pseudo-triangulation with digons respecting $S$.
Second, a target $Z'$ is also restricted to be a pseudo-triangulation with digons.

\paragraph{Free combination of discharging rules}
To check whether several rules $R_1,\ldots,R_k$ apply to the same dart $e$ in $G^*$, we want to check whether a pseudo-triangulation with digons $(R^*, \delta^*)$ with a distinguished dart $e^*$ exists such that for every $1 \leq i \leq k$, there exists a homomorphism $\phi_i$ from a rule $R_i$ to $R^*$ such that $\phi_i(\vec e_{R_i})=e^*$ and
every vertex $v$ in $R_i$ satisfies that
$\delta^-_{R_i}(v)\leq \delta^*(\phi_i(v))\leq \delta^+_{R_i}(v)$.
If a homomorphism from $(R^*, \delta^*)$ to $G^*$ exists, all rules $R_1,\ldots,R_k$ apply.
Conversely, if all rules $R_1,\ldots, R_k$ apply to the same dart in $G^*$, the union of their images satisfies the same requirement as $R^*$.
We will check whether such an $R^*$ exists as follows.

We take the disjoint union of the pseudo-triangulation with digons of $R_1,\ldots, R_k$, together with their degree-range functions, denoted by $R^{\sqcup}$.
For $R^{\sqcup}$, we impose the identification requests 
$$S=\{(\vec e_{R_1}, \vec e_{R_2}), (\vec e_{R_2},\vec e_{R_3}), \ldots, (\vec e_{R_{k-1}}, \vec e_{R_k})\}.$$
We then compute a set $\mathcal{R}^*$ of free homomorphic images of $R^{\sqcup}$ respecting $S$.
By the remark in the previous paragraph, $\mathcal{R}^*$ is a non-empty set if and only if $R^*$, described as above, exists.
A pseudo-triangulation with digons $R^*$ in $\mathcal R^*$, together with the common image of the distinguished darts and $r(R^*)=\sum_{i=1}^k r(R_i)$, is called a \emph{combined rule} or \emph{a free combination of discharging rules} $R_1,\ldots,R_k$.

\paragraph{The algorithm to compute free homomorphisms.}
In the following, we aim to compute free homomorphisms with degree-range functions.
The algorithm to compute free homomorphic images with free homomorphisms of a pseudo-triangulation is given in \cite{inoue2026four}.
However, we need to compute free homomorphic images and free homomorphisms of pseudo-embeddings, or pseudo-triangulations with digons.
Therefore, we use a modified implementation of the algorithm.
The only place where the implementation differs is the boundary completion step: during the construction of the free homomorphic image $(Z^*, \delta^{*-}, \delta^{*+})$, some boundary vertex $v$ of $Z^*$ may satisfy $\delta^{*-}(v) = \delta^{*+}(v) = d(v)$.
The algorithm completes its incidence list of $v$. 
For the completeness, we present all algorithmic descriptions below and all necessary pseudocode in the appendix, see Section \ref{sec:pseudo1}. 

\begin{itemize}
    \item \emph{The algorithm for pseudo-triangulations in \cite{inoue2026four}.}
    Let $e_\textsf{first},e_\textsf{last}$ be the first, the last dart in the incidence list of $v$.
    If $\tail(\efirst)=\tail(\elast)$, then the following operation creates a loop, so it fails to complete the incidence list, and the algorithm returns \texttt{null}.
    Otherwise, the algorithm completes the incidence list of $v$ by assigning $\predecessor(e_\textsf{first})=e_\textsf{last}, \successor(e_\textsf{last})=e_\textsf{first}$.
    It then adds two darts $f,g$ with $\reverse(f)=g$ and $\reverse(g)=f$, and set the pointers as follows:
    $\head(f)=\tail(\efirst)$,
    $\successor(f)=\nil$,
    $\predecessor(f)=\reverse(\efirst)$,
    $\head(g)=\tail(\elast)$,
    $\successor(g)=\reverse(\elast)$, and
    $\predecessor(g)=\nil$.
    Finally, it updates $\successor(\reverse(\efirst))=f$ and $\predecessor(\reverse(\elast))=g$.

    This is the \textsf{addBoundaryDarts} routine in Algorithm A.4.8 of \cite{inoue2026four}.
    We also write the same pseudocode in Algorithm \ref{alg:add_boundary_darts}.

    \item \emph{The algorithm for pseudo-embeddings.}
    The algorithm completes the incidence list of $v$ by only assigning $\predecessor(e_\textsf{first})=e_\textsf{last}, \successor(e_\textsf{last})=e_\textsf{first}$.
    The corresponding pseudocode is given in the \textsf{boundaryCompletions} routine in Algorithm \ref{alg:boundary_completions_normal}.
    
    \item \emph{The algorithm for pseudo-triangulations with digons.}
    The difference from a pseudo-triangulation is that $e_\textsf{first}, e_\textsf{last}$ may form a digon in a target $Z'$.
    We complete the incidence list of $v$ in two ways.
    The first one is the same as the above algorithm for pseudo-triangulations.
    For the second way, we try to identify $\tail(\elast)$ with $\tail(\efirst)$.
    First, if the degree intersection $[\delta^{*-}(\tail(\efirst)), \delta^{*+}(\tail(\efirst))] \cap [\delta^{*-}(\tail(\elast)), \delta^{*+}(\tail(\elast))]$ is empty, we cannot identify them, so returns \texttt{null}.
    Otherwise, the algorithm completes the incidence list of $v$ by assigning $\predecessor(\efirst)=\elast, \successor(\elast)=\efirst$, and link the incidence list of $\tail(\elast)$ and $\tail(\efirst)$ by calling \textsf{linkIncidenceListEnds} routine with input $\eufirst=\reverse(\elast), \ewlast=\reverse(\efirst)$.
    This routine proceeds as follows.
    Let $u$ be $\head(\eufirst)$, and $w$ be $\head(\ewlast)$.
    First, assign $\predecessor(\eufirst)=\ewlast$, and $\successor(\ewlast)=\eufirst$.
    If $u=w$, the algorithm terminates here.
    Otherwise, for every dart whose head is $u$, we change its head to $w$ and remove $u$ from the vertex set.
    The degree range of $w$ is replaced by the intersection $[\delta^-(u), \delta^+(u)] \cap [\delta^-(w), \delta^+(w)]$.
    This routine is described in Algorithm \ref{alg:link_indidence_list_ends}.
    We discard it if the operation creates a loop.

    The corresponding pseudocode is given in the \textsf{identifyNeighbors} routine in Algorithm \ref{alg:identify_neighbors}.
    The overall algorithm for the boundary completion step is presented in the \textsf{boundaryCompletions} routine in Algorithm \ref{alg:boundary_completions_digons}.
\end{itemize}

In the other parts of the algorithm, as in \cite{inoue2026four}, it does not use the fact that all faces are triangles, so we use the same algorithm.
Namely, within the algorithm for computing free homomorphic images of pseudo-triangulation with degree-range functions in \cite{inoue2026four}, we replace the \textsf{addBoundaryDarts} routine with the corresponding \textsf{boundaryCompletions} routine, depending on whether the target is a pseudo-embedding or a pseudo-triangulation with digons.
Specifically, the \textsf{addBoundaryDarts} routine is called once inside the \textsf{fixSingleDegreeIssue} routine in Algorithm A.4.7, so the replacement occurs only once.

For pseudo-triangulations with digons, we have two choices for completing the incidence list, depending on the face size.
This makes a free homomorphic image with degree functions non-unique.

The proof in \cite{inoue2026four} with the above observation leads to the following lemma.

\begin{lem}
\label{lem:free-hom}
\showlabel{lem:free-hom}
    Given a pseudo-embedding $(Z, \delta^-, \delta^+)$ and a set of identification requests $S$,
    the modified algorithm as above computes a set $\mathcal{Z}^*$ of free homomorphic images of $(Z, \delta^-, \delta^+)$ respecting $S$.
\end{lem}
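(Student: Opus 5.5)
We will prove Lemma \ref{lem:free-hom} by reducing to the correctness proof of the free-homomorphism algorithm in \cite{inoue2026four}. We check that every step of that proof which does not use triangularity carries over verbatim, and we re-prove only the statements about the boundary completion step, which is the single place where the algorithm was modified.

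\textbf{Structure of the original argument.} The algorithm of \cite{inoue2026four} maintains a current image $(Z^*,\delta^{*-},\delta^{*+})$ together with a map from $Z$. It repeatedly performs forced operations until no identification request or local inconsistency remains: merging darts that must coincide, merging vertices via heads, and propagating successor and predecessor pointers. Correctness rests on a single invariant. For every homomorphism $\phi'$ from some $(Z,\delta)\in(Z,\delta^-,\delta^+)$ to a target $Z'$ of the admissible type that respects $S$, $\phi'$ factors through the current image. Each forced operation preserves this invariant, because whatever it performs, $\phi'$ is compelled to perform as well. At termination the current image is itself a valid object of the required type, and the invariant gives universality. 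The plan is to state this invariant in a branching form, for a \emph{set} of current images, and to verify that each operation preserves it. For every operation other than boundary completion, this verification is exactly the one in \cite{inoue2026four}, since those steps never inspect face sizes.

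\textbf{Boundary completion.} This step is triggered at a boundary vertex $v$ with $\delta^{*-}(v)=\delta^{*+}(v)=d(v)$. In any target, $\phi'(v)$ has degree exactly $d(v)$, so the images of $e_\textsf{first}$ and $e_\textsf{last}$ must be consecutive around $\phi'(v)$. Forcing $\predecessor(e_\textsf{first})=e_\textsf{last}$ is therefore sound in both settings.

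\emph{Pseudo-embedding targets.} Nothing further is forced, since there is no face condition (M5) on the target. The single completion therefore preserves the invariant and universality.

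\emph{Pseudo-triangulation-with-digons targets.} By (M5) applied in $Z'$ to $\phi'(e_\textsf{last})$, the face between $\phi'(e_\textsf{last})$ and $\phi'(e_\textsf{first})$ is either a triangle or a digon. In the triangle case, $\phi'$ factors through the completion produced by \textsf{addBoundaryDarts}. In the digon case, $\tail(\phi'(e_\textsf{first}))=\tail(\phi'(e_\textsf{last}))$ and the reversed darts are consecutive at that vertex, so $\phi'$ factors through the branch produced by \textsf{identifyNeighbors}/\textsf{linkIncidenceListEnds}. In that branch the degree range becomes the intersection, which is sound because both tails have the same image degree. Branches are discarded only when they produce a loop or an empty degree range, which no target of the admissible type allows, so discarding loses no homomorphism. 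Consequently every $\phi'$ factors through at least one branch, which gives the covering property of $\mathcal{Z}^*$. We then prune branches that factor through other branches to obtain minimality.

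\textbf{Termination and expected difficulty.} Each operation either decreases the number of vertices or darts, or adds darts bounded by the fixed degree upper bounds at completed vertices. The original termination measure therefore still applies, and the branching tree is finite. I expect the main obstacle to be showing that each final output is genuinely a pseudo-triangulation with digons, in particular that (M5) and (M6) hold after the digon branch. Merging $\tail(e_\textsf{last})$ into $\tail(e_\textsf{first})$ may cascade into further forced identifications, and we must check that \textsf{linkIncidenceListEnds} yields a single incidence list rather than two disjoint lists at the merged vertex. My first attempt would be to argue that, because $\eufirst$ and $\ewlast$ are respectively the first and last darts of acyclic lists, linking them concatenates the two lists into one. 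If the merged list is cyclic, its degree is then checked against the updated degree range and the procedure re-enters the standard completion loop.
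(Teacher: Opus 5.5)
Your proposal is correct and takes the same route as the paper. The paper likewise just invokes the correctness proof of \cite{inoue2026four} and observes that the only modified step is boundary completion. For pseudo-embedding targets, that step merely links $e_\textsf{last}$ to $e_\textsf{first}$, which is forced because the image of $v$ has degree exactly $d(v)$, and it forces nothing else.

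Your analysis of the \textsf{addBoundaryDarts}/\textsf{identifyNeighbors} branches, including the minimality pruning, belongs to the companion Lemma \ref{lem:free-hom-tri-digon} and is not needed for this statement.
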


\begin{lem}
\label{lem:free-hom-tri-digon}
\showlabel{lem:free-hom-tri-digon}
    Given a pseudo-triangulation with digons $(Z, \delta^-, \delta^+)$ and a set of identification requests $S$,
    the modified algorithm as above computes a set $\mathcal{Z}^*$ of free homomorphic images of $(Z, \delta^-, \delta^+)$ respecting $S$, where the targets are restricted to pseudo-triangulations with digons.
\end{lem}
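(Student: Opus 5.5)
The plan is to transfer the correctness argument for the free-homomorphism algorithm of \cite{inoue2026four} to the digon setting. The only change to the algorithm is that the \textsf{addBoundaryDarts} step inside \textsf{fixSingleDegreeIssue} is replaced by the two-way \textsf{boundaryCompletions} routine. So the proof will reuse the invariant from \cite{inoue2026four} and re-verify it only at that replaced step.

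First, recall that invariant. At every stage, the algorithm maintains a finite collection of pairs $((Z^*,\delta^{*-},\delta^{*+}),\phi^*)$ with $\phi^*$ a homomorphism respecting the requests processed so far. The collection is \emph{covering}: for every $(Z,\delta)\in(Z,\delta^-,\delta^+)$ and every homomorphism $\phi'\colon(Z,\delta)\to(Z',\delta')$ respecting $S$, with $Z'$ a pseudo-triangulation with digons, some member admits a factorisation $\phi'=\phi^{*'}\circ\phi^*$. The invariant holds trivially at the start, with the single member $(Z,\mathrm{id})$.

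Second, check the invariant across each operation. For identification of darts, propagation along $\head$, $\reverse$, $\successor$, $\predecessor$, and intersection of degree ranges, the arguments of \cite{inoue2026four} never use that faces are triangles. They apply verbatim, since any $\phi'$ must already identify whatever these steps identify.

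The new case is the boundary completion at a vertex $v$ with $\delta^{*-}(v)=\delta^{*+}(v)=d(v)$. Let $\phi'$ factor through the current image via $\psi$. Because $\psi(v)$ has degree exactly $d(v)$ in $Z'$, the incidence list of $\psi(v)$ is cyclic and is exhausted by the images of the list of $v$. Hence $\successor(\psi(\elast))=\psi(\efirst)$. Apply (M5) to $\psi(\elast)$ in $Z'$:
\begin{itemize}
\item if case (i) holds (a triangle), then $\psi$ factors through the first completion, which adds the darts $f,g$;
\item if case (ii) holds (a digon), then $\tail(\psi(\elast))=\tail(\psi(\efirst))$ and $\predecessor(\reverse\psi(\elast))=\reverse\psi(\efirst)$, so $\psi$ factors through the branch produced by \textsf{identifyNeighbors}/\textsf{linkIncidenceListEnds}.
\end{itemize}
In the digon case the degree ranges intersect, because $\psi$ maps both tails to one vertex, and no loop arises, because $Z'$ is loopless. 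Therefore discarding \texttt{null} or looped branches loses no factorisation. For Lemma \ref{lem:free-hom} (pseudo-embedding targets) only the cyclic-list closing is forced, and that is exactly what the pseudo-embedding variant does.

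Third, establish termination and minimality. Each step either strictly decreases the number of vertices or boundary ends, or strictly shrinks a degree range inside a bounded range (the $\infty$-case is handled as in \cite{inoue2026four}), so branching is finite. At termination every member is a pseudo-triangulation with digons: completed lists satisfy (M6), and (M5) holds at each closed corner by construction of the two completions. Minimality is obtained by finally discarding any member that factors through another member, which keeps the set covering.

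The main obstacle I expect is that \textsf{linkIncidenceListEnds} merges $u$ into $w$. This merge can trigger new degree-forced completions or create conflicts elsewhere, and I must show that the re-entry into the main loop of \cite{inoue2026four} still respects the invariant and terminates. I would handle this by observing that the merge is just a special case of identifying vertices after the identification request $(\reverse(\elast),\predecessor^{-1}\ldots)$. This reduces it to the already-verified identification machinery.
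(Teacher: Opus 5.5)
Your proposal is correct and follows the paper's route. The paper simply invokes the correctness proof of \cite{inoue2026four}, adding the observation that the only altered step is the boundary completion, where $\efirst$ and $\elast$ may bound either a triangle or a digon in the target, which is exactly your (M5) case split. One point in your final paragraph is imprecise: the merge in \textsf{linkIncidenceListEnds} is not a dart identification, but you do not need that reduction, because your digon case already shows that every $\psi$ factors through the merged state, and the remaining steps of \cite{inoue2026four} preserve the covering invariant from any valid intermediate state.
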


\paragraph{Enforce single digon incidence}
By Lemma \ref{lem:triangulation}, every vertex in $G^*$ is incident to at most one digon.
Consider a pseudo-embedding $(Z, \delta^-, \delta^+)$ such that some vertex is incident to at least two digons, and a homomorphism from $(Z, \delta^-, \delta^+)$ to $G^*$ exists.
Since the images of two digons are also incident to the same vertex in $G^*$, these two digons in $Z$ must be mapped to the same digon in $G^*$.

We explain it in terms of darts.
Consider a dart $e_0$ with
$\successor(e_0) \neq \nil$ and $e_{1}=\reverse(\successor(e_0))$ with $\successor(e_1) \neq \nil$ and 
$e_{2}=\reverse(\successor(e_1))$.
If $e_0=e_2$, then $e_0,e_1$ bound a digon.
For a vertex $v$, assume that there exist two distinct such darts whose head is $v$, say $e_0, f_0$.
These darts must be mapped to the same dart in $G^*$ since $\head(e_0)=\head(f_0)$, so their images have the same head in $G^*$, and hence they are equal.
From this observation, a homomorphism from $(Z, \delta^-, \delta^+)$ to $G^*$ respects $\{(e_0, f_0)\}$.
In the following procedure, we convert $(Z, \delta^-, \delta^+)$ to a set $\mathcal{Z}^*$ of pseudo-embeddings with a homomorphism from $(Z, \delta^-, \delta^+)$ to it.
We ensure that for a pseudo-embedding in $\mathcal{Z}^*$, no vertex is incident to more than one digon.
We also ensure that for every $(Z, \delta) \in (Z, \delta^-, \delta^+)$, if there exists a homomorphism $\phi'$ from $(Z, \delta)$ to $G^*$, then there exists $((Z^*, \delta^{*-}, \delta^{*+}), \phi^*) \in \mathcal{Z}^*$ and $(Z^*, \delta^*) \in (Z^*, \delta^{*-}, \delta^{*+})$ such that $\phi' = \phi^{*'} \circ \phi^*$, where $\phi^{*'}$ is a homomorphism from $(Z^*, \delta^*)$ to $G^*$ and
$\mathcal{Z}^*$ is minimal with this property.
We call this set \emph{a set of free homomorphic images enforcing single digon incidence}.

We explain the process to compute this set.
The algorithm makes an empty queue, and pushes $((Z, \delta^-, \delta^+), \allowbreak \id{V(Z) \cup D(Z)})$, where $\id{V(Z) \cup D(Z)}$ is an identity map of vertices and darts in $Z$.
Each element in the queue consists of a pseudo-embedding with degree-range functions together with a homomorphism from the original $(Z, \delta^-,\delta^+)$ to it.
While the queue is not empty, pop one element $((\tilde{Z}, \tilde{\delta}^-, \tilde{\delta}^+), \tilde{\phi})$ from the queue.
If every vertex $v$ in $\tilde{Z}$ is incident to at most one digon, we push it into the final resulting set $\mathcal{Z}^*$.
Otherwise, some vertex $v$ in $\tilde{Z}$ is incident to at least two digons.
We choose two distinct darts $e_0, f_0$ whose head is $v$ as above.
The pseudocode corresponding to the algorithm to take two such darts is presented in Algorithm \ref{alg:two_digons_incident_with_same_vertex}.
We compute a set $\tilde{\mathcal{Z}}^*$ of free homomorphic images of $(\tilde{Z}, \tilde{\delta}^-, \tilde{\delta}^+)$ respecting $\{(e_0, f_0)\}$. 
For each $((\tilde{Z}^*, \tilde{\delta}^{*-}, \tilde{\delta}^{*+}), \tilde{\phi}^*) \in \tilde{\mathcal{Z}}^*$, we push $(\tilde{Z}^*, \tilde{\delta}^{*-}, \tilde{\delta}^{*+})$ paired with the homomorphism $\tilde{\phi}^* \circ \tilde{\phi}$ into the queue.
This algorithm finally terminates since every time we take free homomorphic images, the number of darts decreases.
The pseudocode of this overall algorithm is presented in Algorithm \ref{alg:enforce_single_digon_incidence}.

The following lemma follows from Lemma \ref{lem:free-hom} and the fact that two darts $e_0,f_0$ described above are mapped to the same dart.
\begin{lem}
\label{lem:pseud-embed-single-digon}
\showlabel{lem:pseud-embed-single-digon}
    Given a pseudo-embedding with degree-range functions $(Z, \delta^-, \delta^+)$, the above algorithm computes a set of free homomorphic images enforcing single digon incidence.
\end{lem}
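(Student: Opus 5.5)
We prove Lemma \ref{lem:pseud-embed-single-digon} by induction on the number of darts, following the queue-based procedure. There are three things to check: termination, soundness and completeness (every homomorphism to $G^*$ factors through some output), and minimality. The basic tool is Lemma \ref{lem:free-hom}, applied at each step to the pair of requests $\{(e_0,f_0)\}$ picked by Algorithm \ref{alg:two_digons_incident_with_same_vertex}.

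\textbf{Termination.} Take a popped element $((\tilde Z,\tilde\delta^-,\tilde\delta^+),\tilde\phi)$ whose underlying pseudo-embedding has a vertex incident to two digons. The chosen darts $e_0\neq f_0$ are distinct and must be identified. Every free homomorphic image respecting $\{(e_0,f_0)\}$ therefore has strictly fewer darts, because free homomorphisms are surjective onto their image and merge $e_0$ with $f_0$. So the dart count is a strictly decreasing nonnegative potential. The branching is finite, since each call of Lemma \ref{lem:free-hom} returns a finite set. Hence the queue empties after finitely many steps. Every element placed in $\mathcal Z^*$ has, by the exit test, no vertex incident to more than one digon.

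\textbf{Completeness.} We maintain the invariant that for every $(Z,\delta)\in(Z,\delta^-,\delta^+)$ and every homomorphism $\phi'\colon(Z,\delta)\to G^*$, some element currently in the queue or in $\mathcal Z^*$ admits a factorisation $\phi'=\psi\circ\tilde\phi$, with $\psi$ a homomorphism from some degree-instance of $\tilde Z$ to $G^*$.

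Initially the invariant holds with $\tilde\phi$ the identity. Now suppose the element popped carries such a $\psi$ and has two digons at a vertex $v$, bounded by darts $e_0$ and $f_0$. Homomorphisms preserve $\head$, $\reverse$ and non-$\nil$ $\successor$. Hence $\psi(e_0)$ and $\psi(f_0)$ both bound digons in $G^*$ with head $\psi(v)$. By Lemma \ref{lem:triangulation}(1), $\psi(v)$ is incident to at most one digon, so the two images are the same digon.

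We still have to see that they are the same dart, not the two darts of that digon entering $\psi(v)$. This is the point needing care. The darts $e_0$ and $f_0$ are selected in the same orientation, namely the dart $e$ whose successor leads around the digon back to $e$. Under a homomorphism this orientation is preserved, and each digon has a unique such dart at each endpoint. So $\psi(e_0)=\psi(f_0)$, and $\psi$ respects $\{(e_0,f_0)\}$.

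By Lemma \ref{lem:free-hom}, $\psi$ factors as $\psi'\circ\tilde\phi^*$ for some member of $\tilde{\mathcal Z}^*$. The element pushed, with homomorphism $\tilde\phi^*\circ\tilde\phi$, then satisfies the invariant. At termination the queue is empty, so the invariant gives the required factorisation through $\mathcal Z^*$.

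\textbf{Minimality.} This inherits from the minimality of each $\tilde{\mathcal Z}^*$ in Lemma \ref{lem:free-hom}, after discarding, if necessary, outputs through which no homomorphism to $G^*$ factors.

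The step I expect to be the main obstacle is the orientation argument that forces $\psi(e_0)=\psi(f_0)$ rather than merely equal underlying edges. I would handle it by fixing, in Algorithm \ref{alg:two_digons_incident_with_same_vertex}, that both darts satisfy the (M5)(ii) pattern starting from themselves. Then uniqueness of that dart per digon-endpoint in $G^*$ (which has no loops) closes the argument.
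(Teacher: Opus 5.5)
Your proposal is correct and follows the paper's argument. Termination holds because, in the pseudo-embedding version, boundary completion only links incidence lists and adds no darts, so each free homomorphic image is a quotient in which $e_0$ and $f_0$ are merged; completeness holds because every homomorphism to $G^*$ must send $e_0,f_0$ to the unique digon-starting dart at the image vertex, so Lemma~\ref{lem:free-hom} applies, and your orientation argument spells out what the paper compresses into one line. The only soft spot is minimality, which does not simply ``inherit'' across branches of the queue, but the paper does not argue it either, and only the covering property is used later.
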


A set of free homomorphic images enforcing single digon incidence can also be considered for pseudo-triangulations with digons.
The algorithm to compute this set is the same as that for pseudo-embeddings, except that whenever the procedure calls the subroutine for free homomorphic images, we use the version for pseudo-triangulations with digons instead of the version for pseudo-embeddings.
The following lemma similarly follows from Lemma \ref{lem:free-hom-tri-digon}.
\begin{lem}
\label{lem:pseud-tri-single-digon}
\showlabel{lem:pseud-tri-single-digon}
    Given a pseudo-triangulation with digons with degree-range functions $(Z, \delta^-, \delta^+)$, the above algorithm computes a set of free homomorphic images enforcing single digon incidence.
\end{lem}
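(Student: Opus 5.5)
The proof reduces Lemma \ref{lem:pseud-tri-single-digon} to Lemma \ref{lem:free-hom-tri-digon}, exactly as Lemma \ref{lem:pseud-embed-single-digon} reduces to Lemma \ref{lem:free-hom}. We analyse the queue-based procedure and check three things: it terminates, every output has no vertex incident with two digons, and the output set has the required universality and minimality with respect to homomorphisms into $G^*$. The targets are now restricted to pseudo-triangulations with digons; this is harmless because $G^*$ itself is one.

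\textbf{Termination and the output property.} A pair is pushed into $\mathcal{Z}^*$ only when the test ``every vertex is incident with at most one digon'' succeeds, so every element of $\mathcal{Z}^*$ has this property by construction. For termination, each popped element that fails the test is replaced by free homomorphic images respecting $\{(e_0,f_0)\}$ with $e_0\neq f_0$. Any homomorphism respecting this request identifies two distinct darts, so the image has strictly fewer darts. The boundary completion in the digon version adds darts only in its first branch, namely the \textsf{addBoundaryDarts} branch, and those darts are then glued. I would argue, as in \cite{inoue2026four}, that the net number of darts still strictly decreases, or else bound the number of darts by the fixed total degree $\sum \delta^{+}$ restricted to finite-degree vertices. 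Together with finite branching (Lemma \ref{lem:free-hom-tri-digon} returns finite sets), König's lemma then gives termination.

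\textbf{Universality.} Fix $(Z,\delta)\in(Z,\delta^-,\delta^+)$ and a homomorphism $\phi'\colon (Z,\delta)\to G^*$. I would prove by induction along the queue the invariant that some element $((\tilde Z,\tilde\delta^\pm),\tilde\phi)$ in the queue or output admits $(\tilde Z,\tilde\delta)$ and $\tilde\phi'\colon(\tilde Z,\tilde\delta)\to G^*$ with $\phi'=\tilde\phi'\circ\tilde\phi$. Initially this holds with the identity map. Suppose such an element is popped and some vertex $v$ of $\tilde Z$ carries two digon darts $e_0,f_0$, chosen by Algorithm \ref{alg:two_digons_incident_with_same_vertex}. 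Homomorphisms preserve $\successor$ and $\reverse$ wherever they are non-\nil, so $\tilde\phi'(e_0)$ and $\tilde\phi'(f_0)$ are both darts with head $\tilde\phi'(v)$ that bound digons in $G^*$. By Lemma \ref{lem:triangulation}(1), $\tilde\phi'(v)$ is incident with at most one digon. Moreover, the digon dart with a given head is unique: the two darts of that digon with head $\tilde\phi'(v)$ are distinguished by the $\successor$ orientation in the (M5)(ii) condition. Hence $\tilde\phi'(e_0)=\tilde\phi'(f_0)$, and $\tilde\phi'$ respects $\{(e_0,f_0)\}$. Lemma \ref{lem:free-hom-tri-digon} then supplies a pushed element through which $\tilde\phi'$ factors, and the invariant persists. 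This includes the correct degree function, since degrees in the image are intersections of ranges containing the degree in $G^*$. At termination the queue is empty, so the factorization is realized by an element of $\mathcal{Z}^*$.

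\textbf{Minimality and the main obstacle.} If minimality is intended in the sense of the definition, I would discard redundant outputs, or inherit minimality from the minimality in Lemma \ref{lem:free-hom-tri-digon}. I expect the hardest point to be the symmetry step: justifying that $e_0,f_0$ as chosen really must have equal images rather than possibly mapping to the two \emph{opposite} darts of the digon. I would settle this by checking that Algorithm \ref{alg:two_digons_incident_with_same_vertex} always selects, at each digon, the dart $e_0$ satisfying $e_2=e_0$ whose successor leads into the digon, so the orientation is canonical. A secondary point needing care is the dart count in the termination argument under the triangle-completion branch.
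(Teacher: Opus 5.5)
Your proposal is correct and follows the paper's route. The paper obtains this lemma directly from Lemma~\ref{lem:free-hom-tri-digon}, together with the observation that every vertex of $G^*$ is incident with at most one digon, so the two digon darts $e_0,f_0$ with a common head must have the same image and every homomorphism into $G^*$ respects $\{(e_0,f_0)\}$; your queue invariant and your uniqueness of the digon dart at a vertex simply make this explicit.

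On termination, your $\sum\delta^+$ fallback does not bound darts between tail-range boundary vertices. A cleaner measure is the pair (number of boundary vertices, number of darts) in lexicographic order. Darts are added only by a boundary completion, which turns a boundary vertex into an inner one, and no step creates boundary vertices, so this pair strictly decreases.
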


We often compute a set of free homomorphic images respecting a set of identification requests $S$, and after that, for each element in this set, compute free homomorphic images enforcing single digon incidence, and take the union of them.
We call the resulting set \emph{a set of free homomorphic images respecting $S$ and enforcing single digon incidence}.
The pseudocode corresponding to the algorithm to compute this set is presented in Algorithm \ref{alg:free_hom_single_digon}.
We can also compute a set of free homomorphic images enforcing single digon incidence of a free combination of discharging rules.
An element in this set together with the image of the distinguished dart and the same amount of charge is regarded as a discharging rule.
We call it \emph{a combined rule enforcing single digon incidence} or \emph{a free combination of discharging rules enforcing single digon incidence}.

\paragraph{Blocked by configurations}
To show Lemma \ref{lem:send5}, we want to check whether, for some configuration $K \in \mathcal{K}$, there exists a homomorphism from $K$ to a combined rule $R^*$.

Then, we say a pseudo-embedding with degree-range functions $(Z, \delta^-, \delta^+)$ is \emph{blocked by configuration in $\mathcal{K}$} if for every $(Z, \delta) \in (Z, \delta^{-}, \delta^{+})$, there exists a homomorphism from some configuration $K \in \mathcal{K}$ to $(Z, \delta)$.

We use the same algorithm in \cite{inoue2026four} to check it.
See Section 10 in \cite{inoue2026four}.
However, handling a cut-vertex in a configuration is different from that, so we only explain it here.

Recall that all configurations in $\mathcal{K}$ are normal, see Definition \ref{dfn:normal}.
For a normal configuration, if a cut-vertex of $K$ exists, it is incident to two outer edges in its outer extension $\widehat K$.
For each outer edge, we consider the plane graph $K$ with this outer edge and its outer endpoints.
This graph is represented by pseudo-embeddings, as follows.
From a pseudo-embedding of the outer extension $\widehat K$, delete outer edges and outer endpoints except the chosen outer edge and outer endpoint.
We make a vertex in $K$ a boundary vertex if it is incident to a deleted outer edge.
Specifically, if the $\successor, \predecessor$ pointer for some dart points to a dart corresponding to a deleted outer edge, we reassign \nil to this pointer.
Since we leave one chosen outer edge that is incident to a cut-vertex, this dart representation satisfies (M6), so it is a pseudo-embedding.
Then, we have two pseudo-embeddings from a normal configuration $K$ with a cut-vertex.
To check whether a homomorphism from $K$ exists, we check whether a homomorphism from either of these two pseudo-embeddings exists.

We also check the mirror images of configurations in $\mathcal{K}$.

\paragraph{Free combinations of discharging rules enforcing single digon incidence, which are not blocked by $\mathcal{K}$}
We implemented the above algorithms to compute a set of free homomorphic images respecting the identification requests and enforcing single digon incidence.
We consider all subsets of $\mathcal{R}$, and compute a set of all free combinations of discharging rules of this set, enforcing single digon incidence.
Let $\mathcal{R}^*$ denote this set.
Note that $\mathcal{R}^*$ contains a trivial rule with charge zero.
Moreover, we remove a combined rule in $\mathcal{R}^*$ when it is blocked by configurations in $\mathcal{K}$.
Let $\cRstarnoK$ denote this set.
The algorithm to compute it is presented in Algorithm A.8.2 in \cite{inoue2026four}, but inside this routine, we use Algorithm \ref{alg:free_hom_single_digon} for pseudo-triangulation with digons instead of the routine to compute a set of free homomorphic images of pseudo-triangulation.
We check that every combined rule $R^* \in \cRstarnoK$ has the amount of charge $r(R^*) \leq 5$, see Lemma \ref{comp:lem:combined_rules}.
This shows Lemma \ref{lem:send5}.

Note that the algorithm in this section does not use the fact that $G^*$ has at most 3 digons, so the algorithm is generally applicable to triangulations with digons, but with the constraint that every vertex is incident to at most one digon.

\subsection{The final charge of vertices with degrees between 7 and 11}\label{sect:cartwheel}\showlabel{sect:cartwheel}

In this subsection, we show Lemma \ref{lem:deg7-11}.
We describe the algorithm for checking the lemma.
The algorithm searches for a possible structure of $B_2(v)$ such that $T(v) > c(v)$, i.e., the final charge exceeds the value depending on digons, but a homomorphism from any configuration $K \in \mathcal{K}$ does not exist.
If the algorithm concludes that no such structure exists, then Lemma \ref{lem:deg7-11} follows.
The strategy of the algorithm is the same as the one used in Section 11.2 in \cite{inoue2026four}.
However, there are two obstructions to apply that algorithm directly, and therefore, we need a modified algorithm.
We first explain these two obstructions.

The first is that $G^*$ lacks sufficient connectivity.
As we described in Section \ref{subsect:sketch}, a minimal counterexample to 4CT is an internally 6-connected triangulation, but $G^*$ is not.
The ball $B_2(v)$ for a vertex $v$ is ``well-behaved" for a minimal counterexample to 4CT, i.e., this ball consists of $v$ and two cycles $C_1, C_2$ and edges between them.
However, in our case, $B_2(v)$ in $G^*$ is not like that.

The second is that the structure of $B_2(v)$ varies.
For a minimal counterexample to 4CT, all faces are triangles.
However, $G^*$ has digons, so the faces of $B_2(v)$ in $G^*$ may contain digons and triangles.
Thus, when constructing $B_2(v)$, there are exponentially many structures of faces compared to those for a minimal counterexample to 4CT.

These two obstructions prevent us from preconstructing all possible local structures that determine the final charge.
To overcome this difficulty, our algorithm constructs the structure of $B_2(v)$ while simultaneously computing the final charge of $v$.

\subsubsection{Wheels and cartwheels}

\paragraph{Wheel}
\emph{A wheel of center degree $d$} is a plane graph obtained from a cycle $v_1v_2\ldots v_dv_1$ by adding a \emph{center vertex} $v$ inside the cycle and joining $v$ to every $v_i$.
The vertices $v_1, \ldots, v_d$ appear in this clockwise rotation around $v$.
\emph{A wheel of center degree $d$ with a digon} is obtained from a wheel of center degree $d-1$ by replacing one edge $vv_1$ with a pair of multiple edges bounding a digon.
Thus, the center vertex has degree $d$.
A wheel of center degree $d$ possibly with a digon means either a wheel of center degree $d$ or a wheel of center degree $d$ with a digon.
A wheel possibly with a digon is a near-triangulation possibly with a digon, so it is
represented as a pseudo-triangulation with digons, denoted by $Z_W$.
Specifically, an edge is converted to two darts that are reverse to each other, and \successor/\predecessor are assigned from the embedding, but \nil is assigned if the face between the successor/predecessor edge is the outer face of $W$.
$Z_W$ is equipped with degree-range functions defined by $\delta_W^-(v)=\delta_W^+(v)=d$ and $\delta_W^-(v_i)=5, \delta_W^+(v_i)=\infty$.

\begin{claim}
\label{claim:hom-wheel}
\showlabel{claim:hom-wheel}
    Let $v$ be a vertex of degree $d$ in $G^*$.
    Let $(Z_W, \delta_W^-, \delta_W^+)$ be a wheel of center degree $d$ if $v$ is not incident to a digon; otherwise, a wheel of center degree $d$ with a digon incident to $ v$.
    Then, a homomorphism from some $(Z_W, \delta_W) \in (Z_W, \delta_W^-, \delta_W^+)$ to $G^*$ such that the center vertex is mapped to $v$ exists.
\end{claim}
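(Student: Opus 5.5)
We build the homomorphism directly from the rotation at $v$ in $G^*$, by specifying the image of one dart and checking that the pointer conditions propagate consistently. Let $f_1,\dots,f_d$ be the darts of $G^*$ with head $v$, in the clockwise order given by the incidence list of $v$; this list is cyclic because $G^*$ is a triangulation with digons and every vertex is inner. Set $u_i=\tail(f_i)$.

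\emph{Case where $v$ is not incident to a digon.} Every face at $v$ is a triangle. By (M5), for each $i$ the darts $f_i$, $\reverse(\successor(f_i))$, \dots\ close up after three steps. Hence the face between $f_i$ and $f_{i+1}$ is a triangle $v u_i u_{i+1}$, and $u_i u_{i+1}$ is an edge of $G^*$. We map the center of $Z_W$ to $v$, the spoke dart into $v$ from $v_i$ to $f_i$, its reverse to $\reverse(f_i)$, and the rim darts to the darts of the triangle sides $u_iu_{i+1}$ determined by (M5). We then check the four homomorphism conditions dart by dart.
\begin{itemize}
\item \textbf{Heads and reverses} hold by construction.
\item \textbf{$\successor$ at the center} holds because both incidence lists are cyclic in the same clockwise order.
\item \textbf{$\successor$ and $\predecessor$ at a rim vertex $v_i$.} In $Z_W$, $v_i$ is a boundary vertex whose list has exactly three darts: from $v_{i-1}$, from $v$, and from $v_{i+1}$, in clockwise order. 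The non-$\nil$ pointers are exactly the two relations coming from the two triangular faces at $v_i$. The triangle condition (M5) in $G^*$ gives the same consecutive relations among the corresponding darts at $u_i$.
\end{itemize}
No injectivity is needed, since the $u_i$ may coincide; this is exactly why homomorphisms rather than embeddings are used. Finally, define $\delta_W(v)=d$ and $\delta_W(v_i)=d_{G^*}(u_i)$. This lies in $[5,\infty)$ by Lemma \ref{lem:triangulation}(2), so $(Z_W,\delta_W)\in(Z_W,\delta_W^-,\delta_W^+)$, and degrees are preserved.

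\emph{Case where $v$ is incident to a digon.} By Lemma \ref{lem:triangulation}(1), $v$ is incident to exactly one digon. Relabel so that the digon lies between $f_d$ and $f_1$. Then $u_d=u_1$ by case (ii) of (M5), and every other face at $v$ is a triangle. The wheel of center degree $d$ with a digon has the same local pattern at its center: $d$ darts, one consecutive pair bounding a digon with $v_1$, and triangles elsewhere. We use the same assignment as in the first case. At the pair bounding the digon, the successor relation in $Z_W$ corresponds to the digon relation in $G^*$, which holds by (M5)(ii). At $v_1$, which carries both parallel edges, the incidence list in $Z_W$ consists of the rim dart from $v_2$, the two parallel darts, and the rim dart from $v_{d-1}$, with the digon relation between the parallel pair. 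These relations match those at $u_1$ in $G^*$, given by the triangles on either side and the digon in the middle.

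\emph{Expected obstacle.} The only delicate point is bookkeeping at boundary vertices of $Z_W$, since $\nil$ pointers impose no constraint. One must check that every non-$\nil$ pointer of $Z_W$ corresponds to a face of the wheel, and that every such face is realized by a face of $G^*$ at $v$. That correspondence is immediate from (M5) once the rotation at $v$ is fixed. Alternatively, the whole argument follows from the remark after the definition of homomorphism: $Z_W$ is connected and satisfies (M6), so fixing one dart's image determines the homomorphism, and the only thing to verify is that the propagated assignment is consistent, which is the face-by-face check above.
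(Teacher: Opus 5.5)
Your proposal is correct and follows the same route as the paper. The paper simply observes that every face at $v$ is a triangle or the single digon allowed by Lemma \ref{lem:triangulation}, that all neighbors have degree at least five, and that \nil-pointers impose no constraint, so parallel edges between consecutive neighbors of $v$ cause no obstruction; your face-by-face check via (M5) is this argument written out, with one harmless slip: since $v_1,\dots,v_d$ are clockwise around the center, the clockwise incidence list at a rim vertex $v_i$ is from $v_{i+1}$, then from $v$, then from $v_{i-1}$ (the reverse of what you wrote), which does not matter because you match the pointer relations face by face.
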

\begin{proof}
    By Lemma \ref{lem:triangulation}, all faces that are incident to $v$ are triangles or digons, but $v$ is not incident to more than one digon.
    Since the minimum degree of $G^*$ is five, each neighbor of $v$ has the degree of at least five.
    Thus, the claim holds.
    Note that there may be a pair of parallel edges between two neighbors of $v$ bounding a digon, but this does not obstruct the homomorphism because homomorphisms are not required to preserve \nil-pointers.
\end{proof}

We refine the degree ranges of non-center vertices $v_i$ of a wheel possibly with a digon.
Especially, we refine the range $[\delta_W^-(v_i), \delta_W^+(v_i)] = [5, \infty]$ to one of the ranges $[5,5]$, $[6,6]$, $[7,7]$, $[8,8]$, or $[9,\infty]$.
We consider all combinations of these degree ranges for $v_1, \ldots, v_m$, with $m=d$ or $m=d-1$ depending on whether $v$ is incident to a digon.
Let $\mathcal{W}_d$ be the set of pseudo-triangulations with digons with degree-range functions representing all wheels with center of degree $d$ possibly with a digon obtained in this way.
By Claim \ref{claim:hom-wheel}, for a vertex of degree $d$ in $G^*$, we know that there exists $(Z_W, \delta_W) \in (Z_W, \delta^-_W, \delta^+_W)$ such that a homomorphism from $(Z_W, \delta_W)$ to $G^*$ where the center vertex is mapped to $v$ exists.

The pseudocode for the algorithm that constructs such a set of all the wheels with the center of degree $d$ is presented in Algorithm \ref{alg:enum_wheels}.
The pseudocode for the algorithm that constructs such a set of all the wheels with the center of degree $d$ with a digon is presented in Algorithm \ref{alg:enum_digon_incident_wheels}.
By calling both of them, we can get $\mathcal{W}_d$.

\paragraph{Cartwheel}
A wheel only contains the neighbors of the center vertex.
We will extend it by adding vertices and darts that represent a part of the second-neighborhood structure of the center vertex, but we do not necessarily add all second neighbors.
We do not add third neighbors of the center vertex.
We call such a pseudo-triangulation with digons with degree-range functions \emph{a cartwheel}.
During the algorithm, we ensure that for every vertex $u$ of a cartwheel, the degree range is either fixed (i.e., $\delta^-(u)=\delta^+(u)$) or $\delta^+(u)=\infty$.
The latter type of range is called \emph{tail range}.
We also ensure that every vertex $u$ is incident to at most one digon.
Note that we also regard elements in $\mathcal{W}_d$ as cartwheels.

We distinguish darts whose head is the center vertex $v$ in a cartwheel.
Since the degree of the center vertex in a cartwheel in $\mathcal{W}_d$ is $d$, there are $d$ darts whose head is $v$.
They are denoted by $e_1, \ldots, e_d$ in the clockwise order.
All cartwheels we process in this algorithm are obtained from the disjoint union of some cartwheel in $\mathcal{W}_d$ and several discharging rules by taking a free homomorphic image.
We maintain the image of $e_1, \ldots, e_d$ together with a cartwheel.
By the requirement of homomorphisms with a degree function, different darts in $\{e_i\}_{1 \leq i \leq d}$ are mapped to different darts.
When we use $e_1,\ldots,e_d$ to designate darts in some cartwheel, they are the images of the original $e_1,\ldots,e_d$ in a cartwheel in $\mathcal{W}_d$.

A key perspective for understanding the algorithm is that, when we handle a cartwheel $(Z_C, \delta_C^-, \delta_C^+)$, we consider the final charge of a vertex $v$ in $G^*$ when for some $(Z_C, \delta_C) \in (Z_C, \delta_C^-, \delta_C^+)$, a homomorphism from $(Z_C, \delta_C)$ to $G^*$ exists and $v$ is the image of the center vertex.

\subsubsection{Charges along an edge and a bound on the final charge}
Let $R=(Z_R, \delta_R^-, \delta_R^+)$ be a discharging rule with a distinguished dart $e_R$, and $(Z_C, \delta_C^-, \delta_C^+)$ be a cartwheel.

A rule $R$ \emph{always applies to a dart $e_C$} in $(Z_C, \delta_C^-, \delta_C^+)$ if a homomorphism $\phi$ from $(Z_R, \delta_R^-, \delta_R^+)$ to $(Z_C, \delta_C^-, \delta_C^+)$ exists such that $\phi(e_R)=e_C$ and for every vertex $v_R$ in $Z_R$, $[\delta_C^-(\phi(v_R)), \delta_C^+(\phi(v_R))] \subseteq [\delta_R^-(v_R), \delta_R^+(v_R)]$.
This implies that for every $(Z_C, \delta_C) \in (Z_C, \delta_C^-, \delta_C^+)$, a rule $R$ applies to a dart $e_C$ in $(Z_C, \delta_C)$.
This means that if for some $(Z_C, \delta_C) \in (Z_C, \delta_C^-, \delta_C^+)$, a homomorphism $\phi_C$ from $(Z_C, \delta_C)$ to $G^*$ exists, then a rule $R$ applies to the dart $\phi_C(e_C)$ in $G^*$.
The pseudocode for the algorithm that checks whether a rule always applies to a dart is the same as Algorithm A.9.1 in \cite{inoue2026four}.

A rule $R$ \emph{never applies to a dart $e_C$} in $(Z_C, \delta_C^-, \delta_C^+)$ if a set of free homomorphic images of the disjoint union $(Z_C, \delta_C^-, \delta_C^+) \sqcup (Z_R, \delta_R^-, \delta_R^+)$ respecting $\{(e_R, e_C)\}$ and enforcing single digon incidence is an empty set.
This implies that, for every $(Z_C, \delta_C) \in (Z_C, \delta_C^-, \delta_C^+)$ and every $(Z_R, \delta_R) \in (Z_R, \delta_R^-, \delta_R^+)$, a homomorphism $\phi_R$ from $(Z_R, \delta_R)$ to $G^*$ and a homomorphism $\phi_C$ from $(Z_C, \delta_C)$ to $G^*$ such that $\phi_R(e_R)=\phi_C(e_c)$ do not exist simultaneously.
This means that if for some $(Z_C, \delta_C) \in (Z_C, \delta_C^-, \delta_C^+)$, a homomorphism $\phi_C$ from $(Z_C, \delta_C)$ to $G^*$ exists, then a rule $R$ does not apply to the dart $\phi_C(e_C)$ in $G^*$.
Algorithm \ref{alg:never_apply} presents the pseudocode for checking whether a rule never applies to a dart.

A rule $R$ \emph{sometimes applies to a dart $e_C$} in $(Z_C, \delta_C^-, \delta_C^+)$ if the set of free homomorphic images used in the preceding definition of ``never applies'' is nonempty; equivalently, $R$ does not never apply to a dart $e_C$.

\paragraph{An upper bound of the final charge}
Recall that the set of combined rules $\cRstar$ and $\cRstarnoK$ is constructed in Section \ref{subsect:free-homomorphism}.
Every combined rule $R^*$ in $\cRstar \setminus \cRstarnoK$ is blocked by $\mathcal{K}$.
If such a combined rule $R^*$ always applies to a dart in a cartwheel, then a homomorphism from some configuration $K \in \mathcal{K}$ to this cartwheel exists.
As we only consider cartwheels such that there does not exist a homomorphism from any configuration in $\mathcal{K}$ to it, we have only to consider combined rules in $\cRstarnoK$, not $\cRstar$.

For a cartwheel $(Z_C, \delta_C^-, \delta_C^+)$ of center degree $d$, consider that, for some $(Z_C, \delta_C) \in (Z_C, \delta_C^-, \delta_C^+)$, a homomorphism $\phi$ from $(Z_C, \delta_C)$ to $G^*$ exists.
Let $v \in V(G^*)$ be the image of the center vertex.
We aim to calculate an upper bound of the final charge of $v$ in this situation.

We explain how to calculate an upper bound of the final charge.
We consider darts $e_1, \ldots, e_d$ whose heads are the center vertices of a cartwheel.
For each $1 \leq j \leq d$, we calculate an upper bound of the charge transferred along $\phi(e_j)$.
We compute the maximum value of $r(R^*)$ for all combined rules $R^* \in \cRstarnoK$ such that $R^*$ sometimes applies to $e_j$ in $(Z_C, \delta_C^-, \delta_C^+)$.
We denote this value by $r_j$.
We also calculate a lower bound of the charge transferred along $\phi(\reverse(e_j))$.
We compute the sum of $r(R)$ for all rules $R \in \mathcal{R}$ such that $R$ always applies to $\reverse(e_j)$ in $(Z_C, \delta_C^-, \delta_C^+)$.
We denote this value by $s_j$.
Then, combining the initial charge $10 \cdot (6 - d)$, we calculate $10 \cdot (6-d) + \sum_{1 \leq j \leq d} r_j - \sum_{1 \leq j \leq d} s_j$ as an upper bound of the final charge.

\begin{lem}
\label{lem:upperbound-charge}
\showlabel{lem:upperbound-charge}
    For a cartwheel $(Z_C, \delta_C^-, \delta_C^+)$, assume that for some $(Z_C, \delta_C) \in (Z_C, \delta_C^-, \delta_C^+)$, a homomorphism from $(Z_C, \delta_C)$ to $G^*$ exists.
    Let $v \in V(G^*)$ be the image of the center vertex.
    If there does not exist a homomorphism from any configuration in $\mathcal{K}$ to $B_2(v)$, then the final charge of $v$ is at most $10 \cdot (6-d) + \sum_{1 \leq j \leq d} r_j - \sum_{1 \leq j \leq d} s_j$, calculated as above.
\end{lem}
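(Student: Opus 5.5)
The plan is to split the final charge of $v$ into the inflow and outflow along its incident edges and bound each dart separately. By the definition of $T$, we have $T(v)=10(6-d)+\sum_{j=1}^{d}\bigl(r(\phi(e_j))-r(\phi(\reverse(e_j)))\bigr)$. The homomorphism $\phi$ from $(Z_C,\delta_C)$ to $G^*$ preserves degrees. The center has degree $d$ in the cartwheel, and distinct $e_j$ map to distinct darts. Hence the darts of $G^*$ with head $v$ are exactly $\phi(e_1),\ldots,\phi(e_d)$, so the sum runs over all edges at $v$. It therefore suffices to prove $r(\phi(e_j))\le r_j$ and $r(\phi(\reverse(e_j)))\ge s_j$ for each $j$.

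\textbf{Lower bound $s_j$.} Let $R\in\mathcal{R}$ always apply to $\reverse(e_j)$ in the cartwheel, witnessed by a homomorphism $\psi$ with degree-range containment. Composing with $\phi$ gives a homomorphism from $R$ to $G^*$ sending $e_R$ to $\phi(\reverse(e_j))=\reverse(\phi(e_j))$. For each vertex $u$ of $R$ we have $d_{G^*}(\phi(\psi(u)))=\delta_C(\psi(u))\in[\delta_C^-(\psi(u)),\delta_C^+(\psi(u))]\subseteq[\delta_R^-(u),\delta_R^+(u)]$. So $R\in\mathcal{R}_{\reverse(\phi(e_j))}$. Summing over such rules, and using that charges are nonnegative, gives $r(\reverse(\phi(e_j)))\ge s_j$.

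\textbf{Upper bound $r_j$.} Let $\mathcal{R}_{\phi(e_j)}=\{R_1,\ldots,R_k\}$ be the rules applying to $\phi(e_j)$, with homomorphisms $\phi_i$ into $G^*$. Their free combination respecting the distinguished darts has some image $R^*$ through which all $\phi_i$ factor. After enforcing single digon incidence, which is valid since every vertex of $G^*$ meets at most one digon by Lemma~\ref{lem:triangulation}, we get some $R^*\in\cRstar$ with a homomorphism $\phi^*:R^*\to G^*$ sending its distinguished dart to $\phi(e_j)$ and $r(R^*)=r(\phi(e_j))$. (If $k=0$ we use the trivial rule.) This step relies on Lemmas~\ref{lem:free-hom-tri-digon} and~\ref{lem:pseud-tri-single-digon}.

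Next we need $R^*\in\cRstarnoK$. Every vertex of a rule lies within distance $2$ of $t(R)$, and $\phi(e_j)$ has head $v$. Hence the image of $R^*$ lies in $B_2(v)$. If $R^*$ were blocked by $\mathcal{K}$, a configuration would map into $R^*$ with its realized degrees, and composing would give a homomorphism from some $K\in\mathcal{K}$ to $B_2(v)$, contrary to hypothesis.

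Finally we need $R^*$ to sometimes apply to $e_j$. The homomorphisms $\phi^*$ and $\phi$ agree on the identified darts. So the disjoint union of $(Z_C,\delta_C)$ and $R^*$ maps to the pseudo-triangulation with digons $G^*$ while respecting $\{(e_{R^*},e_j)\}$. By the nonemptiness remark, together with single-digon enforcement, the set of free homomorphic images is nonempty. Thus $r(\phi(e_j))=r(R^*)\le r_j$. Summing both bounds over $j$ proves the lemma.

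The main obstacle is this last step: certifying that a concrete witness $G^*$ makes the free-image set nonempty. This requires that $B_2(v)$, or $G^*$ itself, is a legitimate target pseudo-triangulation with digons with exact degrees lying in the ranges of both objects. It also requires that the blocking notion used for $\cRstar\setminus\cRstarnoK$, quantified over all degree realizations $(Z,\delta)$, applies to the particular realization induced by $G^*$. I would verify carefully that the degree-range intersections in the free construction contain the actual degrees in $G^*$.
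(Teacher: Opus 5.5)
Your proposal is correct and follows essentially the same argument as the paper: you split $T(v)$ dart by dart, bound the outflow from below by $s_j$ via rules that always apply to $\reverse(e_j)$ composed with $\phi$, and bound the inflow by $r_j$ via the combined rule of the rules applying to $\phi(e_j)$, which lies in $\mathcal{R}^{*-\mathcal{K}}$ by the $B_2(v)$ hypothesis and sometimes applies to $e_j$ because $G^*$ witnesses a simultaneous homomorphism. You spell out details the paper leaves implicit (that $\phi(e_1),\ldots,\phi(e_d)$ exhaust the darts at $v$, the distance-$2$ containment, the empty-rule case), and the ``obstacle'' you flag is already handled by the paper's conventions, namely that $G^*$ is a pseudo-triangulation with digons with exact degree ranges and that blocking is quantified over all degree realizations.
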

\begin{proof}
    Let $\phi$ be a homomorphism from $(Z_C, \delta_C)$ to $G^*$.
    Let $\mathcal{R}_j$ be a set of rules applied to $\phi(e_j)$.
    Then, some combined rule $R^*_j$ obtained from $\mathcal{R}_j$ applies to $\phi(e_j)$ in $G^*$.
    $R^*_j$ belongs to $\cRstar$.
    Moreover, $R^*_j$ belongs to $\cRstarnoK$ since there does not exist a homomorphism from any configuration in $\mathcal{K}$ to $B_2(v)$.
    $R^*_j$ sometimes applies to $e_j$ in $(Z_C, \delta_C^-, \delta_C^+)$ since otherwise, $R^*_j$ never applies to $e_j$, which contradicts that $R^*_j$ applies to $\phi(e_j)$ in $G^*$.
    Then, $r_j$ is at least the discharge of $R^*_j$ i.e., $r(R^*_j)$.

    Let $R$ be a rule that always applies to $\reverse(e_j)$ in $(Z_C, \delta_C^-, \delta_C^+)$.
    Then, $R$ also applies to $\phi(\reverse(e_j))$ in $G^*$.
    Thus, the actual charge sent along $\phi(\reverse(e_j))$ is at least $s_j$.
    Therefore, the claim holds.
\end{proof}

The pseudocode for the algorithm that computes $s_j, r_j$, and the upper bound of the final charge is the same as Algorithm A.9.3, A.9.4, A.9.13 in \cite{inoue2026four} respectively, except that the \neverApply routine called inside these algorithms is replaced with the \neverApply routine in this paper, which is Algorithm \ref{alg:never_apply}.

\paragraph{A lower bound of charges given to digons}
For a cartwheel $(Z_C, \delta_C^-, \delta_C^+)$ of center degree $d$, consider that, for some $(Z_C, \delta_C) \in (Z_C, \delta_C^-, \delta_C^+)$, a homomorphism $\phi$ from $(Z_C, \delta_C)$ to $G^*$ exists.
Let $v \in V(G^*)$ be the image of the center vertex.
We aim to compute a lower bound on $c(v)$ in Lemma \ref{lem:deg7-11}.

For each digon in $(Z_C, \delta_C^-, \delta_C^+)$, we consider endpoints $u_1, u_2$ of this digon.
We compute the distances from the center vertex $v$ to $u_1,u_2$.
We assume w.l.o.g., that $\textsf{dist}(u_1, v) \leq \textsf{dist}(u_2, v)$ in $Z_C$.
Depending on these distances, we consider one case from \ref{constaint-(i)},  \ref{constaint-(ii)}, \ref{constaint-(iii)}, or \ref{constaint-(iv)}.
We check whether the degree ranges specified in this case include the degree ranges $[\delta_C^-(u_1), \delta_C^+(u_1)]$, $[\delta_C^-(u_2), \delta_C^+(u_2)]$.
If so, we add the specified value for this case.
For example, if $\textsf{dist}(u_1, v)=1$, $\textsf{dist}(u_2, v)=2$, $5 = \delta_C^-(u_1) = \delta_C^+(u_1) = 5$ and $6 \leq \delta_C^-(u_2) \leq \delta_C^+(u_2) \leq 7$, then it corresponds to one case in \ref{constaint-(iii)}, so add the value 2.
We compute the sum of these values for each digon, which is denoted by $c(Z_C)$.
The pseudocode for the algorithm to compute $c(Z_C)$ is presented in Algorithm \ref{alg:lower_bound_of_digon_charge}.
\begin{lem}
\label{lem:charge-given-to-digon}
\showlabel{lem:charge-given-to-digon}
    For a cartwheel $(Z_C, \delta_C^-, \delta_C^+)$, assume that for some $(Z_C, \delta_C) \in (Z_C, \delta_C^-, \delta_C^+)$, a homomorphism from $(Z_C, \delta_C)$ to $G^*$ exists.
    Let $v \in V(G^*)$ be the image of the center vertex.
    The value $c(v)$ is at least the calculated value $c(Z_C)$.
\end{lem}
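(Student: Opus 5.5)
The plan is to show that each digon counted in $c(Z_C)$ maps to a distinct digon of $G^*$ lying in $B_2(v)$. We then check that, for that image digon, the case among \ref{constaint-(i)}--\ref{constaint-(iv)} and the assigned constant are the same as those used when computing $c(Z_C)$. Since all constants are nonnegative, summing over the digons of $Z_C$ then gives a lower bound for $c(v)$.

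First I would verify that a homomorphism $\phi$ from $(Z_C,\delta_C)$ to $G^*$ maps digons to digons. A digon in $Z_C$ is given by darts $e_0,e_1$ with non-$\nil$ successors and $e_2=e_0$ in (M5). Because $\phi$ commutes with $\reverse$ and with non-$\nil$ $\successor$, the images satisfy the same relation, so $\phi(e_0),\phi(e_1)$ bound a digon of $G^*$ with endpoints $\phi(u_1),\phi(u_2)$.

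Next I would check injectivity on digons. Every digon of a cartwheel has an endpoint at distance at most $1$ from the center, since cartwheels contain only first and second neighbours, and digons are attached to the wheel. The darts $e_1,\dots,e_d$ map injectively into the darts at $v$, and the rotation around the center is preserved. Hence two distinct digons incident to or adjacent to the center stay distinct. If two digons of $Z_C$ had the same image, the homomorphism would identify a vertex incident to two digons in $Z_C$, or two distinct such local structures. Single digon incidence is enforced in every cartwheel, so this cannot occur without contradicting the rotation at the image of a neighbour of the center. I expect this step to be the main obstacle: it requires a careful argument that the homomorphism does not collapse distinct digons at distance $2$ onto the same digon of $G^*$. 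My first attempt would be to route it through the injectivity of the darts around each fixed-degree neighbour $v_i$, whose incidence list is complete in the cartwheel because its degree range is fixed; this makes $\phi$ locally injective there.

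Then I would compare distances and degrees. Distances can only decrease under $\phi$: $\textsf{dist}_{G^*}(\phi(u),v)\le \textsf{dist}_{Z_C}(u,\text{center})$. Equality must be argued for distances $0$ and $1$. Distance $0$ is clear. For distance $1$, the neighbours of the center in $Z_C$ map to neighbours of $v$. The remaining issue is that a vertex at distance $2$ in $Z_C$ could become a neighbour of $v$ in $G^*$. I would rule this out using Lemma \ref{lem:triangulation}: this would create a short separating cycle through $v$ that contradicts the connectivity assumptions. Alternatively, I would observe that the wheel $\mathcal{W}_d$ already lists all $d$ neighbours via $e_1,\dots,e_d$, so any neighbour of $v$ in $G^*$ is the image of some $\tail(e_j)$. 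In the latter case the digon's endpoint is in fact identified with a wheel vertex, and the distance pattern agrees.

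For degrees, $d_{G^*}(\phi(u))=\delta_C(u)\in[\delta_C^-(u),\delta_C^+(u)]$, which lies inside the range required by the chosen case. So the constant assigned in $G^*$ equals the one added to $c(Z_C)$. The tie-breaking convention ($d(u_1)\le d(u_2)$ when distances are equal) is symmetric in the relevant cases, so the labelling does not matter. Summing over the distinct image digons, and using that further digons of $B_2(v)$ contribute nonnegatively, gives $c(v)\ge c(Z_C)$.
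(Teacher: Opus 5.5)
Your outline (digons map to digons, distinct counted digons have distinct images, distance pairs are preserved, the degree ranges then give the same constant, sum) matches the paper's. However, the two steps that carry the proof are not established.

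\textbf{Local injectivity.} The tool you are missing is local injectivity at \emph{every} vertex $u$ of $Z_C$. It comes from $\delta_C(u)\ge d_C(u)$ (equality at inner vertices, strict inequality at boundary vertices), not from completeness of incidence lists. If darts $e\neq f=\mathrm{succ}^k(e)$ at $u$, with $0<k<d_C(u)$, had the same image, then $\mathrm{succ}^k(\phi(e))=\phi(e)$ would force $d_{G^*}(\phi(u))\le k<\delta_C(u)$.

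Your restriction to fixed-degree neighbours with complete incidence lists does not cover what is needed. A fixed degree range does not make an incidence list complete: a wheel neighbour of fixed degree $5$ starts with three darts. Also, neighbours with tail range $[9,\infty]$ are boundary vertices, and they can be endpoints of counted $(1,1)$-digons (constant $2$).

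Together with Lemma~\ref{lem:triangulation}, local injectivity yields that distinct neighbours of a common vertex have distinct images, and the rest follows from this.

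\textbf{Distance preservation.} Let $u_2$ be an endpoint at distance $2$, joined by the digon to a neighbour $u_1$ of the centre $v_C$.
\begin{itemize}
\item You never exclude $\phi(u_2)=v$. This case identifies the two neighbours $v_C,u_2$ of $u_1$.
\item For $\textsf{dist}(v,\phi(u_2))=1$, the ``short separating cycle'' argument gives nothing. Lemma~\ref{lem:triangulation} forces the triangle $v\,\phi(u_1)\,\phi(u_2)$ to be \emph{facial}. The paper then uses that this face is the image of one of the two wheel triangles at the edge $v_Cu_1$. Hence $u_2$ and a wheel neighbour of $u_1$ are identified, again two neighbours of $u_1$.
\end{itemize}

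\textbf{The fallback is false.} You claim that ``the endpoint is identified with a wheel vertex, and the distance pattern agrees''. In fact the pair becomes $(1,1)$ instead of $(1,2)$, and this identification genuinely has to be excluded. Suppose two $(1,2)$-digons $\{a_1,b_1\},\{a_2,b_2\}$ with all degrees $5$ had $\phi(b_1)=\phi(a_2)$ and $\phi(b_2)=\phi(a_1)$. Then $c(Z_C)$ would count $4+4$, while $c(v)$ receives only $5$ from the single image digon.

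\textbf{Distinctness.} Once distances are preserved, the step you flag as the main obstacle is short and needs no analysis of digons at distance $2$. Only digons with pairs $(0,1),(1,1),(1,2)$ contribute to $c(Z_C)$, so your unjustified claim that every digon of a cartwheel has an endpoint within distance $1$ is not needed. Two counted digons with the same image have the same pair. Then an endpoint at distance $\le 1$ of one has the same image as an endpoint at distance $\le 1$ of the other. Hence they coincide in $Z_C$: either both are the centre, or they are neighbours of the centre and injectivity applies. That vertex would then be incident to two digons, contradicting single digon incidence.

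Your first step (digons map to digons) and the final degree and summation step are fine.
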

\begin{proof}
    Let $\phi$ be a homomorphism from $(Z_C, \delta_C)$ to $G^*$.
    By definition, for every vertex $u$ in $Z_C$, the image of $u$ by $\phi$ has a degree $\delta_C(u)$.

    Recall that in $(Z_C, \delta_C^-, \delta_C^+)$, every inner vertex $v$ satisfies $\delta_C^-(v)=\delta_C^+(v)=d_C(v)$ and every boundary vertex $v$ satisfies $\delta_C^-(v) > d_C(v)$ ($d_C(v)$ is the number of darts whose head is $v$) from the algorithm to compute free homomorphism, see Section \ref{subsect:free-homomorphism} and \cite{inoue2026four}.
    Suppose that two distinct darts, say $e,f$, whose heads are the same vertex are mapped to the same dart in $G^*$.
    By exchanging $e$ and $f$ if necessary, there exists an integer $k < d_C(\head(e))$ such that $f=\successor^k(e)$.
    Since a homomorphism preserves \successor, $\phi(f)=\successor^k(\phi(e))$ holds.
    Then, the degree of $\phi(\head(e))$ is at most $k (< d_C(\head(e)))$, which contradicts $\phi(\head(e))$ has degree $\delta_C(\head(e))$.
    
    Two distinct neighbors of the same vertex are mapped to two distinct vertices in $G^*$ since otherwise it makes a separating 2-cycle or a loop in $G^*$, which contradicts Lemma \ref{lem:triangulation}.
    
    Recall that in computing $c(Z_C)$ (also $c(v)$), we only use digons whose both endpoints are at a distance of $(0,1),(1,1)$, or $(1,2)$.
    Let $v_C$ be the center vertex of $Z_C$ and $u_1,u_2$ be endpoints of a digon such that $(\textsf{dist}(v_C, u_1), \textsf{dist}(v_C, u_2))$ is one of $(0,1),(1,1),(1,2)$.

    We show the distance between $v_C$ and $u_i$ ($i=1,2$) are preserved under $\phi$, i.e., $\textsf{dist}(v_C, u_i)=\textsf{dist}(\phi(v_C), \phi(u_i))$ for $i=1,2$.
    It is easy to see that $\textsf{dist}(v_C, u_i) \geq \textsf{dist}(\phi(v_C), \phi(u_i))$.
    If $\textsf{dist}(v_C, u_i)=1$, but $\textsf{dist}(\phi(v_C), \phi(u_i))=0$, then it makes a loop in $G^*$, which contradicts Lemma \ref{lem:triangulation}.
    If $\textsf{dist}(v_C, u_i)=2$, but $\textsf{dist}(\phi(v_C), \phi(u_i))=0$, then two neighbors of $u_{3-i}$ ($u_i$ and $v_C$) are mapped to the same vertex, which contradicts the above discussion.
    If $\textsf{dist}(v_C, u_i)=2$, but $\textsf{dist}(\phi(v_C), \phi(u_i))=1$, then it makes a cycle of length 3 in $G^*$.
    If it is separating, then it contradicts Lemma \ref{lem:triangulation}.
    Otherwise, this cycle is facial, so two neighbors of $u_{3-i}$ ($u_i$ and a neighbor of $u_{3-i}$ at a distance 1 from $v_C$) are mapped to the same vertex, which contradicts the above discussion.
    Thus, we can show that the distances are preserved.

    It remains to show that two such distinct digons in $Z_C$
    are mapped to two distinct digons in $G^*$ under $\phi$.
    Since the distances are preserved, for two digons that are mapped to the same digon in $G^*$, their endpoints have the same distance pair from the center vertex.
    At least one endpoint of a digon is at a distance of 1 from $v_C$.
    Taking into account that at most one digon is incident to a vertex in $Z_C$, it does not occur, since otherwise two neighbors of the center vertex would be mapped to the same vertex in $G^*$.
\end{proof}

By the above computations, we cut a cartwheel $(Z_C, \delta_C^-, \delta_C^+)$ if $10 \cdot (6-d) + \sum_{1 \leq j \leq d} r_j - \sum_{1 \leq j \leq d} s_j \leq c(Z_C)$.
This is because $T(v) \leq c(v)$ for the vertex $v$, which is the image of the center vertex.

\subsubsection{Fixing rules applied from neighbors to the center}
\label{subsect:fix-in-rules}
\showlabel{subsect:fix-in-rules}
For a vertex $v \in V(G^*)$, we aim to fix the set of discharging rules in $\mathcal{R}$ that apply to darts whose head is $v$.
Recall that each combined rule in $\cRstar$ or $\cRstarnoK$ is obtained from a subset of rules in $\mathcal{R}$.
As such a set of rules, we consider a subset of discharging rules corresponding to some combined rule in $\cRstarnoK$.

\paragraph{Algorithm}
We process each cartwheel $(Z_W, \delta_W^-, \delta_W^+) \in \mathcal{W}_d$ in the following way.
We first create $\mathcal{C}_0=\{(Z_W, \delta_W^-, \delta_W^+)\}$.
We construct the sets $\mathcal{C}_1,\ldots,\mathcal{C}_d$ in order from $\mathcal{C}_0$.
Each element in $\mathcal{C}_i$ is a cartwheel.
We construct $\mathcal{C}_{i}$ from $\mathcal{C}_{i-1}$ as follows: for each $(Z_C, \delta_C^-, \delta_C^+) \in \mathcal{C}_{i-1}$, and for each $(Z_{R^*_i}, \delta_{R^*_i}^-, \delta_{R^*_i}^+)$ with the distinguished dart $e_{R^*_i}$ in $\cRstarnoK$, we compute a set of free homomorphic images of the disjoint union $(Z_C, \delta_C^-, \delta_C^+) \sqcup (Z_{R^*_i}, \delta_{R^*_i}^-, \delta_{R^*_i}^+)$ respecting $\{(e_i, e_{R^*_i})\}$ and enforcing single digon incidence.
For each element $((Z_{C'}, \delta_{C'}^-, \delta_{C'}^+), \phi^*)$ in this set, if some vertices have a degree range except for the tail range, we refine them to fixed degrees and enumerate all combinations.
We add all resulting cartwheels to $\mathcal{C}_i$.
The pseudocode for the algorithm that constructs $\mathcal{C}_d$ is presented in Algorithm \ref{alg:fix_in_rules}.
The pseudocode for updating a cartwheel by a combined rule $R^*_i \in \cRstarnoK$ is presented in Algorithm \ref{alg:update_degree_by_rule}.

For every rule $R$ in $\mathcal{R}$, every vertex is of distance at most 2 from both $s(R)$ and $t(R)$, so it holds for every combined rule in $\cRstarnoK$ and $\cRstar$. 
Thus, this process adds only second neighbors of the center vertex and no third neighbors.

\paragraph{Pruning}
We apply three pruning methods when adding $(Z_{C'}, \delta_{C'}^-, \delta_{C'}^+)$ to $\mathcal{C}_i$ for $i=1, \ldots, d$.
We consider darts $e_1, \ldots, e_d$ whose head is the center vertex in $Z_{C'}$.

Recall that for such $(Z_{C'}, \delta_{C'}^-, \delta_{C'}^+)$ and for every $j \leq i$, we have decided the exact set of rules applying to $e_j$, which corresponds to a combined rule $R_j^*$ identified with $e_j$.
We denote this set by $\mathcal{R}_j$.
This has two implications.
One is that the amount of charge $r(R_j^*)$ is the exact amount of charge transferred along the image of $e_j$ under a homomorphism from $(Z_{C'}, \delta_{C'}^-, \delta_{C'}^+)$ to $G^*$.
The other is for pruning: we can cut $(Z_{C'}, \delta_{C'}^-, \delta_{C'}^+)$ if there exists a rule $R \in \mathcal{R} \setminus \mathcal{R}_j$ that always applies to $e_j$.

We also calculate an upper bound of the final charge.
For $1 \leq j \leq i$, we know that $r(R_j^*)$ is the exact discharge.
For $i < j \leq d$, we compute $r_j$, which is the maximum value of $r(R^*)$ for all combined rules $R^* \in \cRstarnoK$ such that $R^*$ sometimes applies to $e_j$ in $(Z_{C'}, \delta^-_{C'}, \delta^+_{C'})$.
We also calculate $s_j$, which is the sum of $r(R)$ for all rules $R \in \mathcal{R}$ such that $R$ always applies to $\reverse(e_j)$ in $(Z_{C'}, \delta^-_{C'}, \delta^+_{C'})$.
Then, we calculate $10 \cdot (6-d) + \sum_{1 \leq j \leq i} r(R_j^*) + \sum_{i < j \leq d} r_j - \sum_{1 \leq j \leq d} s_j$ as an upper bound of the final charge.
Corresponding to Lemma \ref{lem:upperbound-charge}, we have the following lemma stating the validity of this pruning.

\begin{lem}
\label{lem:upperbound-charge2}
\showlabel{lem:upperbound-charge2}
    Let $(Z_{C'}, \delta_{C'}^-, \delta_{C'}^+)$, $R_j^*$, $\mathcal{R}_j (1 \leq j \leq i)$, and $\{e_j\}_{1 \leq j \leq d}$ be defined as above.
    Assume that for some $(Z_{C'}, \delta_{C'}) \in (Z_{C'}, \delta_{C'}^-, \delta_{C'}^+)$, a homomorphism from $(Z_{C'}, \delta_{C'})$ to $G^*$ exists, and the set of rules applying to the image of $e_j$ equals $\mathcal{R}_j$ for $1 \leq j \leq i$.
    Let $v \in V(G^*)$ be the image of the center vertex.
    If there does not exist a homomorphism from any configuration in $\mathcal{K}$ to $B_2(v)$, the final charge of $v$ is at most $10 \cdot (6-d) + \sum_{1 \leq j \leq i} r(R_j^*) + \sum_{i < j \leq d} r_j - \sum_{1 \leq j \leq d} s_j$, calculated as above.
\end{lem}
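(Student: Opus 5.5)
I will follow the proof of Lemma \ref{lem:upperbound-charge}, splitting the darts $e_1,\ldots,e_d$ into the fixed ones ($j \le i$) and the free ones ($j > i$). The charge on the fixed darts is known exactly. The free darts and the reverse darts are bounded exactly as before. Throughout, fix $(Z_{C'}, \delta_{C'})$ and a homomorphism $\phi$ from it to $G^*$ with $\phi(v_C)=v$, where $\mathcal{R}_j$ is exactly the set of rules applying to $\phi(e_j)$ for $j \le i$.

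I begin by writing the final charge as
\[
T(v) = 10(6-d) + \sum_{j=1}^{d} r(\phi(e_j)) - \sum_{j=1}^{d} r(\reverse(\phi(e_j))).
\]
This needs the darts $\phi(e_1),\ldots,\phi(e_d)$ to be exactly the $d$ darts of $G^*$ with head $v$, without repetition. That holds because $\delta_{C'}(v_C)=d=d_{G^*}(v)$, the incidence list of $v_C$ is cyclic of length $d$, and $\phi$ preserves $\successor$. Hence $\phi$ restricted to this list is a bijection onto the incidence list of $v$; this is the same argument used in the proof of Lemma \ref{lem:charge-given-to-digon}. I also need that $\phi(e_j)$ is the image of the original $e_j$ from the wheel, which follows because $\phi$ is the composition of the maintained homomorphisms.

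\textbf{Fixed darts ($j\le i$).} By hypothesis, the set of rules applying to $\phi(e_j)$ is $\mathcal{R}_j$. So $r(\phi(e_j))=\sum_{R\in\mathcal{R}_j} r(R) = r(R_j^*)$, since the charge of a combined rule is by definition the sum of the charges of its constituent rules.

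\textbf{Free darts ($j>i$).} I repeat the argument of Lemma \ref{lem:upperbound-charge}. Let $\mathcal{R}'_j$ be the set of rules applying to $\phi(e_j)$. The union of their images in $G^*$ gives a homomorphism from the disjoint union identified along the distinguished darts. By the defining property of a set of free homomorphic images, and since $G^*$ has at most one digon at each vertex (so the enforcing step loses nothing, by Lemma \ref{lem:pseud-tri-single-digon}), this homomorphism factors through some combined rule $R^{*}\in\cRstar$ with $r(R^*)=r(\phi(e_j))$. The composite homomorphism from $R^*$ lands in $B_2(v)$, since every vertex of $R^*$ is within distance $2$ of the head of its distinguished dart. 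The hypothesis that no configuration of $\mathcal{K}$ maps into $B_2(v)$ then shows that $R^*$ is not blocked, so $R^*\in\cRstarnoK$.

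Moreover, $R^*$ sometimes applies to $e_j$ in $(Z_{C'},\delta_{C'}^-,\delta_{C'}^+)$. Otherwise the set of free homomorphic images of $Z_{C'}\sqcup R^*$ respecting $\{(e_j,e_{R^*})\}$ and enforcing single digon incidence would be empty. That contradicts the simultaneous homomorphisms into $G^*$ that agree on $\phi(e_j)$. Hence $r(\phi(e_j))\le r_j$.

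\textbf{Reverse darts.} Every rule $R$ that always applies to $\reverse(e_j)$ yields, by composing with $\phi$, a homomorphism from $R$ to $G^*$ sending $e_R$ to $\reverse(\phi(e_j))$. This composite respects the degree ranges, because $\delta_{C'}(u)$ lies in $[\delta_{C'}^-(u),\delta_{C'}^+(u)]\subseteq[\delta_R^-,\delta_R^+]$. So $R$ applies there, and since all charges $r(R)$ are positive, $r(\reverse(\phi(e_j)))\ge s_j$.

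Summing the three bounds gives the claim. The main obstacle is the free-dart step: checking that the factorization through a free combination remains valid after the single-digon-enforcing step, and that the blocking test applies within $B_2(v)$. Both are handled by the cited lemmas in the same way as in Lemma \ref{lem:upperbound-charge}, so the new content is only the exactness on the fixed darts.
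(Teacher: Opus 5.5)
Your proposal is correct and follows the paper's argument: the paper gives no separate proof of this lemma and presents it as the analogue of Lemma~\ref{lem:upperbound-charge}, and your proof is that argument, with the fixed darts $e_j$ ($j\le i$) contributing exactly $r(R_j^*)$ by hypothesis. Your extra steps (showing that $\phi(e_1),\ldots,\phi(e_d)$ are exactly the darts with head $v$, and using positivity of the charges $r(R)$ in the $s_j$ bound) supply details the paper leaves implicit.
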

Thus, we can cut $(Z_{C'}, \delta_{C'}^-, \delta_{C'}^+)$ if the computed upper bound of the final charge is at most $c(Z_{C'})$, similarly as before.
Finally, we cut $(Z_{C'}, \delta_{C'}^-, \delta_{C'}^+)$ if $(Z_{C'}, \delta_{C'}^-, \delta_{C'}^+)$ is blocked by $\mathcal{K}$.
Note that we apply the same pruning for the initial case where $i=0$.
The pseudocode for checking whether applying three pruning methods to the current cartwheel is presented in Algorithm \ref{alg:prune}.

\subsubsection{Refinement}
For all cartwheels in $\mathcal{C}_d$, we have determined the set of rules applied to $e_i$ for every $1 \leq i \leq d$.
However, some cartwheels in $\mathcal{C}_d$ may not have all second neighbors of the center.
We construct the final output $\mathcal{C}$ of the algorithm from $\mathcal{C}_d$ by selectively adding them.

The algorithm we will explain is not generalizable to any set of rules, as a general implementation is very complicated due to the flexibility of face sizes.
Thus, the algorithm is tailored to our set of rules, which makes our implementation and the proof of correctness simpler.
An unnecessary general implementation may cause mistakes, so we choose a simpler algorithm tailored to our set of rules.

\paragraph{A homomorphic cover}
For a rule $R$ represented by $(Z_R, \delta_R^-, \delta_R^+)$ with a distinguished dart $e_R$, consider the rules $R_1,\ldots,R_k$ with the following properties: if there exists a homomorphism $\phi$ from some $(Z_R, \delta_R) \in (Z_R, \delta_R^-, \delta_R^+)$ to $G^*$, then there exists an index $i$ ($1 \leq i \leq k$) such that a homomorphism $\phi_i$ from some $(Z_{R_i}, \delta_{R_i}) \in (Z_{R_i}, \delta^-_{R_i}, \delta^+_{R_i})$ to $G^*$ satisfying $\phi(e_R)=\phi_i(e_{R_i})$ exists.
We call such rules $R_1,\ldots,R_k$ \emph{a homomorphic cover of $R$}.
We show two examples.
Actually, all homomorphic covers used in this algorithm are only these two and their mirror images.
One is in Figure \ref{fig:rule27-aux}.
The other is in Figure \ref{fig:rule31-aux}.
For both of them, the three rules from the right form a homomorphic cover of the left rule $R$.

Actually, these two originate from the rule $R(27)$ and $R(31)$, see Figure \ref{fig:rules-RSST}.
Thus, the rules in Figure \ref{fig:rule27-aux} are denoted by $R_{\textsf{aux}, 27}, R_{\textsf{aux}, 27}^1, R_{\textsf{aux}, 27}^2,
R_{\textsf{aux}, 27}^3$ from the left.
Similarly the rules in Figure \ref{fig:rule31-aux} are denoted by $R_{\textsf{aux}, 31}, R_{\textsf{aux}, 31}^1, R_{\textsf{aux}, 31}^2,
R_{\textsf{aux}, 31}^3$ from the left.
We can check that $R_{\textsf{aux}, 27}^1, R_{\textsf{aux}, 27}^2,
R_{\textsf{aux}, 27}^3$ form a homomorphic cover of $R_{\textsf{aux}, 27}$ by observing the boundary edge, designated by $e$ in $R_{\textsf{aux}, 27}$.
In $R_{\textsf{aux}, 27}^1$, the edge corresponding to $e$ bounds a digon with another edge.
In $R_{\textsf{aux}, 27}^2$, the edge corresponding to $e$ bounds a triangle with other edges, and the vertex that forms a triangle with $e$ has degree $5$.
In $R_{\textsf{aux}, 27}^3$, the edge corresponding to $e$ bounds a triangle with other edges, and the vertex that forms a triangle with $e$ has degree-range $[6, \infty]$.
Since the minimum degree of $G^*$ is 5 and all faces of $G^*$ are digons or triangles, they form a homomorphic cover.

\begin{figure}[htbp]
    \centering
    \includegraphics[width=0.7\linewidth]{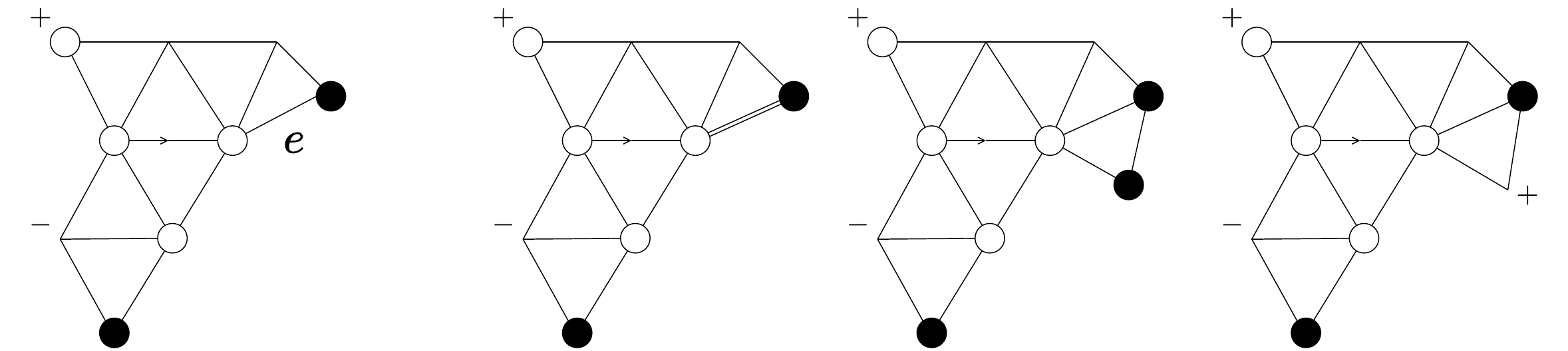}
    \caption{The right three rules, denoted by $R_{\textsf{aux}, 27}^1, R_{\textsf{aux}, 27}^2,
R_{\textsf{aux}, 27}^3$ form a homomorphic cover of the left rule $R_{\textsf{aux}, 27}$.}
    \label{fig:rule27-aux}
\end{figure}

\begin{figure}[htbp]
    \centering
    \includegraphics[width=0.7\linewidth]{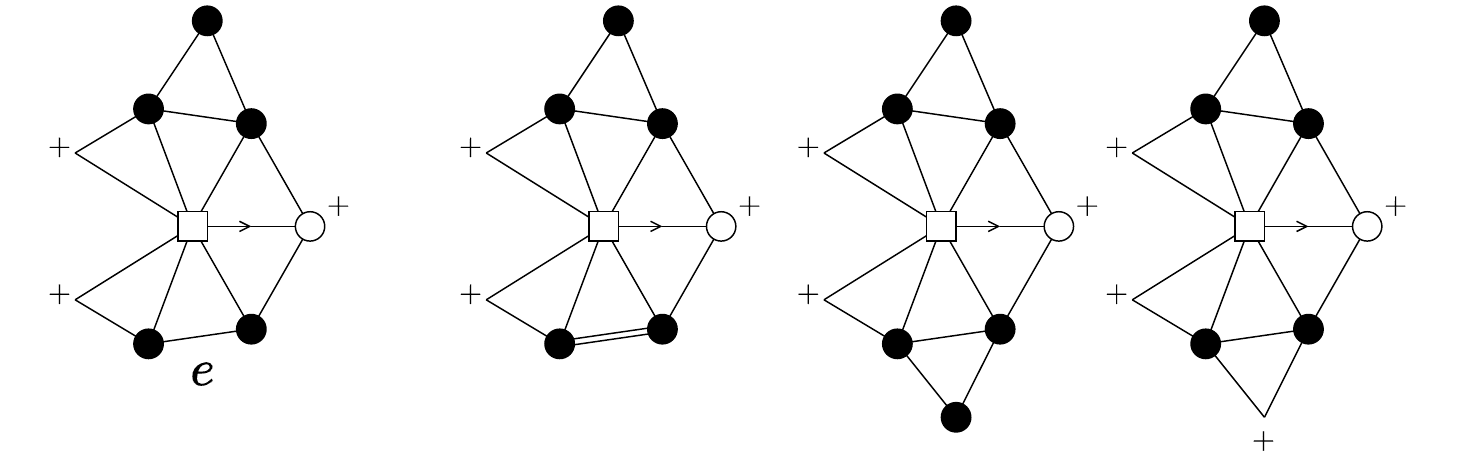}
    \caption{The right three rules, denoted by $R_{\textsf{aux}, 31}^1, R_{\textsf{aux}, 31}^2,
R_{\textsf{aux}, 31}^3$ form a homomorphic cover of the left rule $R_{\textsf{aux}, 31}$. We denote them by $R$.}
    \label{fig:rule31-aux}
\end{figure}

\paragraph{Algorithm}
We explain the algorithm.
Let $\Rauxiliary$ be the set that consists of $(R_{\textsf{aux}, 27}, R_{\textsf{aux}, 27}^1, R_{\textsf{aux}, 27}^2,
R_{\textsf{aux}, 27}^3)$, $(R_{\textsf{aux}, 31}, R_{\textsf{aux}, 31}^1, R_{\textsf{aux}, 31}^2,
R_{\textsf{aux}, 31}^3)$, and their mirrors.
Initially, we add every cartwheel in $\mathcal{C}_d$ to a queue.
While the queue is not empty, we pop one cartwheel $(Z_C, \delta_C^-, \delta_C^+)$ from the queue.
Then, we iterate over all pairs of homomorphic covers $(R,R_1,R_2,R_3) \in \Rauxiliary$ and a dart $e_i$ whose head is the center vertex in $(Z_C, \delta_C^-, \delta_C^+)$.
For each pair $((R, R_1, R_2, R_3), e_i)$, we check whether $R$ always applies to $\reverse(e_i)$ in $(Z_C, \delta_C^-, \delta_C^+)$, but no rule in $\{R_1, R_2, R_3\}$ always applies to $\reverse(e_i)$ in $(Z_C, \delta_C^-, \delta_C^+)$.
If so, we perform the refinement.
Otherwise, we proceed to the next pair.
The pseudocode for this algorithm that determines whether we should refine is presented in Algorithm \ref{alg:should_refine}.
For the refinement procedure, for each $i=1,2,3$, we take a set of free homomorphic images of the disjoint union $(Z_C, \delta_C^-, \delta_C^+) \sqcup (Z_{R_i}, \delta^-_{R_i}, \delta^+_{R_i})$ respecting $\{(\reverse(e_i), e_{R_i})\}$.
The pseudocode for this refinement algorithm is presented in Algorithm \ref{alg:refinement}.
We apply the three previously described pruning methods to the resulting set.
For the surviving cartwheels, if some vertices have a non-tail degree range, we refine them to fixed degrees and enumerate all combinations.
We push the resulting cartwheels into the queue.
If $(Z_C, \delta_C^-, \delta_C^+)$ is not refined for any pair, we add it to the final set $\mathcal{C}$.
We repeat until the queue becomes empty.
The pseudocode for the overall algorithm for the refinement is presented in Algorithm \ref{alg:fix_out_rules}.

Note that this algorithm terminates in finite time.
After we perform refinement by a pair $((R,R_1,R_2,R_3), e_i)$, we do not apply refinement for the same pair again since either of $R_1,R_2,R_3$ always applies to $\reverse(e_i)$ after refinement.
Thus, for one cartwheel in $\mathcal{C}_d$, the refinement occurs at most $2 \cdot d$ times.

The algorithm to compute $\mathcal{C}$ from a singleton $\mathcal{C}_0$ is presented in Algorithm \ref{alg:enum_bad_cartwheels}.
Let $\Ca$ be the union of all $\mathcal{C}$ obtained from each singleton of cartwheels in $\mathcal{W}_d$.

\begin{lem}
\label{lem:cartwheel-algorithm-is-correct}    
\showlabel{lem:cartwheel-algorithm-is-correct}
    For $d \in \{7,8,9,10,11\}$, let $\Ca$ be obtained from $\mathcal{W}_d$ as above.
    If $\Ca = \emptyset$, then for every vertex $v$ of degree $d$ in $V(G^*)$, at least one of the following holds: (i) $T(v) \leq c(v)$ or (ii) a homomorphism from configurations in $\mathcal{K}$ to $B_2(v)$ exists.
\end{lem}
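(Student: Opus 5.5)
We argue by contraposition: fix $d\in\{7,\dots,11\}$ and a vertex $v$ of degree $d$ in $G^*$ with $T(v)>c(v)$ and no homomorphism from any $K\in\mathcal{K}$ to $B_2(v)$. We will show that some cartwheel survives every stage of the algorithm, so $\Ca\neq\emptyset$. The invariant we maintain is: at each stage there is a cartwheel $(Z_C,\delta_C^-,\delta_C^+)$ in the current set, some $(Z_C,\delta_C)$ in it, and a homomorphism $\phi:(Z_C,\delta_C)\to G^*$ sending the center to $v$ and $e_1,\dots,e_d$ to the darts with head $v$ in clockwise order. For $j\le i$ we additionally require that the set of rules applying to $\phi(e_j)$ is exactly the rule set $\mathcal{R}_j$ underlying the combined rule fixed at step $j$.

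\textbf{Base step.} Claim \ref{claim:hom-wheel} gives a wheel possibly with a digon that maps to $v$. Refining each rim degree into one of $[5,5],\dots,[8,8],[9,\infty]$ according to the actual degree in $G^*$ selects an element of $\mathcal{W}_d$ satisfying the invariant for $i=0$.

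\textbf{Inductive step $\mathcal{C}_{i-1}\to\mathcal{C}_i$.} Let $\mathcal{R}_i$ be the rules applying to $\phi(e_i)$. Their free combination, enforcing single digon incidence, maps to $G^*$; since $B_2(v)$ receives no configuration, it is not blocked, so some $R_i^*\in\cRstarnoK$ applies to $\phi(e_i)$. We then take the pair of homomorphisms from $(Z_C,\delta_C)$ and from $R_i^*$ into $G^*$, which agree on $e_i\mapsto\phi(e_i)$. By Lemma \ref{lem:pseud-tri-single-digon} (with Lemma \ref{lem:free-hom-tri-digon}), they factor through some member of the computed set of free homomorphic images; this uses that $G^*$ is a pseudo-triangulation with digons in which each vertex meets at most one digon. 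Refining non-tail degree ranges to the true degrees then gives a cartwheel in $\mathcal{C}_i$ satisfying the invariant. Only second neighbors are added, since every rule vertex lies within distance $2$ of $s(R)$ and $t(R)$.

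\textbf{Pruning.} We must check that none of the three pruning tests discards this cartwheel.
\begin{enumerate}
\item \emph{Blocking.} A homomorphism from some $K\in\mathcal{K}$ into the cartwheel would compose with $\phi$ to give one into $B_2(v)$, because the image of the cartwheel lies in $B_2(v)$.
\item \emph{Extra rule.} A rule in $\mathcal{R}\setminus\mathcal{R}_j$ that always applies to $e_j$ would apply to $\phi(e_j)$ in $G^*$, contradicting the exactness of $\mathcal{R}_j$.
\item \emph{Charge bound.} We need that the computed bound exceeds $c(Z_C)$. Lemma \ref{lem:upperbound-charge2}, whose proof is the same as that of Lemma \ref{lem:upperbound-charge}, gives $T(v)\le$ the computed bound. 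Lemma \ref{lem:charge-given-to-digon} gives $c(Z_C)\le c(v)$. Hence the bound is at least $T(v)>c(v)\ge c(Z_C)$, and the cartwheel is not cut.
\end{enumerate}

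\textbf{Refinement stage.} Suppose the popped cartwheel triggers refinement for the pair $((R,R_1,R_2,R_3),e_i)$. Then $R$ always applies to $\reverse(e_i)$, so its image applies in $G^*$. By the homomorphic-cover property, some $R_k$ maps to $G^*$ with distinguished dart $\phi(\reverse(e_i))$. This property itself has to be verified: it uses minimum degree $5$ and the fact that every face is a triangle or a digon, exactly as in the discussion of Figures \ref{fig:rule27-aux} and \ref{fig:rule31-aux}. Amalgamating as before yields a surviving cartwheel in the output of the free homomorphism computation, and pruning again spares it by the same three arguments. Since the algorithm terminates (at most $2d$ refinements per cartwheel), some cartwheel satisfying the invariant reaches $\mathcal{C}$. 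Therefore $\Ca\neq\emptyset$, which is the contrapositive of the lemma.

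The main obstacle is the amalgamation step. We must make sure the set of free homomorphic images really covers every pair of compatible homomorphisms into $G^*$, including the extra identifications made to enforce single digon incidence and the digon-versus-triangle branching in the boundary completion. We must also make sure degree refinement never loses the actual degree pattern. This rests on the universality of free homomorphic images (Lemmas \ref{lem:free-hom-tri-digon}, \ref{lem:pseud-tri-single-digon}), which we take as given.
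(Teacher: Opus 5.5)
Your proposal is correct and follows essentially the same route as the paper. You start from the wheel given by Claim \ref{claim:hom-wheel}, and at each step you follow the combined rule in $\mathcal{R}^{*-\mathcal{K}}$ generated by the actual rule set $\mathcal{R}_j$ at $\phi(e_j)$, relying on the universality of free homomorphic images enforcing single digon incidence. You rule out all three pruning tests using Lemmas \ref{lem:upperbound-charge}, \ref{lem:charge-given-to-digon} and \ref{lem:upperbound-charge2}, the exactness of $\mathcal{R}_j$, and the absence of configurations in $B_2(v)$, and then carry the cartwheel through refinement via the homomorphic-cover property. Your stage-by-stage invariant, in place of the paper's ``first ignore pruning, then check survival,'' is only a more explicit bookkeeping of the same argument.
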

\begin{proof}
    Let $v$ be a vertex of degree $d$ in $G^*$.
    By Claim \ref{claim:hom-wheel}, for some cartwheel $(Z_W, \delta_W^-, \delta_W^+)$ in $\mathcal{W}_d$, a homomorphism from some $(Z_W, \delta_W) \in (Z_W, \delta_W^-, \delta_W^+)$ to $G^*$ mapping its center to $v$ exists.
    We consider the algorithm in Section \ref{subsect:fix-in-rules} to generate $\mathcal{C}_i$ ($1 \leq i \leq d$) starting from $\mathcal{C}_0$ that consists of this cartwheel $(Z_W, \delta_W^-, \delta_W^+)$.
    We show the following claim:
    \begin{itemize}
        \item[(1)] If $v$ satisfies neither (i) nor (ii), then there exists a cartwheel $(Z_C, \delta_C^-, \delta_C^+) \in \mathcal{C}_d$ such that a homomorphism from some $(Z_C, \delta_C) \in (Z_C, \delta_C^-, \delta_C^+)$ to $G^*$, where the center vertex is mapped to $v$, exists.
    \end{itemize}
    First, we show claim (1) if we did not apply pruning.
    
    Let $e_j$ ($1 \leq j \leq d$) be darts in $(Z_W, \delta_W^-, \delta_W^+)$ whose head is the center vertex.
    For $j=1,\ldots,d$, let $\mathcal{R}_j$ be the set of rules that apply to the image of $e_j$ in $G^*$.
    Then, a combined rule $R_j^*$ is obtained from $\mathcal{R}_j$ by applying it to the image of $e_j$ in $G^*$.
    If $R_j^*$ is blocked by $\mathcal{K}$, case (ii) holds, so we can assume that $R_j^* \in \cRstarnoK$.
    In the algorithm in Section \ref{subsect:fix-in-rules}, every time we generate $\mathcal{C}_j$ from $\mathcal{C}_{j-1}$, we consider the case where we choose the specified combined rule $R_j^*$ and compute free homomorphic images respecting $\{(e_j, e_{R_j^*})\}$ and enforcing single digon incidence.
    By the property of free homomorphic images respecting identification requests and enforcing single digon incidence with Lemmas \ref{lem:free-hom-tri-digon} and \ref{lem:pseud-tri-single-digon}, some cartwheel $(Z_C, \delta_C^-, \delta_C^+)$ in $\mathcal{C}_d$ obtained in this case satisfies claim (1).

    We show that this cartwheel $(Z_C, \delta_C^-, \delta_C^+)$ survives the pruning if $v$ satisfies neither (i) nor (ii).
    By Lemmas \ref{lem:upperbound-charge}, \ref{lem:charge-given-to-digon}, and \ref{lem:upperbound-charge2}, $(Z_C, \delta_C^-, \delta_C^+)$ is not pruned by the charge condition since otherwise $v$ satisfies (i).
    $(Z_C, \delta_C^-, \delta_C^+)$ is not blocked by $\mathcal{K}$, since otherwise $v$ satisfies (ii).
    If some rule $R \in \mathcal{R} \setminus \mathcal{R}_j$ always applies to $e_j$ in $(Z_C, \delta_C^-, \delta_C^+)$, then $R$ also applies to the image of $e_j$ in $G^*$ under a homomorphism from $(Z_C, \delta_C^-, \delta_C^+)$.
    This contradicts the definition of $\mathcal{R}_j$, so the pruning does not cut $(Z_C, \delta_C^-, \delta_C^+)$.
    Therefore, claim (1) holds.

    We obtain the claim from (1) by replacing $\mathcal{C}_d$ with $\mathcal{C}$.
    We consider the cartwheel $(Z_C, \delta_C^-, \delta_C^+)$ in claim (1).
    When a rule $R$ always applies to $\reverse(e_i)$, a homomorphism from some $(Z_R, \delta_R) \in (Z_R, \delta_R^-, \delta_R^+)$ to $G^*$ such that $e_R$ is mapped to the image of $\reverse(e_i)$ exists.
    By the definition of a homomorphic cover, for some $R_i$ ($i=1,2,3$), a homomorphism from some $(Z_{R_i}, \delta_{R_i}) \in (Z_{R_i}, \delta_{R_i}^-, \delta_{R_i}^+)$ to $G^*$ such that $e_{R_i}$ is mapped to the image of $\reverse(e_i)$ exists.
    Thus, by the property of free homomorphic images, for one of the cartwheels obtained by computing free homomorphic images with $(Z_{R_i}, \delta_{R_i}^-, \delta_{R_i}^+)$ respecting $\{(\reverse(e_i), e_{R_i}\}$, a desired homomorphism to $G^*$ exists.
    In the same discussion as claim (1), this cartwheel survives the pruning, so a desired claim holds.

    This implies that if some vertex $v$ satisfies neither (i) nor (ii), then $\mathcal{C} \neq \emptyset$.
    Therefore, this lemma holds.
\end{proof}
Finally, we use Algorithm \ref{alg:enum_bad_cartwheels} to compute $\mathcal{C}$ from $\mathcal{C}_0$ on a computer and verified that $\mathcal{C}=\emptyset$ for each possible $\mathcal{C}_0$, see Lemma \ref{comp:lem:deg7-11}.
This proves Lemma \ref{lem:deg7-11}.

\section{Homomorphic images of configurations}
\label{sect:hom-conf}
\showlabel{sect:hom-conf}
The goal of this section is to prove Theorem \ref{thm:hom-imply-reducible} below. Together with Theorem \ref{thm:K-homomorphism-G}, this completes the proof of Theorem \ref{mainth}. 

Recall that $G^*$ is the triangulation with digons, and $G$ is its dual subcubic graph in Lemma \ref{lem:triangulation}.
We aim to construct the set of multi-boundary islands such that, if there exists a homomorphism from a configuration in $\mathcal{K}$ to $G^*$, then at least one of them appears in $G$.
Then, we show that all multi-boundary islands in this set are semi-reducible.
We will construct a set of multi-boundary islands, not multi-boundary configurations.

\begin{theorem}
\label{thm:hom-imply-reducible}
\showlabel{thm:hom-imply-reducible}
    If there exists a homomorphism from a configuration $K$ in $\mathcal{K}$ to $G^*$, then some semi-reducible multi-boundary island appears in $G$.
\end{theorem}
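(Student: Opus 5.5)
Given a homomorphism $\phi$ from (the outer extension $\widehat K$ of) some $K\in\mathcal{K}$ to $G^*$, I will extract the multi-boundary configuration that $\phi$ actually realizes in $G^*$, take its dual multi-boundary island, and show that this island appears in $G$ and is semi-reducible. The argument has three parts: a structural part (what the image of $\widehat K$ looks like in $G^*$), a dualization part (the island appears in $G$), and a computational part (every island that can arise is semi-reducible).

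\textbf{Step 1 (structure of the image).} Since the diameter of each $K\in\mathcal K$ is small, $\phi$ is locally injective. Indeed, distinct darts with a common head have distinct images by the degree-preservation argument used in the proof of Lemma \ref{lem:charge-given-to-digon}. Moreover, by Lemma \ref{lem:triangulation} there are no loops and no separating $2$-cycles. The image may still fail to be an induced copy of $K$, because distinct vertices of $K$ can be identified or joined by extra edges through short separating cycles of length $4$ or $5$; Lemma \ref{lem:triangulation} describes exactly when such cycles occur. I will take the image $\phi(\widehat K)$, together with the faces of $G^*$ it covers, and regard the remaining faces of the image subgraph as boundary faces. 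This yields a multi-boundary configuration $K'$ with at most three boundary faces. Each outer-extension region of $K'$ is a triangle or a digon of $G^*$.

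\textbf{Step 2 (enumeration).} These possibilities can be generated as free homomorphic images of the pseudo-embedding $\widehat K$. I will use Lemmas \ref{lem:free-hom} and \ref{lem:pseud-embed-single-digon}, and add identification requests for each pair of vertices that Lemma \ref{lem:triangulation} allows to be identified or adjacent, namely at distance at most $4$ through a short cycle. Images that violate Lemma \ref{lem:triangulation} are discarded: a separating $3$-cycle, or a $4$-cycle with two non-tiny sides. Among the survivors, faces where the boundary walk closes up become either inner faces or separate boundary faces; this is precisely where multi-boundary configurations arise. For each surviving $K'$, the dual of the completed image gives a multi-boundary island. It lies in the family of islands associated with the outer extension of $K'$, which is the set of trimmed islands of $I(K')$. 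By construction this island appears in $G$ in the sense of Definition \ref{dfn:island-appear}: degrees agree because $\phi$ preserves $\delta_K$, and inner faces are faces of $G^*$, so the corresponding vertices of $G$ have full degree.

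\textbf{Step 3 (semi-reducibility).} For each enumerated $K'$ I will check by computer that $I(K')$ is semi-D-reducible or semi-C-reducible by some deletable set $F$. Lemma \ref{lem:imply} then transfers semi-reducibility to every trimmed island, which covers all digon/triangle choices on the boundary at once. Boundary faces of ring size $0$ or $1$ need separate treatment. A face of ring size $1$ cannot appear in $G$ because $G$ is bridgeless. Ring size $0$, or the case of no boundary face, means $G$ itself is $I$; then $I$ must be $3$-edge-colourable, and hence semi-D-reducible as remarked after Lemma \ref{lem:D-reducible-doesnot-appear}. For semi-C-reducibility I also need $F$ to remain deletable in the image, since condition 2 of Definition \ref{dfn:deletable} refers to faces; these are checked on $K'$ itself.

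The main obstacle is Step 2: showing that the enumeration of non-induced and self-intersecting images is exhaustive, while keeping it finite. Here the weak connectivity in Lemma \ref{lem:triangulation} ($5$-cycles with digons on both sides, and tiny $4$-cycles) permits many identifications. My first approach is to restrict identification requests to pairs that are joined in $\widehat K$ by a path of length at most $4$. I would then argue, using the third part of Lemma \ref{lem:triangulation}, that any other coincidence would create a short separating cycle with vertices on both sides. The resulting islands are then handed to the reducibility checker.
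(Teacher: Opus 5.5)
\textbf{There is a genuine gap in Step 2, the completeness of the enumeration.} Your architecture is the same as the paper's: enumerate free homomorphic images of $\widehat K$, dualize each surviving image to $I(K')$ or one of its trimmed islands, check semi-reducibility by computer, and transfer to trimmed islands via Lemma \ref{lem:imply}. But Step 2, which you rightly call the main obstacle, is not resolved, and the resolution you propose fails.

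\emph{The distance restriction is not justified.} Lemma \ref{lem:triangulation} cannot justify restricting identification requests to pairs joined by a path of length at most $4$. It says nothing about cycles of length $\ge 6$. It also allows a $5$-cycle to have arbitrarily many vertices on both sides, provided each side contains a digon. For example, suppose an outer endpoint of $u\in V(K)$ is mapped onto a vertex $v\in V(K)$ at distance $4$ from $u$ in $G(K)$, which is path-distance $5$ in $\widehat K$. This closes a $5$-cycle that the lemma permits. It also forces two distinct darts of $\widehat K$ to have the same image, so the realized configuration genuinely differs from $K$.

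\emph{Three further problems.}
First, vertex pairs are the wrong objects to branch on. Coincidences of outer endpoints alone are harmless, while ``becoming adjacent'' is not an identification request at all.
Second, you do not say how to iterate. After one identification, newly closed incidence lists must again bound triangles or digons of $G^*$, which can force further identifications.
Third, discarding a free image because it contains an apparent short separating cycle is unsound unless the map from that image to $G^*$ is injective on darts. Otherwise the ``separated'' vertices may still collapse, and that branch must be refined further, not dropped.

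\textbf{How the paper handles this.} It uses no distance argument. Lemma \ref{lem:K-imply-I} is proved by induction on the number of darts of $\widehat K$, via the dichotomy \emph{$\phi$ injective on darts or not}.

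If $\phi$ is injective on darts, only outer endpoints can coincide, and Claim \ref{claim:island-appear} shows that $I(K)$ or a trimmed island of it appears in $G$. The checks \textsf{isPlanar} and \textsf{hasSeparatingCycle} only decide whether to output $I(K)$ at the current node. Claims \ref{claim:is-planar} and \ref{claim:cycle} show that \textsf{isPlanar} returning \textsf{false}, or \textsf{hasSeparatingCycle} returning \textsf{true}, forces non-injectivity, so no case is lost.

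If $\phi$ is not injective on darts, some pair of distinct darts is identified. The algorithm then branches over \emph{all} pairs $(e,f)$ of darts of the finite $\widehat K$. For each pair it takes free homomorphic images respecting $\{(e,f)\}$ and enforcing single digon incidence. It restores an outer extension with \textsf{makeOuterExtension}, branching on $e_0=e_2$ versus $e_0=e_3$; this is exhaustive by (M5), as shown in Claim \ref{claim:make-outer-extension}. It then recurses. The dart count strictly drops, which gives termination and completeness at once.

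The theorem itself takes the least $i$ such that $K_i$ or its mirror maps to $G^*$, so that earlier configurations can be used for blocking. It then concludes with Lemmas \ref{lem:I-is-reducible} and \ref{lem:imply}. Lemma \ref{lem:triangulation} is used only for pruning, never for completeness.

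\textbf{A smaller error in Step 3.} An island with no boundary face that appears in $G$ is isomorphic to $G$, and hence is \emph{not} $3$-edge-colourable. Its colourability (and so its semi-D-reducibility) must be verified by the computer check; it cannot be inferred from its appearing in $G$.
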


Actually, our constructed set of multi-boundary islands has the form $\mathcal{I} \cup \mathcal{T}$, where $\mathcal{T}$ is the set of trimmed multi-boundary islands of some multi-boundary island $I \in \mathcal{I}$.
If all multi-boundary islands in $\mathcal{I}$ are semi-reducible, then all multi-boundary islands in $\mathcal{T}$ are semi-reducible by Lemma \ref{lem:imply}.
Thus, we only maintain the set $\mathcal{I}$.
Note that we construct $\mathcal{I}$ from $\mathcal{K}$ by executing the algorithms described in this section on a computer.

\subsection{Algorithms for computer checks}
\label{subsect:alg-comp-check}
\showlabel{subsect:alg-comp-check}
The algorithm in this section branches, for each $K\in\mathcal K$, according to whether a homomorphism from $\widehat K$ is injective on darts.
If it is injective, we show that $I(K)$ or its trimmed multi-boundary island appears in $G$, see Claim \ref{claim:island-appear}.
Otherwise, some two distinct darts are mapped to the same dart, and hence there exists a homomorphism from a free homomorphic image respecting this pair of darts.
However, this simple branching creates too many multi-boundary islands.
We therefore apply pruning based on planarity and on the structure of separating cycles in $G^*$.
Although we first describe the pruning method, the main algorithm is \allHomImages, which is presented at the end of this section.

Note that in the algorithm in this Section, we only use the fact that the number of vertices of degree 2 is at most 3 in only one part.
This is the part where, if the constructed multi-boundary island has more than 3 vertices of degree 2, we do not add it to $\mathcal{I}$.
We add a footnote in this part of the algorithm.

\paragraph{Free homomorphic images of an outer extension of a multi-boundary configuration}
In the following algorithm, we compute free homomorphic images of an outer extension of a multi-boundary configuration $\widehat K$.
Note that an outer extension of a multi-boundary configuration is a pseudo-embedding, so we use the algorithm for a pseudo-embedding (not a pseudo-triangulation with digons).
The homomorphism from $\widehat K$ preserves the degrees of vertices in $K$, see Section \ref{sect:dart}.
When computing free homomorphic images of $\widehat K$, it is considered as a pseudo-embedding with degree-range functions $\delta^-,\delta^+$.
Specifically, for a vertex $v$ in $K$, $\delta^-(v)=\delta^+(v)=\delta_K(v)$, and for an outer endpoint $v$, $\delta^-(v)=5, \delta^+(v)=\infty$.
Since the minimum degree of $G^*$ is 5, $\delta^-(v)=5$.

In $\widehat K$, a boundary vertex has degree one.
The following claim shows that this property is preserved under taking free homomorphic images.
This claim helps us to see that a free homomorphic image of an outer extension can also be regarded as an outer extension of some multi-boundary configuration.
\begin{claim}
\label{claim:suc=nil iff head=degrre-one}
\showlabel{claim:suc=nil iff head=degrre-one}
    Let $(Z, \delta^-, \delta^+)$ be a pseudo-embedding such that
    every boundary vertex $v$ has degree one, but $\delta^-(v) > 1$.
    Let $(Z^*, \delta^{*-}, \delta^{*+})$ be a free homomorphic image of $(Z, \delta^-, \delta^+)$ respecting some set of identification requests $S$.
    
    In $Z^*$, every boundary vertex $v^*$ has degree one, but $\delta^{*-}(v^*) > 1$.
\end{claim}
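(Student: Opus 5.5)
The plan is to prove the claim by tracing how the algorithm for free homomorphic images of pseudo-embeddings builds $Z^*$ from $Z$. That algorithm, the one of \cite{inoue2026four} with \textsf{addBoundaryDarts} replaced by the \textsf{boundaryCompletions} routine for pseudo-embeddings, repeatedly identifies darts and vertices. Whenever a boundary vertex $v$ reaches $\delta^{*-}(v)=\delta^{*+}(v)=d(v)$, it completes the incidence list of $v$. In the pseudo-embedding version this completion only sets $\predecessor(\efirst)=\elast$ and $\successor(\elast)=\efirst$; it adds no darts. I would therefore argue by induction over the elementary steps of the algorithm, maintaining the invariant: \emph{every boundary vertex has degree one and lower degree bound greater than one}. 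The invariant holds initially by hypothesis. Each step is either (a) an identification of two darts, together with the induced identification of their heads, tails and $\successor/\predecessor$ neighbours, or (b) a boundary completion.

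For a step of type (b), the vertex being completed becomes an inner vertex and no other incidence list changes, so the invariant is untouched. Note that a boundary vertex $u$ of degree one with $\delta^-(u)>1$ is never completed by itself. Its degree is $1<\delta^-(u)$, so completion can only happen once $u$ has merged with vertices of larger degree.

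For a step of type (a), I would split by the type of the heads being merged.

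\textbf{Two inner vertices merge.} The result is an inner vertex, since the cyclic lists are identified.

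\textbf{An inner vertex merges with a boundary vertex.} The boundary vertex's single dart is identified with a dart of the cyclic list, so the result is again inner.

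\textbf{Two boundary vertices $u,w$ merge.} Each has exactly one dart, say $e_u$ and $e_w$, and the identification forces $e_u=e_w$. The merged vertex therefore still has a single dart, hence degree one. Its range is the intersection $[\max(\delta^-(u),\delta^-(w)),\min(\delta^+(u),\delta^+(w))]$, so the lower bound stays greater than one. If the intersection is empty, the image is discarded, and the claim is vacuous for it.

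I also have to check that an identification never creates a new boundary vertex of degree at least two. A boundary vertex with two darts would need two darts in one acyclic list linked by $\successor$. Identification only merges existing lists along matching pointers, and the merged list of any vertex that had a non-\nil\ successor already comes from an inner (cyclic) list. So an acyclic list only ever arises from degree-one boundary vertices.

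The main obstacle I expect is making this last point rigorous. The identification procedure propagates along $\successor$ and $\predecessor$ pointers, and I must show that merging a one-dart list into a cyclic list cannot leave a partially linked acyclic list of length at least two. I would handle it as follows: by (M3) and (M6), every dart of a degree-one boundary vertex has both pointers equal to \nil. Its identification with a dart $f$ therefore imposes no constraints beyond $f$'s own pointers, and the resulting list is exactly $f$'s list, which is either cyclic or of length one. Finally, I would check that the single-digon and loop-discarding modifications only discard images or perform identifications of type (a), so the invariant survives them too.
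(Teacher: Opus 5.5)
Your proposal is correct and follows essentially the same route as the paper. The paper also argues directly from the free-homomorphism algorithm for pseudo-embeddings: a boundary vertex of $Z^*$ cannot be the image of an inner vertex, and two vertices are identified only when darts with those heads are identified, so two degree-one boundary vertices can merge only by identifying their unique darts. Your step-by-step invariant is a more detailed version of that single observation; it also covers the boundary-completion step and the lower bound $\delta^{*-}>1$, which follows from intersecting degree ranges and which the paper leaves implicit.
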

\begin{proof}
    Let $\phi^*$ be a free homomorphism from $(Z, \delta^-, \delta^+)$ to $(Z^*, \delta^{*-}, \delta^{*+})$.
    Suppose some dart $e^*$ in $Z^*$ has $\successor(e^*)=\nil$, but the degree of $\head(e^*)$ is more than one.
    Since $\successor(e^*)=\nil$, the vertex $\head(e^*)$ cannot be the image of an inner vertex of $Z$.

    Thus, while computing the set of free homomorphic images, two degree-one boundary vertices are identified, but two darts whose head is the identified vertex are not identified.
    This does not occur because, by observing the algorithm to compute free homomorphic images of pseudo-embeddings, including the algorithm in \cite{inoue2026four}, two vertices are identified only if two darts whose heads are these two vertices are identified.
\end{proof}

Note that when executing the algorithm for computer checks, as described below, we confirm the property in Claim \ref{claim:suc=nil iff head=degrre-one} every time we take free homomorphic images.
In this section, as a pseudo-embedding, we only consider an outer extension of a multi-boundary configuration and its free homomorphic images.
Thus, a boundary vertex has degree one by the above claim.
Connectivity is also preserved under taking a free homomorphism.

All pseudo-embeddings with degree, or degree-range functions, are loopless.
Since $G^*$ has no loops, and the image of a loop under a homomorphism is also a loop, restricting the input $Z$ to be loopless is fine.
Specifically, the algorithm to compute free homomorphic images does not return a pseudo-embedding with loops.

In this section, as a homomorphism $\phi$ from dart representations with degree-range functions $(Z,\delta^-,\delta^+)$ to $(Z', \delta^{'-}, \delta^{'+})$, we consider the standard one: every vertex $v$ in $Z$ satisfies $[\delta^{'-}(\phi(v)), \delta^{'+}(\phi(v))] \subseteq [\delta^-(v), \delta^+(v)]$.
We consider that $G^*$, which is represented as a pseudo-triangulation with digons, has a degree range with both lower bound and upper bound exactly $d_{G^*}(v)$.
Thus, a homomorphism to $G^*$ satisfies $\delta^-(v) \leq d_{G^*}(\phi(v)) \leq \delta^+(v)$.

\paragraph{Check planarity}
By identifying several darts of an outer extension of a multi-boundary configuration, we may have ``non-planar" pseudo-embeddings.
We consider the homomorphism to the plane graph $G^*$.
However, a homomorphism from ``non-planar" pseudo-embeddings to $G^*$ may exist.

The subroutine $\isPlanar$ checks planarity in some sense.
The input of this subroutine is a connected loopless pseudo-embedding $Z=(V,D)$.
The procedure $\isPlanar(Z)$ proceeds as follows.
From an arbitrary dart $e_0$, we compute a closed walk starting from $e_0$ as follows.
For a dart $e_i$, if $\successor(e_i) \neq \nil$, we determine the next dart $e_{i+1}=\reverse(\successor(e_i))$.
Otherwise, $\successor(e_i) = \nil$, then we determine the next dart $e_{i+1}=\reverse(f)$, where $f$ is the dart with $\head(f)=\head(e_i)$ and $\predecessor(f)=\nil$.
Let $F$ be the set of closed walks obtained by starting from every dart in this way, where cyclic shifts are considered as the same walk.
The pseudocode corresponding to the algorithm to compute $F$ is presented in the \getWalks subroutine in Algorithm \ref{alg:get_walks}.
The routine \isPlanar returns \textsf{true} if $|V|-|D|/2+|F|=2$, otherwise \textsf{false}.
Since $Z$ has no loops, every dart $e$ in $Z$ satisfies $\reverse(e) \neq e$, so the number of darts is even.
The pseudocode of the \isPlanar subroutine is presented in Algorithm \ref{alg:is_planar}.

For every boundary vertex, we can complete its incidence list by assigning $\predecessor(e)=f,\successor(f)=e$ for the first dart $e$ and the last dart $f$ of its incidence list.
The obtained pseudo-embedding in this way from $Z$ is considered as a rotation system, which is equivalent to a 2-cell embedding on some orientable surface (see \cite{MT}), but not necessarily the plane.
By considering the Euler formula for orientable surfaces, it follows that $|V|-|D|/2+|F| \leq 2$.
The following claim shows that $\isPlanar(Z)=\textsf{false}$ implies non-planarity of $Z$ in some sense.

\begin{claim}
\label{claim:is-planar}
\showlabel{claim:is-planar}
    Let $Z=(V,D)$ be a connected loopless pseudo-embedding.
    We assume that a homomorphism from $Z$ to the plane graph $G^*$ exists.
    If $\isPlanar(Z)=\textsf{false}$, then this homomorphism is not injective on darts.
\end{claim}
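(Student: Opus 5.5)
We argue by contraposition: assuming the homomorphism $\phi\colon Z\to G^*$ is injective on darts, we show that $|V|-|D|/2+|F|=2$, so $\isPlanar(Z)=\textsf{true}$. The idea is to realize the closed walks in $F$ as facial boundaries of a connected plane graph $H$ obtained from the image of $Z$ in $G^*$. Then Euler's formula for $H$ gives the required count.

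First, since $\phi$ is injective on darts, it is also injective on vertices of $Z$ that are heads of darts. Every vertex is such a head by (M1). Injectivity on darts, together with preservation of $\reverse$, shows that $\phi(Z)$ is a subgraph $H$ of $G^*$ with $|V(H)|=|V|$ and $|E(H)|=|D|/2$. $H$ is connected because $Z$ is. We embed $H$ with the embedding inherited from $G^*$, so $H$ is a connected plane graph. Euler's formula gives $|V(H)|-|E(H)|+|F(H)|=2$. It therefore suffices to exhibit a bijection between the walks in $F$ and the faces of $H$.

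Next, we compare the rotation systems. For each vertex $u$ of $Z$, the incidence list of $u$ (by (M6)) is mapped injectively, preserving $\successor$ and $\predecessor$, onto a set of consecutive darts in the clockwise rotation at $\phi(u)$ in $G^*$. These are exactly the darts of $H$ at $\phi(u)$, by injectivity and since every dart of $H$ is an image. For an inner vertex this is the whole rotation. For a boundary vertex it is an interval, and closing the list cyclically (first dart after last) reproduces the rotation of $\phi(u)$ restricted to $H$, because the darts of $G^*$ between the last and first darts are not in $H$. Hence the rotation system of $Z$, completed at boundary vertices as in the discussion before the claim, coincides with the rotation system that the plane embedding of $H$ induces. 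The face-tracing rule used by \getWalks is exactly the face-tracing rule for this rotation system: follow $\reverse(\successor(\cdot))$, and at a $\nil$ successor wrap around to the dart with $\nil$ predecessor. So the walks in $F$ are the facial walks of this rotation system.

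Finally, a rotation system induced by an embedding of a connected graph in the sphere has facial walks in bijection with the faces of that embedding. Consequently $|F|=|F(H)|$, and $|V|-|D|/2+|F|=2$, a contradiction. The main obstacle is the middle step: checking carefully that the wrap-around rule at boundary vertices matches the face boundaries of the subgraph $H$, since some faces of $H$ merge several faces of $G^*$. Orientation conventions (clockwise $\successor$ versus the side used in \getWalks) must also be consistent. I would handle this by writing out that the restriction of a planar rotation system to a connected subgraph is again planar, with the induced cyclic order obtained by skipping absent darts. The connectivity of $H$ is what guarantees that $H$ is cellularly embedded, so Euler's formula applies without correction.
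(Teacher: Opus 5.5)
There is a genuine gap in your first step: injectivity on darts does \emph{not} imply injectivity on vertices. A dart with head $u$ and a dart with head $u'\neq u$ can be sent to two different darts with the same head, so $\phi(u)=\phi(u')$ is compatible with $\phi$ being injective on darts. Dart-injectivity only rules out this happening when $u$ or $u'$ is inner, because the cyclic incidence list of an inner vertex is mapped onto the entire rotation at its image. Two \emph{boundary} vertices, however, can be identified, with their incidence lists landing on disjoint intervals of the rotation at the common image vertex.

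This is the typical situation in the paper, not a degenerate one. In an outer extension $\widehat K$, the last outer endpoint at one boundary vertex and the first outer endpoint at the next are distinct degree-one vertices. A dart-injective homomorphism sends them to the same vertex of $G^*$ whenever the face between them is a triangle (cf.\ Claim~\ref{claim:island-appear}). So $|V(H)|=|V|$ fails. Your assertion that closing up the incidence list of a boundary vertex $u$ reproduces the rotation of $H$ at $\phi(u)$ also fails: darts from the other preimages of $\phi(u)$ sit in the gap between the last and first darts of $u$, and they belong to $H$. Hence the walks in $F$ are in general not the facial walks of $H$, and there is no bijection between $F$ and $F(H)$.

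What is missing is a comparison of Euler characteristics across these identifications, which is how the paper argues (by induction on the number of identified vertices). Your argument covers exactly the base case: if $\phi$ is also injective on vertices, then $|V|-|D|/2+|F|=|V(H)|-|E(H)|+|F(H)|=2$.

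In general, the rotation of $H$ at a vertex $w$ is a cyclic concatenation $I_1I_2\cdots I_k$ of the images of the incidence lists of the preimages of $w$. Start from the cyclically completed rotation system of $Z$ and merge these preimages one at a time. Each merge swaps the successors of two darts, so it lowers the vertex count by one and changes the number of facial walks by $\pm1$ (one walk splits, or two walks merge). Thus $|V|-|E|+|F|$ stays equal or drops by $2$ at each step, which gives $2=|V(H)|-|E(H)|+|F(H)|\le |V|-|D|/2+|F|$.

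This contradicts $|V|-|D|/2+|F|\le 1$, which follows from $\isPlanar(Z)=\textsf{false}$ together with Euler's formula for the completed rotation system on an orientable surface. The rest of your setup is fine: edges in bijection, $H$ connected, \getWalks tracing the faces of the completed rotation system, and Euler's formula for the connected plane graph $H$.
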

\begin{proof}
    Let $Z^*$ be the image of $Z$ under $\phi$.
    $Z^*$ can be regarded as a subgraph of $G^*$.
    Since $G^*$ is embedded in the plane, $Z^*$ is also embedded in the plane.
    Let $V(Z^*), E(Z^*), F(Z^*)$ denote the set of vertices, edges, faces of $Z^*$ in this embedding.
    Suppose $\phi$ is injective on darts.
    We prove that $|V(Z^*)|-|E(Z^*)|+|F(Z^*)| \leq |V|-|D|/2+|F|$ by induction on the number of vertices identified by $\phi$.
    By Euler Formula, $|V(Z^*)|-|E(Z^*)|+|F(Z^*)|=2$, but since $\isPlanar(Z)=\textsf{false}$, $ |V|-|D|/2+|F| \leq 1$ holds.
    It leads to a contradiction.
    
    As a base case, suppose $\phi$ is injective on vertices.
    Then, $|V(Z^*)|=|V|$ and $|E(Z^*)|=|D|/2$.
    For each closed walk in $F$ of $Z$, its image bounds a face in $F(Z^*)$, and hence $|F|=|F(Z^*)|$.
    Thus, $|V(Z^*)|-|E(Z^*)|+|F(Z^*)|=|V|-|D|/2+|F|$.

    As an inductive step, we consider the case where two distinct vertices $u,v$ in $Z$ are mapped to the same vertex, i.e., $\phi(u)=\phi(v)$.
    Since $\phi$ is injective on darts, both $u$ and $v$ are boundary vertices.
    Then, there exists another homomorphism $\phi'$ from $Z$
    such that $\phi$ is represented by $\phi'' \circ \phi'$, where $\phi''$ is a homomorphism satisfying, for all vertices or darts $x,y$ in $\phi'(Z)$,
    $\phi''(x)=\phi''(y)$ if and only if $\{x,y\}=\{\phi'(u), \phi'(v)\}$.
    In $Z^*$, which is the image of $\phi$, the number of vertices decreases by $1$ from $\phi'(Z)$.
    The number of faces increases by at most $1$.
    Applying the induction hypothesis to $\phi'(Z)$, $|V(Z^*)|-|E(Z^*)|+|F(Z^*)| \leq |V|-|D|/2+|F|$ holds.
\end{proof}

\paragraph{Check separating cycles}
Recall in Lemma \ref{lem:triangulation}, $G^*$ has no separating cycle of length at most 3.
$G^*$ neither has a cycle of length 4 such that more than 2 vertices are in both disks separated by this cycle.
The \hasSeparatingCycle subroutine below checks these kinds of cycles in a given pseudo-embedding.
We will show Claim \ref{claim:cycle} below.

A \emph{walk} of length $l$ is a sequence of darts $(d_i)_{i=0}^{l-1}$ where $\head(d_i)=\tail(d_{i+1})$ for every $0 \leq i < l-1$.
A \emph{closed walk} of length $l$ is a walk of length $l$ such that $\head(d_{l-1})=\tail(d_0)$.
A \emph{cycle} of length $l$ is a closed walk of length $l$ where if $i \neq j$ then $\head(d_i) \neq \head(d_j)$ for all $0 \leq i,j \leq l-1$.

The \hasSeparatingCycle subroutine takes a connected loopless pseudo-embedding $(Z, \delta^-, \delta^+)$ with degree-range functions such that $\isPlanar(Z)=\textsf{true}$.
The \hasSeparatingCycle subroutine proceeds as follows.
First, we enumerate cycles of length at most 4 in $Z$.
The pseudocode to enumerate such cycles is presented in Algorithm \ref{alg:enum_cycles}.
This algorithm searches cycles starting from each dart in $Z$, and simply runs DFS.

For each cycle $C=(d_i)_{i=0}^{l-1}$, the algorithm proceeds as follows.
The indices of $d_i$ are cyclic, i.e., if $i < 0$ or $i \geq l$, then $d_i=d_{i \bmod l}$.
We add the label \textsf{L} or \textsf{R} to all darts in $Z$.
In the following, the symbol $L_D(d)$ denotes the label of a dart $d$.
First, for each $0 \leq i \leq l-1$, assign $L_D(d_i)=\textsf{L}$, $L_D(\reverse(d_i))=\textsf{R}$. 
After that, we propagate these labels as follows.
For each $0 \leq i \leq l-1$, if $\successor(d_i) \neq \nil$ and $\successor(d_i) \neq \reverse(d_{i+1})$, and assign $L_D(\successor(d_i))=\textsf{L}$.
Moreover, if an unlabeled dart $d$ is reachable from $\successor(d_i)$ by following $\successor, \predecessor, \reverse$ pointers without intersecting any other darts in $C$ or their reverse, assign $L_D(d)=\textsf{L}$.
We also propagate labels from $\reverse(d_i)$ similarly.
Specifically, if $\successor(\reverse(d_{i+1})) \neq \nil$ and $\successor(\reverse(d_{i+1})) \neq d_i$, assign $L_D(\successor(\reverse(d_{i+1})))=\textsf{R}$, and propagate this label in the same way as above.
Under the assumption of Claim \ref{claim:cycle} below, $\isPlanar(Z)=\textsf{true}$, so no two labels are assigned to the same dart.
The pseudocode corresponding to the algorithm for adding labels to darts is presented in Algorithm \ref{alg:label_darts}.

Then, we consider the vertex set
$V_C \coloneq \{\head(d) \mid d \in C\}$,
$V_\textsf{L} \coloneq \{\head(d) \mid L_D(d)=\textsf{L}\} \setminus V_C$, and
$V_\textsf{R} \coloneq \{\head(d) \mid L_D(d)=\textsf{R}\} \setminus V_C$.
By the label propagation step, two darts whose heads are the same vertex not in $C$ have the same label, so $V_\textsf{L} \cap V_\textsf{R} = \emptyset$.
If all vertices in $V_\textsf{L}$ are inner,
we compute the value $n_\textsf{L}$ by the number of inner vertices in $V_\textsf{L}$.
Otherwise, $n_\textsf{L}$ is this number plus 1.
We compute the value $n_\textsf{R}$ in the same way from $V_\textsf{R}$.
The pseudocode to compute $n_\textsf{L}, n_\textsf{R}$ is presented in Algorithm \ref{alg:num_seperated_vertices}.

If the length of $C$ is at most 3, $n_\textsf{L}>0$, and $n_\textsf{R}>0$, return \textsf{true}.
Also, if the length of $C$ is 4, $n_\textsf{L}>2$, and $n_\textsf{R}>2$, return \textsf{true}.
Otherwise, we process the next cycle.
If we finish processing all cycles, return \textsf{false}.
The pseudocode of the \hasSeparatingCycle subroutine is presented in Algorithm \ref{alg:has_separating_cycle}.

We can show the following claim.
\begin{claim}
\label{claim:cycle}
\showlabel{claim:cycle}
    Let $(Z, \delta^-, \delta^+)$ be a connected loopless pseudo-embedding with degree-range functions with $\isPlanar(Z)=\textsf{true}$.
    We assume a homomorphism from $(Z, \delta^-, \delta^+)$ to $G^*$ exists.
    If $\hasSeparatingCycle((Z, \delta^-, \delta^+))=\textsf{true}$ then this homomorphism is not injective on darts.
\end{claim}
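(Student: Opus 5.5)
We prove the contrapositive: assuming $\phi$ is injective on darts, we show that every cycle $C$ of length at most $4$ in $Z$ has $n_\textsf{L}=0$ or $n_\textsf{R}=0$ when $|C|\le 3$, and $n_\textsf{L}\le 2$ or $n_\textsf{R}\le 2$ when $|C|=4$. Fix such a cycle $C=(d_i)_{i=0}^{l-1}$. Injectivity on darts gives injectivity on the inner vertices of $Z$: two distinct inner vertices cannot share an image, because their full incidence lists would then map onto the same incidence list of $G^*$, and the degree condition forces them to overlap in a dart. We also need $\phi(C)$ to be a genuine cycle in $G^*$ of the same length. The heads of the $d_i$ are distinct in $Z$; if two of them were identified by $\phi$, then $\phi(C)$ would be a closed walk of length at most $4$ in $G^*$ with a repeated vertex. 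This splits into shorter closed walks, and the case analysis via Lemma~\ref{lem:triangulation} (looplessness; at most one digon per vertex, which rules out separating $2$-cycles) should reduce to situations we can handle directly or that contradict dart-injectivity. This is a point I would check carefully, since boundary vertices can be identified.

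Assuming $\phi(C)$ is a cycle of length $l\le 4$ in $G^*$, the main step is to show that the two label classes go to opposite sides of $\phi(C)$. The darts labelled $\textsf{L}$ form the connected region reachable from the left side of $C$ through $\successor$, $\predecessor$ and $\reverse$ moves without crossing $C$. A homomorphism preserves $\successor$, $\predecessor$ and $\reverse$. Dart-injectivity keeps the images of these moves from hitting darts of $\phi(C)$ or their reverses. Since $G^*$ is embedded in the plane, the rotation at each vertex of $\phi(C)$ then sends every dart starting on the left of $C$ into the interior disk of $\phi(C)$ (say), and every $\textsf{R}$ dart into the exterior disk. So $\phi(V_\textsf{L})$ lies in the open interior disk and $\phi(V_\textsf{R})$ in the open exterior disk. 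Here I would use $\isPlanar(Z)=\textsf{true}$ to guarantee that the labelling is consistent, i.e.\ that no dart receives both labels.

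Next we count. The inner vertices of $V_\textsf{L}$ map injectively into the interior. If some vertex of $V_\textsf{L}$ is a boundary vertex, it has degree one (Claim~\ref{claim:suc=nil iff head=degrre-one}) and $\delta^->1$. Its image is then a vertex in the interior that is not the image of any inner vertex, because its image has a dart that is not the image of anything, which contributes the $+1$ in $n_\textsf{L}$. Hence the interior disk contains at least $n_\textsf{L}$ vertices of $G^*$, and similarly the exterior contains at least $n_\textsf{R}$. Lemma~\ref{lem:triangulation}(3) now finishes the argument. If $l\le 3$, one of the disks is a face, so it contains no vertex and one of $n_\textsf{L},n_\textsf{R}$ is $0$. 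If $l=4$, one side has no vertex or is a tiny disk, with at most two vertices, so one of $n_\textsf{L},n_\textsf{R}$ is at most $2$. The case $l=2$ uses the same bullet, treating a digon as a face.

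The hardest step is the region-correspondence argument. When boundary vertices of $Z$ are identified by $\phi$, the image of the left region might wrap around to the other side of $\phi(C)$. The $+1$ accounting also has to be made rigorous: we must make sure that the extra vertex we count is distinct from the images of the inner vertices and lies strictly inside the disk, not on $\phi(C)$. To settle both points I would first try an induction on vertex identifications, mirroring the proof of Claim~\ref{claim:is-planar}.
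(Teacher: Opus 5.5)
Your outline is the paper's argument: $\phi(C)$ is a cycle, only boundary vertices can share images, the two label classes land in the two open disks of $\phi(C)$, you count, and you contradict Lemma~\ref{lem:triangulation}. Your local separation argument is also correct. The gap is the two points you leave open: whether $\phi(C)$ is a genuine cycle, and whether the counted vertices (in particular the ``$+1$'' vertex) lie strictly inside the disk rather than on $\phi(C)$. The routes you suggest for them, a case analysis of closed walks with a repeated vertex and an induction on identifications, are not needed.

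The missing observation is that every vertex of $C$ is an inner vertex. The cycles produced by \textsf{enumCycles} never turn back, i.e.\ $\reverse(d_{i+1})\neq d_i$. So $\head(d_i)$ carries the two distinct darts $d_i$ and $\reverse(d_{i+1})$ and has degree at least two. In the setting where the claim is used (outer extensions and their free homomorphic images inside \allHomImages), every boundary vertex has degree one by Claim~\ref{claim:suc=nil iff head=degrre-one}, a fact you already invoke. Under a dart-injective $\phi$, an inner vertex $u$ shares its image with \emph{no} other vertex. Its cyclic incidence list is closed under $\successor$, so its image is the entire rotation at $\phi(u)$ in $G^*$, and any other vertex mapped to $\phi(u)$ would have a dart colliding with one of these. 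Hence $\phi$ is injective on $V_C$, so $\phi(C)$ is a cycle of length $l$, and no vertex of $V_\textsf{L}\cup V_\textsf{R}$ is mapped onto $\phi(C)$.

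The same fact is the right justification for the $+1$: the image of a boundary vertex of $V_\textsf{L}$ cannot be the image of an inner vertex, because that inner vertex would already use every dart there. Your stated reason, that the image ``has a dart that is not the image of anything'', is false in general. Several degree-one boundary vertices can together cover the whole rotation of their common image.

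Your wrap-around worry is unfounded. The label classes are defined by $\successor$/$\predecessor$/$\reverse$ moves inside $Z$. Each such move maps to the same move in $G^*$, and by dart-injectivity it avoids $\phi(C)\cup\reverse(\phi(C))$. So the images stay on one side of $\phi(C)$ regardless of which boundary vertices are identified, since identifications add no moves. The same argument shows no dart is reachable from both sides, so label consistency does not even need $\isPlanar(Z)=\textsf{true}$. With these points in place, your count and the appeal to Lemma~\ref{lem:triangulation} go through exactly as in the paper.
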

\begin{proof}
    Suppose that $\hasSeparatingCycle((Z, \delta^-, \delta^+))=\textsf{true}$, but the homomorphism is injective on darts.
    $Z$ has a cycle $C$ with length at most 3, $n_\textsf{L} > 0$, and $n_\textsf{R} > 0$, or length at most 4, $n_\textsf{L} > 2$, and $n_\textsf{R} > 2$.
    The image of $C$ under a homomorphism also forms a cycle in $G^*$.
    Since this homomorphism is injective on darts, two distinct vertices of $Z$ can be mapped to the same vertex only if they are both boundary vertices.
    Then, the number of vertices in the two disks separated by the image of $C$ in $G^*$ is at least $n_\textsf{L}$ and $n_\textsf{R}$ respectively.
    The image of $C$ forming a cycle in $G^*$ contradicts Lemma \ref{lem:triangulation}.
\end{proof}

Note that the pruning by Claim \ref{claim:cycle} is important for reducing the size of the set $\mathcal{I}$.

\paragraph{All homomorphic images}
We also allow a multi-boundary configuration to be embedded in non-planar surfaces, and call it a \emph{possibly non-planar multi-boundary configuration}.
We consider the routine $\allHomImages(K, \mathcal{K}_\textsf{smaller})$ where the input $K$ is a possibly non-planar multi-boundary configuration, and $\mathcal{K}_\textsf{smaller}$ is a set of configurations.
This routine outputs the set $\mathcal{L}$ of multi-boundary islands with the following property: if there exists a homomorphism from $K$ to $G^*$ but there does not exist any homomorphism from a configuration in $\mathcal{K}_\textsf{smaller}$ to $G^*$, then there exists one multi-boundary island $I \in \mathcal{L}$ such that $I$ or a trimmed multi-boundary island of $I$ appears in $G$.
The routine $\allHomImages(K, \Ksmaller)$ proceeds as follows.

As an output set, we initialize $\mathcal{L}$ as an empty set.
Let $\widehat K$ be the outer extension of $K$.
If $\widehat K$ is blocked by the configuration set $\mathcal{K}_\textsf{smaller}$, then return $\mathcal{L}=\emptyset$, and finish this routine.
Otherwise, the routine proceeds as follows.
If $\isPlanar(\widehat K)=\textsf{true}$, then $K$ can be regarded as a multi-boundary configuration (in the plane).
If $\hasSeparatingCycle(\widehat K)=\textsf{false}$, we add one island to $\mathcal{L}$ as follows.
Recall that $I(K)$ is the multi-boundary island associated with $K$ described in Section \ref{subsect:conf-2-island}.
If no face $F \in F_R(K)$ has ring size of $1$, we choose $I(K)$.
Otherwise, the face in $F_R(I(K))$ originating from $F \in F_R(K)$ of ring size 1 is incident to only one degree-one vertex.
We obtain a trimmed island $I(K)$ by deleting every degree-one vertex incident to such a face.
We choose this island in this case.
If the chosen island has at most 3 degree-2 vertices\footnote{This is the only part where we use the fact that the number of vertices of degree 2 is at most 3 in the algorithm in Section~\ref{sect:hom-conf}.}, we add it to $\mathcal{L}$.
In this part, we have to implement the algorithm to construct such an island from an outer extension $\widehat K$.
Since the implementation details of the construction are rather technical, we defer them to Section \ref{subsect:island-from-outer-extension}.
For other cases, i.e., $\isPlanar(\widehat K)=\textsf{false}$ or $\hasSeparatingCycle(\widehat K)=\textsf{true}$, we add nothing to $\mathcal{L}$ here.

Next, we consider all pairs of distinct darts in $\widehat K$, and for each pair $(e,f)$, compute the set of free homomorphic images of $\widehat K$ respecting $S=\{(e,f)\}$ and forcing single digon incidence, denoted by $\mathcal{Z}_{e,f}^*$.
However, as we know $\mathcal{Z}^*_{\reverse(e),f}=\mathcal{Z}^*_{e,\reverse(f)}$, if we consider all pairs containing a dart $e$, we do not need to consider a pair containing $\reverse(e)$.
Specifically, in this enumeration process, we loop over a dart $e$ in the outer loop, and we loop over a dart $f$ in the inner loop.
In the implementation, we maintain a set of marked darts.
When an unmarked dart $e$ is chosen in the outer loop, the inner loop ranges over all unmarked darts $f$ distinct from $e$.
After this iteration, we mark both $e$ and $\reverse(e)$.
Marked darts are not used later because every pair containing a marked dart has already been represented in an earlier iteration.
This pruning reduces the size of the set $\mathcal{I}$.

For the enumerated pair $(e, f)$, consider each element $((Z_{e,f}^*, \delta_{e,f}^{*-}, \delta_{e,f}^{*+}), \phi^*_{e,f}) \in \mathcal{Z}_{e,f}^*$.
By Claim \ref{claim:suc=nil iff head=degrre-one}, a boundary vertex has degree one in $Z_{e,f}^*$.
The \makeOuterExtension subroutine makes $(Z_{e,f}^*, \delta_{e,f}^{*-}, \delta_{e,f}^{*+})$ an outer extension of some possibly non-planar multi-boundary configuration.
The subroutine $\makeOuterExtension((Z_{e,f}^*, \allowbreak \delta_{e,f}^{*-}, \delta_{e,f}^{*+}))$ proceeds as follows.

We initialize an output set and an empty queue.
We push $(Z_{e,f}^*, \delta_{e,f}^{*-}, \delta_{e,f}^{*+})$ into the queue.
While this queue is non-empty, we pop one element $(Z,\delta^-,\delta^+)$ and process it as follows.
We consider an arbitrary dart $e_0$ where $\successor(e_0) \neq \nil$.
Let $e_{i+1}$ be $\reverse(\successor(e_i))$ for $i=0,1,2,3$.
Note that if $\successor(e_i) = \nil$ for some $i$, the later darts are not defined.
Since $\successor(e_0) \neq \nil$, $e_1$ is always defined, and $e_1 \neq e_0$ since otherwise $e_0$ is a loop.
We check whether every such dart $e_0$ satisfies at least one of the following: $\head(e_1)$ or $\head(e_2)$ is a boundary vertex, $e_2=e_0$, or $e_3=e_0$.
Note that $e_2$ is defined when $\head(e_1)$ is an inner vertex and $e_3$ is defined when $\head(e_2)$ is an inner vertex.
If every such dart satisfies this condition, we add $(Z, \delta^-,\delta^+)$ to the output set.
Otherwise, a dart $e_0$ exists where neither $\head(e_1)$ nor $\head(e_2)$ is a boundary vertex, $e_2 \neq e_0$ and $e_3 \neq e_0$.
The pseudocode to find such darts $e_i$ ($0 \leq i \leq 3$) is presented in Algorithm \ref{alg:find_four_darts}.
We compute a set $\mathcal{Z}_2$ of free homomorphic images of $(Z,\delta^-,\delta^+)$ respecting $\{(e_0,e_2)\}$ and enforcing single digon incidence.
Also, we compute a set $\mathcal{Z}_3$ of free homomorphic images of $(Z,\delta^-,\delta^+)$ respecting $\{(e_0, e_3)\}$ and enforcing single digon incidence.
The pseudocode to compute $\mathcal{Z}_2$ and $\mathcal{Z}_3$ is presented in Algorithm \ref{alg:enusre_outer_extension}.
We push all elements in $\mathcal{Z}_2$ and $\mathcal{Z}_3$ into the queue, then finish the process for $(Z,\delta^-,\delta^+)$.
This iteration terminates in finite time because the number of darts decreases at each iteration.
The pseudocode of the \makeOuterExtension subroutine is presented in Algorithm \ref{alg:make_outer_extension}.

Let $\mathcal{S}=\makeOuterExtension((Z_{e,f}^*, \delta_{e,f}^{*-}, \delta_{e,f}^{*+}))$.
Every pseudo-embedding in $\mathcal{S}$ can be regarded as an outer extension of some possibly non-planar multi-boundary configuration, see Claim \ref{claim:make-outer-extension} below.
For each of them, we recursively call $\allHomImages$ with the same $\mathcal{K}_\textsf{smaller}$, and add its output to $\mathcal{L}$.
We process other pseudo-embeddings in $\mathcal{Z}_{e,f}^*$ in the same way, and update $\mathcal{L}$.
For other pairs of darts except $(e, f)$, we process them in the same way.
We return $\mathcal{L}$ as the output of $\allHomImages(K, \mathcal{K}_\textsf{smaller})$.
The recursive call of \allHomImages terminates in finite time since at every recursive call, the number of darts decreases.
The pseudocode of \allHomImages is presented in Algorithm \ref{alg:all_hom_images}.

Using the $\allHomImages$ routine, we construct the set of islands $\mathcal{I}$ as follows.
We order configurations in $\mathcal{K}$ by $K_1,K_2,\ldots,K_{|\mathcal{K}|}$, and initialize $\mathcal{K}_\textsf{smaller}=\emptyset$.
For each $K_i$ ($i=1,2,\ldots,|\mathcal{K}|$) in this order, we call $\allHomImages(K_i, \mathcal{K}_\textsf{smaller})$.
After calling it, we update $\mathcal{K}_\textsf{smaller}$ by adding $K_i$ and its mirror image, say $K_i^{\textsf{mirror}}$.
The union of all islands obtained by calling the \allHomImages routine is $\mathcal{I}$.
In other words, $\mathcal{I} = \bigcup_{1 \leq i \leq |\mathcal{K}|} \allHomImages(K_i, \{K_j, K_j^{\textsf{mirror}}\}_{j=1}^{i-1})$.
Lemma \ref{lem:I-is-reducible} is shown by constructing the set $\mathcal{I}$ and checking the semi-reducibility of all multi-boundary islands in $\mathcal{I}$, see Lemma \ref{comp:lem:reducible}.

\begin{lem}
\label{lem:I-is-reducible}
\showlabel{lem:I-is-reducible}
    All multi-boundary islands in $\mathcal{I}$ are semi-D-reducible or semi-C-reducible.
\end{lem}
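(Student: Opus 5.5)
This lemma is a finite statement about an explicitly constructed set, so the plan is a computer verification organized to make each step checkable. The first stage builds $\mathcal{I}$. We fix the ordering $K_1,\dots,K_{|\mathcal{K}|}$ of the 915 normal configurations and run $\allHomImages(K_i,\{K_j,K_j^{\textsf{mirror}}\}_{j<i})$ for each $i$. This uses the free-homomorphism routine for pseudo-embeddings (Lemma \ref{lem:free-hom}), enforcement of single digon incidence (Lemma \ref{lem:pseud-embed-single-digon}), \makeOuterExtension, and the pruning by \isPlanar and \hasSeparatingCycle. Each island emitted at a leaf is produced from the outer extension $\widehat K$ by the construction of Section \ref{subsect:conf-2-island}:
\begin{enumerate}
\item take the free completion;
\item take the dual;
\item split each ring-face vertex into degree-one vertices;
\item trim faces of ring size 1.
\end{enumerate}
We record, for each island, its ring structure $R\subseteq\mathbb{N}\times\mathbb{N}$ with the clockwise labelling $r_{i,j}$ around each boundary face $F_i$. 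As sanity checks at every step, we confirm Claim \ref{claim:suc=nil iff head=degrre-one} and that the island has at most three degree-two vertices.

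The second stage tests each $I\in\mathcal{I}$ for semi-D-reducibility. We enumerate $\mathcal{D}_R$, all $3^{|R|}$ ring colorings with no parity restriction. We compute $\mathcal{C}_I$ by a direct edge-coloring extension search inside $I$, which is fast because islands have bounded size. We then compute the maximal semi-consistent subset of $\mathcal{D}_R\setminus\mathcal{C}_I$ by the standard fixed-point iteration. Starting from $\mathcal{C}=\mathcal{D}_R\setminus\mathcal{C}_I$, we repeatedly delete every $\phi$ that fails the following test: for some pair of colors $x,y$, every semi-matching partition $M$ of $\phi^{-1}(\{x,y\})$ admits a sub-collection $M'\subseteq M$ whose switch leaves $\mathcal{C}$. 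Definition \ref{dfn:semi-matching} allows singletons, and these model the half-chains ending at degree-two vertices outside $I$. The semi-matchings must therefore be enumerated per boundary face: each is a non-crossing partial pairing, with the unpaired elements as singletons, and matches never join two different faces. The union of semi-consistent sets is semi-consistent, so this iteration terminates at the largest semi-consistent set. $I$ is semi-D-reducible exactly when this set is empty.

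For islands that fail this test, we enumerate the deletable edge sets $F$ of Definition \ref{dfn:deletable}. For each $F$ we compute $\mathcal{C}_{I\dotdiv F}$ via colorings modulo $F$, and we declare $I$ semi-C-reducible by $F$ if $\mathcal{C}_{I\dotdiv F}$ is disjoint from the maximal semi-consistent set computed above. As the parity remark preceding Lemma \ref{lem:C-reducible-doesnot-appear} predicts, islands with at most one degree-two vertex can only pass this C-test.

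The main obstacle I expect is computational. Dropping the parity condition and adding half-chains both inflate the search: there are $3^{|R|}$ colorings, and many more semi-matchings per color pair than perfect non-crossing matchings. The multi-boundary islands arising from non-injective homomorphisms also multiply $|\mathcal{I}|$. To keep this tractable, I would precompute, for each boundary face size, the list of semi-matchings as bitmasks, together with the images of each coloring under every sub-collection $M'$. The fixed-point iteration can then use a worklist, re-examining only colorings whose switch-images were just removed. If $|\mathcal{I}|$ itself is too large, I would strengthen the pruning by \hasSeparatingCycle, which is already noted as important, and use Lemma \ref{lem:imply} to avoid retesting islands that are trimmings of tested ones. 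The output is the certificate recorded in Lemma \ref{comp:lem:reducible}, independently reproduced.
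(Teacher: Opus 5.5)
Your two-stage plan matches the paper's proof. $\mathcal{I}$ is by definition the output of \allHomImages run over $K_1,\dots,K_{|\mathcal{K}|}$ with $\Ksmaller=\{K_j,K_j^{\textsf{mirror}}\}_{j<i}$, and the lemma is exactly the computer check of Lemma~\ref{comp:lem:reducible}. That check is carried out as you describe: all $3^{|R|}$ ring colorings, half-chains as singletons in per-face non-crossing semi-matchings, the fixed-point computation of the maximal semi-consistent subset of $\mathcal{D}_R\setminus\mathcal{C}_I$, and otherwise a search over deletable $F$ with $\mathcal{C}_{I\dotdiv F}$ disjoint from that subset.

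One step of your construction of $\mathcal{I}$ is wrong as stated: the bound of three degree-two vertices is not a sanity check to be \emph{confirmed} but a \emph{filter}. In \allHomImages an island with more than three degree-two vertices is simply not added to $\mathcal{L}$; this is the only place the bound on digons is used. Nothing rules such islands out, since the recursion creates new digons, for instance through the identification $(e_0,e_2)$ in \makeOuterExtension. As an assertion, your check may abort. If you keep these islands instead, you are testing a set larger than $\mathcal{I}$ whose extra members need not be semi-reducible. Discarding them is sound for three reasons: an island appearing in $G$ keeps its degree-two vertices, $G$ has at most three of them, and trimming only turns degree-three vertices (duals of the triangles $v\,x_{e,i}\,x_{e,i+1}$) into degree-two ones. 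Hence neither such an island nor any trimming of it can appear in $G$.

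Relatedly, islands are not produced only at leaves of the recursion. Every call in which $\widehat K$ is not blocked by $\Ksmaller$, \isPlanar holds, and \hasSeparatingCycle fails contributes its island $I(K)$, trimmed at ring-size-$1$ faces. The recursion over dart pairs then continues from that node regardless, which is what covers the injective case of Claim~\ref{claim:island-appear} at every level.

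As for the cost you anticipate, it suffices to test semi-matchings in which no two cyclically consecutive $\{x,y\}$-colored ring edges of a face are both singletons and no further non-crossing pair can be added. The reason is that merging two singletons into a pair only shrinks the set of switched colorings required to stay in the candidate set. The paper's enumeration of planar half-Kempe matchings uses exactly this reduction.
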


\subsection{Analysis}
Our goal is to show Theorem \ref{thm:hom-imply-reducible} by analyzing the above \allHomImages routine.

\begin{claim}
\label{claim:island-appear}
\showlabel{claim:island-appear}
    Let $K$ be a multi-boundary configuration, and $\widehat K$ be its outer extension.
    Assume a homomorphism $\phi$ from $\widehat K$ to $G^*$ exists.
    If $\phi$ is injective on darts, then $I(K)$ or a trimmed multi-boundary island of $I(K)$ appears in $G$.
\end{claim}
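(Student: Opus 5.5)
The plan is to read off the island directly from the image of $\widehat K$ in $G^*$ and dualize. Let $\phi$ be injective on darts. Since $K$ itself has fixed degrees $\delta_K(v)$ and $\phi$ preserves successor pointers on inner vertices, every vertex $v$ of $K$ is an inner vertex of $\widehat K$ whose full rotation is mapped bijectively onto the rotation of $\phi(v)$ in $G^*$. Moreover, two distinct vertices of $K$ cannot share an image: they would then have the same incidence list of darts, contradicting injectivity on darts. Hence $\phi$ restricted to $K$ is an embedding of $G(K)$ into $G^*$ preserving local rotations. Consequently, every face of $G(K)$ not in $F_R(K)$, being a triangle or digon traced by successor/reverse pointers at inner vertices, maps to a facial triangle or digon of $G^*$.

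Next I would handle the outer part. For each directed boundary edge $e$ with head $v$ and consecutive outer edges $vx_{e,i}, vx_{e,i+1}$, the face of $G^*$ between $\phi(vx_{e,i})$ and $\phi(vx_{e,i+1})$ is either a digon, in which case $\phi(x_{e,i})=\phi(x_{e,i+1})$, or a triangle, in which case $\phi(x_{e,i})\phi(x_{e,i+1})$ is an edge. Likewise, $x_{e,\deltaout_K(e)}$ and $x_{e^+,1}$ map to the same vertex, namely the third vertex of the face after the last outer edge, since that face contains the boundary edge $e^+$. This exactly reproduces one of the completions of $\widehat K$ used to define the family of multi-boundary islands associated with the outer extension of $K$. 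That family is precisely the set of trimmed islands of $I(K)$. So the chosen completion $I'$ (equal to $I(K)$ when every face in $F_R(K)$ contributes a triangle) is a candidate.

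It remains to verify Definition~\ref{dfn:island-appear} for $I'$ in $G$. The inner faces of $G^*$ incident to vertices of $\phi(K)$ are exactly the images of the faces of the completion. Dually, these correspond to the vertices of $I'-E_R(I')$, with the same degrees: $3$ for triangles and $2$ for digons. The edges of $I'-E_R(I')$ are the duals of edges of $\phi(\widehat K)$ with an inner face on each side. The non-boundary faces of $I'$ are the duals of vertices of $K$, whose full rotations are preserved, so they are faces of $G$. The degree condition follows because each such face of $G^*$ is a facial triangle or digon of $G^*$, hence dual to a degree-3 or degree-2 vertex of $G$.

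The main obstacle is injectivity of the induced map on the completion's faces and edges. Two faces of the completion might map to the same face of $G^*$, or a boundary edge of some $F\in F_R(K)$ might be identified with an inner edge, since outer endpoints may be identified with each other or with vertices of $K$. I would argue that every face of the completion contains a dart of $\widehat K$ issuing from a vertex of $K$, with that dart taken in a fixed orientation. Hence distinct faces are determined by distinct darts, and injectivity on darts separates them. Edges are handled the same way, since each dual edge of $I'-E_R(I')$ corresponds to an edge of $\widehat K$ incident to $K$. Pendant edges of $I'$ need not be distinct in $G$, which is harmless because appearance only requires $I'-E_R(I')$ to be a subgraph.
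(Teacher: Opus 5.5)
Your proposal is correct and follows the same route as the paper's much terser proof. Dart-injectivity makes $\phi$ injective on $V(K)$ with full rotations preserved, so only outer endpoints can be identified, and reading off whether each face between consecutive outer edges is a triangle or a digon selects $I(K)$ or a trimmed island; you then verify its appearance in more detail than the paper does.

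Two small points to tighten:
\begin{itemize}
\item An outer endpoint can never be mapped onto a vertex $u$ of $K$. If it were, the dart of that outer edge whose head is the endpoint would share its image with some dart whose head is $u$, because the whole rotation at $\phi(u)$ consists of images of darts at $u$; this contradicts dart-injectivity. This fact is what forces the face after the last outer edge of each boundary dart to be a triangle rather than a digon, which your sentence ``since that face contains the boundary edge'' leaves unjustified.
\item Suppose every consecutive outer pair along some $F\in F_R(K)$ forms a digon (or there are no such pairs, as when $F$ has ring size $0$). Then the resulting non-boundary face of $I'$ is dual to the common image of those outer endpoints, not to a vertex of $K$. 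Showing it is a face of $G$ needs the short observation that the rotation at that image consists exactly of the images of the outer edges.
\end{itemize}
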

\begin{proof}
    Since $\phi$ is injective on darts, $\phi$ is also injective for vertices in $K$.
    A pair of vertices that may be mapped to the same vertex is the outer endpoints of $K$.
    We consider the image of $\widehat K$ and darts in $G^*$ that form a facial triangle with two consecutive outer edges of $K$.
    If all faces bounded by the image of the outer edges of $K$ are triangles, $I(K)$ appears in $G$.
    Otherwise, one of the trimmed multi-boundary islands of $I(K)$ appears in $G$.
\end{proof}

\begin{claim}
\label{claim:make-outer-extension}
\showlabel{claim:make-outer-extension}
    Let $(Z, \delta^-, \delta^+)$ be a connected pseudo-embedding such that every boundary vertex has degree one, and $\mathcal{S}$ be the output of $\makeOuterExtension((Z, \delta^-, \delta^+))$. Then, the following holds.
    \begin{itemize}
        \item A pseudo-embedding in $\mathcal{S}$ can be considered as an outer extension of some possibly non-planar multi-boundary configuration.
        \item If there exists a homomorphism from $(Z, \delta^-, \delta^+)$ to $G^*$, then a homomorphism from some possibly non-planar multi-boundary configuration in $\mathcal{S}$ to $G^*$ exists.
    \end{itemize}
\end{claim}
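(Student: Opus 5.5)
I will prove the two items of Claim \ref{claim:make-outer-extension} separately. The first is a structural statement about the terminal elements of the queue. The second is an invariant of the queue, maintained by the free-homomorphism lemmas.

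\textbf{First item.} Let $(Z,\delta^-,\delta^+)\in\mathcal{S}$. Every element ever pushed into the queue is a free homomorphic image of the input, so by Claim \ref{claim:suc=nil iff head=degrre-one} every boundary vertex of $Z$ has degree one and lower degree bound greater than one. Connectivity and looplessness are also preserved.

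I take $G(K)$ to be the graph on the inner vertices of $Z$. The outer edges are the edges to degree-one boundary vertices, and each such vertex is a separate outer endpoint. The faces of $G(K)$ split into two kinds:
\begin{itemize}
\item faces traced by $e_{i+1}=\reverse(\successor(e_i))$ entirely through inner vertices;
\item the remaining regions, which meet outer edges.
\end{itemize}
The termination condition of \makeOuterExtension says that for every dart $e_0$ with $\successor(e_0)\neq\nil$, either the walk reaches a boundary vertex within two steps, or it closes up as a digon ($e_2=e_0$) or a triangle ($e_3=e_0$). So every face of the first kind is a digon or a triangle, and the faces of the second kind form $F_R(K)$.

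Next I set $\delta_K(v)=\delta^-(v)=\delta^+(v)$ for each inner vertex $v$. This is the degree of $v$ in $Z$, since inner vertices have complete cyclic incidence lists. I define $\deltaout_K(e)$ as the number of outer edges between $e$ and the next boundary dart in the rotation around $\head(e)$. With these choices the degree identity of Definition \ref{dfn:conf} holds by counting the rotation at $v$. Rebuilding $\widehat K$ from $K$ then returns $Z$. No planarity is claimed, which is exactly why the statement says ``possibly non-planar''.

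\textbf{Second item.} I show by induction over the queue that the following invariant holds: if a homomorphism $\phi$ from $(Z,\delta^-,\delta^+)$ to $G^*$ exists, then some element currently in the queue or already in the output admits a homomorphism to $G^*$.

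Suppose an element is popped but not output. The algorithm found darts $e_0,\ldots,e_3$ with $\head(e_1)$ and $\head(e_2)$ inner vertices and $e_2\neq e_0$, $e_3\neq e_0$. Because these heads are inner vertices with complete incidence lists, $\phi$ preserves $\successor$ at them. So $\phi(e_{i+1})=\reverse(\successor(\phi(e_i)))$ for $i\le 2$, and these darts trace the face of $G^*$ to the left of $\phi(e_0)$. Since $\successor(e_0)\neq\nil$, that face lies in $G^*$ and is a digon or a triangle by Lemma \ref{lem:triangulation}. Hence $\phi(e_2)=\phi(e_0)$ or $\phi(e_3)=\phi(e_0)$, i.e.\ $\phi$ respects $\{(e_0,e_2)\}$ or $\{(e_0,e_3)\}$. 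Every vertex of $G^*$ meets at most one digon, so Lemmas \ref{lem:free-hom} and \ref{lem:pseud-embed-single-digon} give an element of $\mathcal{Z}_2$ or $\mathcal{Z}_3$ through which $\phi$ factors. The invariant is therefore preserved.

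The number of darts strictly decreases along this process, so the queue empties. At that point the invariant forces some element of $\mathcal{S}$ to map homomorphically to $G^*$. By the first item, that element is the outer extension of a possibly non-planar multi-boundary configuration.

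\textbf{Main obstacle.} I expect the hardest step to be the bookkeeping in the first item: checking that the regions left over after removing digon and triangle faces give well-defined clockwise boundary walks with consistent $\deltaout_K$ values. This is delicate when an inner vertex appears several times on these walks. I would handle it by defining $D$ directly as the set of darts $e$ between inner vertices whose $\successor$ is an outer edge. Walks along such darts then partition them into closed walks, and each closed walk gives one face of $F_R(K)$.
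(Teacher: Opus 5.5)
Your second item is correct and is the paper's argument. $\successor$ is non-\nil at the heads of $e_0,e_1,e_2$, so $\phi$ maps $e_0,\dots,e_3$ onto a facial walk of $G^*$. Then (M5) gives $\phi(e_0)=\phi(e_2)$ or $\phi(e_0)=\phi(e_3)$, and the free-image lemmas supply the factorization.

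The gap is in the first item. You use the termination condition of \makeOuterExtension only to show that faces traced entirely through inner vertices are digons or triangles. You then define $\deltaout_K(e)$ as the number of outer edges at the corresponding corner of a face in $F_R(K)$, but nothing you say prevents this number from being $0$. That is not allowed, for three reasons:
\begin{itemize}
\item $\deltaout_K$ takes values in $\mathbb{Z}_+$.
\item In an outer extension every corner of a boundary face carries at least one outer edge; otherwise $x_{e,\deltaout_K(e)}$, the free completion and the ring size are meaningless.
\item \textsf{freeCompletionFromOuterExtension} relies on exactly this property.
\end{itemize}
Your fix for the bookkeeping silently assumes the same property. You take $D$ to be the darts between inner vertices whose successor is an outer edge, and walk along them. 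At a boundary corner with no outer edge, such a walk leaves $D$, so your claimed partition of $D$ into closed walks breaks down.

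The missing step is exactly what the paper's proof of the first item consists of: in a walk produced by \getWalks that contains a dart with a boundary head, no three consecutive darts have inner heads. This follows from the same termination condition. A maximal run of inner-headed darts in such a walk starts with a dart $e_0=(x\to v)$, where $x$ is a boundary vertex. Suppose $\head(e_1)$ and $\head(e_2)$ were both inner. Then $e_2$ and $e_3$ have inner tails while $e_0$ has a boundary tail, so $e_2\neq e_0$ and $e_3\neq e_0$, contradicting termination. Hence each run is one of two things:
\begin{itemize}
\item a corner between two consecutive outer edges at a single vertex; or
\item a single edge $vw$ of $G(K)$ flanked by outer edges at $v$ and at $w$.
\end{itemize}
This is precisely $\deltaout_K\geq 1$, and it makes your walk construction well defined.

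You also take $\delta^-(v)=\delta^+(v)$ at inner vertices for granted. The paper justifies this by noting that inner vertices of the output are images of inner vertices of the input.
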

\begin{proof}
    Let $(Z', \delta^{'-}, \delta^{'+})$ be a pseudo-embedding in $\mathcal{S}$.
    By Claim \ref{claim:suc=nil iff head=degrre-one}, every boundary vertex in $Z'$ has degree one.
    Every inner vertex in $Z'$ is the image of an inner vertex of $Z$, so it has a fixed degree-range, i.e., $\delta^{'-}(v)=\delta^{'+}(v)$.
    Considering the process in \makeOuterExtension, a dart $e_0$ in $Z'$ where $\successor(e_0) \neq \nil$ satisfies at least one of the following: the head of $e_1 \coloneq \reverse(\successor(e_0))$ is a boundary vertex, the head of $e_2 \coloneq \reverse(\successor(e_1))$ is a boundary vertex, $e_0=e_2$, or $e_0=e_3 \coloneq \reverse(\successor(e_2))$.
    Consider a walk computed in the $\getWalks$ subroutine.
    If this walk contains a dart whose head is a boundary vertex, no three consecutive darts have inner vertices as heads.
    Otherwise, the number of darts in this walk is either 2 or 3.
    Thus, the first claim holds.

    In \makeOuterExtension, we convert a pseudo-embedding by taking free homomorphic images respecting $\{(e_0,e_2)\}$ or $\{(e_0, e_3)\}$, where $e_{i+1}=\reverse(\successor(e_i))$ for $i=0,1,2$.
    To prove the second claim, it suffices to show that a homomorphism from $(Z, \delta^-, \delta^+)$ to $G^*$ respects $\{(e_0, e_2)\}$ or $\{(e_0, e_3)\}$.
    Since $G^*$ is a pseudo-triangulation with digons, the images of $e_0,e_1,e_2,e_3$ satisfy either $\phi(e_0)=\phi(e_2)$ or $\phi(e_0)=\phi(e_3)$ by (M5).
    Hence, the homomorphism respects either $\{(e_0, e_2)\}$ or $\{(e_0, e_3)\}$.
\end{proof}

\begin{lem}
\label{lem:K-imply-I}
\showlabel{lem:K-imply-I}
    Let $K$ be a possibly non-planar multi-boundary configuration, $\mathcal{K}_\textsf{smaller}$ be a set of configurations, and $\mathcal{L}$ be the output of $\allHomImages(K, \mathcal{K}_\textsf{smaller})$.
    If there exists a homomorphism from $K$ to $G^*$ but there does not exist a homomorphism from any configuration in $\mathcal{K}_\textsf{smaller}$ to $G^*$, then there exists a multi-boundary island $I \in \mathcal{L}$ where $I$ or a trimmed multi-boundary island of $I$ appears in $G$.
\end{lem}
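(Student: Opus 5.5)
We argue by induction on the number of darts of $\widehat K$, following the recursive structure of $\allHomImages$. The recursion terminates because each recursive call is made on a free homomorphic image with strictly fewer darts, so induction on $|D(\widehat K)|$ is well founded.

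Fix a homomorphism $\phi$ from $\widehat K$ to $G^*$, and assume that no configuration in $\mathcal{K}_\textsf{smaller}$ maps homomorphically to $G^*$. We first dispose of the blocked case: if $\widehat K$ is blocked by $\mathcal{K}_\textsf{smaller}$, then composing with $\phi$ yields a homomorphism from some configuration in $\mathcal{K}_\textsf{smaller}$ to $G^*$, contrary to assumption. Here we use that the degree ranges of $\widehat K$ contain the actual degrees of the images, so some $(\widehat K,\delta)$ in the range family maps to $G^*$. Hence this case cannot occur, and we split on whether $\phi$ is injective on darts.

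\textbf{Case 1: $\phi$ is injective on darts.} By Claim \ref{claim:is-planar}, $\isPlanar(\widehat K)=\textsf{true}$, so $K$ is a genuine plane multi-boundary configuration. By Claim \ref{claim:cycle}, $\hasSeparatingCycle(\widehat K)=\textsf{false}$. Hence the routine adds the chosen island: either $I(K)$, or its trim at faces of ring size $1$. By Claim \ref{claim:island-appear}, $I(K)$ or one of its trimmed islands appears in $G$. A face of ring size $1$ yields a boundary face incident to a single degree-one vertex, which cannot appear because $G$ is bridgeless. So the island that appears is a trim of the chosen island; note that a trim of a trim is again a trim. The degree-two condition is automatic: the island appears in $G$, so its degree-two vertices are degree-two vertices of $G$, of which there are at most three. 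Therefore the chosen island passes the footnoted filter and lies in $\mathcal{L}$.

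\textbf{Case 2: $\phi$ is not injective on darts.} Pick distinct darts $e\neq f$ with $\phi(e)=\phi(f)$. Since $\phi$ commutes with $\reverse$, we may replace the pair by $(\reverse(e),\reverse(f))$ if needed, so the marking scheme still enumerates a pair $(e',f')$ with $\phi(e')=\phi(f')$. Then $\phi$ respects $\{(e',f')\}$. Because $G^*$ has every vertex incident to at most one digon, $\phi$ also factors through the single-digon enforcement, by Lemmas \ref{lem:free-hom} and \ref{lem:pseud-embed-single-digon}. Thus $\phi=\psi\circ\phi^*$ for some element $(Z^*,\phi^*)\in\mathcal{Z}^*_{e',f'}$ and some homomorphism $\psi\colon Z^*\to G^*$. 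By Claim \ref{claim:suc=nil iff head=degrre-one}, boundary vertices of $Z^*$ have degree one. By Claim \ref{claim:make-outer-extension}, some element of $\makeOuterExtension(Z^*)$ is the outer extension of a possibly non-planar multi-boundary configuration $K'$ that maps homomorphically to $G^*$. It has fewer darts than $\widehat K$, since $e'$ and $f'$ were identified. The induction hypothesis applied to the recursive call $\allHomImages(K',\mathcal{K}_\textsf{smaller})$ gives the required island, and that call's output is included in $\mathcal{L}$.

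The main obstacle is bookkeeping at the interface between homomorphisms with degree functions and homomorphisms with degree ranges. We must check that the "set of free homomorphic images" property lets $\phi$ factor through some element, and that the factored map $\psi$ still satisfies the degree condition required of homomorphisms from outer extensions. I would handle this by treating $G^*$ as a pseudo-triangulation with digons carrying exact degree ranges, and then invoking minimality and the universal property exactly as stated in Lemmas \ref{lem:free-hom} and \ref{lem:pseud-embed-single-digon}.
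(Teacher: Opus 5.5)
Your proof is correct and follows essentially the same route as the paper: induction on the number of darts of $\widehat K$, disposal of the blocked case by composing homomorphisms, the injective case via Claims \ref{claim:is-planar}, \ref{claim:cycle} and \ref{claim:island-appear}, and the non-injective case via $\mathcal{Z}^*_{e,f}$, Claim \ref{claim:make-outer-extension} and the induction hypothesis. Your extra bookkeeping is sound and covers points the paper's proof passes over. This includes the reverse-dart argument showing the marking scheme still enumerates a suitable pair, and the degree-two filter check. That check implicitly uses that trimming never removes degree-two vertices of $I(K)$, since these come from internal digons of $K$ and are not adjacent to pendant edges.
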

\begin{proof}
    We prove by induction on the number of darts in the outer extension $\widehat K$.
    By definition, a homomorphism from $K$ to $G^*$ can be extended to a homomorphism from $\widehat K$ to $G^*$.
    Let $\phi$ be such a homomorphism.

    If $\widehat K$ is blocked by $\mathcal{K}_\textsf{smaller}$, a homomorphism $\phi$ from some configuration $L \in \mathcal{K}_\textsf{smaller}$ to $\widehat K$ exists.
    By composing a homomorphism from $\widehat K$ to $G^*$, a homomorphism from $L$ to $G^*$ exists.
    This contradicts the assumption, so it is fine that \allHomImages returns $\mathcal{L}=\emptyset$.
    
    If $\isPlanar(\widehat K)=\textsf{false}$, then by Claim \ref{claim:is-planar}, $\phi$ is not injective on darts.
    If $\isPlanar(\widehat K)=\textsf{true}$ and $\hasSeparatingCycle(\widehat K)=\textsf{true}$, then by Claim \ref{claim:cycle}, $\phi$ is not injective on darts.
    Consider the remaining case: $\isPlanar(\widehat K)=\textsf{true}$ and $\hasSeparatingCycle(\widehat K)=\textsf{false}$.
    $K$ is a multi-boundary configuration.
    If $\phi$ is injective on darts, by Claim \ref{claim:island-appear}, $I(K)$ or a trimmed multi-boundary island of $I(K)$ appears in $G$.
    If $I(K)$ has a face incident to only one degree-one vertex, then $I(K)$ does not appear in $G$, since $G$ is bridgeless.
    We add a trimmed multi-boundary island of $I(K)$ by deleting such degree-one vertices to $\mathcal{L}$ (if no such vertex exists, it is the same as $I(K)$).
     
    We consider the case where $\phi$ is not injective on darts.
    Let $e,f$ be two distinct darts in $\widehat K$ that are mapped to the same dart in $G^*$.
    By the definition of free homomorphic images, there exists a homomorphism from some pseudo-embedding, say $(Z_{e,f}^*, \delta_{e,f}^{*-}, \delta_{e,f}^{*+})$, in $\mathcal{Z}_{e,f}^*$ to $G^*$.
    $Z_{e,f}^*$ has smaller number of darts than that of $\widehat K$.
    By Claim \ref{claim:make-outer-extension}, there exists a homomorphism from some possibly non-planar multi-boundary configuration in $\mathcal{S}=\makeOuterExtension((Z_{e,f}^*, \delta_{e,f}^{*-}, \delta_{e,f}^{*+}))$ to $G^*$.
    During the algorithm, we add the resulting set of multi-boundary islands for the \allHomImages routine with input some multi-boundary configuration represented by elements in $\mathcal{S}$ to $\mathcal{L}$.
    Applying the induction hypothesis to it shows that the claim holds.

    Finally, the base case of this induction is when, for all pairs of darts $e,f$, the set of free homomorphic images is empty.
\end{proof}

\begin{proof}[Proof of Theorem \ref{thm:hom-imply-reducible}]
    We order configurations in $\mathcal{K}$ by $K_1,K_2,\ldots,K_{|\mathcal{K}|}$.
    We consider the smallest index $i$ such that a homomorphism from $K_i$ or $K_i^{\textsf{mirror}}$ to $G^*$ exists.
    By reversing the orientation of the plane embedding if necessary, assume that this homomorphism is from $K_i$.
    Let $\mathcal{L}$ be the output of $\allHomImages(K_i, \{K_j, K_j^{\textsf{mirror}}\}_{j=1}^{i-1})$.
    By Lemma \ref{lem:K-imply-I}, there exists $I \in \mathcal{L}$ such that either $I$ or a trimmed multi-boundary island of $I$ appears in $G$.
    Since $\mathcal{L} \subseteq \mathcal{I}$, Lemma \ref{lem:I-is-reducible} implies that $I$ is semi-reducible.
    By Lemma \ref{lem:imply}, every trimmed multi-boundary island of $I$ is also semi-reducible.
    Hence, the claim holds. 
\end{proof}

\subsection{How to get a multi-boundary island from an outer extension}
\label{subsect:island-from-outer-extension}
\showlabel{subsect:island-from-outer-extension}
In this section, we explain how to get a multi-boundary island, which is added to $\mathcal{I}$ in the \allHomImages routine.

First, we explain how to convert an outer extension $\widehat K$ of a multi-boundary configuration $K$ to a free completion of $K$.
Let $F$ be the set of walks obtained by calling $\getWalks(\widehat K)$.
For each walk $W \in F$, and for each dart $e_0$ in $W$, if $\predecessor(\reverse(e_0))=\nil$, then $\successor(e_0) \neq \nil$ since $\widehat K$ is an outer extension of a multi-boundary configuration.
Let $e_1$ be $\reverse(\successor(e_0))$.
If $\successor(e_1) = \nil$, then we add two darts between $\tail(e_0)$ and $\head(e_1)$.
This operation is done by calling \addBoundaryDartsDirectly in Algorithm \ref{alg:add_boundary_darts_directly} with input $\efirst=\reverse(e_1)$, $\elast=e_0$.
If $\successor(e_1) \neq \nil$, then let $e_2=\reverse(\successor(e_1))$.
Since $\widehat K$ is an outer-extension of a multi-boundary configuration, $\successor(e_2)=\nil$ holds.
In this case, we identify $\tail(e_0)$ and $\head(e_2)$.
This operation is done by calling \linkIncidenceListEnds in Algorithm \ref{alg:link_indidence_list_ends} with input $\eufirst=\reverse(e_0)$ and $\ewlast=e_2$.
The two subroutines above are the same as those used to compute free homomorphic images of pseudo-triangulations with digons.
After applying these operations for each walk $W$ and each dart $e_0$ in $W$, we obtain a free completion of $K$, which is denoted by $S$.
The pseudocode corresponding to this algorithm is presented in Algorithm \ref{alg:free_completion_from_outer_extension}.

Second, we explain how to get a multi-boundary island $I(K)$ from a free completion $S$ of $K$.
For each pair of darts that are reverse to each other, we create one edge.
Let $F_S$ be the set of walks computed by $\getWalks(S)$.
Since $S$ is a free completion, for each walk in $F_S$, either all darts in the walk have non-$\nil$ successors, or all darts in the walk have successor $\nil$.
Also, every walk that satisfies the former condition has a length of 2 or 3.
If $W$ in $F_S$ satisfies the former condition, then we create one vertex.
This vertex is incident to edges corresponding to pairs of darts containing darts in $W$.
If $W$ in $F_S$ satisfies the latter condition, we create $|W|$ vertices.
Each vertex is incident to an edge corresponding to a pair of darts containing darts in $W$.
The obtained graph is an island $I(K)$.

As described in Section \ref{subsect:alg-comp-check}, we delete every edge of $I(K)$ if this edge is the only edge that is incident to some face $F_R(I(K))$.
The conceptual algorithm to obtain such an island from a free completion $S$ is straightforward.
Its implementation, however, must take into account the file format of multi-boundary islands used as input to the semi-reducibility checker.
In this file format, the edges in $I$ are assigned indices, with the edges in $E_R(I)$ required to appear first.
Therefore, the construction must carefully maintain edge indices.

The implemented algorithm also adds an auxiliary edge incident with each vertex of degree two to make its degree 3.
The overall pseudocode to implement the algorithm for constructing such an island from a free completion is presented in Algorithm \ref{alg:island-from-free-completion}.
Inside this function, the algorithms in Algorithm \ref{alg:index-boundary-edges}, \ref{alg:index-pendant-edges}, \ref{alg:index-other-edges} maintain the indices of edges.

\section{Algorithm}\label{sec:alg}\showlabel{sec:alg}

In this section, we consider the algorithmic corollary of our main theorem, Theorem \ref{mainth}. Let us mention the problem again. 

\begin{quote}{\bf Algorithmic Problem}\\
{\bf Input}: A 2-connected apex cubic graph $G$.\\
{\bf Output}: A three-edge-coloring of $G$.\\
{\bf Time complexity}: $O(n^2)$, where $n=|V(G)|$.
\end{quote}

\begin{itemize}
\item
Find an apex vertex $v$. 
Since the graph is cubic, testing $G-v$ for planarity for all $v$ costs $O(n^2)$, using linear-time planarity testing, say, by Hopcroft and Tarjan \cite{HopcroftT74}. Construct the planar subcubic graph $G$ as in Section 2.
\item 
Find one homomorphic image of the unavoidable configurations from $\mathcal{K}$ in $G^*$. The set $\mathcal{K}$ has 915 normal configurations, so by scanning local neighborhoods and testing homomorphisms, we can find one configuration in $\mathcal{K}$ in $O(n)$ time; see \cite{proj2024} for more details. 
\item
Convert the found homomorphic image of the configuration in $G^*$ into a semi-reducible multi-boundary island in $G$. This is exactly Theorem \ref{thm:hom-imply-reducible} and Lemma \ref{lem:K-imply-I}.
\item 
Reduce the graph, recursively three-edge-color the smaller graph, and extend the coloring back using the semi-D or semi-C reducibility certificate.
\end{itemize}
Each reduction decreases the graph size, and each extension requires $O(n)$ time to find a configuration and linear-time Kempe chain operations. We have $O(n)$ reductions, so the total time complexity is $O(n^2)$.

\section*{Acknowledgment}
We are grateful for the availability of modern generative-AI systems, which made possible an unusual additional reproducibility test: independent reconstruction of the verification software directly from the detailed pseudocode. We acknowledge in particular ChatGPT (OpenAI) and Claude (Anthropic), whose independently generated implementations reproduced the computational results of the paper.

%% file: tikz/rules-RSST.tex
\tikzset{deg1/.style={thick, circle, draw, fill=black, inner sep=0pt,}}
\tikzset{deg5/.style={thick, circle, draw, fill=black, inner sep=1.5pt,}}
\tikzset{deg6/.style={thick, circle, draw, fill=black, inner sep=0pt,}}
\tikzset{deg7/.style={thick, circle, draw, fill=white, inner sep=2pt,}}
\tikzset{deg8/.style={thick, rectangle, draw, fill=white, inner sep=2pt,}}
\tikzset{deg9/.style={thick, regular polygon, regular polygon sides=3, rotate=180, draw, fill=white, inner sep=1pt,}}
\tikzset{deg10/.style={thick, regular polygon, draw, fill=white, inner sep=2pt,}}
\tikzset{->-/.style={decoration={
    markings,
    mark=at position .6 with {\arrow{>}}}, postaction={decorate}}}
\tikzset{->>-/.style={decoration={
    markings, 
    mark=at position .5 with {\arrow{>}};, 
    mark=at position .6 with {\arrow{>}};}, postaction={decorate}}}

\begin{figure}[htbp]
\scalebox{0.9}{%
\begin{tabular}{ccccccc}
\begin{minipage}[t]{0.13\hsize}
\centering
\begin{tikzpicture} []
    \node [deg5] at (0.0, 0.0) (v0) {};
    \node [deg5] at (0.9, 0.0) (v1) {};
    \node [above = 0.15 cm of v1, anchor=center] (v1+) { $+$ };
    \draw [->>-] (v0) -- (v1);
    \foreach \u / \v in {v0/v1}
        \draw (\u) -- (\v);
    \foreach \u / \v in {}
        \draw [double] (\u) -- (\v);
\end{tikzpicture}
\end{minipage}
&
\begin{minipage}[t]{0.13\hsize}
\centering
\begin{tikzpicture} []
    \node [deg6] at (0.0, 0.0) (v0) {};
    \node [above = 0.15 cm of v0, anchor=center] (v0-) { $-$ };
    \node [deg7] at (0.9, 0.0) (v1) {};
    \node [above = 0.15 cm of v1, anchor=center] (v1+) { $+$ };
    \node [deg5] at (0.4500000000000001, 0.7794228634059948) (v2) {};
    \draw [->-] (v0) -- (v1);
    \foreach \u / \v in {v0/v1, v0/v2, v1/v2}
        \draw (\u) -- (\v);
    \foreach \u / \v in {}
        \draw [double] (\u) -- (\v);
\end{tikzpicture}
\end{minipage}
&
\begin{minipage}[t]{0.13\hsize}
\centering
\begin{tikzpicture} []
    \node [deg6] at (0.0, 0.0) (v0) {};
    \node [above = 0.15 cm of v0, anchor=center] (v0-) { $-$ };
    \node [deg6] at (0.9, 0.0) (v1) {};
    \node [above = 0.15 cm of v1, anchor=center] (v1+) { $+$ };
    \node [deg6] at (0.4500000000000001, 0.7794228634059948) (v2) {};
    \node [above = 0.15 cm of v2, anchor=center] (v2-) { $-$ };
    \node [deg5] at (-0.44999999999999984, 0.7794228634059949) (v3) {};
    \draw [->-] (v0) -- (v1);
    \foreach \u / \v in {v0/v1, v0/v2, v0/v3, v1/v2, v2/v3}
        \draw (\u) -- (\v);
    \foreach \u / \v in {}
        \draw [double] (\u) -- (\v);
\end{tikzpicture}
\end{minipage}
&
\begin{minipage}[t]{0.13\hsize}
\centering
\begin{tikzpicture} []
    \node [deg6] at (0.0, 0.0) (v0) {};
    \node [deg6] at (0.9, 0.0) (v1) {};
    \node [above = 0.15 cm of v1, anchor=center] (v1+) { $+$ };
    \node [deg6] at (0.4500000000000001, 0.7794228634059948) (v2) {};
    \node [above = 0.15 cm of v2, anchor=center] (v2-) { $-$ };
    \node [deg6] at (-0.44999999999999984, 0.7794228634059949) (v3) {};
    \node [above = 0.15 cm of v3, anchor=center] (v3-) { $-$ };
    \node [deg5] at (-5.551115123125783e-17, 1.5588457268119895) (v4) {};
    \draw [->-] (v0) -- (v1);
    \foreach \u / \v in {v0/v1, v0/v2, v0/v3, v1/v2, v2/v4, v2/v3, v3/v4}
        \draw (\u) -- (\v);
    \foreach \u / \v in {}
        \draw [double] (\u) -- (\v);
\end{tikzpicture}
\end{minipage}
&
\begin{minipage}[t]{0.13\hsize}
\centering
\begin{tikzpicture} []
    \node [deg6] at (0.0, 0.0) (v0) {};
    \node [deg6] at (0.9, 0.0) (v1) {};
    \node [above = 0.15 cm of v1, anchor=center] (v1+) { $+$ };
    \node [deg6] at (0.4500000000000001, 0.7794228634059948) (v2) {};
    \node [deg6] at (-0.44999999999999984, 0.7794228634059949) (v3) {};
    \node [deg6] at (-5.551115123125783e-17, 1.5588457268119895) (v4) {};
    \node [above = 0.15 cm of v4, anchor=center] (v4-) { $-$ };
    \node [deg5] at (0.8999999999999999, 1.5588457268119895) (v5) {};
    \draw [->-] (v0) -- (v1);
    \foreach \u / \v in {v0/v1, v0/v2, v0/v3, v1/v2, v2/v5, v2/v4, v2/v3, v3/v4, v4/v5}
        \draw (\u) -- (\v);
    \foreach \u / \v in {}
        \draw [double] (\u) -- (\v);
\end{tikzpicture}
\end{minipage}
&
\begin{minipage}[t]{0.13\hsize}
\centering
\begin{tikzpicture} []
    \node [deg6] at (0.0, 0.0) (v0) {};
    \node [deg7] at (0.9, 0.0) (v1) {};
    \node [above = 0.15 cm of v1, anchor=center] (v1+) { $+$ };
    \node [deg5] at (0.4500000000000001, 0.7794228634059948) (v2) {};
    \node [deg6] at (-0.44999999999999984, 0.7794228634059949) (v3) {};
    \node [deg6] at (0.29371664009976267, 1.665749841116982) (v4) {};
    \node [above = 0.15 cm of v4, anchor=center] (v4-) { $-$ };
    \node [deg5] at (1.2957233587073176, 1.0872409923990967) (v5) {};
    \draw [->-] (v0) -- (v1);
    \foreach \u / \v in {v0/v1, v0/v2, v0/v3, v1/v2, v1/v5, v2/v5, v2/v4, v2/v3, v3/v4, v4/v5}
        \draw (\u) -- (\v);
    \foreach \u / \v in {}
        \draw [double] (\u) -- (\v);
\end{tikzpicture}
\end{minipage}
&
\begin{minipage}[t]{0.13\hsize}
\centering
\begin{tikzpicture} []
    \node [deg6] at (0.0, 0.0) (v0) {};
    \node [deg7] at (0.9, 0.0) (v1) {};
    \node [above = 0.15 cm of v1, anchor=center] (v1+) { $+$ };
    \node [deg6] at (-0.44999999999999984, 0.7794228634059949) (v2) {};
    \node [deg6] at (0.4500000000000001, 0.7794228634059948) (v3) {};
    \node [deg6] at (-5.551115123125783e-17, 1.5588457268119895) (v4) {};
    \node [deg6] at (0.8999999999999999, 1.5588457268119895) (v5) {};
    \node [above = 0.15 cm of v5, anchor=center] (v5-) { $-$ };
    \node [deg5] at (1.35, 0.7794228634059948) (v6) {};
    \draw [->-] (v0) -- (v1);
    \foreach \u / \v in {v0/v1, v0/v3, v0/v2, v1/v3, v1/v6, v2/v3, v2/v4, v3/v6, v3/v5, v3/v4, v4/v5, v5/v6}
        \draw (\u) -- (\v);
    \foreach \u / \v in {}
        \draw [double] (\u) -- (\v);
\end{tikzpicture}
\end{minipage}
\\
\begin{minipage}[t]{0.13\hsize}
\centering
\begin{tikzpicture} []
    \node [deg7] at (0.0, 0.0) (v0) {};
    \node [deg7] at (0.9, 0.0) (v1) {};
    \node [above = 0.15 cm of v1, anchor=center] (v1+) { $+$ };
    \node [deg6] at (0.5611408216728603, 0.7036483342212269) (v2) {};
    \node [above = 0.15 cm of v2, anchor=center] (v2-) { $-$ };
    \node [deg5] at (-0.20026884056068292, 0.8774351209636413) (v3) {};
    \draw [->-] (v0) -- (v1);
    \foreach \u / \v in {v0/v1, v0/v2, v0/v3, v1/v2, v2/v3}
        \draw (\u) -- (\v);
    \foreach \u / \v in {}
        \draw [double] (\u) -- (\v);
\end{tikzpicture}
\end{minipage}
&
\begin{minipage}[t]{0.13\hsize}
\centering
\begin{tikzpicture} []
    \node [deg7] at (0.0, 0.0) (v0) {};
    \node [deg7] at (0.9, 0.0) (v1) {};
    \node [above = 0.15 cm of v1, anchor=center] (v1+) { $+$ };
    \node [deg6] at (0.5611408216728603, 0.7036483342212269) (v2) {};
    \node [above = 0.15 cm of v2, anchor=center] (v2-) { $-$ };
    \node [deg6] at (-0.20026884056068292, 0.8774351209636413) (v3) {};
    \node [above = 0.15 cm of v3, anchor=center] (v3-) { $-$ };
    \node [deg5] at (-0.8108719811121772, 0.3904953652058024) (v4) {};
    \draw [->-] (v0) -- (v1);
    \foreach \u / \v in {v0/v1, v0/v2, v0/v3, v0/v4, v1/v2, v2/v3, v3/v4}
        \draw (\u) -- (\v);
    \foreach \u / \v in {}
        \draw [double] (\u) -- (\v);
\end{tikzpicture}
\end{minipage}
&
\begin{minipage}[t]{0.13\hsize}
\centering
\begin{tikzpicture} []
    \node [deg7] at (0.0, 0.0) (v0) {};
    \node [deg7] at (0.9, 0.0) (v1) {};
    \node [above = 0.15 cm of v1, anchor=center] (v1+) { $+$ };
    \node [deg5] at (0.5611408216728601, -0.703648334221227) (v2) {};
    \node [deg5] at (-0.20026884056068314, -0.8774351209636413) (v3) {};
    \node [deg5] at (0.5611408216728603, 0.7036483342212269) (v4) {};
    \node [deg5] at (-0.20026884056068292, 0.8774351209636413) (v5) {};
    \node [above = 0.15 cm of v5, anchor=center] (v5+) { $+$ };
    \draw [->-] (v0) -- (v1);
    \foreach \u / \v in {v0/v2, v0/v1, v0/v4, v0/v5, v0/v3, v1/v2, v1/v4, v2/v3, v4/v5}
        \draw (\u) -- (\v);
    \foreach \u / \v in {}
        \draw [double] (\u) -- (\v);
\end{tikzpicture}
\end{minipage}
&
\begin{minipage}[t]{0.13\hsize}
\centering
\begin{tikzpicture} []
    \node [deg7] at (0.0, 0.0) (v0) {};
    \node [deg7] at (0.9, 0.0) (v1) {};
    \node [above = 0.15 cm of v1, anchor=center] (v1+) { $+$ };
    \node [deg5] at (0.5611408216728603, 0.7036483342212269) (v2) {};
    \node [deg6] at (-0.20026884056068292, 0.8774351209636413) (v3) {};
    \node [deg5] at (0.5611408216728605, 1.4846390646328314) (v4) {};
    \node [deg5] at (0.5611408216728601, -0.703648334221227) (v5) {};
    \node [deg6] at (-0.20026884056068314, -0.8774351209636413) (v6) {};
    \node [left = 0.15 cm of v6, anchor=center] (v6+) { $+$ };
    \draw [->-] (v0) -- (v1);
    \foreach \u / \v in {v0/v2, v0/v3, v0/v6, v0/v5, v0/v1, v1/v2, v1/v5, v2/v3, v2/v4, v3/v4, v5/v6}
        \draw (\u) -- (\v);
    \foreach \u / \v in {}
        \draw [double] (\u) -- (\v);
\end{tikzpicture}
\end{minipage}
&
\begin{minipage}[t]{0.13\hsize}
\centering
\begin{tikzpicture} []
    \node [deg7] at (0.0, 0.0) (v0) {};
    \node [deg7] at (0.9, 0.0) (v1) {};
    \node [right = 0.15 cm of v1, anchor=center] (v1+) { $+$ };
    \node [deg5] at (0.5611408216728601, -0.703648334221227) (v2) {};
    \node [deg5] at (-0.20026884056068314, -0.8774351209636413) (v3) {};
    \node [deg5] at (0.56114082167286, -1.4846390646328316) (v4) {};
    \node [deg5] at (0.5611408216728603, 0.7036483342212269) (v5) {};
    \node [deg5] at (-0.20026884056068292, 0.8774351209636413) (v6) {};
    \node [deg5] at (0.5611408216728605, 1.4846390646328314) (v7) {};
    \draw [->-] (v0) -- (v1);
    \foreach \u / \v in {v0/v2, v0/v1, v0/v5, v0/v6, v0/v3, v1/v2, v1/v5, v2/v3, v2/v4, v3/v4, v5/v7, v5/v6, v6/v7}
        \draw (\u) -- (\v);
    \foreach \u / \v in {}
        \draw [double] (\u) -- (\v);
\end{tikzpicture}
\end{minipage}
&
\begin{minipage}[t]{0.13\hsize}
\centering
\begin{tikzpicture} []
    \node [deg7] at (0.0, 0.0) (v0) {};
    \node [deg8] at (0.9, 0.0) (v1) {};
    \node [right = 0.15 cm of v1, anchor=center] (v1+) { $+$ };
    \node [deg5] at (0.5611408216728603, 0.7036483342212269) (v2) {};
    \node [deg6] at (-0.20026884056068292, 0.8774351209636413) (v3) {};
    \node [deg5] at (0.754795608185546, 1.4742849311212725) (v4) {};
    \node [deg5] at (1.0737867867424147, 0.7614096622335433) (v5) {};
    \node [deg5] at (0.5611408216728601, -0.703648334221227) (v6) {};
    \node [deg5] at (-0.20026884056068314, -0.8774351209636413) (v7) {};
    \draw [->-] (v0) -- (v1);
    \foreach \u / \v in {v0/v6, v0/v1, v0/v2, v0/v3, v0/v7, v1/v6, v1/v5, v1/v2, v2/v5, v2/v4, v2/v3, v3/v4, v4/v5, v6/v7}
        \draw (\u) -- (\v);
    \foreach \u / \v in {}
        \draw [double] (\u) -- (\v);
\end{tikzpicture}
\end{minipage}
&
\begin{minipage}[t]{0.13\hsize}
\centering
\begin{tikzpicture} []
    \node [deg7] at (0.0, 0.0) (v0) {};
    \node [deg7] at (0.9, 0.0) (v1) {};
    \node [above = 0.15 cm of v1, anchor=center] (v1+) { $+$ };
    \node [deg6] at (0.5611408216728603, 0.7036483342212269) (v2) {};
    \node [deg5] at (-0.20026884056068292, 0.8774351209636413) (v3) {};
    \node [deg5] at (0.30319593583683485, 1.4408124657820967) (v4) {};
    \node [deg5] at (0.5611408216728601, -0.703648334221227) (v5) {};
    \draw [->-] (v0) -- (v1);
    \foreach \u / \v in {v0/v5, v0/v1, v0/v2, v0/v3, v1/v5, v1/v2, v2/v4, v2/v3, v3/v4}
        \draw (\u) -- (\v);
    \foreach \u / \v in {}
        \draw [double] (\u) -- (\v);
\end{tikzpicture}
\end{minipage}
\\
\begin{minipage}[t]{0.13\hsize}
\centering
\begin{tikzpicture} []
    \node [deg7] at (0.0, 0.0) (v0) {};
    \node [deg7] at (0.9, 0.0) (v1) {};
    \node [above = 0.15 cm of v1, anchor=center] (v1+) { $+$ };
    \node [deg6] at (0.5611408216728603, 0.7036483342212269) (v2) {};
    \node [deg5] at (-0.20026884056068292, 0.8774351209636413) (v3) {};
    \node [deg6] at (-0.8108719811121772, 0.3904953652058024) (v4) {};
    \node [deg5] at (-0.8108719811121773, -0.3904953652058022) (v5) {};
    \node [deg5] at (0.5611408216728601, -0.703648334221227) (v6) {};
    \node [deg7] at (-0.20026884056068314, -0.8774351209636413) (v7) {};
    \node [below = 0.15 cm of v7, anchor=center] (v7+) { $+$ };
    \draw [->-] (v0) -- (v1);
    \foreach \u / \v in {v0/v6, v0/v1, v0/v2, v0/v3, v0/v4, v0/v5, v0/v7, v1/v6, v1/v2, v2/v3, v3/v4, v4/v5, v5/v7, v6/v7}
        \draw (\u) -- (\v);
    \foreach \u / \v in {}
        \draw [double] (\u) -- (\v);
\end{tikzpicture}
\end{minipage}
&
\begin{minipage}[t]{0.13\hsize}
\centering
\begin{tikzpicture} []
    \node [deg7] at (0.0, 0.0) (v0) {};
    \node [deg7] at (0.9, 0.0) (v1) {};
    \node [above = 0.15 cm of v1, anchor=center] (v1+) { $+$ };
    \node [deg6] at (0.5611408216728603, 0.7036483342212269) (v2) {};
    \node [deg6] at (-0.20026884056068292, 0.8774351209636413) (v3) {};
    \node [deg6] at (-0.8108719811121772, 0.3904953652058024) (v4) {};
    \node [deg5] at (-0.8108719811121773, -0.3904953652058022) (v5) {};
    \node [deg5] at (-0.20026884056068314, -0.8774351209636413) (v6) {};
    \node [deg5] at (0.5611408216728601, -0.703648334221227) (v7) {};
    \draw [->-] (v0) -- (v1);
    \foreach \u / \v in {v0/v7, v0/v1, v0/v2, v0/v3, v0/v4, v0/v5, v0/v6, v1/v7, v1/v2, v2/v3, v3/v4, v4/v5, v5/v6, v6/v7}
        \draw (\u) -- (\v);
    \foreach \u / \v in {}
        \draw [double] (\u) -- (\v);
\end{tikzpicture}
\end{minipage}
&
\begin{minipage}[t]{0.13\hsize}
\centering
\begin{tikzpicture} []
    \node [deg7] at (0.0, 0.0) (v0) {};
    \node [deg7] at (0.9, 0.0) (v1) {};
    \node [above = 0.15 cm of v1, anchor=center] (v1+) { $+$ };
    \node [deg6] at (0.5611408216728603, 0.7036483342212269) (v2) {};
    \node [deg5] at (-0.20026884056068292, 0.8774351209636413) (v3) {};
    \node [deg5] at (-0.8108719811121772, 0.3904953652058024) (v4) {};
    \node [deg5] at (-0.20026884056068314, -0.8774351209636413) (v5) {};
    \node [deg5] at (0.5611408216728601, -0.703648334221227) (v6) {};
    \node [deg7] at (-0.8108719811121773, -0.3904953652058022) (v7) {};
    \node [left = 0.15 cm of v7, anchor=center] (v7+) { $+$ };
    \node [deg6] at (0.30319593583683485, 1.4408124657820967) (v8) {};
    \draw [->-] (v0) -- (v1);
    \foreach \u / \v in {v0/v6, v0/v1, v0/v2, v0/v3, v0/v4, v0/v7, v0/v5, v1/v6, v1/v2, v2/v8, v2/v3, v3/v8, v3/v4, v4/v7, v5/v7, v5/v6}
        \draw (\u) -- (\v);
    \foreach \u / \v in {}
        \draw [double] (\u) -- (\v);
\end{tikzpicture}
\end{minipage}
&
\begin{minipage}[t]{0.13\hsize}
\centering
\begin{tikzpicture} []
    \node [deg7] at (0.0, 0.0) (v0) {};
    \node [deg7] at (0.9, 0.0) (v1) {};
    \node [deg5] at (0.5611408216728601, -0.703648334221227) (v2) {};
    \node [deg6] at (-0.20026884056068314, -0.8774351209636413) (v3) {};
    \node [deg5] at (-0.8108719811121773, -0.3904953652058022) (v4) {};
    \node [deg5] at (0.9716628866661935, -1.3106386677394388) (v5) {};
    \node [deg5] at (1.174420480507537, -0.7311907554577427) (v6) {};
    \node [deg6] at (0.5611408216728603, 0.7036483342212269) (v7) {};
    \node [above = 0.15 cm of v7, anchor=center] (v7+) { $+$ };
    \draw [->-] (v0) -- (v1);
    \foreach \u / \v in {v0/v1, v0/v7, v0/v4, v0/v3, v0/v2, v1/v7, v1/v2, v1/v6, v2/v3, v2/v5, v2/v6, v3/v4, v3/v5, v5/v6}
        \draw (\u) -- (\v);
    \foreach \u / \v in {}
        \draw [double] (\u) -- (\v);
\end{tikzpicture}
\end{minipage}
&
\begin{minipage}[t]{0.13\hsize}
\centering
\begin{tikzpicture} []
    \node [deg7] at (0.0, 0.0) (v0) {};
    \node [deg8] at (0.9, 0.0) (v1) {};
    \node [above = 0.15 cm of v1, anchor=center] (v1+) { $+$ };
    \node [deg5] at (0.5611408216728601, -0.703648334221227) (v2) {};
    \node [deg6] at (-0.20026884056068314, -0.8774351209636413) (v3) {};
    \node [deg5] at (-0.8108719811121773, -0.3904953652058022) (v4) {};
    \node [deg5] at (0.56114082167286, -1.4846390646328316) (v5) {};
    \node [deg5] at (1.322550483906403, -0.8774351209636416) (v6) {};
    \node [deg6] at (-0.8108719811121772, 0.3904953652058024) (v7) {};
    \node [deg6] at (-0.20026884056068292, 0.8774351209636413) (v8) {};
    \node [deg6] at (0.5611408216728603, 0.7036483342212269) (v9) {};
    \node [above = 0.15 cm of v9, anchor=center] (v9+) { $+$ };
    \draw [->-] (v0) -- (v1);
    \foreach \u / \v in {v0/v2, v0/v1, v0/v9, v0/v8, v0/v7, v0/v4, v0/v3, v1/v2, v1/v6, v1/v9, v2/v3, v2/v5, v2/v6, v3/v4, v3/v5, v4/v7, v5/v6, v7/v8, v8/v9}
        \draw (\u) -- (\v);
    \foreach \u / \v in {}
        \draw [double] (\u) -- (\v);
\end{tikzpicture}
\end{minipage}
&
\begin{minipage}[t]{0.13\hsize}
\centering
\begin{tikzpicture} []
    \node [deg7] at (0.0, 0.0) (v0) {};
    \node [deg7] at (0.9, 0.0) (v1) {};
    \node [above = 0.15 cm of v1, anchor=center] (v1+) { $+$ };
    \node [deg6] at (0.5611408216728603, 0.7036483342212269) (v2) {};
    \node [deg5] at (-0.20026884056068292, 0.8774351209636413) (v3) {};
    \node [deg5] at (-0.8108719811121772, 0.3904953652058024) (v4) {};
    \node [deg5] at (-0.8108719811121773, -0.3904953652058022) (v5) {};
    \node [deg6] at (-0.20026884056068314, -0.8774351209636413) (v6) {};
    \node [deg6] at (0.5611408216728601, -0.703648334221227) (v7) {};
    \draw [->-] (v0) -- (v1);
    \foreach \u / \v in {v0/v7, v0/v1, v0/v2, v0/v3, v0/v4, v0/v5, v0/v6, v1/v7, v1/v2, v2/v3, v3/v4, v4/v5, v5/v6, v6/v7}
        \draw (\u) -- (\v);
    \foreach \u / \v in {}
        \draw [double] (\u) -- (\v);
\end{tikzpicture}
\end{minipage}
&
\begin{minipage}[t]{0.13\hsize}
\centering
\begin{tikzpicture} []
    \node [deg7] at (0.0, 0.0) (v0) {};
    \node [deg7] at (0.9, 0.0) (v1) {};
    \node [deg6] at (0.5611408216728603, 0.7036483342212269) (v2) {};
    \node [deg6] at (-0.20026884056068292, 0.8774351209636413) (v3) {};
    \node [above = 0.15 cm of v3, anchor=center] (v3-) { $-$ };
    \node [deg5] at (-0.8108719811121772, 0.3904953652058024) (v4) {};
    \node [deg5] at (-0.8108719811121773, -0.3904953652058022) (v5) {};
    \node [deg5] at (1.1744204805075373, 0.7311907554577426) (v6) {};
    \node [deg6] at (0.5611408216728601, -0.703648334221227) (v7) {};
    \node [below = 0.15 cm of v7, anchor=center] (v7+) { $+$ };
    \draw [->-] (v0) -- (v1);
    \foreach \u / \v in {v0/v7, v0/v1, v0/v2, v0/v3, v0/v4, v0/v5, v1/v7, v1/v6, v1/v2, v2/v6, v2/v3, v3/v4, v4/v5}
        \draw (\u) -- (\v);
    \foreach \u / \v in {}
        \draw [double] (\u) -- (\v);
\end{tikzpicture}
\end{minipage}
\\
\begin{minipage}[t]{0.13\hsize}
\centering
\begin{tikzpicture} []
    \node [deg7] at (0.0, 0.0) (v0) {};
    \node [deg7] at (0.9, 0.0) (v1) {};
    \node [deg6] at (0.5611408216728603, 0.7036483342212269) (v2) {};
    \node [deg5] at (1.1744204805075373, 0.7311907554577426) (v3) {};
    \node [deg5] at (-0.20026884056068292, 0.8774351209636413) (v4) {};
    \node [deg6] at (-0.8108719811121772, 0.3904953652058024) (v5) {};
    \node [deg5] at (-0.8108719811121773, -0.3904953652058022) (v6) {};
    \node [deg6] at (0.5611408216728601, -0.703648334221227) (v7) {};
    \node [below = 0.15 cm of v7, anchor=center] (v7+) { $+$ };
    \draw [->-] (v0) -- (v1);
    \foreach \u / \v in {v0/v7, v0/v1, v0/v2, v0/v4, v0/v5, v0/v6, v1/v7, v1/v3, v1/v2, v2/v3, v2/v4, v4/v5, v5/v6}
        \draw (\u) -- (\v);
    \foreach \u / \v in {}
        \draw [double] (\u) -- (\v);
\end{tikzpicture}
\end{minipage}
&
\begin{minipage}[t]{0.13\hsize}
\centering
\begin{tikzpicture} []
    \node [deg7] at (0.0, 0.0) (v0) {};
    \node [deg7] at (0.9, 0.0) (v1) {};
    \node [right = 0.15 cm of v1, anchor=center] (v1+) { $+$ };
    \node [deg7] at (0.5611408216728601, -0.703648334221227) (v2) {};
    \node [deg5] at (0.5611408216728603, 0.7036483342212269) (v3) {};
    \node [deg5] at (-0.20026884056068314, -0.8774351209636413) (v4) {};
    \node [deg5] at (0.16057124038403708, -1.3740888845573092) (v5) {};
    \draw [->-] (v0) -- (v1);
    \foreach \u / \v in {v0/v2, v0/v1, v0/v3, v0/v4, v1/v2, v1/v3, v2/v4, v2/v5, v4/v5}
        \draw (\u) -- (\v);
    \foreach \u / \v in {}
        \draw [double] (\u) -- (\v);
\end{tikzpicture}
\end{minipage}
&
\begin{minipage}[t]{0.13\hsize}
\centering
\begin{tikzpicture} []
    \node [deg7] at (0.0, 0.0) (v0) {};
    \node [deg7] at (0.9, 0.0) (v1) {};
    \node [deg5] at (0.5611408216728603, 0.7036483342212269) (v2) {};
    \node [deg5] at (-0.20026884056068292, 0.8774351209636413) (v3) {};
    \node [deg5] at (-0.8108719811121772, 0.3904953652058024) (v4) {};
    \node [deg5] at (1.1744204805075373, 0.7311907554577426) (v5) {};
    \node [deg6] at (-0.8108719811121773, -0.3904953652058022) (v6) {};
    \node [deg6] at (-0.20026884056068314, -0.8774351209636413) (v7) {};
    \node [deg7] at (0.5611408216728601, -0.703648334221227) (v8) {};
    \node [right = 0.15 cm of v8, anchor=center] (v8+) { $+$ };
    \draw [->-] (v0) -- (v1);
    \foreach \u / \v in {v0/v8, v0/v1, v0/v2, v0/v3, v0/v4, v0/v6, v0/v7, v1/v8, v1/v5, v1/v2, v2/v5, v2/v3, v3/v4, v4/v6, v6/v7, v7/v8}
        \draw (\u) -- (\v);
    \foreach \u / \v in {}
        \draw [double] (\u) -- (\v);
\end{tikzpicture}
\end{minipage}
&
\begin{minipage}[t]{0.13\hsize}
\centering
\begin{tikzpicture} []
    \node [deg7] at (0.0, 0.0) (v0) {};
    \node [deg7] at (0.9, 0.0) (v1) {};
    \node [right = 0.15 cm of v1, anchor=center] (v1+) { $+$ };
    \node [deg5] at (0.5611408216728603, 0.7036483342212269) (v2) {};
    \node [deg5] at (-0.20026884056068292, 0.8774351209636413) (v3) {};
    \node [deg7] at (0.5611408216728601, -0.703648334221227) (v4) {};
    \node [deg5] at (-0.20026884056068314, -0.8774351209636413) (v5) {};
    \node [deg5] at (-0.8108719811121773, -0.3904953652058022) (v6) {};
    \node [deg5] at (0.7689131320083767, -1.4564943928857415) (v7) {};
    \node [deg6] at (0.16057124038403708, -1.3740888845573092) (v8) {};
    \draw [->-] (v0) -- (v1);
    \foreach \u / \v in {v0/v4, v0/v1, v0/v2, v0/v3, v0/v6, v0/v5, v1/v4, v1/v2, v2/v3, v4/v5, v4/v8, v4/v7, v5/v6, v5/v8, v7/v8}
        \draw (\u) -- (\v);
    \foreach \u / \v in {}
        \draw [double] (\u) -- (\v);
\end{tikzpicture}
\end{minipage}
&
\begin{minipage}[t]{0.13\hsize}
\centering
\begin{tikzpicture} []
    \node [deg7] at (0.0, 0.0) (v0) {};
    \node [deg7] at (0.9, 0.0) (v1) {};
    \node [deg6] at (0.5611408216728603, 0.7036483342212269) (v2) {};
    \node [deg6] at (-0.20026884056068292, 0.8774351209636413) (v3) {};
    \node [deg6] at (-0.8108719811121772, 0.3904953652058024) (v4) {};
    \node [deg5] at (-0.8108719811121773, -0.3904953652058022) (v5) {};
    \node [deg5] at (-0.20026884056068314, -0.8774351209636413) (v6) {};
    \node [deg5] at (1.3372208462963133, 0.6162051162491518) (v7) {};
    \node [deg7] at (0.5611408216728601, -0.703648334221227) (v8) {};
    \node [below = 0.15 cm of v8, anchor=center] (v8+) { $+$ };
    \draw [->-] (v0) -- (v1);
    \foreach \u / \v in {v0/v2, v0/v3, v0/v4, v0/v5, v0/v6, v0/v8, v0/v1, v1/v2, v1/v8, v1/v7, v2/v3, v2/v7, v3/v4, v4/v5, v5/v6, v6/v8}
        \draw (\u) -- (\v);
    \foreach \u / \v in {}
        \draw [double] (\u) -- (\v);
\end{tikzpicture}
\end{minipage}
&
\begin{minipage}[t]{0.13\hsize}
\centering
\begin{tikzpicture} []
    \node [deg7] at (0.0, 0.0) (v0) {};
    \node [deg7] at (0.9, 0.0) (v1) {};
    \node [deg7] at (0.5611408216728601, -0.703648334221227) (v2) {};
    \node [deg6] at (0.5611408216728603, 0.7036483342212269) (v3) {};
    \node [deg6] at (1.1744204805075373, 0.7311907554577426) (v4) {};
    \node [deg5] at (1.6181426667187135, 0.3069489065414417) (v5) {};
    \node [deg6] at (1.6181426667187133, -0.306948906541442) (v6) {};
    \node [below = 0.15 cm of v6, anchor=center] (v6+) { $+$ };
    \node [deg6] at (-0.20026884056068314, -0.8774351209636413) (v7) {};
    \node [left = 0.15 cm of v7, anchor=center] (v7-) { $-$ };
    \node [deg5] at (0.16057124038403708, -1.3740888845573092) (v8) {};
    \node [deg7] at (-0.20026884056068292, 0.8774351209636413) (v9) {};
    \node [left = 0.15 cm of v9, anchor=center] (v9+) { $+$ };
    \draw [->-] (v0) -- (v1);
    \foreach \u / \v in {v0/v2, v0/v1, v0/v3, v0/v9, v0/v7, v1/v2, v1/v6, v1/v5, v1/v4, v1/v3, v2/v7, v2/v8, v3/v4, v3/v9, v4/v5, v5/v6, v7/v8}
        \draw (\u) -- (\v);
    \foreach \u / \v in {}
        \draw [double] (\u) -- (\v);
\end{tikzpicture}
\end{minipage}
&
\begin{minipage}[t]{0.13\hsize}
\centering
\begin{tikzpicture} []
    \node [deg8] at (0.0, 0.0) (v0) {};
    \node [deg7] at (0.9, 0.0) (v1) {};
    \node [right = 0.15 cm of v1, anchor=center] (v1+) { $+$ };
    \node [deg5] at (0.6363961030678928, 0.6363961030678927) (v2) {};
    \node [deg5] at (5.5109105961630896e-17, 0.9) (v3) {};
    \node [deg5] at (-0.6363961030678927, 0.6363961030678928) (v4) {};
    \node [deg5] at (0.6363961030678926, -0.636396103067893) (v5) {};
    \node [deg5] at (-1.6532731788489269e-16, -0.9) (v6) {};
    \draw [->-] (v0) -- (v1);
    \foreach \u / \v in {v0/v5, v0/v1, v0/v2, v0/v3, v0/v4, v0/v6, v1/v5, v1/v2, v2/v3, v3/v4, v5/v6}
        \draw (\u) -- (\v);
    \foreach \u / \v in {}
        \draw [double] (\u) -- (\v);
\end{tikzpicture}
\end{minipage}
\\
\begin{minipage}[t]{0.13\hsize}
\centering
\begin{tikzpicture} []
    \node [deg8] at (0.0, 0.0) (v0) {};
    \node [deg7] at (0.9, 0.0) (v1) {};
    \node [right = 0.15 cm of v1, anchor=center] (v1+) { $+$ };
    \node [deg5] at (0.6363961030678928, 0.6363961030678927) (v2) {};
    \node [deg5] at (5.5109105961630896e-17, 0.9) (v3) {};
    \node [deg5] at (-0.6363961030678927, 0.6363961030678928) (v4) {};
    \node [deg5] at (-0.9, 1.1021821192326179e-16) (v5) {};
    \node [deg5] at (0.7263064833220598, 1.3193332436601612) (v6) {};
    \node [deg5] at (0.6363961030678926, -0.636396103067893) (v7) {};
    \draw [->-] (v0) -- (v1);
    \foreach \u / \v in {v0/v7, v0/v1, v0/v2, v0/v3, v0/v4, v0/v5, v1/v7, v1/v2, v2/v6, v2/v3, v3/v6, v3/v4, v4/v5}
        \draw (\u) -- (\v);
    \foreach \u / \v in {}
        \draw [double] (\u) -- (\v);
\end{tikzpicture}
\end{minipage}
&
\begin{minipage}[t]{0.13\hsize}
\centering
\begin{tikzpicture} []
    \node [deg8] at (0.0, 0.0) (v0) {};
    \node [deg7] at (0.9, 0.0) (v1) {};
    \node [right = 0.15 cm of v1, anchor=center] (v1+) { $+$ };
    \node [deg5] at (0.6363961030678928, 0.6363961030678927) (v2) {};
    \node [deg5] at (5.5109105961630896e-17, 0.9) (v3) {};
    \node [deg5] at (0.6363961030678926, -0.636396103067893) (v4) {};
    \node [deg5] at (-0.636396103067893, -0.6363961030678927) (v5) {};
    \node [deg5] at (0.7263064833220592, -1.3193332436601617) (v6) {};
    \node [deg6] at (-1.6532731788489269e-16, -0.9) (v7) {};
    \draw [->-] (v0) -- (v1);
    \foreach \u / \v in {v0/v4, v0/v1, v0/v2, v0/v3, v0/v5, v0/v7, v1/v4, v1/v2, v2/v3, v4/v7, v4/v6, v5/v7, v6/v7}
        \draw (\u) -- (\v);
    \foreach \u / \v in {}
        \draw [double] (\u) -- (\v);
\end{tikzpicture}
\end{minipage}
&
\begin{minipage}[t]{0.13\hsize}
\centering
\begin{tikzpicture} []
    \node [deg8] at (0.0, 0.0) (v0) {};
    \node [deg7] at (0.9, 0.0) (v1) {};
    \node [right = 0.15 cm of v1, anchor=center] (v1+) { $+$ };
    \node [deg5] at (0.6363961030678926, -0.636396103067893) (v2) {};
    \node [deg5] at (-1.6532731788489269e-16, -0.9) (v3) {};
    \node [deg5] at (0.4193332436601614, -1.4464857228137262) (v4) {};
    \node [deg5] at (0.6363961030678928, 0.6363961030678927) (v5) {};
    \node [deg5] at (5.5109105961630896e-17, 0.9) (v6) {};
    \node [deg6] at (-0.636396103067893, -0.6363961030678927) (v7) {};
    \node [above = 0.15 cm of v7, anchor=center] (v7+) { $+$ };
    \node [deg6] at (-0.6363961030678927, 0.6363961030678928) (v8) {};
    \node [above = 0.15 cm of v8, anchor=center] (v8+) { $+$ };
    \node [deg5] at (0.41933324366016156, 1.4464857228137262) (v9) {};
    \node [above = 0.15 cm of v9, anchor=center] (v9+) { $+$ };
    \draw [->-] (v0) -- (v1);
    \foreach \u / \v in {v0/v6, v0/v8, v0/v7, v0/v3, v0/v2, v0/v1, v0/v5, v1/v2, v1/v5, v2/v3, v2/v4, v3/v7, v3/v4, v5/v9, v5/v6, v6/v8, v6/v9}
        \draw (\u) -- (\v);
    \foreach \u / \v in {}
        \draw [double] (\u) -- (\v);
\end{tikzpicture}
\end{minipage}
&
\begin{minipage}[t]{0.13\hsize}
\centering
\begin{tikzpicture} []
    \node [deg8] at (0.0, 0.0) (v0) {};
    \node [deg7] at (0.9, 0.0) (v1) {};
    \node [deg6] at (0.6363961030678928, 0.6363961030678927) (v2) {};
    \node [deg5] at (5.5109105961630896e-17, 0.9) (v3) {};
    \node [deg5] at (-0.6363961030678927, 0.6363961030678928) (v4) {};
    \node [deg5] at (-0.9, 1.1021821192326179e-16) (v5) {};
    \node [deg5] at (-0.636396103067893, -0.6363961030678927) (v6) {};
    \node [deg6] at (-1.6532731788489269e-16, -0.9) (v7) {};
    \node [deg7] at (0.6363961030678926, -0.636396103067893) (v8) {};
    \node [below = 0.15 cm of v8, anchor=center] (v8+) { $+$ };
    \node [deg5] at (1.1636038969321072, 0.6363961030678931) (v9) {};
    \draw [->-] (v0) -- (v1);
    \foreach \u / \v in {v0/v8, v0/v1, v0/v2, v0/v3, v0/v4, v0/v5, v0/v6, v0/v7, v1/v8, v1/v9, v1/v2, v2/v9, v2/v3, v3/v4, v4/v5, v5/v6, v6/v7, v7/v8}
        \draw (\u) -- (\v);
    \foreach \u / \v in {}
        \draw [double] (\u) -- (\v);
\end{tikzpicture}
\end{minipage}
&
\end{tabular}
}
\caption{The rules used in \cite{RSST} (also in \cite{doublecross}).}
\label{fig:rules-RSST}
\end{figure}

%% file: apex_connectivity.tex
\section{Connectivity Lemma}\label{sec:con}\showlabel{sec:con}

In this section, the definition of islands and related terms follows the usage in previous papers on purely cubic graphs, which we define in detail below. (Notice that this is different from the definition in this paper, which includes vertices of degree 1. After deleting vertices of degree one and their incident edges, the degrees of the endpoints become two. Such vertices correspond to vertices of degree 2 for islands in this section.)

\begin{dfn}
    Let $I$ be a graph of all vertices of degree $2$ or $3$.
    We call such graphs \emph{islands}.
    Let $G$ be a bridgeless cubic graph with a cyclic cut $F$ (a cut separating two connected components, each containing a cycle), and assume that one of the two connected components of $G - F$ is isomorphic to an island $I$.
    We say that $I$ \emph{appears} in $G$, and $F$ is the \emph{ring} of $I$.
    The vertices of degree 2 are called \emph{ring vertices}, and the edges in $F$ are called \emph{ring edges} of $I$.
    
    Let $I$ be an island with ring $F$.
    For any $\phi_I: E(I) \to [3]$ which is an edge-coloring of $I$, we can uniquely extend the coloring to $\phi_{I+F}: E(I+F) \to [3]$. 
    We call the restriction of $\phi_{I+F}$ to $F$ the \emph{ring coloring} of $I$. 
\end{dfn}

    We denote the set of all possible ring colorings of $I$ by $\mathcal{C}(I)$. 
    We denote all colorings $\phi$ that satisfy the parity equation $|\phi^{-1}(\{1\})| \equiv |\phi^{-1}(\{2\})| \equiv |\phi^{-1}(\{3\})| \pmod 2$ as $\mathcal{C}^{\text{valid}}$.

\begin{dfn}
    A set $\mathcal C\subseteq C^{\mathrm{valid}}$ is \emph{consistent} if the following holds. For every $\varphi\in\mathcal C$ and every $\kappa\in[3]$, there exists a pairing $\mathcal P$ of
    \[
        \varphi^{-1}([3]\setminus\{\kappa\})
    \]
    such that, for every subcollection $\mathcal P'\subseteq\mathcal P$, the coloring obtained from $\varphi$ by interchanging the two non-$\kappa$ colors on all members of $\bigcup\mathcal P'$ also belongs to $\mathcal C$.
\end{dfn}

Let $\mathcal{C}(I)$ denote the set of ring colorings that extend to a three-edge-coloring of $I$. 
We define $\mathcal{C}^*(I)$ to be the maximal consistent subset of $\mathcal{C}^{\mathrm{valid}}\setminus \mathcal{C}(I)$.
(This is well-defined, since the union of any family of consistent subsets of $\mathcal{C}^{\mathrm{valid}}\setminus \mathcal{C}(I)$ is consistent.)

The island $I$ is \emph{D-reducible} if $\mathcal{C}^*(I)=\emptyset$.

For the purposes of this section, $I$ is \emph{C-reducible} if there exists an island $J$ with the same cyclically ordered ring as $I$ such that $|V(J)|<|V(I)|$ and $C(J)\cap \mathcal{C}^*(I)=\varnothing$.

We first show the following.

\begin{lem}
    Let $I$ be an island that can be embedded in the plane so that all vertices of degree 2 are adjacent to the outer plane.
    (We call such islands \emph{disk islands}.)
    Let $F$ be its ring.
    $I$ is \emph{reducible} (meaning any non-3-edge-colorable 2-edge connected cubic graph containing $I$ can be made to a smaller non-3-edge-colorable 2-edge connected cubic graph by replacing $I$ with a smaller island) if one of the following holds.
    \begin{itemize}
        \item $2 \leq |F| \leq 4$
        \item $|F| = 5$ and $|V(I)| > 5$.
    \end{itemize}
    \label{lem:disk-cut}
\end{lem}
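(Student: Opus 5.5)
Since $I$ is a disk island, its ring edges $F$ carry a cyclic order around the outer face. A standard parity argument shows that every coloring of $F$ induced by a three-edge-coloring of the rest of $G$ lies in $\mathcal{C}^{\mathrm{valid}}$. For each case I will exhibit a smaller replacement island $J$ with the same cyclically ordered ring. The goal is to show that $I$ is either D-reducible or C-reducible via $J$, and that substituting $J$ keeps the graph $2$-edge-connected and cubic. I will do this by explicit enumeration of ring colorings up to permuting colors and rotating the ring.

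\emph{Small rings.} For $|F|=2$ the valid colorings are $(a,a)$. Any island with two ring edges can be replaced by a single edge, which colors $(a,a)$. The reduced graph stays bridgeless, because a bridge there would give a bridge of the original graph. For $|F|=3$ the valid colorings are permutations of $(1,2,3)$, and I replace $I$ by a single vertex. For $|F|=4$ the valid colorings up to symmetry are $(a,a,b,b)$, $(a,b,b,a)$ and $(a,b,a,b)$. I take $J$ to be a single edge joining two consecutive pairs of ring edges, in whichever of the two orientations is appropriate. I expect this to be C-reducible by the classical Kempe argument. If $I$ fails to extend a coloring, then one of the two Kempe-pairing patterns outside $I$ that planarity allows (the pairs cannot cross) switches the coloring into one that $J$ colors. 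Concretely, a consistent set avoiding $\mathcal{C}(I)$ must avoid every coloring that $I$ extends. Since $I$ is bridgeless and nontrivial, $I$ extends at least one of $(a,a,b,b)$ and $(a,b,b,a)$, and a Kempe switch converts $(a,b,a,b)$ into one of these. Choosing $J$ to realize the other pattern keeps $\mathcal{C}(J)$ disjoint from $\mathcal{C}^*(I)$. The reduced graph must also stay bridgeless; if the first choice of orientation fails this, I use the other.

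\emph{Ring size five.} The valid colorings are, up to symmetry, those with color multiplicities $(3,1,1)$, ten patterns up to rotation. I take $J$ to be the $5$-cycle island, or a single vertex joined to an edge (two vertices) when that suffices. The condition $|V(I)|>5$ ensures $J$ is strictly smaller. Any island with a $5$-ring extends all colorings in some Kempe-closed family. The aim is to show that $\mathcal{C}^*(I)$ is disjoint from the colorings extended by the $4$-vertex tree $J$ (a path with pendant ring edges), which colors every coloring except those of one rotation class. Using the noncrossing Kempe pairings of the three positions carrying the non-majority colors with one majority position, I will check by hand that every consistent set avoiding $\mathcal{C}(I)$ is forced onto a single rotation class. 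I then pick the tree $J$ whose one missing class matches it.

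The main obstacle is exactly this ring-size-five case. It needs a finite but delicate case check of consistent sets. It also needs a check that the reduced graph stays $2$-edge-connected, since gluing $J$ in could create a $2$-edge cut. I would handle the latter by noting that such a cut would come from a small cut through $I$'s complement, combined with minimality.
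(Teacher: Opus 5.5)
Your proposal has a genuine gap. It never uses the one source of information about $\mathcal{C}(I)$: small cubic graphs obtained by closing $I$ off along its ring are three-edge-colorable (the planar ones by the Four Color Theorem; the paper also uses single-cross ones). Kempe switching only tells you which sets are closed under switching; it cannot show that $I$ extends anything.

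Already for $|F|=2,3$, replacing $I$ by an edge or a vertex preserves non-colorability only if $\mathcal{C}(I)\neq\emptyset$. That is exactly the Four Color Theorem applied to the planar bridgeless graph $I+v_1v_2$ (resp.\ $I$ with a new vertex joined to $v_1,v_2,v_3$). The paper then gets D-reducibility because all valid ring colorings are equivalent.

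For $|F|=4$, ``bridgeless and nontrivial'' does not imply that $I$ extends $(a,a,b,b)$ or $(a,b,b,a)$. Even granting it, your conclusion does not follow. Suppose $\mathcal{C}(I)$ consisted of the classes of $(a,a,a,a)$ and $(a,a,b,b)$. Every Kempe fact you invoke holds, yet the classes of $(a,b,a,b)$ and $(a,b,b,a)$ form a consistent set (even with noncrossing pairings) that meets both H-shaped replacements. Only the colorability of the planar closure $G^B_{12}$ rules this out.

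You also assume that chains outside $I$ cannot cross. But $G$ is an arbitrary cubic graph (necessarily nonplanar), so nothing prevents such crossings, and consistency in this section allows arbitrary pairings. The crossing pairing $\{1,3\},\{2,4\}$ sends $(a,b,a,b)$ to $(a,a,a,a)$, a valid class you omitted from your list. The paper instead uses all six closures $G^A_{ij}$, $G^B_{ij}$ to show that $\mathcal{C}(I)$ contains at least three of the four valid classes. Since a single class is never consistent under any pairing, $I$ is D-reducible.

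For $|F|=5$ the plan fails as stated. A tree carrying five ring edges has three vertices (a four-vertex tree carries six). It colors exactly the four classes $\phi^5_{ik},\phi^5_{il},\phi^5_{jk},\phi^5_{jl}$, which form a $4$-cycle in $K_5$. It is the $5$-cycle island that colors exactly one rotation class.

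From the $25$ closures $G^C_{ij}$, $G^D_{ij,kl}$, the paper shows that the graph $CG(I)$ of classes in $\mathcal{C}^*(I)$ has three properties:
\begin{itemize}
\item it is triangle-free, since every triple contains an edge of $\mathcal{C}(I)$;
\item it contains no $4$-cycle;
\item it has no vertex of degree one, by consistency.
\end{itemize}
Hence it is empty or a $5$-cycle. The complement of a $5$-cycle in $K_5$ contains no $4$-cycle, so no tree can ever avoid a nonempty $\mathcal{C}^*(I)$. The correct $J$ is a $5$-cycle island whose ring order is chosen so that $\mathcal{C}(J)$ is the complementary $5$-cycle. Without the closure constraints C and D there is nothing to force this structure, so the hand check you propose has no input to work with.
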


The method we use for this proof is similar to the one provided in \cite{3edgecoloring}.

\begin{proof}
    When $|F|=2,3$, all possible ring colorings that satisfy the parity condition are equivalent.
    So, $\mathcal{C}(I) = \mathcal{C}^{\text{valid}}$ if $\mathcal{C}(I)$ is nonempty.
    If $|F| = 2$, let $G$ be a cubic graph obtained by connecting the two vertices in $I$ that are of degree two with a new edge, and if $|F| = 3$, let $G$ be a cubic graph obtained by identifying the three vertices in $I+F$ that are degree one.
    In either case, $G$ is planar and, by 4CT, colorable.
    Therefore, $\mathcal{C}(I) = \mathcal{C}^{\text{valid}}$, and $I$ is reducible.

    Next, when $|F| = 4$, let us name the four vertices of $I$ having degree 2 in clockwise order as $v_1, v_2, v_3, v_4$.
    We construct six cubic graphs from $I$ as follows. ($u,w$ are new vertices.)
    
    \begin{itemize}
        \item $G^A_{ij} = I + v_iv_j + v_kv_l$ ($\{i,j,k,l\} = \{1,2,3,4\}$, $i < j$)
        \item $G^B_{ij} = I + v_iu + v_ju + v_kw + v_lw + uw$ ($\{i,j,k,l\} = \{1,2,3,4\}$, $i < j$).
    \end{itemize}

    The drawings of the graphs can be seen in \cref{fig:GAB}.

    \begin{figure}
        \centering
        \includegraphics[height=0.2\textheight]{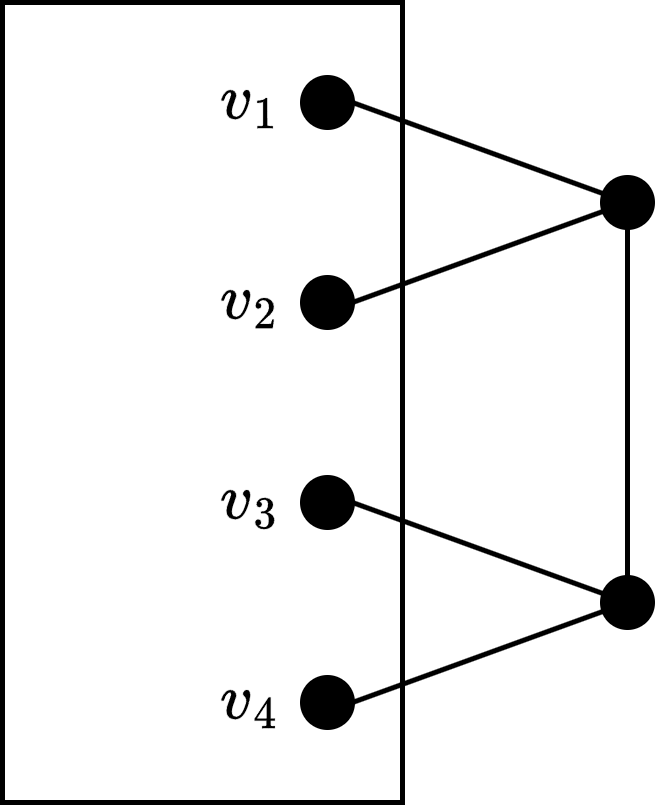}
        \includegraphics[height=0.2\textheight]{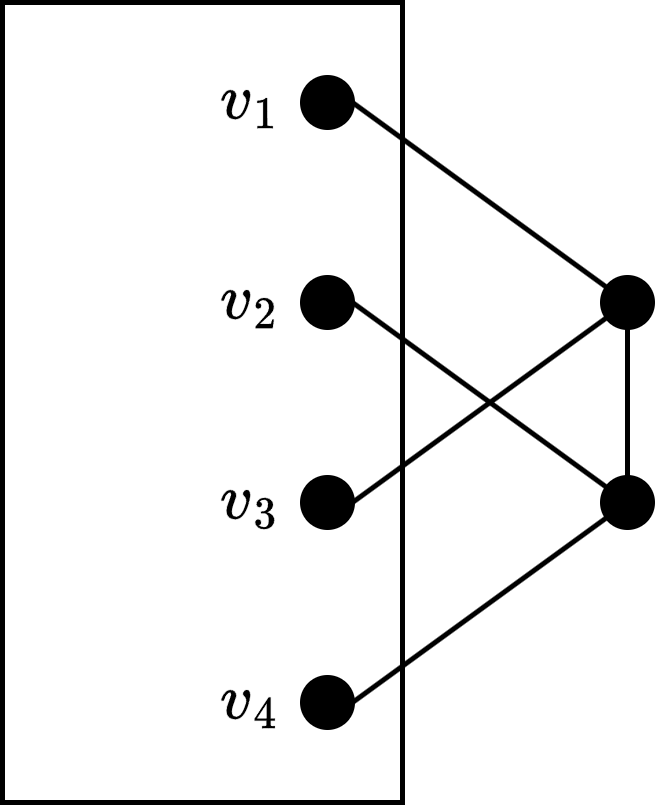}
        \includegraphics[height=0.2\textheight]{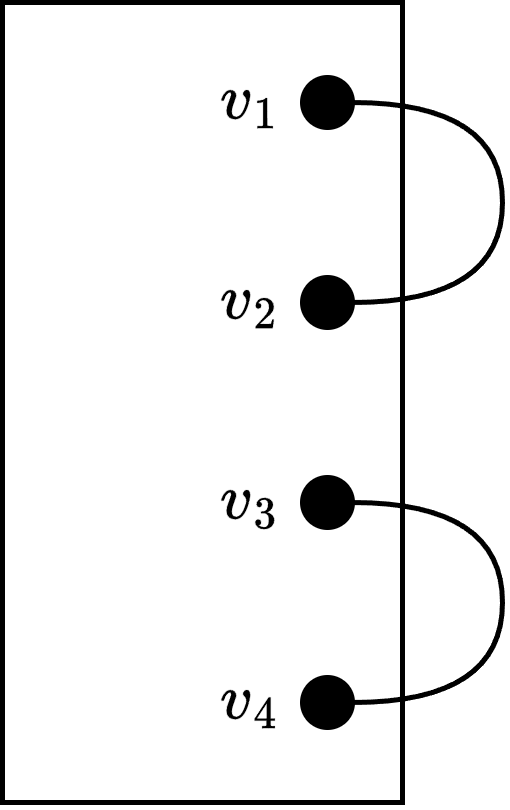}
        \includegraphics[height=0.2\textheight]{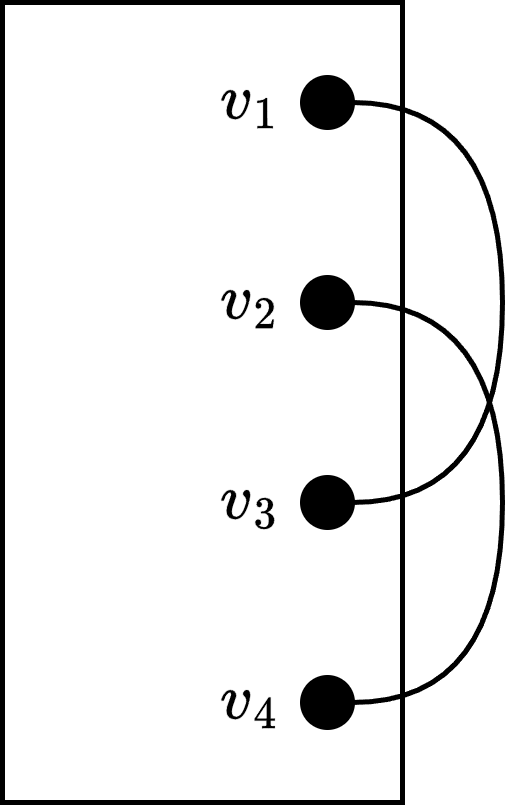}
        \caption{All possible $G^A_{ij}, G^B_{ij}$, excluding rotational symmetry.}
        \label{fig:GAB}
    \end{figure}

    $G^A_{12}, G^A_{14}, G^B_{12}, G^B_{14}$ are planar and $G^A_{13}, G^B_{13}$ are single-cross, so they are all colorable. 
    The multiset expression of the restriction of a coloring of $G^A_{ij}$ to $F$ must be $\{\{i,j,k,l\},\varnothing,\varnothing\}$ or $\{\{i,j\},\{k,l\},\varnothing\}$.
    The multiset expression of the restriction of a coloring of $G^B_{ij}$ to $F$ must be $\{\{i,k\},\{j,l\},\varnothing\}$ or $\{\{i,l\},\{j,k\},\varnothing\}$.
    Note that there are four unique (in the sense that they are pairwise non-equivalent) valid $F$ colorings.
    Since the intersection of $\mathcal{C}(I)$ and the coloring pair above is nonempty, $\mathcal{C}(I)$ contains at least three distinct elements.
    Hence
    \[
        \left|\mathcal{C}^{\mathrm{valid}}\setminus \mathcal{C}(I)\right|\leq 1
    \]
    up to equivalence.
    No nonempty consistent set can consist of only one of these four equivalence classes.
    Indeed, starting from any one of the four classes, choose a color $\kappa$ absent from $\phi$. 
    Switching the two non-$\kappa$ colors on one member of the corresponding pairing produces a different equivalence class. 
    Thus the unique maximal consistent subset of $\mathcal{C}^{\mathrm{valid}}\setminus \mathcal{C}(I)$ is empty.
    Hence $I$ is D-reducible.

    Finally, when $|F| = 5$, we construct 25 graphs as follows.

    \begin{itemize}
        \item $G^C_{ij} = I + v_iv_j + v_ku + v_lu + v_mu$ ($\{i,j,k,l,m\} = \{1,2,3,4,5\}$, $i < j$)
        \item $G^D_{ij, kl} = I + v_iu + v_ju + v_kw + v_lw + uv + wv + v_mv$ ($\{i,j,k,l,m\} = \{1,2,3,4,5\}$, $i < j$, $i < k$, $k < l$)
    \end{itemize}

    The drawings of the graphs are represented in \cref{fig:GCD}.
    
    \begin{figure}
        \centering
        \includegraphics[height=0.2\textheight]{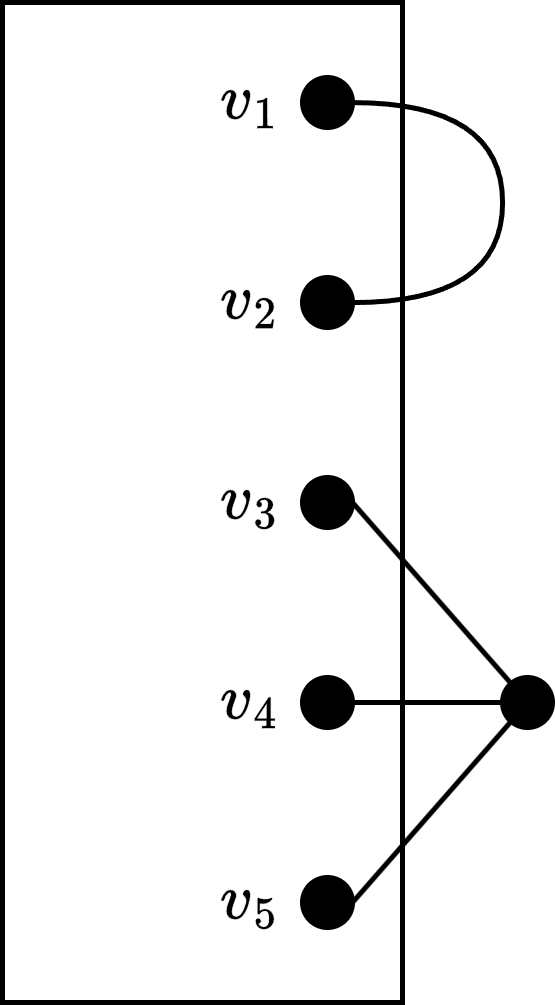}
        \includegraphics[height=0.2\textheight]{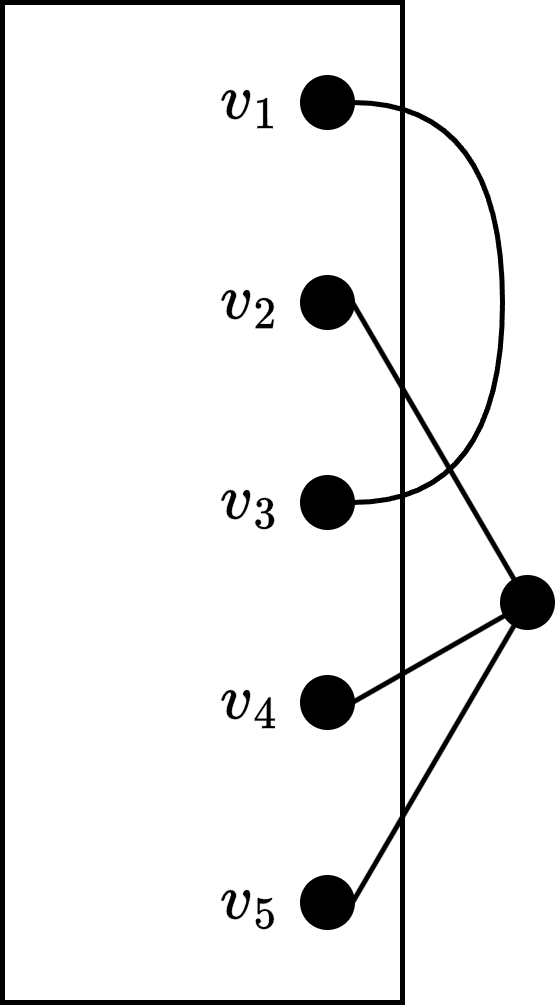}
        \includegraphics[height=0.2\textheight]{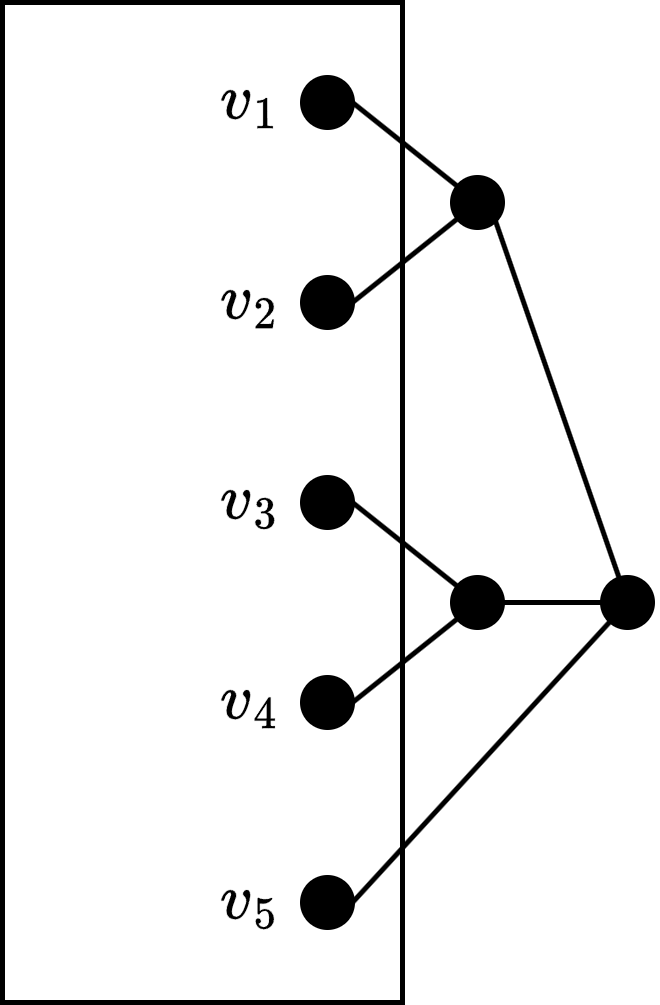}
        \includegraphics[height=0.2\textheight]{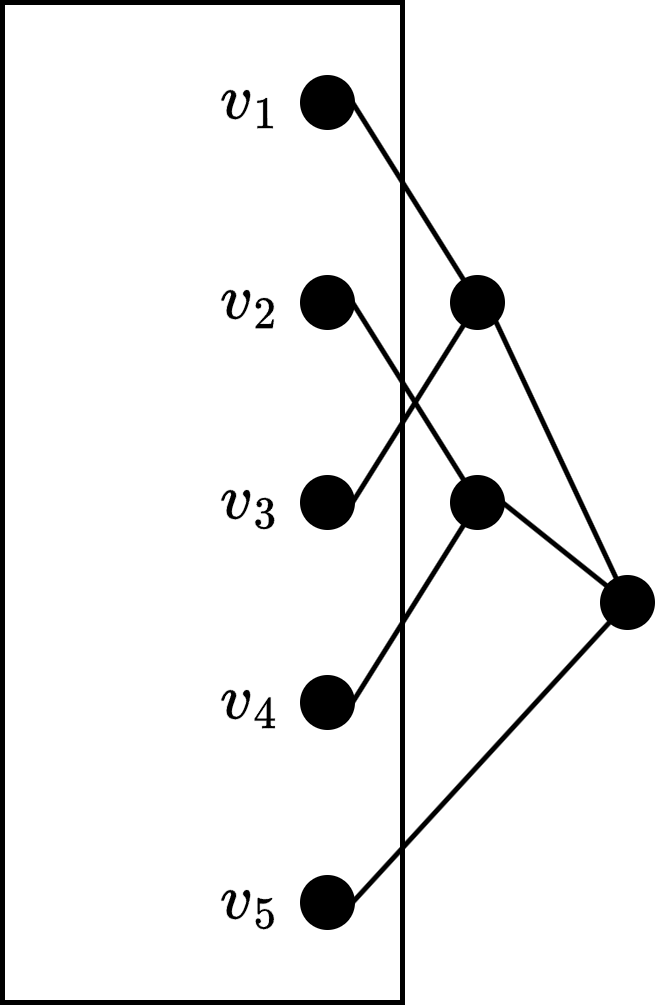}
        \includegraphics[height=0.2\textheight]{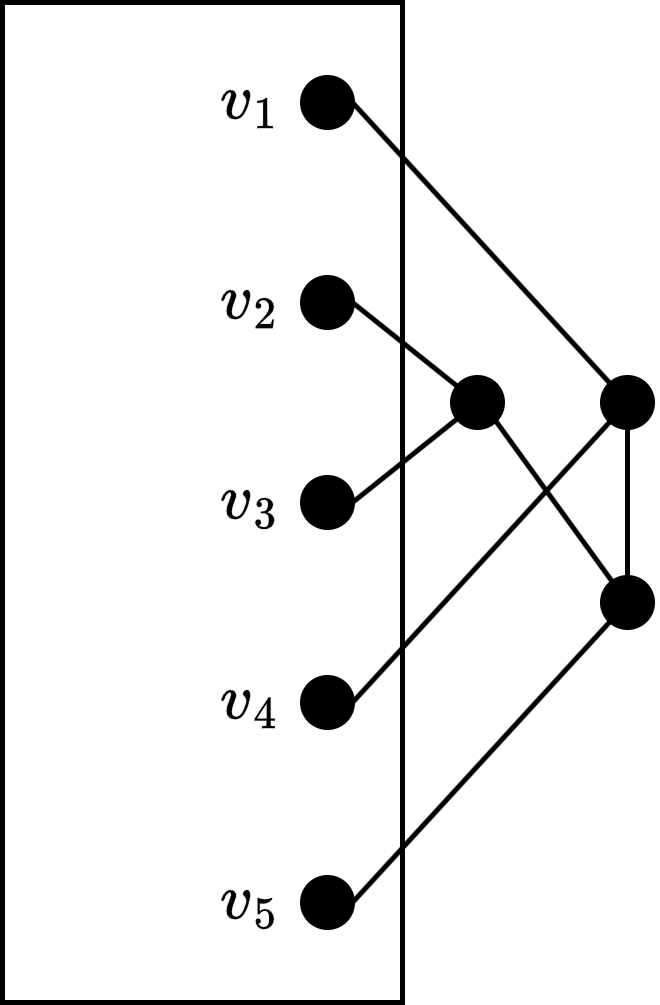}
        \caption{All possible $G^C_{ij}, G^D_{ij, kl}$, excluding rotational symmetry.}
        \label{fig:GCD}
    \end{figure}

    All of $G^C_{ij}$ and $G^D_{ij,kl}$ are planar or single-cross, so every graph in this class is colorable.
    Note that there are 10 equivalence classes of valid $F$ colorings, and their corresponding multisets must be of the form $\{\{i\}, \{j\}, \{k,l,m\}\}$, which we denote by $\phi^5_{ij}$ hereafter.
    
    The restriction of a coloring of $G^C_{ij}$ must be one of $\phi^5_{kl}, \phi^5_{km}, \phi^5_{lm}$.
    The restriction of a coloring of $G^D_{ij,kl}$ must be one of $\phi^5_{ik}, \phi^5_{il}, \phi^5_{jk}, \phi^5_{jl}$.
    We call the conditions above conditions C and D, respectively.

    Let $CG(I)$ be the graph with vertex set $\{1,2,3,4,5\}$ in which $ij$ is an edge if and only if the equivalence class $\varphi^5_{ij}$ belongs to
    $\mathcal{C}^*(I)$. 
    Since $\mathcal{C}^*(I)$ is consistent, $CG(I)$ has no vertex of degree one.
    To see this, suppose that $ij\in E(CG(I))$. Choose a representative of $\varphi^5_{ij}$ in which $f_i$ has color $1$, $f_j$ has color $2$, and the other three ring edges have color $3$ (where $f_i$ is the ring edge incident with $v_i$). 
    Apply consistency with $\kappa=1$. 
    In the resulting pairing, $f_j$ is paired with some $f_k$, where $k\notin\{i,j\}$. Switching on this pair gives the class $\varphi^5_{ik}\in \mathcal{C}^*(I)$. Hence $i$ has another neighbor in $CG(I)$.
    Interchanging the roles of $i$ and $j$ gives the same conclusion for $j$.
    
    By condition C, every set of three vertices contains an edge corresponding to a
    coloring in $\mathcal{C}(I)$. 
    Since $\mathcal{C}^*(I)$ is disjoint from $\mathcal{C}(I)$, the graph $CG(I)$ contains no triangle. 
    By condition D, $CG(I)$ does not contain all four edges between two pairs of vertices. Equivalently, $CG(I)$ contains no $4$-cycle.
    
    If $CG(I)$ is empty, then $I$ is D-reducible. 
    Suppose that $CG(I)$ is nonempty.
    Since $CG(I)$ has no vertex of degree one, it contains a cycle. 
    It has no triangle or $4$-cycle and has only five vertices, so $CG(I)$ is a $5$-cycle.
    Moreover, it has no additional edge, since every chord of a $5$-cycle creates a triangle or a $4$-cycle.
    
    The complement of a $5$-cycle in $K_5$ is another $5$-cycle. Let $J$ be a $5$-cycle island whose cyclic ordering of the ring is chosen so that
    \[
    C(J)=E(K_5)\setminus E(CG(I)).
    \]
    Then
    \[
    C(J)\cap \mathcal{C}^*(I)=\emptyset.
    \]
    Since $|V(J)|=5<|V(I)|$, $I$ is C-reducible.

\end{proof}

We now consider a three-edge-coloring of an apex cubic graph. 
To this end, let $G$ be a minimal counterexample to the 3-edge-coloring of an apex cubic graph.
We prove the following:

\begin{lem}
    If there exists an edge set $F$ with $|F| = 5$ such that $G - F$ has two connected components $I_1, I_2$, one of the following is true:
    \begin{enumerate}
        \item $\min\{|V(I_1)|, |V(I_2)|\} \leq 5$.
        \item One of $I_1, I_2$ is isomorphic to a \emph{domino} (a graph of vertex size 7, depicted in \cref{fig:domino}), and the apex vertex is in $V(I_2), V(I_1)$, respectively.
    \end{enumerate}
\end{lem}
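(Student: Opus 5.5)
We argue by contradiction: take a minimal counterexample $G$ and a $5$-edge cut $F$ with components $I_1,I_2$, both of order at least $6$. Let $a$ be the apex vertex, and say $a\in V(I_2)$ (relabelling if needed). Since $I_1$ avoids $a$, the graph $I_1$ together with its five ring vertices sits inside the planar graph $G-a$.

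The first step is to show that $I_1$ is a disk island, i.e.\ it can be drawn with all five ring vertices on the outer face. Take the planar drawing of $G-a$. Contract $I_2-a$ to a single vertex; this keeps planarity. If $I_2-a$ is disconnected, contract each of its components instead. In either case every ring edge leaving $I_1$ ends in a contracted vertex lying in one face of $I_1$. Edges of $F$ incident with $a$ must be treated separately. Such an edge attaches $I_1$ to $a$, and we attach it to a new pendant vertex; this does not affect the disk property as long as all ring vertices still lie on a common face. If the ring vertices of $I_1$ do not all lie on one face, then some cycle of $I_1$ separates ring edges. By the minimality of $G$ and cyclic $4$-edge-connectivity (the standard reductions of cuts of size at most $4$ already follow from Lemma~\ref{lem:disk-cut}), this gives a smaller cyclic cut and hence a contradiction. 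With $I_1$ a disk island, $|F|=5$ and $|V(I_1)|>5$, Lemma~\ref{lem:disk-cut} shows that $I_1$ is reducible.

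The second step applies the reduction. Replace $I_1$ by a smaller island $J$. This is the $5$-cycle $J$ from the $C$-reducible case, or the colourable completion in the $D$-reducible case. The new graph $G'$ is again cubic and $2$-edge-connected. It is apex with the same apex $a$, because $J$ is a planar disk island whose ring order matches the cyclic order in which the ring edges meet the face. Since $G'$ is smaller, minimality makes it $3$-edge-colourable. Its colouring restricted to $F$ lies in $C(J)$, which is disjoint from $\mathcal{C}^*(I_1)$. Kempe-chain switches in $I_2$ then lead to a ring colouring in $\mathcal{C}(I_1)$, so $G$ is colourable, a contradiction. One point needs care here: Lemma~\ref{lem:disk-cut} uses Kempe chains on the far side, and the far side $I_2$ contains $a$, so it is not planar. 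Consistency, however, only needs a pairing of equal-colour ring edges. When that side is nonplanar, crossing pairings must be excluded. This is exactly why conclusion 2 allows an exception when $I_2$ contains the apex and $I_1$ is the domino.

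The main obstacle is this last issue. The $C$-reducibility argument in Lemma~\ref{lem:disk-cut} builds the pairings from the graphs $G^C,G^D$, which are planar or single-cross, so it implicitly assumes planar chains on the other side. When the apex lies in $I_2$, the Kempe chains through $a$ may cross, and the argument breaks down. The plan is as follows. First run the $D$/$C$ argument with $I_2$ as the side being reduced whenever the apex is in $I_1$. When the apex is in $I_2$, reduce $I_1$ instead, using the fact that all of its chains are planar. The only case where that planar-side argument fails is when $CG(I_1)$ is a $5$-cycle and replacing $I_1$ by a $5$-cycle does not decrease the order in a useful way. I expect a direct check to show that this happens exactly when $|V(I_1)|=7$, forcing $I_1$ to be the domino of Figure~\ref{fig:domino}. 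That yields conclusion 2.
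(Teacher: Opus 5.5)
\textbf{There is a genuine gap in your first step.} It sits in exactly the case the paper treats separately. When some edge $f_0\in F$ is incident with the apex $a$, its end $x$ in $I_1$ has degree two in $G-a$. That end can lie on any face of the plane graph $I_1$, while $f_1,\dots,f_4$ reach $I_1$ from the side of $I_2-a$.

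Your claim that a cycle of $I_1$ separating ring vertices yields a small cyclic cut is false here. If a cycle $C$ of $I_1$ encloses $x$, the vertex set inside $C$ is cut off by its edges to $C$ together with $f_0$. Nothing bounds that number, so minimality gives no contradiction. Hence $I_1$ need not be a disk island, and Lemma~\ref{lem:disk-cut} does not apply: its gadgets are colourable only because they are planar or single-cross, and that needs all five ring vertices on one face.

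The missing idea is the paper's treatment of this case:
\begin{itemize}
\item With $f_1,\dots,f_4$ in cyclic order, every $G^C_{ij}(I_1)$ is still apex, and so is every $G^D_{ij,kl}(I_1)$ except $\{\{i,j\},\{k,l\}\}=\{\{1,3\},\{2,4\}\}$. Minimality therefore makes these replacements colourable.
\item One then works with both colouring graphs $CG(I_1)$ and $CG(I_2)$. They have no degree-one vertices (by Kempe chains), are edge-disjoint (since $G$ is not colourable), and are nonempty (by minimality). Hence one of them is a $3$-, $4$- or $5$-cycle.
\item Each case is killed by a gadget replacing the side that carries the short cycle. That side is sometimes $I_2$, the side containing the apex, so your plan of only ever reducing $I_1$ is too restrictive.
\end{itemize}

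\textbf{Your account of the domino is also misplaced.} Non-planarity of the far side does not hurt the Kempe-chain step. In any cubic graph the two-coloured chains pair up the ring edges, and the consistency notion in Lemma~\ref{lem:disk-cut} only asks for some pairing, crossing or not.

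The failure you predict also does not occur. $|V(I_1)|$ is odd, since $3|V(I_1)|-5$ is even. So $|V(I_1)|>5$ forces $|V(I_1)|\ge 7$, and replacing $I_1$ by a $5$-cycle does lower the order. No check of the $5$-cycle case will single out the domino.

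In the paper the domino comes from apex-preservation in the $4$-cycle case. When $CG(I_2)$ is the $4$-cycle of the crossing pairs $\{\{1,3\},\{2,4\}\}$, the three-vertex gadget $G^D_{13,24}(I_1)$ need not be apex. The $7$-vertex domino is used instead: it has the same colouring graph, and $I_1$ together with the domino is apex.

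So the thing to track throughout is whether each reduced graph stays apex, not whether Kempe chains are planar. This also undercuts your second step, where you assert that $G'$ is apex because $J$'s ring order matches the face. The $5$-cycle $J$ of Lemma~\ref{lem:disk-cut} is chosen with whatever cyclic order makes $C(J)$ the complement of $CG(I)$, so that match is not automatic.
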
 

\begin{proof}
    Suppose that such an edge set $F$ exists, but none of the conclusions stated in the lemma are satisfied.

    We assume, without loss of generality, that the apex vertex lies in $V(I_2)$.
    If $F$ is not incident to the apex vertex,
    $I_1$ is an island embedded in a disk, in which we can apply Lemma \ref{lem:disk-cut} to obtain a contradiction.

    Now, we consider the case where an edge $f_0 \in F$ is incident to the apex vertex. 
    Let $f_1, f_2, f_3, f_4$ be edges in $F \setminus \{f_0\}$, labeled in clockwise order of the embedding.
    Let $G_{ij}^\mathcal{C}(I)$ and $G_{ij,kl}^D(I)$ be the graphs defined in the image as above, but for island $I$.

    Any combination of $G_{ij}^C(I_1)$ is apex, and $G_{ij,kl}^D(I_1)$ is apex for every combination except for $\{\{i,j\},\{k,l\}\} = \{\{1, 3\},\{2, 4\}\}$.
    For $I_2$, $G_{ij}^C(I_2)$ is apex with $i=0$ or $\{i,j\} \in \{\{1,2\},\{2,3\},\{3,4\},\{1,4\}\}$.

    We construct a graph $CG(I)$ with vertex set $F$, in which there is an edge between $i$ and $j$ if and only if $\phi_{ij}^5$ can be extended to a 3-edge-coloring of $I$.

    By the Kempe chain lemma, $CG(I_1)$ and $CG(I_2)$ have no vertices of degree 1. 
    Since $G$ is not 3-edge-colorable, $E(CG(I_1)) \cap E(CG(I_2)) = \varnothing$, and since it is a minimal counterexample, both $CG(I_1)$ and $CG(I_2)$ have at least one edge.

    With these two conditions on $CG(I_1)$ and $CG(I_2)$, we can say that one of $E(CG(I_1))$ or $E(CG(I_2))$ forms a cycle.
    (This is because $|E(CG(I_1))| + |E(CG(I_2))| \le |E(K_5)| = 10$, and any nonempty graph with no vertex of degree one and fewer than five edges has one nontrivial component, and that component is a cycle. The only graph that realizes this is $K_4^-$ (a graph obtained by deleting one edge from $K_4$), but we cannot pack two $K_4^-$s into $K_5$, resulting in a contradiction.)

    Now we do a case analysis:
    
    \begin{itemize}
        \item One of $E(CG(I_1)), E(CG(I_2))$ is a cycle of order 3.
        If $E(CG(I_1))$ is a triangle, we can find a triplet of vertices $k,l,m$ such that $kl, lm, mk \notin CG(I_1)$, and this will violate the colorability of $G_{ij}^C(I_1)$ (where $\{i,j\} = \{0,1,2,3,4\} \setminus \{k,l,m\}$).
        If $E(CG(I_2))$ is a triangle, we can simply use this triangle $k,l,m$ to confirm non-colorability of $G_{ij}^C(I_1)$.
        \item One of $E(CG(I_1)), E(CG(I_2))$ is a cycle of order 4.
        Let $ik, kj, jl, li$ be the four edges of the cycle. For every case except for when $E(CG(I_2))$ is the one containing the four cycle and $\{\{i,j\},\{k,l\}\} = \{\{1, 3\},\{2, 4\}\}$, we know that $G_{ij,kl}^D(I_1)$ or $G_{ij,kl}^D(I_2)$ (depending on which graph is the four cycle) is apex, and it is non-colorable. 
        The only case where this graph is (potentially) non-apex is when $\{\{i,j\},\{k,l\}\} = \{\{1, 3\},\{2, 4\}\}$, and in this case, let us connect the \emph{domino island} and $I_1$, as depicted in \cref{fig:domino}. This is non-colorable because the coloring graph of the domino is the same as $E(CG(I_2))$.
        \item One of $CG(I_1), CG(I_2)$ is a five-cycle.
        In this case, both $CG(I_1)$ and $CG(I_2)$ must be a five-cycle.
        Let $f_i, f_j, f_k, f_l, f_m$ be the order of the cycle in $CG(I_2)$. 
        Then, by connecting each of the degree 1 endpoints of $I_2 + F$ to a five-cycle connecting edges in order $f_i,f_k,f_m,f_j,f_l$, we can say this is non-colorable.
    \end{itemize}
\end{proof}

\begin{figure}[H]
    \centering
    \includegraphics[width=0.2\linewidth, valign=m]{img/lowcuts-domino.pdf}
    \qquad
    \includegraphics[width=0.45\linewidth, valign=m]{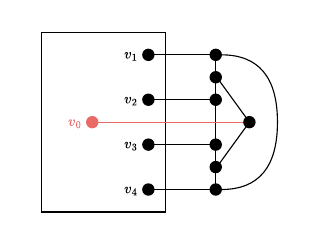}
    \caption{The domino graph and its attachment to an island, producing an apex graph.}
    \label{fig:domino}
\end{figure}

%% file: pseudo/reducibility_check.tex
\input{pseudo/github_commands}
\subsection{Reducibility checking}\label{appreduce}

The notation used in Algorithm \ref{alg:get_valid_parens_matching} and \ref{alg:get_planar_half_kempes_matching} is as follows: $[n]$ denotes $\{1,\ldots,n\}$.
If $M$ is a partial matching on $[n]$, define $U_n(M) := [n] \setminus \bigcup_{e \in M} e$, which is the set of unmatched positions.
If $M$ is a matching on $[n]$ and $B=\{b_1<\cdots<b_m\}$, define $B[M] := \{\{b_r,b_s\} : \{r,s\}\in M\}$ and $M+c := \{\{r+c,s+c\} : \{r,s\}\in M\}$.

These algorithms describe how we generate semi-matchings in Definition \ref{dfn:semi-matching} for an island with a single boundary.
Semi-matchings for a multi-boundary island are generated by concatenating the semi-matchings for its individual boundary components.

\begin{algorithm}[H]
    \caption{\getValidParensWithLink$(q)$}
    \label{alg:get_valid_parens_matching}
    \begin{algorithmic}[1]
        \Require{An integer $q \geq 1$.}
        \Ensure{The Catalan family of noncrossing perfect matchings on $[2q]$.}
        \If{$q=1$}
            \State \Return $\bigl\{\{\{1,2\}\}\bigr\}$
        \EndIf

        \State $\mathcal{P} \gets \emptyset$

        \For{$i \gets 1$ to $q-1$}
            \ForAll{$L \in \Call{GetValidParens}{i}$}
                \ForAll{$R \in \Call{GetValidParens}{q-i}$}
                    \State $\mathcal{P} \gets \mathcal{P} \cup \{L \cup (R+2i)\}$
                \EndFor
            \EndFor
        \EndFor

        \ForAll{$M \in \Call{GetValidParens}{q-1}$}
            \State $\mathcal{P} \gets \mathcal{P} \cup \{(M+1) \cup \{\{1,2q\}\}\}$
        \EndFor

        \State \Return $\mathcal{P}$
    \end{algorithmic}
\end{algorithm}

\begin{algorithm}[H]
    \caption{\getPlanarHalfKempesWithLink$(n)$}
    \label{alg:get_planar_half_kempes_matching}
    \begin{algorithmic}[1]
        \Require{An integer $n \geq 1$.}
        \Ensure{The set of nonredundant planar partial matchings on the cyclically ordered set $[n]$.}
        \If{$n=1$}
            \State \Return $\{\emptyset\}$
        \EndIf
        \State $\mathcal{R} \gets \emptyset$ \Comment{The set to be returned.}
        \State $\mathcal{A} \gets \emptyset$ \Comment{All generated partial matchings, including redundant ones.}

        \For{$q \gets \lfloor n/2 \rfloor$ down to $1$}
            \ForAll{$B \subseteq [n]$ with $|B|=2q$}
                \State $H \gets [n]\setminus B$
                \If{there exists $i \in [n]$ such that $i \in H$ and $i+1 \in H$ cyclically}
                    \State \textbf{continue}
                \EndIf

                \State write $B=\{b_1<\cdots<b_{2q}\}$

                \ForAll{$M_0 \in \Call{GetValidParens}{q}$}
                    \State $M \gets B[M_0]$
                    \State $\mathcal{A} \gets \mathcal{A} \cup \{M\}$
                    \State $\mathsf{redundant} \gets \mathrm{false}$

                    \ForAll{$\{x,y\} \subseteq U_n(M)$}
                        \State $M' \gets M \cup \{\{x,y\}\}$
                        \If{$M' \in \mathcal{A}$}
                            \State $\mathsf{redundant} \gets \mathrm{true}$
                        \EndIf
                    \EndFor

                    \If{$\mathsf{redundant}=\mathrm{false}$}
                        \State $\mathcal{R} \gets \mathcal{R} \cup \{M\}$
                    \EndIf
                \EndFor
            \EndFor
        \EndFor

        \State \Return $\mathcal{R}$
    \end{algorithmic}
\end{algorithm}

\subsubsection{Checking semi-C-reducibility}
\newcommand{\Ependant}{E_\textsf{pendant}}

In this section, we provide the pseudocode to enumerate the deletable edge set (see Definition \ref{dfn:deletable}), which is necessary to check C-reducibility of a given multi-boundary island.
When representing a multi-boundary island $I$ together with a set
$\Ependant$ of auxiliary edges, one added at each vertex of degree two, we assume that the edges are indexed so that the edges of $E_R(I)$ come first,
followed by the edges of $\Ependant$.

\begin{algorithm}[H]
    \caption{\getDeletableEdgeSetWithLink($I$)}
    \label{alg:get-deletable-edge-set}
    \begin{algorithmic}[1]
        \Require{A multi-boundary island $I$ together with a set
$\Ependant$ of auxiliary edges, one added at each vertex of degree two.}
        \Ensure{A collection of deletable edge sets.}
        \State $\texttt{exists} \gets$ the array of length $|E(I) \cup \Ependant|$ with all $-1$. \Comment{unset: $-1$, delete: $0$, retain: $1$}
        \For{$i \gets 0$ \textbf{to}
        $|E_R(I)|+|\Ependant|-1$}
            \State $\texttt{exists}[i]\gets 1$ \Comment{always retain edges in $E_R(I)$ and auxiliary edges.}
        \EndFor
        \State \Return
        \Call{GetDeletableEdgeSetInternal}{$I, \texttt{exists}$}
    \end{algorithmic}
\end{algorithm}

\begin{algorithm}[H]
\caption{\getDeletableEdgeSetInternalWithLink($I, \texttt{exists}$)}
\label{alg:get-deletable-edge-set-internal}
\begin{algorithmic}[1]
    \Require{A multi-boundary island $I$ together with a set
$\Ependant$ of auxiliary edges, one added at each vertex of degree two.}
    \Ensure{A collection of deletable edge sets.}
    
    \State $\texttt{EtoF}[e]\gets\emptyset$ for every edge $e \in E(I) \cup \Ependant$
    \For{each face $f \in F(I) \setminus F_R(I)$}
        \For{each edge $e$ appearing on the boundary walk of $f$}
            \State $\texttt{EtoF}[e] \gets \texttt{EtoF}[e] \cup \{f\}$
        \EndFor
    \EndFor
    
    \State $\texttt{existsList} \gets$ an empty array
    \State $\texttt{max\_delete\_count} \gets 4$
    
    \Procedure{Place}{$\texttt{exists},e_0$} \Comment{\texttt{exists} is modified inside this function.}
        \State $\texttt{Q} \gets$ an empty queue
        \State $\texttt{Q.push}(e_0)$
    
        \While{$\texttt{Q}$ is not empty}
            \State $e \gets \texttt{Q.pop()}$
            \For{each pair $(f,g)$ where $e,f,g \in E(I) \cup \Ependant$ incident with the same vertex }
                \State $\texttt{unset} \gets \{h \in \{e,f,g\} \mid \texttt{exists}[h]=-1\}$
                \State $\texttt{count1} \gets |\{h \in \{e,f,g\} \mid \texttt{exists}[h]=1\}|$
                \If{$|\texttt{unset}|=0$ \textbf{and} $\texttt{count1}=1$}
                    \State \Return \textbf{false} \Comment{Check the first condition of deletable.}
                \ElsIf{$|\texttt{unset}|=1$ \textbf{and} $\texttt{count1}\leq 1$}
                    \State Let $h$ be the unique edge in $\texttt{unset}$
                    \State $\texttt{exists}[h]\gets\texttt{count1}$  \Comment{Ensure the first condition of deletable.}
                    \State \texttt{Q.push($h$)}
                \EndIf
            \EndFor
        \EndWhile
    
        \State \Return \textbf{true}
    \EndProcedure
    
    \Procedure{Recurse}{$\texttt{exists},e$}
        \State $\texttt{delete\_count}\gets$ the number of $0$ in \texttt{exists}
        \If{$\texttt{delete\_count}>\texttt{max\_delete\_count}$}
            \State \Return
        \EndIf
        \While{$e<|E(I) \cup \Ependant|$ \textbf{and} $\texttt{exists}[e]\geq 0$}
            \State $e\gets e+1$
        \EndWhile
        \If{$e=|E(I) \cup \Ependant|$}
            \State $\texttt{boolExists}[h] \gets (\texttt{exists}[h]=1)$ for every $h\in E(I) \cup \Ependant$
            \If{$0<\texttt{delete\_count}<\texttt{max\_delete\_count}$}
                \State $\texttt{existsList} \gets \texttt{existsList} \cup \{\texttt{boolExists}\}$
            \ElsIf{$\texttt{delete\_count}=\texttt{max\_delete\_count}$
            \textbf{and}
            \Call{IsDeletable}{$I, \texttt{boolExists}, \texttt{EtoF}$}}
                 \State $\texttt{existsList} \gets \texttt{existsList} \cup \{\texttt{boolExists}\}$
            \EndIf
            \State \Return
        \EndIf
        \ForAll{$a \in \{0,1\}$} \Comment{delete: $0$, retain: $1$}
            \State $\texttt{exists2} \gets \texttt{exists}$
            \State $\texttt{exists2}[e] \gets a$
            \If{\Call{Place}{$\texttt{exists2}, e$}}
                \State \Call{Recurse}{$\texttt{exists2}, e+1$}
            \EndIf
        \EndFor
    \EndProcedure
    \State \Call{Recurse}{$\texttt{exists}, 0$}
    \State \Return $\texttt{existsList}$
\end{algorithmic}
\end{algorithm}

\begin{algorithm}[H]
    \caption{\isDeletableWithLink($I,\texttt{boolExists}, \texttt{EtoF}$)}
    \label{alg:is-deletable}
    \begin{algorithmic}[1]
        \Require{A multi-boundary island $I$ together with a set
$\Ependant$ of auxiliary edges, one added at each vertex of degree two, \texttt{boolExists} represents an edge set, and \texttt{EtoF} represents the set of faces in $F(I) \setminus F_R(I)$ incident with each edge $e \in E(I) \cup \Ependant$.}
        \Ensure{\True if the edge set represented by \texttt{boolExists} satisfies the second condition of deletable.}
        \For{each face $f \in F(I)\setminus F_R(I)$}
            \State $\texttt{deletion} \gets \emptyset$
            \For{each edge $e$ appearing on the boundary walk of $f$}
                \If{$\texttt{boolExists}[e] = \False$}
                    \State $\texttt{deletion}
                        \gets \texttt{deletion}\cup\{e\}$
                \EndIf
            \EndFor
            \If{$|\texttt{deletion}| \geq 3$}
                \State \Return \textbf{true}
            \EndIf
        \EndFor
        \For{each edge $e \in E(I) \cup \Ependant$}
            \State $\texttt{deletion} \gets \emptyset$
            \For{each face $f \in \texttt{EtoF}[e]$}
                \For{each edge $g$ appearing on the boundary walk of $f$}
                    \If{$\texttt{boolExists}[g] = \False$}
                        \State $\texttt{deletion}
                            \gets \texttt{deletion}\cup\{g\}$
                    \EndIf
                \EndFor
            \EndFor
            \If{$|\texttt{deletion}| = 4$}
                \State \Return \textbf{true}
            \EndIf
        \EndFor
        \State \Return \textbf{false}
    \end{algorithmic}
\end{algorithm}

%% file: pseudo/github_commands.tex
\newcommand{\githubbase}{https://github.com/three-edge-coloring-apex-cubic-graphs/computer-checks/blob/f15d3c7eeafc8814aef478bff59344bdad88bd0f}
\newcommand{\githublink}[3]{\href{\githubbase/#1\##2}{#3}} 

\newcommand{\enumDigonsWithLink}{\githublink{src/cartwheel.cpp}{L125-L143}{enumDigons}}
\newcommand{\distFromCenterWithLink}{\githublink{src/cartwheel.cpp}{L145-L167}{distFromCenter}}
\newcommand{\lowerBoundOfDigonChargeWithLink}{\githublink{src/cartwheel.cpp}{L169-L201}{lowerBoundOfDigonCharge}}
\newcommand{\enumWheelsWithLink}{\githublink{src/cartwheel.cpp}{L203-L227}{enumWheels}}
\newcommand{\enumDigonIncidentWheelsWithLink}{\githublink{src/cartwheel.cpp}{L229-L247}{enumDigonIncidentWheels}}
\newcommand{\generateCartwheelWithLink}{\githublink{src/cartwheel.cpp}{L249-L277}{generateCartwheel}}
\newcommand{\enumPossibleBadWheelsWithLink}{\githublink{src/cartwheel.cpp}{L279-L297}{enumPossibleBadWheels}}
\newcommand{\fixInRulesWithLink}{\githublink{src/cartwheel.cpp}{L300-L328}{fixInRules}}
\newcommand{\updateDegreeByRuleWithLink}{\githublink{src/cartwheel.cpp}{L330-L345}{updateDegreeByRule}}
\newcommand{\concreteDegreeExceptTailWithLink}{\githublink{src/cartwheel.cpp}{L347-L365}{concreteDegreeExceptTail}}
\newcommand{\pruneWithLink}{\githublink{src/cartwheel.cpp}{L367-L383}{prune}}
\newcommand{\pruneByNonAssociatedRuleWithLink}{\githublink{src/cartwheel.cpp}{L385-L400}{pruneByNonAssociatedRule}}
\newcommand{\upperBoundOfChargeWithLink}{\githublink{src/cartwheel.cpp}{L402-L420}{upperBoundOfCharge}}
\newcommand{\fixOutRulesWithLink}{\githublink{src/cartwheel.cpp}{L423-L467}{fixOutRules}}
\newcommand{\shouldRefineWithLink}{\githublink{src/cartwheel.cpp}{L469-L480}{shouldRefine}}
\newcommand{\refinementWithLink}{\githublink{src/cartwheel.cpp}{L482-L490}{refinement}}
\newcommand{\verifyNoBadCartwheelsWithLink}{\githublink{src/cartwheel.cpp}{L492-L506}{verifyNoBadCartwheels}}

\newcommand{\extendFromCutVerticesWithLink}{\githublink{src/configuration.cpp}{L170-L199}{extendFromCutVertices}}
\newcommand{\findCutTuplesWithLink}{\githublink{src/configuration.cpp}{L201-L228}{findCutTuples}}
\newcommand{\removeRingWithLink}{\githublink{src/configuration.cpp}{L230-L287}{removeRing}}

\newcommand{\allHomImagesWithLink}{\githublink{src/configuration_homomorphism.cpp}{L83-L120}{allHomImages}}
\newcommand{\makeOuterExtensionWithLink}{\githublink{src/configuration_homomorphism.cpp}{L164-L184}{makeOuterExtension}}
\newcommand{\findFourDartsWithLink}{\githublink{src/configuration_homomorphism.cpp}{L186-L205}{findFourDarts}}
\newcommand{\ensureOuterExtensionWithLink}{\githublink{src/configuration_homomorphism.cpp}{L208-L218}{ensureOuterExtension}}
\newcommand{\hasSeparatingCycleWithLink}{\githublink{src/configuration_homomorphism.cpp}{L220-L232}{hasSeparatingCycle}}
\newcommand{\enumCyclesWithLink}{\githublink{src/configuration_homomorphism.cpp}{L234-L277}{enumCycles}}
\newcommand{\labelDartsWithLink}{\githublink{src/configuration_homomorphism.cpp}{L279-L309}{labelDarts}}
\newcommand{\numSeparatedVerticesWithLink}{\githublink{src/configuration_homomorphism.cpp}{L311-L356}{numSeparatedVertices}}
\newcommand{\freeCompletionFromOuterExtensionWithLink}{\githublink{src/configuration_homomorphism.cpp}{L359-L383}{freeCompletionFromOuterExtension}}
\newcommand{\islandFromFreeCompletionWithLink}{\githublink{src/configuration_homomorphism.cpp}{L385-L396}{islandFromFreeCompletion}}
\newcommand{\indexBoundaryEdgesWithLink}{\githublink{src/configuration_homomorphism.cpp}{L398-L414}{indexBoundaryEdges}}
\newcommand{\indexPendantEdgesWithLink}{\githublink{src/configuration_homomorphism.cpp}{L416-L437}{indexPendantEdges}}
\newcommand{\indexOtherEdgesWithLink}{\githublink{src/configuration_homomorphism.cpp}{L439-L453}{indexOtherEdges}}
\newcommand{\constructIslandWithLink}{\githublink{src/configuration_homomorphism.cpp}{L455-L473}{constructIsland}}
\newcommand{\confirmDegreeOneWithLink}{\githublink{src/configuration_homomorphism.cpp}{L475-L483}{confirmDegreeOne}}
\newcommand{\confirmFreeCompletionWithLink}{\githublink{src/configuration_homomorphism.cpp}{L485-L505}{confirmFreeCompletion}}

\newcommand{\freeHomomorphismWithLink}{\githublink{src/free_homomorphism.hpp}{L62-L73}{freeHomomorphism}}
\newcommand{\dartIdentificationWithLink}{\githublink{src/free_homomorphism.hpp}{L77-L85}{dartIdentification}}
\newcommand{\freeHomomorphismOnlyDegreesWithLink}{\githublink{src/free_homomorphism.hpp}{L89-L154}{freeHomomorphismOnlyDegrees}}
\newcommand{\resolveDegreeIssuesWithLink}{\githublink{src/free_homomorphism.hpp}{L157-L186}{resolveDegreeIssues}}
\newcommand{\innerSubdegreeErrorWithLink}{\githublink{src/free_homomorphism.hpp}{L189-L198}{innerSubdegreeError}}
\newcommand{\vertexSingleDegreeIssueWithLink}{\githublink{src/free_homomorphism.hpp}{L201-L215}{vertexSingleDegreeIssue}}
\newcommand{\fixSingleDegreeIssueWithLink}{\githublink{src/free_homomorphism.hpp}{L218-L236}{fixSingleDegreeIssue}}
\newcommand{\singleOutLowerDegreeWithLink}{\githublink{src/free_homomorphism.hpp}{L240-L251}{singleOutLowerDegree}}
\newcommand{\enforceSingleDigonIncidenceWithLink}{\githublink{src/free_homomorphism.hpp}{L255-L276}{enforceSingleDigonIncidence}}
\newcommand{\freeHomomorphismAndEnforceSingleDigonIncidenceWithLink}{\githublink{src/free_homomorphism.hpp}{L280-L293}{freeHomomorphismAndEnforceSingleDigonIncidence}}

\newcommand{\boundaryCompletionsConfigurationWithLink}{\githublink{src/pseudo_configuration.cpp}{L61-L72}{boundaryCompletions}}
\newcommand{\addBoundaryDartsWithLink}{\githublink{src/pseudo_configuration.cpp}{L74-L87}{addBoundaryDarts}}
\newcommand{\identifyNeighborsWithLink}{\githublink{src/pseudo_configuration.cpp}{L89-L107}{identifyNeighbors}}
\newcommand{\alwaysApplyWithLink}{\githublink{src/pseudo_configuration.cpp}{L109-L111}{alwaysApply}}
\newcommand{\neverApplyWithLink}{\githublink{src/pseudo_configuration.cpp}{L113-L117}{neverApply}}
\newcommand{\amountOfChargeSendWithLink}{\githublink{src/pseudo_configuration.cpp}{L119-L127}{amountOfChargeSend}}
\newcommand{\amountOfPossibleChargeSendWithLink}{\githublink{src/pseudo_configuration.cpp}{L129-L139}{amountOfPossibleChargeSend}}

\newcommand{\fromVRotationsWithLink}{\githublink{src/pseudo_embedding.cpp}{L40-L89}{fromVRotations}}
\newcommand{\isBoundaryWithLink}{\githublink{src/pseudo_embedding.cpp}{L122-L130}{isBoundary}}
\newcommand{\firstDartWithLink}{\githublink{src/pseudo_embedding.cpp}{L132-L139}{firstDart}}
\newcommand{\lastDartWithLink}{\githublink{src/pseudo_embedding.cpp}{L141-L148}{lastDart}}
\newcommand{\anyDartWithLink}{\githublink{src/pseudo_embedding.cpp}{L150-L157}{anyDart}}
\newcommand{\sucKTimesWithLink}{\githublink{src/pseudo_embedding.cpp}{L159-L168}{sucKTimes}}
\newcommand{\getERotationsWithLink}{\githublink{src/pseudo_embedding.cpp}{L170-L185}{getERotations}}
\newcommand{\getDartsWithLink}{\githublink{src/pseudo_embedding.cpp}{L187-L195}{getDarts}}
\newcommand{\twoDigonsIncidentWithSameVertexWithLink}{\githublink{src/pseudo_embedding.cpp}{L197-L218}{twoDigonsIncidentWithSameVertex}}
\newcommand{\numDigonsWithLink}{\githublink{src/pseudo_embedding.cpp}{L220-L232}{numDigons}}

\newcommand{\removeIsolatedVertexWithLink}{\githublink{src/pseudo_embedding_with_degree.cpp}{L57-L70}{removeIsolatedVertex}}
\newcommand{\addBoundaryDartsDirectlyWithLink}{\githublink{src/pseudo_embedding_with_degree.cpp}{L72-L84}{addBoundaryDartsDirectly}}
\newcommand{\linkIncidenceListEndsWithLink}{\githublink{src/pseudo_embedding_with_degree.cpp}{L86-L104}{linkIncidenceListEnds}}
\newcommand{\getWalksWithLink}{\githublink{src/pseudo_embedding_with_degree.cpp}{L106-L130}{getWalks}}
\newcommand{\isPlanarWithLink}{\githublink{src/pseudo_embedding_with_degree.cpp}{L132-L139}{isPlanar}}
\newcommand{\boundaryCompletionsEmbeddingWithLink}{\githublink{src/pseudo_embedding_with_degree.cpp}{L142-L149}{boundaryCompletions}}
\newcommand{\containConfWithLink}{\githublink{src/pseudo_embedding_with_degree.cpp}{L151-L175}{containConf}}
\newcommand{\dartsByDegreeWithLink}{\githublink{src/pseudo_embedding_with_degree.cpp}{L177-L194}{dartsByDegree}}
\newcommand{\rootedContainConfWithLink}{\githublink{src/pseudo_embedding_with_degree.cpp}{L196-L198}{rootedContainConf}}
\newcommand{\blockedByReducibleConfigurationWithLink}{\githublink{src/pseudo_embedding_with_degree.cpp}{L200-L208}{blockedByReducibleConfiguration}}
\newcommand{\representativeDegreeWithLink}{\githublink{src/pseudo_embedding_with_degree.cpp}{L211-L250}{representativeDegree}}

\newcommand{\verifyNoBadCartwheelsForAllWithLink}{\githublink{verify_no_bad_cartwheels_for_all.sh}{L20-L28}{verifyNoBadCartwheelsForAll}}


\newcommand{\githubbaseReducibleCheck}{https://github.com/three-edge-coloring-apex-cubic-graphs/semi-reducibility-checker/blob/b38ddc531cc64a2bd58fc1904f8b378c3a5fa799}
\newcommand{\githublinkReducibleCheck}[3]{\href{\githubbaseReducibleCheck/#1\##2}{#3}} 

\newcommand{\getValidParensWithLink}{\githublinkReducibleCheck{kempe.hpp}{L20-L48}{GetValidParens}}
\newcommand{\getPlanarHalfKempesWithLink}{\githublinkReducibleCheck{kempe.hpp}{L63-L115}{GetPlanarHalfKempes}}

\newcommand{\isDeletableWithLink}{\githublinkReducibleCheck{multi_boundary_island.hpp}{L185-L203}{IsDeletable}}
\newcommand{\getDeletableEdgeSetInternalWithLink}{\githublinkReducibleCheck{multi_boundary_island.hpp}{L205-L277}{GetDeletableEdgeSetInternal}}
\newcommand{\getDeletableEdgeSetWithLink}{\githublinkReducibleCheck{multi_boundary_island.hpp}{L279-L285}{GetDeletableEdgeSet}}

%% file: pseudo/free_homomorphism.tex
\input{pseudo/github_commands}
\subsection{Free Homomorphism respecting identification requests}\label{sec:pseudo1}
For a pseudo-embedding or a pseudo-triangulation with digons $Z$, we denote the sets of vertices and darts of $Z$ by $V(Z)$ and $D(Z)$, respectively.

\begin{algorithm}[H]
    \caption{\boundaryCompletionsEmbeddingWithLink($(Z, \delta^-, \delta^+), v$)}
    \label{alg:boundary_completions_normal}
    \begin{algorithmic}[1]
        \Require{A pseudo-embedding with degree-range functions $(Z, \delta^-, \delta^+)$ and a boundary vertex $v$ with $\delta^-(v)=\delta^+(v)=d_{Z}(v)$.}
        \Ensure{A set of pseudo-embeddings with degree-range functions after the boundary completion steps.}
        \State $\efirst \gets$ the first dart whose head is $v$ (i.e., $\predecessor(\efirst)=\nil$)
        \State $\elast \gets$ the last dart whose head is $v$ (i.e., $\successor(\elast)=\nil$)
        \State $\predecessor(\efirst) \gets \elast$
        \State $\successor(\elast) \gets \efirst$
        \State \Return $\{((Z, \delta^-, \delta^+), \id{V(Z) \cup D(Z)})\}$
    \end{algorithmic}
\end{algorithm}

\begin{algorithm}[H]
    \caption{\boundaryCompletionsConfigurationWithLink($(Z, \delta^-, \delta^+), v$)}
    \label{alg:boundary_completions_digons}
    \begin{algorithmic}[1]
        \Require{A pseudo-triangulation with digons with degree-range functions and a boundary vertex $v$ with $\delta^-(v)=\delta^+(v)=d_Z(v)$.}
        \Ensure{A set of pseudo-triangulations with digons with degree-range functions after the boundary completion steps.}
        \State $\mathcal{Z}' \gets \emptyset$
        \State $Z_1 \gets$ \Call{addBoundaryDarts}{$(Z, \delta^-, \delta^+), v$} \Comment{Algorithm \ref{alg:add_boundary_darts} is the same as Algorithm A.4.8 in \cite{inoue2026four}.}
        \If {$Z_1 \neq \texttt{null}$}
            \State $\mathcal{Z}' \gets \mathcal{Z}' \cup \{(Z_1, \id{V(Z) \cup D(Z))}\}$
        \EndIf
        \State $Z_2 \gets$ \Call{identifyNeighbors}{$(Z, \delta^-, \delta^+), v$}
        \If {$Z_2 \neq \texttt{null}$}
            \State $\mathcal{Z}' \gets \mathcal{Z}' \cup \{Z_2\}$
        \EndIf
        \State \Return $\mathcal{Z}'$
    \end{algorithmic}
\end{algorithm}

\begin{algorithm}[H]
    \caption{\addBoundaryDartsWithLink($(Z, \delta^-, \delta^+)$, $v$)}
    \label{alg:add_boundary_darts}
    \begin{algorithmic}[1]
        \Require{A pseudo-triangulation with digons with degree-range functions $(Z, \delta^-, \delta^+)$, a boundary vertex $v$ with $\delta^-(v)=\delta^+(v)=d_Z(v)$.}
        \Ensure{A pseudo-triangulation with digons with degree-range functions obtained from $(V, \delta^-, \delta^+)$ by adding darts between two neighbors of $v$ and making $v$ an inner vertex or \texttt{null} if two neighbors coincide and the operation would create a loop.}
        \State $\efirst \gets$ the first dart whose head is $v$ (i.e., $\predecessor(\efirst)=\nil$)
        \State $\elast \gets$ the last dart whose head is $v$ (i.e., $\successor(\elast)=\nil$)
        \If {$\tail(\efirst)=\tail(\elast)$}
            \State \Return \texttt{null}
        \EndIf
        \State $(Z', \delta^{'-}, \delta^{'+}) \gets$ a copy of $(Z, \delta^{-}, \delta^{+})$
        \State In $Z'$, $\predecessor(\efirst) \gets \elast$
        \State In $Z'$, $\successor(\elast) \gets \efirst$
        \State \Call{addBoundaryDartsDirectly}{$(Z', \delta^{'-}, \delta^{'+}), \efirst, \elast$} \Comment{$Z'$ is updated inside this function.}
        \State \Return $((Z', \delta^{'-}, \delta^{'+})$
    \end{algorithmic}
\end{algorithm}

\begin{algorithm}[H]
    \caption{\addBoundaryDartsDirectlyWithLink($(Z, \delta^-, \delta^+)$, $\efirst$, $\elast$)}
    \label{alg:add_boundary_darts_directly}
    \begin{algorithmic}[1]
        \Require{A pseudo triangulation with digons $(Z, \delta^-, \delta^+)$ and the first dart $\efirst$ and the last dart $\elast$ of $v$.}
        \Ensure{Update $(Z, \delta^-, \delta^+)$ by adding darts between $\tail(\efirst)$ and $\tail(\elast)$.}
        \State $f \gets$ a dart whose \head is $\tail(\efirst)$, \reverse is $g$, \successor is \nil, and \predecessor is $\reverse(\efirst)$.
        \State $g \gets$ a dart whose \head is $\tail(\elast)$, \reverse is $f$, \successor is $\reverse(\elast)$, and \predecessor is \nil.
        \State $\successor(\reverse(\efirst)) \gets f$
        \State $\predecessor(\reverse(\elast)) \gets g$
        \State \Return
    \end{algorithmic}
\end{algorithm}

\begin{algorithm}[H]
    \caption{\identifyNeighborsWithLink($(Z, \delta^-, \delta^+)$, $v$)}
    \label{alg:identify_neighbors}
    \begin{algorithmic}[1]
        \Require{A pseudo-triangulation with digons with degree-range functions $(Z, \delta^-, \delta^+)$, a boundary vertex $v$ with $\delta^-(v)=\delta^+(v)=d_Z(v)$.}
        \Ensure{A pseudo-triangulation with digons with degree-range functions obtained from $(V, \delta^-, \delta^+)$ by identifying neighbors of $v$ and making $v$ an inner vertex with a mapping $\phi$ or \texttt{null} if such an identification is impossible by degree conditions or makes a loop.}
        \State $\efirst \gets$ the first dart whose head is $v$ (i.e., $\predecessor(\efirst)=\nil$)
        \State $\elast \gets$ the last dart whose head is $v$ (i.e., $\successor(\elast)=\nil$)
        \If {$[\delta^-(\tail(\efirst)), \delta^+(\tail(\efirst))] \cap [\delta^-(\tail(\elast)), \delta^+(\tail(\elast))] = \emptyset$}
            \State \Return \texttt{null}
        \EndIf
        \State $(Z', \delta^{'-}, \delta^{'+}) \gets$ a copy of $(Z, \delta^{-}, \delta^{+})$
        \State In $Z'$, $\predecessor(\efirst) \gets \elast$
        \State In $Z'$, $\successor(\elast) \gets \efirst$
        \State $\phi' \gets$ \Call{linkIncidenceListEnds}{$(Z', \delta^{'-}, \delta^{'+}), \reverse(\elast), \reverse(\efirst)$} \Comment{$Z'$ is updated inside this function.}
        \If {$Z'$ has a loop}
            \State \Return \texttt{null}
        \EndIf
        \State \Return $((Z', \delta^{'-}, \delta^{'+}), \phi')$
    \end{algorithmic}
\end{algorithm}

\begin{algorithm}[H]
    \caption{\linkIncidenceListEndsWithLink($(Z, \delta^-, \delta^+)$, $\eufirst$, $\ewlast$)}
    \label{alg:link_indidence_list_ends}
    \begin{algorithmic}[1]
        \Require {A pseudo-embedding with degree-range functions $(Z, \delta^-, \delta^+)$,
        $\eufirst$ is the first dart whose head is $u$,
        $\ewlast$ is the last dart whose head is $w$.}
        \Ensure {Update $(Z, \delta^{-}, \delta^{+})$ so that the incidence list of $u,w$ are linked by assigning $\predecessor(\eufirst)=\ewlast$, $\successor(\ewlast)=\eufirst$ ($u$ can be possibly the same as $w$.), and return a mapping from the original $Z$ to the updated $Z$.}
        \State $u \gets \head(\eufirst)$
        \State $w \gets \head(\ewlast)$
        \State $\predecessor(\eufirst) \gets \ewlast$
        \State $\successor(\ewlast) \gets \eufirst$
        \If {$u=w$}
            \State \Return $\id{V(Z) \cup D(Z)}$
        \EndIf
        \State $e\gets \eufirst$
        \While {$e \neq \nil$}
            \State $\head(e) \gets w$
            \State $e \gets \successor(e)$
        \EndWhile
        \State $[\delta^-(w), \delta^+(w)] \gets [\delta^-(u), \delta^+(u)] \cap [\delta^-(w), \delta^+(w)]$
        \State $\phi \gets \id{V(Z) \cup D(Z)}$
        \State $\phi(u) \gets w$
        \State remove $u$ from $Z$
        \State \Return $\phi$
    \end{algorithmic}
\end{algorithm}

\subsubsection{Blocked by configurations}
From the set $\mathcal{K}$ of configurations, we construct the set $\bar{\mathcal{K}}$.
This set contains pseudo-embeddings obtained from configurations in $\mathcal{K}$ by adding an outer edge that is incident to a cut-vertex, as described in Section \ref{subsect:free-homomorphism}, together with their mirror images.

The procedure for constructing such a pseudo-embedding from a configuration with a cut vertex is almost the same as Algorithms A.6.1--A.6.3 in \cite{inoue2026four}.
However, it differs from those algorithms in the way the configuration is extended at the cut-vertex.
Algorithm \ref{alg:extend_from_cut_vertices}, \ref{alg:find_cut_tuples}, \ref{alg:remove_ring} correspond to Algorithms A.6.1--A.6.3 in \cite{inoue2026four}.
By using them and Algorithm A.6.4, A.6.5 in \cite{inoue2026four}, we construct the set $\bar{\mathcal{K}}$.

In these algorithms, we use list data structures whose size is changing dynamically.
In adding a new element $x$ to the back of the list \texttt{L1}, we write $\texttt{L1.push\_back}(x)$.

In Algorithm \ref{alg:remove_ring}, we use Algorithm \ref{alg:from_v_rotations}.
This algorithm constructs dart representation from \texttt{rotations} and \texttt{digons}.
\texttt{rotations} is a list such that $\texttt{rotations}[i]$ represents the rotations of neighbors of $i$-th vertex.
\texttt{digons} is a list of endpoints of digons.
We use this function when constructing dart representations after reading files of configurations, rules, and cartwheels.

\begin{algorithm}[H]
    \caption{\extendFromCutVerticesWithLink($N$, $R$, \texttt{degrees}, \texttt{rotations}, \texttt{digons})}
    \label{alg:extend_from_cut_vertices}
    \begin{algorithmic}[1]
        \Require{The number of vertices $N$ in the free completion of a configuration and $R$ in its ring; a vertex set $V = \{v_0, \ldots, v_{N-1}\}$ containing the ring vertices $V_R = \{v_0, \ldots, v_{R-1}\}$, degree functions $\delta^-, \delta^+$ on $V \setminus V_R$, and a list \texttt{rotations} representing rotations of neighbors for each vertex in $V$. and a list \texttt{digons} representing the digons.}
        \Ensure{The set of configurations obtained by extending at each cut-vertex, with the special dart.}
        \State $P \gets$ \Call{findCutTuples}{$N, R, \texttt{rotations}$}
        \State $\mathcal{K} \gets \emptyset$
        \ForAll{$S \gets \{0, \ldots, 2^{|P|}-1\}$}
            \State $\texttt{adjacent\_cutvertex} \gets$ an array of size $R$, initialized by $-1$
            \ForAll{$i \gets \{0, \ldots, |P|-1\}$}
                \State $(v, a, b) \gets P[i]$
                \If{the $i$-th bit of $S$ is $1$}
                    \State $\texttt{adjacent\_cutvertex}[a] \gets v$
                \Else
                    \State $\texttt{adjacent\_cutvertex}[b] \gets v$
                \EndIf
            \EndFor
            \State $(Z, \delta^-, \delta^+) \gets$ \Call{removeRing}{$N, R, \texttt{degrees}, \texttt{rotations}, \texttt{adjacent\_cutvertex}, \texttt{digons}$}
            \State $f \gets$ \Call{maximumDegreeDart}{$(Z, \delta^-, \delta^+)$}
            \State $\mathcal{K} \gets \mathcal{K} \cup \{((Z, \delta^-, \delta^+), f)\}$
        \EndFor
        \State \Return $\mathcal{K}$
    \end{algorithmic}
\end{algorithm}

\begin{algorithm}[H]
    \caption{\findCutTuplesWithLink($N, R, \texttt{rotations}$)}
    \label{alg:find_cut_tuples}
    \begin{algorithmic}[1]
        \Require{The number of vertices $N$ in the free completion of a configuration and $R$ in its ring; a vertex set $V = \{v_0, \ldots, v_{N-1}\}$ containing the ring vertices $V_R = \{v_0, \ldots, v_{R-1}\}$, and a list \texttt{rotations} representing rotations of neighbors for each vertex in $V$.}
        \Ensure{The array of triples consisting of a cut-vertex and the two ring vertices adjacent to it, or raise an error if the input is an invalid configuration.}
        \State $\texttt{P} \gets$ an empty array
        \ForAll{$v_i \in \{v_R, \ldots, v_{N-1}\}$}
            \State $U_R \gets \emptyset$ \Comment{$U_R$ is the set of ring vertices adjacent to $v_i$.}
            \State $t \gets 0$ \Comment{$t$ is the number of transitions from a ring vertex to an internal vertex around $v_i$.}
            \State $d \gets |\texttt{rotations}[i]|$
            \ForAll{$j \gets \{0, \ldots, d-1\}$}
                \State $k_1 \gets \texttt{rotations}[i][j]$
                \If{$k_1 < R$}
                    \State $U_R \gets U_R \cup \{v_{k_1}\}$
                \EndIf
                \State $k_2 \gets \texttt{rotations}[i][(j+1) \bmod d]$
                \If{$k_1 < R$ and $k_2 \geq R$}
                    \State $t \gets t+1$ \Comment{A transition from the ring to an internal vertex.}
                \EndIf
            \EndFor
            \If{$t \geq 2$ and $|U_R| \neq 2$}
                \State \textbf{raise an error} \Comment{not normal configuration}
            \EndIf
            \If{$t=2$ and $|U_R|=2$} \Comment{$v_i$ is a cut-vertex.}
                \State $v_a, v_b \gets U_R$
                \State $\texttt{P.push\_back}((v_i,v_a,v_b))$
            \EndIf
        \EndFor
        \State \Return \texttt{P}
    \end{algorithmic}
\end{algorithm}

\begin{algorithm}[H]
    \caption{\removeRingWithLink$(N, R, \texttt{degrees}, \texttt{rotations}, \texttt{adjacent\_cutvertex}, \texttt{digons})$}
    \label{alg:remove_ring}
    \begin{algorithmic}[1]
        \Require{The number of vertices $N$ in the free completion of a configuration and $R$ in its ring; a vertex set $V = \{v_0, \ldots, v_{N-1}\}$ containing the ring vertices $V_R = \{v_0, \ldots, v_{R-1}\}$, degree functions $\delta^-, \delta^+$ on $V \setminus V_R$ , a list \texttt{rotations} representing rotations of neighbors for each vertex in $V$, a list \texttt{adjacent\_cutvertex}, and a list \texttt{digons}.}
        \Ensure{The pseudo-embedding with degree obtained by deleting each ring vertex $v_i$ with $\texttt{adjacent\_cutvertex}[i]=-1$ and retaining only the edge between each remaining ring vertex and its selected adjacent cut-vertex.}

        \Comment{Step 1: Assign new vertex IDs}
        \State $\texttt{old2new} \gets$ an array of length $N$, each element initialized by $-1$
        \State $\texttt{new\_id} \gets 0$
        \ForAll{$i \gets \{0, \ldots, N-1\}$}
            \If{$i<R$ and $\texttt{adjacent\_cutvertex}[i]=-1$}
                \Continue
            \EndIf
            \State $\texttt{old2new}[i] \gets \texttt{new\_id}$
            \State $\texttt{new\_id} \gets \texttt{new\_id}+1$
        \EndFor
        \State $N' \gets \texttt{new\_id}$

        \Comment{Step 2: Construct new rotations}
        \State $\texttt{new\_rotations} \gets$ a list of $N'$ lists
        \ForAll{$i \gets \{0, \ldots, N-1\}$}
            \If{$i<R$ and $\texttt{adjacent\_cutvertex}[i]=-1$}
                \Continue
            \EndIf
            \ForAll{$j \gets \texttt{rotations}[i]$}
                \If{$j=-1$ or ($i<R$ and $j\neq\texttt{adjacent\_cutvertex}[i]$) or ($j<R$ and $i\neq\texttt{adjacent\_cutvertex}[j]$)}
                    \State $\texttt{new\_rotations[old2new}[i]\texttt{].push\_back}(-1)$
                \Else
                    \State $\texttt{new\_rotations[old2new}[i]\texttt{].push\_back(old2new}[j]\texttt{)}$
                \EndIf
            \EndFor
        \EndFor

        \Comment{Step 3: Update degrees}
        \State $U' \gets \{u_0, \ldots, u_{N'-1}\}$
        \State $\delta^{\prime-},\delta^{\prime+}\colon U'\to\mathbb{N}\cup\{\infty\}$
        \ForAll{$i \gets \{0, \ldots, R-1\}$}
            \If{$\texttt{adjacent\_cutvertex}[i]=-1$}
                \Continue
            \EndIf
            \State $k \gets \texttt{old2new}[i]$
            \State $d \gets$ the number of elements that are not $-1$ in $\texttt{new\_rotations}[k]$
            \State $\delta^{\prime-}(u_k) \gets d+1$, $\delta^{\prime+}(u_k) \gets \infty$
        \EndFor
        \ForAll{$i \gets \{R, \ldots, N-1\}$}
            \State $k \gets \texttt{old2new}[i]$
            \State $\delta^{\prime-}(u_k) \gets \delta^-(v_i)$, $\delta^{\prime+}(u_k) \gets \delta^+(v_i)$
        \EndFor
        \algstore{removeRing}
    \end{algorithmic}
\end{algorithm}
\begin{algorithm}[H]
    \begin{algorithmic}
        \algrestore{removeRing}
        \State $\texttt{new\_digons} \gets$ an empty list  \Comment{Step 4: Construct new digons}
        \ForAll{$(a,b) \gets \texttt{digons}$}
            \If{$\texttt{old2new}[a]=-1$ or $\texttt{old2new}[b]=-1$}
                \Continue
            \EndIf
            \State $\texttt{new\_digons.push\_back}((\texttt{old2new}[a],\texttt{old2new}[b]))$
        \EndFor

        \State \Return The pseudo-embedding obtained by \Call{fromVRotations}{$N'$, $\texttt{new\_rotations}$, $\texttt{new\_digons}$} with degree functions $\delta^{\prime-},\delta^{\prime+}$.
    \end{algorithmic}
\end{algorithm}

\begin{algorithm}[H]
    \caption{\fromVRotationsWithLink($N, \texttt{rotations}, \texttt{digons}$)}
    \label{alg:from_v_rotations}
    \begin{algorithmic}[1]
        \Require{We denote the number of vertices by $N$ and a vertex set by $V=\{v_0,\ldots,v_{N-1}\}$. A list \texttt{rotations}, where $\texttt{rotations}[i]$ represents the rotation of indices of vertices around $v_i \in V$, where $-1$ marks the boundary. A list \texttt{digons} represents pairs of vertices that bound digons.}
        \Ensure{The pseudo-embedding represented by $V$, \texttt{rotations}, and \texttt{digons}, or raise an error if some vertex is adjacent to the same vertex twice except for digons, or the rotations of two vertices have a discrepancy.}

        \State $\texttt{is\_digon} \gets$ an $N \times N$ array, initialized by $\False$
        \ForAll{$(a,b) \in \texttt{digons}$}
            \State $\texttt{is\_digon}[a][b] \gets \True$
            \State $\texttt{is\_digon}[b][a] \gets \True$
        \EndFor

        \State $\texttt{darts} \gets$ an $N \times N$ array of darts, initialized by \nil
        \State $\texttt{digon\_darts} \gets$ an $N \times N$ array of darts, initialized by \nil
        \ForAll{$a \in \{0,\ldots,N-1\}$}
            \ForAll{$i \in \{0,\ldots,|\texttt{rotations}[a]|-1\}$}
                \State $b \gets \texttt{rotations}[a][i]$
                \If{$b=-1$}
                    \Continue
                \EndIf
                \If{$\texttt{darts}[a][b]\neq\nil$}
                    \State \textbf{raise an error} \Comment{A vertex that is adjacent to the same vertex twice except for digons.}
                \EndIf
                \State $\texttt{darts}[a][b] \gets$ a fresh dart
                \If{$\texttt{is\_digon}[a][b]=\True$}
                    \State $\texttt{digon\_darts}[a][b] \gets$ a fresh dart
                \EndIf
            \EndFor
        \EndFor
        \algstore{fromVRotations}
    \end{algorithmic}
\end{algorithm}
\begin{algorithm}[H]
    \begin{algorithmic}
        \algrestore{fromVRotations}
        \State $D \gets \emptyset$ \Comment{$D$ becomes the dart set.}
        \ForAll{$a \in \{0,\ldots,N-1\}$}
            \State $\texttt{size} \gets |\texttt{rotations}[a]|$
            \ForAll{$i \in \{0,\ldots,\texttt{size}-1\}$}
                \State $b \gets \texttt{rotations}[a][i]$
                \If{$b=-1$}
                    \Continue
                \EndIf
                \If{$\texttt{darts}[b][a]=\nil$}
                    \State \textbf{raise an error} \Comment{The rotations of two vertices have a discrepancy.}
                \EndIf
                \State $\texttt{head} \gets v_a$
                \State $s \gets \texttt{rotations}[a][i+1]$ if $i<\texttt{size}-1$ otherwise $\texttt{rotations}[a][0]$
                \State $\texttt{succ} \gets \texttt{darts}[a][s]$ if $s \neq -1$ otherwise $\nil$
                \State $p \gets \texttt{rotations}[a][i-1]$ if $i>0$ otherwise $\texttt{rotations}[a][\texttt{size}-1]$
                \State $\texttt{pred} \gets \texttt{digon\_darts}[a][p]$ if $p \neq -1$ and $\texttt{is\_digon}[a][p]=\True$, otherwise $\texttt{darts}[a][p]$ if $p \neq -1$, otherwise \nil
                \State $e \gets \texttt{darts}[a][b]$
                \If{$\texttt{is\_digon}[a][b]=\True$}
                    \State $e' \gets \texttt{digon\_darts}[a][b]$
                    \State Assign $\head(e) \gets \texttt{head}, \reverse(e) \gets \texttt{digon\_darts}[b][a], \successor(e) \gets e'$, and $ \predecessor(e) \gets \texttt{pred}$
                    \State Assign $\head(e') \gets \texttt{head}, \reverse(e') \gets \texttt{darts}[b][a], \successor(e') \gets \texttt{succ}$, and $ \predecessor(e') \gets e$
                    \State $D \gets D \cup \{e,e'\}$
                \Else
                    \State Assign $\head(e) \gets \texttt{head}$, $\reverse(e) \gets \texttt{darts}[b][a]$,  $\successor(e) \gets \texttt{succ}$, and $\predecessor(e) \gets \texttt{pred}$
                    \State $D \gets D \cup \{e\}$
                \EndIf
            \EndFor
        \EndFor

        \State \Return The pseudo-embedding that consists of $(V,D)$
    \end{algorithmic}
\end{algorithm}

\subsubsection{Enforce single digon incidence}

\begin{algorithm}[H]
    \caption{\enforceSingleDigonIncidenceWithLink($(Z, \delta^-, \delta^+)$)}
    \label{alg:enforce_single_digon_incidence}
    \begin{algorithmic}[1]
        \Require{The input $(Z, \delta^-, \delta^+)$ is a pseudo-embedding with degree or a pseudo-triangulation with digons with degree.}
        \Ensure{A set of free homomorphic images enforcing single digon incidence.}
        \State $\mathcal Z^* \gets \emptyset$
        \State $\texttt{Q}\gets$ an empty queue
        \State $\texttt{Q.push}(((Z, \delta^-, \delta^+), \id{V(Z) \cup D(Z)}))$
        \While{$\texttt{Q}$ is not empty}
            \State $((\tilde{Z}, \tilde{\delta}^-, \tilde{\delta}^+), \tilde{\phi}) \gets \texttt{Q.pop}()$
            \State $C \gets $ \Call{twoDigonsIncidentWithSameVertex}{$\tilde{Z}$}
            \If{$C \neq \texttt{null}$}
                \State $e_0, f_0 \gets C$
                \State $\tilde{\mathcal Z}^* \gets$ \Call{FreeHomomorphism}{$(\tilde{Z}, \tilde{\delta}^-, \tilde{\delta}^+),\{(e_0, f_0)\}$} \Comment{The algorithm to compute free homomorphic images depend on whether the input is a pseudo-embedding or a pseudo-triangulation with digons.}
                \ForAll{$((\tilde{Z}^*, \tilde{\delta}^{*-}, \tilde{\delta}^{*+}), \tilde{\phi}^*) \in \tilde{\mathcal Z}^*$}
                    \State $\texttt{Q.push}(((\tilde{Z}^*, \tilde{\delta}^{*-}, \tilde{\delta}^{*+}), \tilde{\phi}^* \circ \tilde{\phi}))$
                \EndFor
                \Continue
            \EndIf
            \State $\mathcal Z^* \gets \mathcal Z^* \cup \{(\tilde{Z}, \tilde{\delta}^-, \tilde{\delta}^+), \tilde{\phi})\}$
        \EndWhile
        \State \Return $\mathcal Z^*$
    \end{algorithmic}
\end{algorithm}

\begin{algorithm}[H]
    \caption{\twoDigonsIncidentWithSameVertexWithLink($Z$)}
    \label{alg:two_digons_incident_with_same_vertex}
    \begin{algorithmic}[1]
        \Require{A pseudo-embedding $Z$}
        \Ensure{A pair of darts contained in two digons incident with the same vertex, or \texttt{null}}
        \ForAll{$v \in V(Z)$}
            \State $\texttt{digon\_edge} \gets \nil$
            \ForAll{dart $e_0$ with $\head(e_0)=v$}
                \If {$\successor(e_0)=\nil$}
                    \Continue
                \EndIf
                \State $e_1 \gets \reverse(\successor(e_0))$
                \If {$\successor(e_1)=\nil$}
                    \Continue
                \EndIf
                \State $e_2 \gets \reverse(\successor(e_1))$
                \If {$e_0=e_2$}
                    \If {$\texttt{digon\_edge}=\nil$}
                        \State $\texttt{digon\_edge} \gets e_0$
                    \Else
                        \State \Return $(\texttt{digon\_edge}, e_0)$
                    \EndIf
                \EndIf
            \EndFor
        \EndFor
        \State \Return \texttt{null}
    \end{algorithmic}
\end{algorithm}

\begin{algorithm}[H]
    \caption{\freeHomomorphismAndEnforceSingleDigonIncidenceWithLink($(Z, \delta^-, \delta^+), S$)}
    \label{alg:free_hom_single_digon}
    \begin{algorithmic}[1]
        \Require{The input $(Z, \delta^-, \delta^+)$ is a pseudo-embedding with degree or a pseudo-triangulation with digons with degree and a set $S$ of identification requests.}
        \Ensure{A set of free homomorphic images respecting $S$ and enforcing single digon incidence.}
        \State $\mathcal Z^* \gets$ \Call{FreeHomomorphism}{$(Z, \delta^-, \delta^+), S$}
        \State $\tilde{\mathcal Z} \gets \emptyset$
        \ForAll{$((Z^*, \delta^{*-}, \delta^{*+}), \phi^*)\in \mathcal Z^*$}
            \State $\mathcal Z' \gets$ \Call{enforceSingleDigonIncidence}{$Z^*, \delta^{*-}, \delta^{*+}$}
            \ForAll{$((Z', \delta^{'-}, \delta^{'+}), \phi')\in \mathcal Z'$}
                \State $\tilde{\mathcal Z} \gets \tilde{\mathcal Z} \cup\{((Z', \delta^{'-}, \delta^{'+}), \phi' \circ \phi^*)\}$
            \EndFor
        \EndFor
        \State \Return $\tilde{\mathcal Z}$
    \end{algorithmic}
\end{algorithm}

\begin{lem}
\label{comp:lem:combined_rules}
  Every possible combination $R^*$ of the rules in $\mathcal{R}$ that are not blocked by $\mathcal{K}$ satisfies $r(R^*) \leq 5$.
\end{lem}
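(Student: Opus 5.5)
This statement is a finite computational claim, so I would prove it by exhaustive enumeration using the machinery of Section~\ref{subsect:free-homomorphism}. The correctness lemmas needed are Lemmas~\ref{lem:free-hom-tri-digon} and \ref{lem:pseud-tri-single-digon}. The object to be bounded is the set $\cRstarnoK$: the free combinations of subsets of $\mathcal{R}$, enforcing single digon incidence, that are not blocked by $\mathcal{K}$.

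I would first fix the input data: the 32 rules of \cite{RSST}, the three new rules $R(33)$--$R(35)$ with their mirrors, and the split rules $R(10)$ and $R(31)$. Each rule is built as a pseudo-triangulation with digons via Algorithm~\ref{alg:from_v_rotations}. The configurations are represented as the set $\bar{\mathcal K}$, which includes cut-vertex extensions and mirror images.

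Rather than iterating over all $2^{|\mathcal R|}$ subsets, I would build combinations incrementally, as in Algorithm A.8.2 of \cite{inoue2026four}. Order the rules, and maintain a work list of pairs $(R^*,i)$, where $R^*$ is a combined rule and $i$ is the index of the last rule added. Start from the trivial rule with charge $0$. For each $j>i$, take the disjoint union of $R^*$ with $R_j$, identify the distinguished darts, and apply Algorithm~\ref{alg:free_hom_single_digon}. Each image is a new combined rule with charge $r(R^*)+r(R_j)$.

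The key pruning is monotonicity. If $R^*$ is blocked by $\mathcal K$, then every further combination also admits a homomorphism from $R^*$, so it is blocked too; such branches can be discarded. An empty image set means the subset is unrealizable, and that branch is dropped as well. The survivors form $\cRstarnoK$. For each survivor, I would check $r(R^*)\le 5$ and report any violation.

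\textbf{Soundness.} Any set of rules that actually apply to a common dart of $G^*$ arises in this enumeration. This follows from the universal property of free homomorphic images together with the enforcement of single digon incidence, which is valid by Lemma~\ref{lem:triangulation}.

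\textbf{Main obstacle.} The hardest part is the blocking test and the combinatorial blow-up caused by digons. Boundary completion now branches two ways, triangle or digon, so the number of images can grow quickly. Blocking must hold for every degree assignment within the tail ranges. I would handle this as \cite{inoue2026four} does: refine degree ranges to representative fixed degrees, namely $5$, $6$, $7$, $8$, and $[9,\infty]$, and test rooted homomorphisms from $\bar{\mathcal K}$ at each choice. If the enumeration becomes too large, I would first add more configurations to $\mathcal K$ to block the large-charge combinations that remain.
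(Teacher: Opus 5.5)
Your proposal is correct and takes essentially the same approach as the paper. The paper runs Algorithm A.8.2 of \cite{inoue2026four} with its free-homomorphism subroutine replaced by Algorithm~\ref{alg:free_hom_single_digon}, on inputs $\mathcal{R}$ and $\bar{\mathcal K}$, and checks that every combined rule in the resulting $\cRstarnoK$ satisfies $r(R^*)\le 5$. One remark is out of place: your fallback of enlarging $\mathcal K$ would change the statement rather than prove it, since the lemma concerns the fixed set $\mathcal K$.
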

     For the proof, we took the algorithm obtained from Algorithm A.8.2 in \cite{inoue2026four} by replacing the routine to compute a set of free homomorphic images of pseudo-triangulation with that for pseudo-triangulation with digons and enforcing single digon incidence (which is Algorithm \ref{alg:free_hom_single_digon}).
    We used our rule set $\mathcal{R}$, and our configuration set $\bar{\mathcal{K}}$ as inputs.
    In the obtained set $\cRstarnoK$, each combined rule $R^*$ turned out to have $r(R^*) \leq 5$.

%% file: pseudo/cartwheel_algorithm.tex
\input{pseudo/github_commands}
\subsection{The final charges of vertices with degrees from 7 to 11}

\subsubsection{Wheels}

$\CARTWHEELDEGREES = [(5, 5), (6, 6), (7, 7), (8, 8), (9, \infty)]$ is the array representing all possible degree-ranges as neighbors of the center vertex in wheels.
\begin{algorithm}
    \caption{\enumDigonIncidentWheelsWithLink($d$)}
    \label{alg:enum_digon_incident_wheels}
    \begin{algorithmic}[1]
        \Require{A center degree $d$.}
        \Ensure{The set of wheels of center degree $d$ with a digon.}
        \State $\mathcal W\gets \emptyset$
        \State $\texttt{degrees} \gets$ an array of length $d-1$.
        \Procedure{enumDegree}{$\texttt{degrees},i$}
            \If{$i=d-1$}
                \State $W\gets \Call{generateCartwheel}{d,\texttt{degrees},\True}$
                \State $\mathcal W\gets \mathcal W\cup\{W\}$
                \State \Return
            \EndIf
            \For{$j \in \{0, \ldots, 4\}$}
                \State Let $(a,b)$ be $\CARTWHEELDEGREES[j]$
                \State $\texttt{degrees}[i]\gets [a,b]$
                \State \Call{enumDegree}{$\texttt{degrees},i+1$}
            \EndFor
            \State \Return
        \EndProcedure
        \State \Call{enumDegree}{$\texttt{degrees},0$}
        \State \Return $\mathcal W$
    \end{algorithmic}
\end{algorithm}

\begin{algorithm}[H]
    \caption{\enumWheelsWithLink($d$)}
    \label{alg:enum_wheels}
    \begin{algorithmic}[1]
        \Require{A center degree $d$.}
        \Ensure{The set of wheels of center degree $d$.}
        \State $\mathcal{W} \gets \emptyset$
        \State $\texttt{degrees}\gets$ an array of length $d$
        \Procedure{enumDegree}{$\texttt{degrees},i,i_\text{lowest}$}
            \If{$i=d$}
                \If{some rotation of \texttt{degrees} is lexicographically strictly smaller than \texttt{degrees}}
                    \State \Return
                \EndIf
                \State $W\gets \Call{generateCartwheel}{d,\texttt{degrees},\False}$
                \State $\mathcal W\gets \mathcal W\cup\{W\}$
                \State \Return
            \EndIf
            \ForAll{$j\in\{i_\text{lowest},\ldots,4\}$}
                \State Let $(a,b)$ be $\CARTWHEELDEGREES[j]$
                \State $\texttt{degrees}[i]\gets [a,b]$
                \State \Call{enumDegree}{$\texttt{degrees},i+1,i_\text{lowest}$}
            \EndFor
            \State \Return
        \EndProcedure
        \ForAll{$j\in\{0,\ldots,4\}$}
            \State Let $(a,b)$ be $\CARTWHEELDEGREES[j]$
            \State $\texttt{degrees}[0]\gets [a,b]$
            \State \Call{enumDegree}{$\texttt{degrees},1,j$}
        \EndFor
        \State \Return $\mathcal W$
    \end{algorithmic}
\end{algorithm}

\begin{algorithm}[H]
    \caption{\generateCartwheelWithLink($d,\texttt{degrees},\texttt{incident\_digon}$)}
    \label{alg:generate_cartwheel}
    \begin{algorithmic}[1]
        \Require{A center degree $d$, an array \texttt{degrees} of degree ranges, and a boolean \texttt{incident\_digon}.}
        \Ensure{A cartwheel of center degree $d$.}
        \If{$\texttt{incident\_digon}$}
            \State $n \gets d-1$
        \Else
            \State $n \gets d$
        \EndIf

        \State $\texttt{rotations} \gets$ an array of length $n+1$.
        \State $\texttt{rotations}[0] \gets [1, \ldots, n]$
        
        \ForAll{$i \in \{1, \ldots, n\}$}
            \State $i' \gets i+1 \text{ if } i < n \text{ otherwise } 1$
            \State $i'' \gets i-1 \text{ if } i > 1 \text{ otherwise } n$
            \State $\texttt{rotations}[i] \gets [i', 0, i'', -1]$
        \EndFor

        \State $\mathcal D\gets \emptyset$
        \If{$\texttt{incident\_digon}$}
            \State $\mathcal D\gets \mathcal D\cup\{\{0,1\}\}$
        \EndIf

        \State $Z \gets \Call{fromVRotations}{n + 1, \texttt{rotations}, \mathcal{D}}$
        \State $\delta^-, \delta^+ \colon V(Z) \to \mathbb{N}$ such that $\delta^-(v_0)=\delta^+(v_0)=d$, $(\delta^-(v_i), \delta^+(v_i))=\texttt{degrees}[i-1]$ if $0 < i \leq n$.
        \State \Return the cartwheel $(Z, \delta^-, \delta^+)$ with center vertex $v_0$.
    \end{algorithmic}
\end{algorithm}

\subsubsection{Charges along an edge and a bound on the final charge}

\begin{algorithm}[H]
    \caption{\neverApplyWithLink($(Z^*, \delta^{*-}, \delta^{*+}), e^*, R$)}
    \label{alg:never_apply}
    \begin{algorithmic}[1]
        \Require{A pseudo triangulation with digons with degree-range functions $(Z^*, \delta^{*-}, \delta^{*+})$ with a dart $e^*$, and a rule $R$.}
        \Ensure{\True if $R$ never applies to the dart $e^*$ in $(Z^*, \delta^{*-}, \delta^{*+})$, and \False otherwise}
        \State $((Z_R, \delta_R^-, \delta_R^+), e_R) \gets R$
        \State $\mathcal{Z} \gets$ \Call{freeHomomorphismAndEnforceSingleDigonIncidence}{$(Z^*, \delta^{*-}, \delta^{*+}) \sqcup (Z_R, \delta_R^-, \delta_R^+), \{(e^*, e_R)\}$}
        \If {$\mathcal Z=\emptyset$}
            \State \Return \True
        \Else
            \State \Return \False
        \EndIf
    \end{algorithmic}
\end{algorithm}

\subsubsection{A lower bound of charges given to digons}

\begin{algorithm}[H]
    \caption{\lowerBoundOfDigonChargeWithLink($(Z_C,\delta_C^-,\delta_C^+)$)}
    \label{alg:lower_bound_of_digon_charge}
    \begin{algorithmic}[1]
        \Require{A cartwheel $(Z_C,\delta_C^-,\delta_C^+)$ with center vertex $v$}
        \Ensure{A lower bound on the charge given to digons}
        \State $\texttt{dist\_pairs}\gets ((0,1),(1,0),(1,1),(1,2),(2,1))$
        \State $\texttt{degree\_pairs}\gets
                \begin{aligned}[t]
                \bigl(&
                    (([7,10],[5,\infty])),\\
                    &(([5,\infty],[7,10])),\\
                    &(([5,5],[5,5]),([5,5],[6,6]),([6,6],[5,5]),([5,\infty],[5,\infty])),\\
                    &(([5,5],[5,5]),([5,5],[6,7]),([6,7],[5,5])),\\
                    &(([5,5],[5,5]),([5,5],[6,7]),([6,7],[5,5]))
                \bigr)
                \end{aligned}$
        \State $\texttt{charges}\gets
            \bigl(
                (4),
                (4),
                (5,3,3,2),
                (4,2,2),
                (4,2,2)
            \bigr)$
        \State $\texttt{charge}\gets 0$
        \State $\mathcal{D} \gets \Call{enumDigons}{Z_C}$
        \ForAll{digons $(u_1,u_2)\in\mathcal D$}
            \For{$i\gets 1$ to $5$}
                \If{$(\textsf{dist}(v,u_1),\textsf{dist}(v,u_2))=\texttt{dist\_pairs}[i]$}
                    \For{$j\gets 1$ to $|\texttt{degree\_pairs}[i]|$}
                        \State Let $(D_1, D_2)$ be $\texttt{degree\_pairs}[i][j]$
                        \If{$[\delta^-_C(u_1), \delta_C^+(u_1)] \subseteq D_1$ and $[\delta^-_C(u_2), \delta_C^+(u_2)] \subseteq D_2$}
                            \State $\texttt{charge}\gets \texttt{charge}+\texttt{charges}[i][j]$
                            \Break
                        \EndIf
                    \EndFor
                    \Break
                \EndIf
            \EndFor
        \EndFor
        \State \Return $\texttt{charge}$
    \end{algorithmic}
\end{algorithm}

\begin{algorithm}[H]
    \caption{\enumDigonsWithLink($Z_C$)}
    \label{alg:enum_digons}
    \begin{algorithmic}[1]
        \Require{A cartwheel $Z_C$}
        \Ensure{The set of digons of $Z_C$, represented by their endpoint sets}
        \State $\mathcal D\gets \emptyset$
        \ForAll{darts $e_0$ of $Z_C$}
            \If{$\successor(e_0)=\nil$}
                \Continue
            \EndIf
            \State $e_1\gets \reverse(\successor(e_0))$
            \If{$\successor(e_1)=\nil$}
                \Continue
            \EndIf
            \State $e_2\gets \reverse(\successor(e_1))$
            \If{$e_2=e_0$}
                \State $u_1\gets \head(e_0)$
                \State $u_2\gets \head(\reverse(e_0))$
                \State $\mathcal D\gets \mathcal D\cup\{\{u_1,u_2\}\}$
            \EndIf
        \EndFor
        \State \Return $\mathcal D$
    \end{algorithmic}
\end{algorithm}

\subsubsection{Fixing rules applied from neighbors to the center}

\begin{algorithm}[H]
    \caption{\fixInRulesWithLink($(Z_{W}, \delta_{W}^-, \delta_{W}^+), \mathcal{R}, \mathcal{R}^{*-\mathcal{K}}, \bar{\mathcal{K}}$)}
    \label{alg:fix_in_rules}
    \begin{algorithmic}[1]
        \Require{A cartwheel $(Z_{W}, \delta_{W}^-, \delta_{W}^+)$ with the center vertex $v$ and with darts $e_1, \ldots, e_d$ whose head is $v$, a set of rules $\mathcal{R}$, a set of combined rules $\mathcal{R}^{*-\mathcal{K}}$, a set of configurations $\bar{\mathcal{K}}$.}
        \Ensure{The set of cartwheels obtained from $(Z_{W}, \delta_{W}^-, \delta_{W}^+)$ by fixing a set of rules $\mathcal{R}_i$ applied to each dart $e_i$.}
        \State $\mathcal{C}_0 \gets \{((Z_{W}, \delta_{W}^-, \delta_{W}^+), \emptyset)\}$
        \ForAll{$i \in \{1, \ldots, d\}$}
            \State $\mathcal{C}_i \gets \emptyset$
            \ForAll{$((Z_C, \delta_C^-, \delta_C^+), \{R^*_j\}_{j=1}^{i-1}) \in \mathcal{C}_{i-1}$}
                \ForAll{$R_i^* \in \mathcal{R}^{*-\mathcal{K}}$}
                    \State $\mathcal{C}' \gets$ \Call{updateDegreeByRule}{$(Z_C, \delta_C^-, \delta_C^+), e_i, R_i^*$}
                    \ForAll{$(Z_{C'}, \delta_{C'}^-, \delta_{C'}^+) \in \mathcal{C}'$}
                        \If {\Call{prune}{$(Z_{C'}, \delta_{C'}^-, \delta_{C'}^+), \{R_j^*\}_{j=1}^i, \mathcal{R}, \mathcal{R}^{*-\mathcal{K}}, \bar{\mathcal{K}}$}}
                            \Continue
                        \EndIf
                        \State $\mathcal{C}_i \gets \mathcal{C}_i \cup \{((Z_{C'}, \delta_{C'}^-, \delta_{C'}^+), \{R_j^*\}_{j=1}^i)\}$
                    \EndFor
                \EndFor
            \EndFor
        \EndFor
        \State \Return $\mathcal{C}_d$
    \end{algorithmic}
\end{algorithm}

\begin{algorithm}[H]
    \caption{\updateDegreeByRuleWithLink($(Z_C,\delta_C^-,\delta_C^+), e_C, R$)}
    \label{alg:update_degree_by_rule}
    \begin{algorithmic}[1]
        \Require{A cartwheel $(Z_C,\delta_C^-,\delta_C^+)$ with center vertex $v$ and darts $e_1,\ldots,e_d$ whose head is $v$, a dart $e_C$, and a rule $R$}
        \Ensure{A set of cartwheels where degree-ranges are only tail-ranges by taking free homomorphic images of $(Z_C, \delta_C^-, \delta_C^+) \sqcup (Z_R, \delta_R^-, \delta_R^+)$ respecting $\{(e_C, e_R)\}$.}
        \State $((Z_R,\delta_R^-,\delta_R^+),e_R) \gets R$
        \State $\mathcal Z^*
            \gets
            \Call{freeHomomorphismAndEnforceSingleDigonIncidence}
            {(Z_C,\delta_C^-,\delta_C^+) \sqcup (Z_R,\delta_R^-,\delta_R^+), \{(e_C,e_R)\}}$
        \State $\mathcal{C}^* \gets \emptyset$
        \ForAll{$((Z^*, \delta^{*-}, \delta^{*+}), \phi^*) \in \mathcal Z^*$}
            \State $C^* \gets$ the cartwheel generated from $(Z^*,\delta^{*-},\delta^{*+})$ with center vertex $\phi^*(v)$ and center darts $\phi^*(e_i)$.
            \State $\mathcal{C}' \gets \Call{concreteDegreeExceptTail}{C^*}$
            \State $\mathcal{C}^* \gets \mathcal{C}^* \cup \mathcal{C}'$
        \EndFor
        \State \Return $\mathcal{C}^*$
    \end{algorithmic}
\end{algorithm}

\begin{algorithm}[H]
    \caption{\concreteDegreeExceptTailWithLink($(Z_C,\delta_C^-,\delta_C^+)$)}
    \label{alg:concrete_degree_except_tail}
    \begin{algorithmic}[1]
        \Require{A cartwheel $(Z_C,\delta_C^-,\delta_C^+)$}
        \Ensure{The set of cartwheels obtained by fixing all non-tail degree ranges}
        \State $\mathcal C\gets \{(Z_C,\delta_C^-,\delta_C^+)\}$
        \ForAll{vertices $u$ of $Z_C$}
            \If{$\delta_C^-(u)=\delta_C^+(u)$ or $\delta_C^+(u)=\infty$}
                \Continue
            \EndIf
            \State $\mathcal C'\gets \emptyset$
            \For{$d\gets \delta_C^-(u)$ to $\delta_C^+(u)$}
                \ForAll{$(Z,\delta^-,\delta^+)\in\mathcal C$}
                    \State Let $(Z',\delta^{\prime -},\delta^{\prime +})$ be a copy of $(Z,\delta^-,\delta^+)$
                    \State $\delta^{\prime -}(u)\gets d$
                    \State $\delta^{\prime +}(u)\gets d$
                    \State $\mathcal C'\gets \mathcal C'\cup\{(Z',\delta^{\prime -},\delta^{\prime +})\}$
                \EndFor
            \EndFor
            \State $\mathcal C\gets \mathcal C'$
        \EndFor
        \State \Return $\mathcal C$
    \end{algorithmic}
\end{algorithm}

\subsubsection{Pruning}

\begin{algorithm}[H]
    \caption{\pruneWithLink($(Z_C,\delta_C^-,\delta_C^+),\{R^*_j\}_{j=1}^{i},\mathcal R,\mathcal R^{*-\mathcal K},\bar{\mathcal K}$)}
    \label{alg:prune}
    \begin{algorithmic}[1]
        \Require{A cartwheel $(Z_C,\delta_C^-,\delta_C^+)$ with center vertex $v$, combined rules with spokes $\{R^*_j\}_{j=1}^{i}$, rules $\mathcal R$, combined rules $\mathcal R^{*-\mathcal K}$, and reducible configurations $\bar{\mathcal K}$}
        \Ensure{\True if the cartwheel is pruned, and \False otherwise}
        \If{\Call{pruneByNonAssociatedRule}{$(Z_C,\delta_C^-,\delta_C^+),\{R^*_j\}_{j=1}^{i},\mathcal R$}} \Comment{Algorithm A.9.12 in \cite{inoue2026four}.}
            \State \Return \True 
        \EndIf
        \If{\Call{upperBoundOfCharge}{$(Z_C,\delta_C^-,\delta_C^+),\{R^*_j\}_{j=1}^{i},\mathcal R,\mathcal R^{*-\mathcal K}$} $\leq$ \Call{lowerBoundOfDigonCharge}{$(Z_C,\delta_C^-,\delta_C^+)$}} \Comment{upperBoundOfCharge routine is Algorithm A.9.13 in \cite{inoue2026four}.}
            \State \Return \True
        \EndIf
        \If{\Call{blockedByReducibleConfiguration}{$(Z_C,\delta_C^-,\delta_C^+),v,\bar{\mathcal K}$}}
            \State \Return \True \Comment{Algorithm A.7.1 in \cite{inoue2026four}.}
        \EndIf
        \State \Return \False
    \end{algorithmic}
\end{algorithm}

\subsubsection{Refinement}

\begin{algorithm}[H]
    \caption{\fixOutRulesWithLink($\mathcal{C}_d, \mathcal{R}, \mathcal{R}^{*-\mathcal{K}}, \bar{\mathcal{K}}, \Rauxiliary$)}
    \label{alg:fix_out_rules}
    \begin{algorithmic}[1]
        \Require{A set of cartwheels $\mathcal{C}_d$, the set of rules $\mathcal{R}$, the set of combined rules $\mathcal{R}^{*-\mathcal{K}}$, the set of configurations $\bar{\mathcal{K}}$, and an auxiliary set of rules with its homomorphic cover $\Rauxiliary$.}
        \Ensure{The set of cartwheels $\mathcal{C}$, obtained by refining the cartwheels in $\mathcal{C}_d$ based on whether a rule in $\Rauxiliary$ applies from the center.}
        \State \texttt{Q} $\gets$ an empty queue
        \ForAll{$((Z_C, \delta_C^-, \delta_C^+), \{R^*_j\}_{j=1}^d) \in \mathcal{C}_d$}
            \State $\texttt{Q.push}(((Z_C, \delta_C^-, \delta_C^+), \{R^*_j\}_{j=1}^d))$
        \EndFor
        \State $\mathcal{C} \gets \emptyset$
        \While{not \texttt{Q.empty()}}
            \State $((Z_C, \delta_C^-, \delta_C^+), \{R^*_j\}_{j=1}^d) \gets \texttt{Q.pop}()$
            \State \texttt{refined\_flag} $\gets$ \False
            \ForAll{$i \in [1,d]$}
                \ForAll{$(R, \{R_j\}_{j=1}^k) \in \Rauxiliary$}
                    \If {\Call{shouldRefine}{$(Z_C, \delta_C^-, \delta_C^+), i, (R, \{R_j\}_{j=1}^k)$} $=$ \False}
                        \Continue
                    \EndIf
                    \State \texttt{refined\_flag} $\gets$ \True
                    \State $\mathcal{Z}_{C'} \gets$ \Call{refinement}{$(Z_C, \delta_C^-, \delta_C^+), i, (R, \{R_j\}_{j=1}^k)$}
                    \ForAll {$(Z_{C'}, \delta_{C'}^-, \delta_{C'}^+) \in \mathcal{Z}_{C'}$}
                         \If {\Call{prune}{$(Z_{C'}, \delta_{C'}^-, \delta_{C'}^+), \{R_j^*\}_{j=1}^d, \mathcal{R}, \mathcal{R}^{*-\mathcal{K}}, \bar{\mathcal{K}}$}}
                             \Continue
                         \EndIf
                         \State \texttt{Q.push}(($(Z_{C'}, \delta_{C'}^-, \delta_{C'}^+), \{R^*_j\}_{j=1}^d$))
                    \EndFor
                    \Break
                \EndFor
                \If {\texttt{refined\_flag} = \True}
                    \Break
                \EndIf
            \EndFor
            \If {\texttt{refined\_flag} = \False}
                \State $\mathcal{C} \gets \mathcal{C} \cup \{((Z_C, \delta_C^-, \delta_C^+), \{R^*_j\}_{j=1}^d)\}$
            \EndIf
        \EndWhile
        \State \Return $\mathcal{C}$
    \end{algorithmic}
\end{algorithm}

\begin{algorithm}[H]
    \caption{\shouldRefineWithLink($(Z_C,\delta_C^-,\delta_C^+),i,(R,\{R_j\}_{j=1}^k)$)}
    \label{alg:should_refine}
    \begin{algorithmic}[1]
        \Require{A cartwheel $(Z_C,\delta_C^-,\delta_C^+)$, an index $i$ ($1 \leq i \leq d$), a rule $R$ with its homomorphic cover $\{R_j\}_{j=1}^k$}
        \Ensure{\True if we should refine by $(R,\{R_j\}_{j=1}^k)$, otherwise \False.}
        \If {\Call{alwaysApply}{$(Z_C,\delta_C^-,\delta_C^+),\reverse(e_i),R$} $=\False$} \Comment{alwaysApply is in Algorithm A.9.1 in \cite{inoue2026four}.}
            \State \Return \False
        \EndIf
        \ForAll{$j \in \{1, 2, \ldots, k\}$}
            \If{\Call{alwaysApply}{$(Z_C, \delta_C^-, \delta_C^+),\reverse(e_i),R_j$} $=\True$}
                \State \Return \False
            \EndIf
        \EndFor
        \State \Return \True
    \end{algorithmic}
\end{algorithm}

\begin{algorithm}[H]
    \caption{\refinementWithLink($(Z_C,\delta_C^-,\delta_C^+),i,(R,\{R_j\}_{j=1}^k)$)}
    \label{alg:refinement}
    \begin{algorithmic}[1]
        \Require{A cartwheel $(Z_C,\delta_C^-,\delta_C^+)$, an index $i$ ($1 \leq i \leq d$), a rule $R$ with its homomorphic cover $\{R_j\}_{j=1}^k$}
        \Ensure{The set of cartwheels updated by each rule $R_j$.}
        \State $\mathcal C\gets \emptyset$
        \ForAll{$j \in \{1,2, \ldots, k\}$}
            \State $\mathcal C_j\gets \Call{updateDegreeByRule}{(Z_C,\delta_C^-,\delta_C^+), \reverse(e_i) ,R_j}$
            \State $\mathcal C\gets \mathcal C\cup \mathcal C_j$
        \EndFor
        \State \Return $\mathcal C$
    \end{algorithmic}
\end{algorithm}

\subsubsection{Overall algorithm}

\begin{algorithm}[H]
    \caption{\enumPossibleBadWheelsWithLink($d,\mathcal R,\mathcal R^{*-\mathcal K},\bar{\mathcal K}$)}
    \label{alg:enum_possible_bad_wheels}
    \begin{algorithmic}[1]
        \Require{A center degree $d$, rules $\mathcal R$, combined rules $\mathcal R^{*-\mathcal K}$, and configurations $\bar{\mathcal K}$.}
        \Ensure{The set of possibly bad wheels of center degree $d$.}
        \State $\mathcal C_0^d \gets \emptyset$
        \State $\mathcal W \gets \Call{enumWheels}{d}$
        \State $\mathcal W_{\mathrm{digon}} \gets \Call{enumDigonIncidentWheels}{d}$
        \ForAll{$(Z_W,\delta_W^-,\delta_W^+) \in \mathcal{W} \cup \mathcal{W}_{\mathrm{digon}}$}
            \If{\Call{prune}{$(Z_W,\delta_W^-,\delta_W^+),\emptyset,\mathcal R,\mathcal R^{*-\mathcal K},\bar{\mathcal K}$}}
                \Continue
            \EndIf
            \State $\mathcal C_0^d\gets \mathcal C_0^d\cup\{(Z_W,\delta_W^-,\delta_W^+)\}$
        \EndFor
        \State \Return $\mathcal C_0^d$
    \end{algorithmic}
\end{algorithm}

\begin{algorithm}
    \caption{\verifyNoBadCartwheelsWithLink($(Z_C,\delta_C^-,\delta_C^+),\mathcal R,\mathcal R^{*-\mathcal K},\bar{\mathcal K},\Rauxiliary$)}
    \label{alg:enum_bad_cartwheels}
    \begin{algorithmic}[1]
        \Require{A cartwheel $(Z_C,\delta_C^-,\delta_C^+)$, rules $\mathcal R$, combined rules $\mathcal R^{*-\mathcal K}$, reducible configurations $\bar{\mathcal K}$, and the auxiliary set of rules and their homomorphic cover $\Rauxiliary$.}
        \State $\mathcal C_d \gets \Call{fixInRules}{(Z_C,\delta_C^-,\delta_C^+),\mathcal R,\mathcal R^{*-\mathcal K},\bar{\mathcal K}}$
        \State $\mathcal C \gets \Call{fixOutRules}{\mathcal C_d,\mathcal R,\mathcal R^{*-\mathcal K},\bar{\mathcal K},\Rauxiliary}$
        \State \textbf{assert}($\mathcal{C} = \emptyset$)
    \end{algorithmic}
\end{algorithm}

\begin{algorithm}[H]
    \caption{verifyNoBadCartwheelsForAll($\mathcal{R}, \mathcal{R}^{*-\mathcal{K}}, \bar{\mathcal{K}}, \Rauxiliary$)}
    \label{alg:enum_all_bad_cartwheel}
    \begin{algorithmic}[1]
        \Require{The set of rules $\mathcal{R}$, the set of combined rules $\mathcal{R}^{*-\mathcal{K}}$, the set of configurations $\bar{\mathcal{K}}$, and the auxiliary set of rules and their homomorphic cover $\Rauxiliary$.}
        \ForAll{$d \in \{7,8,9,10,11\}$ \textbf{in parallel}}
            \State $\mathcal{C}_0 \gets$ \Call{enumPossibleBadWheels}{$d, \mathcal{R}, \mathcal{R}^{*-\mathcal{K}}, \bar{\mathcal{K}}$}
            \ForAll{$(Z_{C_0}, \delta_{C_0}^-, \delta_{C_0}^+) \in \mathcal{C}_0$ \textbf{in parallel}} 
                \State \Call{verifyNoBadCartwheels}{$(Z_{C_0}, \delta_{C_0}^-, \delta_{C_0}^+), \mathcal{R}, \mathcal{R}^{*-\mathcal{K}}, \bar{\mathcal{K}}, \Rauxiliary$}
            \EndFor
        \EndFor
    \end{algorithmic}
\end{algorithm}
We do not have source code that implements Algorithm \ref{alg:enum_all_bad_cartwheel} literally.
Instead, we first run \textsf{enumPossibleBadWheels}, and then, for each cartwheel produced, we run \textsf{verifyNoBadCartwheels} using a shell script.
This shell script implements lines 3--5 of Algorithm \ref{alg:enum_all_bad_cartwheel}.


\begin{lem}
\label{comp:lem:deg7-11}
   In every call to Algorithm \ref{alg:enum_bad_cartwheels} made during its execution, the assertion does not fail.
   In other words, $\mathcal{C}_\textsf{all}$ in Lemma \ref{lem:cartwheel-algorithm-is-correct} is empty.
\end{lem}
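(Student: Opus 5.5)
This is a computational statement: the assertion in Algorithm \ref{alg:enum_bad_cartwheels} never fails, i.e.\ $\mathcal{C}_\textsf{all}=\emptyset$. So the plan is to describe a verification procedure whose correctness rests on the lemmas already proved, and then run it. The logical reduction is already in place. By Lemma \ref{lem:cartwheel-algorithm-is-correct}, emptiness of $\mathcal{C}_\textsf{all}$ yields Lemma \ref{lem:deg7-11}. The pruning steps are justified by Lemmas \ref{lem:upperbound-charge}, \ref{lem:upperbound-charge2} and \ref{lem:charge-given-to-digon}. The free homomorphic images used in \textsf{updateDegreeByRule} and \textsf{refinement} are exhaustive by Lemmas \ref{lem:free-hom-tri-digon} and \ref{lem:pseud-tri-single-digon}. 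What remains is to carry out the enumeration and check that every branch dies.

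Concretely, I would proceed in four steps.
\begin{enumerate}
\item Precompute the inputs: the rule set $\mathcal{R}$ with mirrors, the blocking set $\bar{\mathcal{K}}$ built from $\mathcal{K}$ via Algorithms \ref{alg:extend_from_cut_vertices}--\ref{alg:remove_ring}, the combined-rule set $\cRstarnoK$ from Lemma \ref{comp:lem:combined_rules}, and $\Rauxiliary$.
\item For each $d\in\{7,\dots,11\}$, call \textsf{enumPossibleBadWheels}. This generates $\mathcal{W}_d$, with and without the digon at the centre, modulo rotation in the digon-free case, and discards wheels that are already pruned.
\item For each surviving wheel, run \textsf{fixInRules}. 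This fixes one combined rule per spoke $e_1,\dots,e_d$, branches over the digon/triangle choices inside the free homomorphic images, and after each step refines every non-tail degree range to exact values and prunes.
\item Run \textsf{fixOutRules} on the result, and assert that what remains is empty.
\end{enumerate}
Each of these branches is finite, so termination is immediate from the dart-count arguments given earlier. I would run the five degrees, and the individual wheels within each degree, in parallel, with a shell driver that mirrors lines 3--5 of Algorithm \ref{alg:enum_all_bad_cartwheel}.

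Alongside the run, I would add internal consistency checks. After every free homomorphism, each cartwheel should satisfy (M1)--(M6). Every vertex should have either a fixed range or a tail range. No vertex should be incident to two digons. No third neighbour of the centre should appear, which holds because every rule lies within distance $2$ of both $s(R)$ and $t(R)$.

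The soundness of the hand-designed refinement step needs its own check. The homomorphic covers $R_{\textsf{aux},27}$ and $R_{\textsf{aux},31}$ must really be covers, which I would verify by the face-type and degree case split given in the text: the designated boundary edge bounds either a digon, or a triangle whose third vertex has degree $5$, or degree in $[6,\infty]$. The refinement must also genuinely terminate. After refining on a pair $((R,R_1,R_2,R_3),e_i)$, some $R_j$ always applies to $\reverse(e_i)$, so the same pair is never refined again, giving at most $2d$ refinements per cartwheel.

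The main obstacle I expect is combinatorial blow-up, together with whether the pruning is strong enough. Every boundary completion can be either a digon or a triangle, and the centre has up to $11$ spokes. If some branch survives, the assertion fails. In that case I would inspect the surviving cartwheel and either add a configuration to $\mathcal{K}$ (reverifying its reducibility via Lemma \ref{comp:lem:reducible}) or tighten the constants $c(v)$, keeping the per-digon total below $20$. For reproducibility, I would have the result independently re-derived from the pseudocode, as described in the introduction.
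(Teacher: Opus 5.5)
Your plan matches the paper's proof: the authors run \textsf{enumPossibleBadWheels} for each $d\in\{7,\dots,11\}$, then run Algorithm~\ref{alg:enum_bad_cartwheels} (\textsf{fixInRules} followed by \textsf{fixOutRules}) on every surviving wheel via a shell script, and observe that no cartwheel survives; the soundness of this procedure comes from Lemma~\ref{lem:cartwheel-algorithm-is-correct} and the supporting lemmas you cite. One caveat: your fallback of enlarging $\mathcal{K}$ or changing $c(v)$ would change the statement being verified and force re-running Lemmas~\ref{comp:lem:combined_rules} and~\ref{comp:lem:reducible}, so it is not part of a proof of the lemma as stated.
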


%% file: pseudo/conf_hom.tex
\input{pseudo/github_commands}
\subsection{Homomorphic images of configurations}

Note that, in line 2 of Algorithm \ref{alg:all_hom_images}, we use the \textsf{blockedByReducibleConfiguration} routine from \cite{inoue2026four}.
In \cite{inoue2026four}, this routine selects a distinguished vertex, called the  ``center", which is the only vertex allowed to be the image of a vertex $v$ with $\delta_K(v) > 8$ of a configuration.
In Algorithm \ref{alg:prune}, this routine is used by selecting the center vertex of a cartwheel as the center.
This distinction is important in \cite{inoue2026four}, but unnecessary here.
Hence, we do not specify a center.

Consequently, every vertex of a configuration, including vertices $v$ with $\delta_K(v)>8$, may be mapped to any vertex of the target pseudo-embedding.
Thus, we implement the uncentered variant of \textsf{blockedByReducibleConfiguration} in Algorithm A.7.1 from \cite{inoue2026four}, together with Algorithms A.6.6 and A.7.2.

\begin{algorithm}[H]
    \caption{\allHomImagesWithLink($\widehat K$, $\Ksmaller$)}
    \label{alg:all_hom_images}
    \begin{algorithmic}[1]
        \Require{The outer extension of possibly non-planar multi-boundary configuration $\widehat K$, a set of configurations $\Ksmaller$.}
        \Ensure{A set of multi-boundary islands $\mathcal{L}$. If a homomorphism from $\widehat K$ to $G^*$ exists but no homomorphism from configurations in $\Ksmaller$ to $G^*$ exists, then there exists an island $I$ in $\mathcal{L}$ such that $I$ or its trimmed island appears in $G$.}
        \State $\mathcal{L} \gets \emptyset$
        \If {\Call{blockedByReducibleConfiguration}{$\widehat K, \Ksmaller$} = \True} \Comment{This routine is Algorithm A.7.1 in \cite{inoue2026four}, but we do not choose the center.}
            \State \Return $\mathcal{L}$
        \EndIf
        \If {\Call{isPlanar}{$\widehat K$} = \True and \Call{hasSeparatingCycle}{$\widehat K$} = \False}
            \State $S \gets$ \Call{freeCompletionFromOuterExtension}{$\widehat K$}
            \State $I \gets$ \Call{islandFromFreeCompletion}{$S$}
            \If {the number of vertices of degree 2 in $I$ is at most 3} \Comment{This property is only used here.}
                \State $\mathcal{L} \gets \mathcal{L} \cup \{I\}$
            \EndIf
        \EndIf
        \State \texttt{checked} $\gets$ a map from $D(\widehat K)$ to $\{\True, \False\}$, initialized all \False.
        \ForAll{$e \gets D(\widehat K)$}
            \ForAll{$f \gets D(\widehat K)$}
                \If {$e=f$ or $\texttt{checked}(e) = \True$ or $\texttt{checked}(f) = \True$}
                     \Continue
                \EndIf
                \State $\mathcal{Z}^*_{e,f} \gets$ \Call{FreeHomomorphismAndEnforceSingleDigonIncidence}{$\widehat K$, $\{(e, f)\}$} \Comment{Since $\widehat K$ is a pseudo-embedding, free homomorphic images for pseudo-embeddings are computed.}
                \ForAll{$((Z^*_{e,f}, \delta^{*-}_{e,f}, \delta^{*+}_{e,f}), \phi^*_{e,f}) \gets \mathcal{Z}^*_{e,f}$}
                    \State $\mathcal{S} \gets$ \Call{makeOuterExtension}{$(Z^*_{e,f}, \delta^{*-}_{e,f}, \delta^{*+}_{e,f})$}
                    \ForAll{$(\widehat{S}, \phi) \gets \mathcal{S}$}
                         \State $\mathcal{M} \gets$ \Call{allHomImages}{$\widehat S, \Ksmaller$}
                         \State $\mathcal{L} \gets \mathcal{L} \cup \mathcal{M}$
                    \EndFor
                \EndFor
            \EndFor
            \State $\texttt{checked}(e) \gets \True$
            \State $\texttt{checked}(\reverse(e)) \gets \True$
        \EndFor
        \State \Return $\mathcal{L}$
    \end{algorithmic}
\end{algorithm}

\subsubsection{Check planarity}

\algdef{SE}[DOWHILE]{Do}{doWhile}{\algorithmicdo}[1]{\algorithmicwhile\ #1}

\begin{algorithm}[H]
    \caption{\getWalksWithLink($Z$)}
    \label{alg:get_walks}
    \begin{algorithmic}[1]
        \Require{A pseudo-embedding $Z$.}
        \Ensure{The set of closed walks.}
        \State \texttt{visited} is a map from $D(Z)$ to $\{\True, \False\}$, initialized all \False.
        \State $F \gets \emptyset$
        \ForAll{$e \gets D(Z)$}
            \If {$\texttt{visited}(e)=\True$}
                \Continue
            \EndIf
            \State $\texttt{W} \gets$ an empty array
            \State $e_\texttt{current} \gets e$
            \Do
                \State $\texttt{W.push\_back}(e_\texttt{current})$
                \State $\texttt{visited}(e_\texttt{current}) \gets \True$
                \If {$\successor(e_\texttt{current}) = \nil$}
                    \State $f \gets$ the first dart of $\head(e_\texttt{current})$ (i.e., $\head(f)=\head(e_\texttt{current})$ and $\predecessor(f)=\nil$)
                    \State $e_\texttt{current} \gets \reverse(f)$
                \Else
                    \State $e_\texttt{current} \gets \reverse(\successor(e_\texttt{current}))$
                \EndIf
            \doWhile{$e_\texttt{current} \neq e$}
            \State $F \gets F \cup \{\texttt{W}\}$
        \EndFor
        \State \Return $F$
    \end{algorithmic}
\end{algorithm}

\begin{algorithm}[H]
    \caption{\isPlanarWithLink($Z$)}
    \label{alg:is_planar}
    \begin{algorithmic}[1]
        \Require{A pseudo-embedding $Z$}
        \Ensure{\True or \False}
        \State $F \gets$ \Call{getWalks}{$Z$}
        \State \Return \True if $|V(Z)|-|D(Z)|/2+|F|=2$ otherwise \False
    \end{algorithmic}
\end{algorithm}

\subsubsection{Check separating cycles}

\begin{algorithm}[H]
    \caption{\hasSeparatingCycleWithLink($Z, \delta^-, \delta^+$)}
    \label{alg:has_separating_cycle}
    \begin{algorithmic}[1]
        \Require{A pseudo-embedding with degree-range functions $(Z, \delta^-, \delta^+)$}
        \Ensure{\True if $(Z, \delta^-, \delta^+)$ has a separating cycle contradicting the connectivity of $G^*$, otherwise \False.}
        \ForAll{$C \gets$ \Call{enumCycles}{$(Z, \delta^-, \delta^+), 4$}}
            \State $L_D \gets$ \Call{labelDarts}{$(Z, \delta^-, \delta^+), C$}
            \State $n_\textsf{left}, n_\textsf{right} \gets$ \Call{numSeparatedVertices}{$(Z, \delta^-, \delta^+), C, L_D$}
            \If {$|C| \leq 3$, $n_\textsf{left} > 0$, and $n_\textsf{right} > 0$}
                \State \Return \True
            \ElsIf {$|C| = 4$, $n_\textsf{left} > 2$, and $n_\textsf{right} > 2$}
                \State \Return \True
            \EndIf
        \EndFor
        \State \Return \False
    \end{algorithmic}
\end{algorithm}

\begin{algorithm}
    \caption{\enumCyclesWithLink($(Z, \delta^-, \delta^+), l$)}
    \label{alg:enum_cycles}
    \begin{algorithmic}[1]
    \Require {A pseudo-embedding with degree-range functions $(Z, \delta^-, \delta^+)$ and a maximum length $l$}
    \Ensure {All cycles of length at most $l$}
    \State \texttt{visitedV} is a map from $V(Z)$ to $\{\True, \False\}$, initialized all $\False$.
    \State \texttt{visitedD} is a map from $D(Z)$ to $\{\True, \False\}$, initialized all $\False$.
    \Procedure{DFS}{\texttt{P}}
        \If{$|\texttt{P}| > l$}
            \State \Return
        \EndIf
        \State Let $e$ be the last dart of \texttt{P}
        \For{every dart $e'$ with $\head(e')=\head(e)$}
            \If {$e'=e$}
                \Continue
            \EndIf
            \State $f \gets \reverse(e')$
            \If{$f$ is the first dart of \texttt{P}}
                \State \textbf{yield} the cycle represented by \texttt{P}
                \Continue
            \EndIf
            \If{$\texttt{visitedD}(f)$ or $\texttt{visitedV}(\head(f))$}
                \Continue
            \EndIf
        
            \State \texttt{P.push\_back}$(f)$
            \State $\texttt{visitedV}(\head(f)) \gets \True$
            \State \Call{DFS}{\texttt{P}}
            \State $\texttt{visitedV}(\head(f))\gets \False$
            \State \texttt{P.pop\_back}()
        \EndFor
    \EndProcedure
    
    \ForAll{dart $e$ of $Z$}
        \State $\texttt{P} \gets$ the list containing only $e$.
        \State $\texttt{visitedV}(\head(e))\gets \True$
        \State \Call{DFS}{\texttt{P}}
        \State $\texttt{visitedV}(\head(e))\gets \False$
        \State $\texttt{visitedD}(e) \gets \True$
    \EndFor
    \end{algorithmic}
\end{algorithm}

\begin{algorithm}[H]
    \caption{\labelDartsWithLink($(Z, \delta^-, \delta^+), C$)}
    \label{alg:label_darts}
    \begin{algorithmic}[1]
        \Require{A pseudo-embedding with degree-range functions $(Z, \delta^-, \delta^+)$ with a cycle $C=(d_i)_{i=0}^{l-1}$.}
        \Ensure{The label $L_D$ which represents whether the dart is on the left side or the right side of $C$ for the direction of this cycle.}
        \State $L_D$ is a map from $D(Z)$ to \textsf{L}, \textsf{R}, or $\bot$, initialized all $\bot$.
        \ForAll{dart $d_i$ in $C$}
            \State $L_D(d_i) \gets \textsf{L}$
            \State $L_D(\reverse(d_i)) \gets \textsf{R}$
        \EndFor

        \Procedure{Propagate}{$e, \Delta$} \Comment{$\Delta$ is either \textsf{L} or \textsf{R}.}
            \State $L_D(e) \gets \Delta$
            \ForAll{$f \gets \{\successor(e), \predecessor(e), \reverse(e)\}$}
                \If {$f \neq \nil$ and $L_D(f) = \bot$}
                    \State \Call{Propagate}{$f, \Delta$}
                \EndIf
            \EndFor
            \State \Return
        \EndProcedure

        \ForAll{dart $d_i$ in $C$}
            \If {$\successor(d_i) \neq \nil$ and $\successor(d_i) \neq \reverse(d_{(i+1) \bmod l})$}
                \State \Call{Propagate}{$\successor(d_i), \textsf{L}$}
            \EndIf
            \If {$\successor(\reverse(d_{(i+1) \bmod l})) \neq \nil$ and $\successor(\reverse(d_{(i+1) \bmod l})) \neq d_i$}
                 \State \Call{Propagate}{$\successor(\reverse(d_{(i+1) \bmod l})), \textsf{R}$}
            \EndIf
        \EndFor
        \State \Return $L_D$
    \end{algorithmic}
\end{algorithm}

\begin{algorithm}[H]
    \caption{\numSeparatedVerticesWithLink($(Z, \delta^-, \delta^+), C, L_D$)}
    \label{alg:num_seperated_vertices}
    \begin{algorithmic}[1]
        \Require{A pseudo-embedding with degree-range functions $(Z, \delta^-, \delta^+)$, a cycle $C$, the label $L_D$.}
        \Ensure{If a homomorphism from $(Z, \delta^-, \delta^+)$ to $G^*$ exists, compute a lower bound of the number of separated vertices by the image of the cycle $C$.}
        \State $V_C \gets \emptyset$
        \ForAll{dart $d_i$ in $C$}
            \State $V_C \gets V_C \cup \{\head(d_i)\}$
        \EndFor
        \State $V_\textsf{L}, V_\textsf{R} \gets \emptyset, \emptyset$
        \ForAll{$d \in D(Z)$}
            \If {$\head(d) \in V_C$}
                \Continue
            \EndIf
            \If {$L_D(d) = \textsf{L}$}
                \State $V_\textsf{L} \gets V_\textsf{L} \cup \{\head(d)\}$
            \ElsIf {$L_D(d) = \textsf{R}$}
                \State $V_\textsf{R} \gets V_\textsf{R} \cup \{\head(d)\}$
            \EndIf
        \EndFor
        \State initialize all $n_{\textsf{L,inner}}, n_{\textsf{L,boundary}}, n_{\textsf{R,inner}}, n_{\textsf{R,boundary}}$ by 0.
        \ForAll{$v \in V(Z)$}
            \If {$v \in V_\textsf{L}$}
                \If {$v$ is a boundary vertex}
                    \State $n_{\textsf{L,boundary}} \gets n_{\textsf{L,boundary}} + 1$
                \Else
                    \State $n_{\textsf{L,inner}} \gets n_{\textsf{L,inner}} + 1$
                \EndIf
            \EndIf
            \If {$v \in V_\textsf{R}$}
                \If {$v$ is a boundary vertex}
                    \State $n_{\textsf{R,boundary}} \gets n_{\textsf{R,boundary}} + 1$
                \Else
                    \State $n_{\textsf{R,inner}} \gets n_{\textsf{R,inner}} + 1$
                \EndIf
            \EndIf
        \EndFor
        \State $n_\textsf{L} \gets n_{\textsf{L,inner}} + 1$ if $n_{\textsf{L,boundary}} > 0$, otherwise $n_{\textsf{L,inner}}$.
        \State $n_\textsf{R} \gets n_{\textsf{R,inner}} + 1$ if $n_{\textsf{R,boundary}} > 0$, otherwise $n_{\textsf{R,inner}}$.
        \State \Return $(n_\textsf{L}, n_\textsf{R})$
    \end{algorithmic}
\end{algorithm}

\subsubsection{Make an outer extension}

\begin{algorithm}[H]
    \caption{\makeOuterExtensionWithLink{$(Z^*, \delta^{*-}, \delta^{*+})$}}
    \label{alg:make_outer_extension}
    \begin{algorithmic}[1]
    \Require{A pseudo-embedding with degree-range functions $(Z^*, \delta^{*-}, \delta^{*+})$.}
    \Ensure{A set of pseudo-embeddings with degree-range functions  representing an outer extension of some possibly non-planar multi-boundary configuration.}
    \State $\texttt{Q} \gets$ an empty queue
    \State $\texttt{Q.push}(((Z^*, \delta^{*-}, \delta^{*+}), \id{V(Z^*) \cup D(Z^*)}))$
    \State $\mathcal{S} \gets \emptyset$
    \While{\texttt{Q} is not empty}
        \State $((Z, \delta^{-}, \delta^{+}), \phi) \gets \texttt{Q.pop}()$
        \State $A \gets \Call{findFourDarts}{(Z, \delta^{-}, \delta^{+})}$
        
        \If {$A \neq \texttt{null}$}
            \State $\tilde{\mathcal{Z}} \gets \Call{ensureOuterExtension}{(Z, \delta^{-}, \delta^{+}), A}$
            \ForAll{$((\tilde{Z}, \tilde{\delta}^{-}, \tilde{\delta}^{+}), \tilde{\phi}) \in \tilde{\mathcal{Z}}$}
                \State $\texttt{Q.push}(((\tilde{Z}, \tilde{\delta}^{-}, \tilde{\delta}^{+}), \tilde{\phi} \circ \phi))$
            \EndFor
        \Else
            \State $\mathcal{S} \gets \mathcal{S} \cup \{((Z, \delta^{-}, \delta^{+}), \phi)\}$
        \EndIf
    \EndWhile
    \State \Return $\mathcal{S}$
    \end{algorithmic}
\end{algorithm}

\begin{algorithm}[H]
    \caption{\findFourDartsWithLink($Z, \delta^-, \delta^+$)}
    \label{alg:find_four_darts}
    \begin{algorithmic}[1]
        \Require{A pseudo-embedding with degree-range functions $(Z, \delta^-, \delta^+)$.}
        \Ensure{Four darts $(e_0,e_1,e_2,e_3)$ where $e_{i+1}=\reverse(\successor(e_i))$ for $0 \leq i < 3$, $e_0 \neq e_2$ and $e_0 \neq e_3$, or \texttt{null}}
        \ForAll{darts $e_0$ of $D(Z)$}
            \If{$\successor(e_0)=\nil$}
                \Continue
            \EndIf
            \State $\texttt{found\_boundary\_vertex}\gets \False$
            \For{$i \in \{0, 1, 2\}$}
                \If {($i=1$ or $i=2$) and $\successor(e_i)=\nil$}
                    \State $\texttt{found\_boundary\_vertex} \gets \True$
                    \Break
                \EndIf
                \State $e_{i+1} \gets \reverse(\successor(e_i))$
            \EndFor
            \If{$\texttt{found\_boundary\_vertex} = \False$ and $e_0 \neq e_2$ and $e_0 \neq e_3$}
                \State \Return $(e_0, e_1, e_2, e_3)$
            \EndIf
        \EndFor
        \State \Return \texttt{null}
    \end{algorithmic}
\end{algorithm}

\begin{algorithm}[H]
    \caption{\ensureOuterExtensionWithLink($(Z, \delta^-, \delta^+), (e_0, e_1, e_2, e_3)$)}
    \label{alg:enusre_outer_extension}
    \begin{algorithmic}[1]
        \Require{A pseudo-embedding with degree-range functions $(Z, \delta^-, \delta^+)$ and four darts $(e_0, e_1, e_2, e_3)$}
        \Ensure{A set of pseudo-embeddings with degree-range functions together with mappings}
        \State $\mathcal{Z}_3 \gets$ \Call{FreeHomomorphismAndEnforceSingleDigonIncidence}{$((Z, \delta^-, \delta^+), {(e_0, e_3)})$}
        \State $\mathcal{Z}_2 \gets$ \Call{FreeHomomorphismAndEnforceSingleDigonIncidence}{$((Z, \delta^-, \delta^+), {(e_0, e_2)})$}
        \State \Return $\mathcal{Z}_3 \cup \mathcal{Z}_2$
    \end{algorithmic}
\end{algorithm}

\subsubsection{How to get a multi-boundary island from an outer extension}

\begin{algorithm}[H]
    \caption{\freeCompletionFromOuterExtensionWithLink($\widehat{K}$)}
    \label{alg:free_completion_from_outer_extension}
    \begin{algorithmic}[1]
        \Require{An outer extension $\widehat{K}$ represented as a pseudo-embedding with degree.}
        \Ensure{A free completion of $K$.}
        \State $F \gets \Call{getWalks}{\widehat{K}}$
        \State $Z \gets \widehat{K}$
        \State $\phi \gets \id{V(\widehat{K}) \cup D(\widehat{K})}$
        \For{each walk $W \in F$}
            \ForAll{darts $e\in W$}
                \State $e_0\gets \phi(e)$
                \If{$\predecessor(\reverse(e_0))=\nil$}
                    \State $e_1\gets \reverse(\successor(e_0))$
                    \If{$\successor(e_1)=\nil$}
                        \State \Call{addBoundaryDartsDirectly}{$Z,\reverse(e_1),e_0$}
                        \Continue
                    \EndIf
                    \State $e_2\gets \reverse(\successor(e_1))$
                    \If{$\successor(e_2)=\nil$}
                        \State $\phi'\gets \Call{linkIncidenceListEnds}{Z,\reverse(e_0),e_2}$
                        \State $\phi\gets \phi'\circ \phi$
                        \Continue
                    \EndIf
                    \State \textbf{assert}(\False) \Comment{Unreachable}
                \EndIf
            \EndFor
        \EndFor
        \State \Return $Z$
    \end{algorithmic}
\end{algorithm}

\begin{algorithm}[H]
    \caption{\islandFromFreeCompletionWithLink($S$)}
    \label{alg:island-from-free-completion}
    \begin{algorithmic}[1]
        \Require{A free completion $S$}
        \Ensure{A multi-boundary island constructed from $S$, together with a set of pendant edges, one added at each vertex of degree two.}
        
        \State $F_S \gets \Call{getWalks}{S}$
        \State $m \gets 0$
        \State Initialize $\texttt{darts\_to\_edge}[e] = -1$ for every dart $e$ of $S$
        
        \State $\texttt{ring\_sizes} \gets
            \Call{indexBoundaryEdges}
            {S,F_S,\texttt{darts\_to\_edge},m}$
        
        \State $(\texttt{n\_pendant\_edge}, \texttt{digon\_to\_pendant\_edge}) \gets
            \Call{indexPendantEdges}
            {S,F_S,\texttt{darts\_to\_edge},m}$
        
        \State $\Call{indexOtherEdges}
            {S,F_S,\texttt{darts\_to\_edge},m}$
        
        \State \Return $\Call{constructIsland}
            {S,F_S,\texttt{darts\_to\_edge},\texttt{ring\_sizes},
             \texttt{n\_pendant\_edge}, \texttt{digon\_to\_pendant\_edge}}$
    \end{algorithmic}
\end{algorithm}

\begin{algorithm}[H]
    \caption{\indexBoundaryEdgesWithLink($S,F_S,\texttt{darts\_to\_edge},m$)}
    \label{alg:index-boundary-edges}
    \begin{algorithmic}[1]
        \Require{A free completion $S$, its walks $F_S$, a dart-to-edge index map
        $\texttt{darts\_to\_edge}$, and the next available edge index $m$}
        \Ensure{The sizes of the boundary rings; $\texttt{darts\_to\_edge}$ and $m$ are updated}
        
        \State $\texttt{ring\_sizes} \gets$ an empty list
        
        \For{each walk $W \in F_S$}
            \If{$\successor(W[0]) = \nil$ and $|W| > 1$} \Comment{A walk in boundary of size more than 1.}
                \For{each dart $e \in W$}
                    \State $\texttt{darts\_to\_edge}[e] \gets m$
                    \State $\texttt{darts\_to\_edge}[\reverse(e)] \gets m$
                    \State $m \gets m+1$
                \EndFor
                \State $\texttt{ring\_sizes.push\_back}(|W|)$
            \EndIf
        \EndFor
        
        \State \Return $\texttt{ring\_sizes}$
    \end{algorithmic}
\end{algorithm}

\begin{algorithm}[H]
    \caption{\indexPendantEdgesWithLink($S,F_S,\texttt{darts\_to\_edge},m$)}
    \label{alg:index-pendant-edges}
    \begin{algorithmic}[1]
        \Require A free completion $S$, its walks $F_S$, a dart-to-edge index map
        $\texttt{darts\_to\_edge}$, and the next available edge index $m$
        \Ensure The number of pendant edges, each of which is incident with a vertex of degree 2 and a map from digons to their corresponding
        pendant-edge indices; $\texttt{darts\_to\_edge}$ and $m$ are updated
        
        \State $\texttt{n\_pendant\_edge} \gets 0$
        \State $\texttt{digon\_to\_pendant\_edge} \gets$ an empty map
        
        \For{each walk $W \in F_S$}
            \State $e_0 \gets W[0]$
        
            \If{$\successor(e_0) = \nil$ and $|W| = 1$} \Comment{A walk in boundary of size 1.}
                \State $\texttt{darts\_to\_edge}[e_0] \gets m$
                \State $\texttt{darts\_to\_edge}[\reverse(e_0)] \gets m$
                \State $m \gets m+1$
                \State $\texttt{n\_pendant\_edge} \gets
                \texttt{n\_pendant\_edge}+1$
            \EndIf
        
            \If{$\successor(e_0) \neq \nil$ and $|W| = 2$} \Comment{A walk bounding a digon.}
                \State $\texttt{digon\_to\_pendant\_edge}[W] \gets m$ \Comment{The index of a pendant edge.}
                \State $m \gets m+1$
                \State $\texttt{n\_pendant\_edge} \gets
                \texttt{n\_pendant\_edge}+1$
            \EndIf
        \EndFor
        
        \State \Return
        $(\texttt{n\_pendant\_edge},
        \texttt{digon\_to\_pendant\_edge})$
    \end{algorithmic}
\end{algorithm}

\begin{algorithm}[H]
\caption{\indexOtherEdgesWithLink($S,F_S,\texttt{darts\_to\_edge},m$)}
    \label{alg:index-other-edges}
    \begin{algorithmic}[1]
        \Require{A free completion $S$, its walks $F_S$, a dart-to-edge index map
        $\texttt{darts\_to\_edge}$, and the next available edge index $m$}
        \Ensure{$\texttt{darts\_to\_edge}$ and $m$ are updated}
        \For{each walk $W \in F_S$}
            \If{$\successor(W[0]) \neq \nil$} \Comment{A walk bounding a digon or a triangle.}
                \For{each dart $e \in W$}
                    \If{$\texttt{darts\_to\_edge}[e] = -1$ and
                    $\texttt{darts\_to\_edge}[\reverse(e)] = -1$}
                        \State $\texttt{darts\_to\_edge}[e] \gets m$
                        \State $\texttt{darts\_to\_edge}[\reverse(e)] \gets m$
                        \State $m \gets m+1$
                    \EndIf
                \EndFor
            \EndIf
        \EndFor
    \end{algorithmic}
\end{algorithm}

\begin{algorithm}[H]
    \caption{\constructIslandWithLink($S,F_S,\texttt{darts\_to\_edge}, \texttt{ring\_sizes}, \texttt{n\_pendant\_edge}, \texttt{digon\_to\_pendant\_edge}$)}
    \label{alg:construct-island}
    \begin{algorithmic}[1]
        \Require{A free completion $S$, its walks $F_S$, a dart-to-edge index map
        $\texttt{darts\_to\_edge}$, the boundary-ring sizes $\texttt{ring\_sizes}$,
        the number of pendant edges $\texttt{n\_pendant\_edge}$, and a map
        $\texttt{digon\_to\_pendant\_edge}$}
        \Ensure{A multi-boundary island constructed from $S$, together with a set of pendant edges, one added at each vertex of degree two.}
        \State $\texttt{incident\_edges} \gets$ an empty list
        \For{each walk $W \in F_S$}
            \If{$\successor(W[0]) \neq \nil$}
                \If{$|W|=2$} \Comment{A walk bounding a digon}
                    \State $\texttt{incident\_edges.push\_back}
                    (\texttt{darts\_to\_edge}[W[0]], \texttt{darts\_to\_edge}[W[1]], \texttt{digon\_to\_pendant\_edge}[W])$
                \ElsIf{$|W|=3$} \Comment{A walk bounding a triangle}
                    \State $\texttt{incident\_edges.push\_back}
                    (\texttt{darts\_to\_edge}[W[0]], \texttt{darts\_to\_edge}[W[1]], \texttt{darts\_to\_edge}[W[2]])$
                \EndIf
            \EndIf
        \EndFor
        \State $I \gets$ A multi-boundary island where \texttt{incident\_edges[$i$]} is the indices of edges that is incident to $i$-th vertex of this island, and \texttt{ring\_sizes[$j$]} is the number of edges in $E_R(I)$ that is incident to $j$-th face in $F_R(I)$.
        \State \Return $I$
    \end{algorithmic}
\end{algorithm}

\begin{lem}
\label{comp:lem:reducible}
All multi-boundary islands in $\mathcal{I}$, generated from $\mathcal{K}$, are semi-reducible.
\end{lem}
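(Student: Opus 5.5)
This lemma is a computational statement, so the plan is to certify it by running the semi-reducibility checker on every island in $\mathcal{I}$. The checker needs to be specified precisely enough that each ``yes'' answer is a witness for Definition \ref{dfn:D-reducible} or Definition \ref{dfn:C-reducible}.

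\textbf{Step 1: generate $\mathcal{I}$.} Order the 915 configurations of $\mathcal{K}$ as $K_1,\dots,K_{|\mathcal{K}|}$. For each $i$, call \allHomImages (Algorithm \ref{alg:all_hom_images}) on $\widehat{K_i}$ with $\Ksmaller=\{K_j,K_j^{\textsf{mirror}}\}_{j<i}$, and take the union of the outputs. As each island is produced, assert the invariants the later steps rely on:
\begin{itemize}
\item boundary vertices have degree one (Claim \ref{claim:suc=nil iff head=degrre-one});
\item the free completion is well formed, i.e.\ the assertion in Algorithm \ref{alg:free_completion_from_outer_extension} never fires;
\item each island has at most three degree-two vertices.
\end{itemize}

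\textbf{Step 2: compute the largest bad semi-consistent set.} For each $I\in\mathcal{I}$, with index set $R$, enumerate all $3^{|R|}$ maps $R\to[3]$. Compute $\mathcal{C}_I$ by directly extending each boundary coloring to a three-edge-coloring of $I$. Then compute the maximal semi-consistent subset $\mathcal{C}^*$ of $\mathcal{D}_R\setminus\mathcal{C}_I$; it is well defined because a union of semi-consistent sets is semi-consistent. The computation is the standard fixed-point iteration. Start from $\mathcal{D}_R\setminus\mathcal{C}_I$. Repeatedly delete any $\phi$ for which some pair of colors $x,y$ admits no semi-matching $M$ partitioning $\phi^{-1}(\{x,y\})$ such that every switch along every $M'\subseteq M$ stays inside the current set.
\begin{itemize}
\item The semi-matchings are enumerated per boundary face by \getPlanarHalfKempes (Algorithm \ref{alg:get_planar_half_kempes_matching}) and concatenated across faces. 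Concatenation is correct because Kempe chains cannot connect different boundary faces.
\item Redundant matchings can be pruned. A matching $M\cup\{\{x,y\}\}$ generates a superset of the switches generated by $M$ with $x,y$ as singletons, so it imposes a stronger requirement and is never needed as a witness. This justifies skipping the non-maximal enumeration.
\end{itemize}
If $\mathcal{C}^*=\emptyset$, then $I$ is semi-D-reducible. Otherwise, enumerate deletable sets $F$ (Algorithms \ref{alg:get-deletable-edge-set}--\ref{alg:is-deletable}), compute $\mathcal{C}_{I\dotdiv F}$ by enumerating three-edge-colorings modulo $F$, and check whether $\mathcal{C}^*\cap\mathcal{C}_{I\dotdiv F}=\emptyset$. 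Because $\mathcal{C}^*$ is maximal, every semi-consistent set inside $\mathcal{D}_R\setminus\mathcal{C}_I$ is contained in $\mathcal{C}^*$, so this one check establishes semi-C-reducibility by $F$.

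\textbf{Step 3: correctness and reproducibility.} Two points need a written argument.
\begin{itemize}
\item \emph{Deletable sets.} The propagation in \textsc{Place} enforces condition 1 of Definition \ref{dfn:deletable}, and \textsc{IsDeletable} enforces condition 2 when $|F|=4$.
\item \emph{Auxiliary pendant edges.} These are added at degree-two vertices in the input format and are always retained. They must not be mistaken for ring edges, so I would check that they are excluded from $R$.
\end{itemize}
The remaining verification is reproducibility: independent reimplementations run from the pseudocode should confirm that every island passes.

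\textbf{Main obstacle: scale.} Islands with large total ring size, where the size is summed over boundary faces, make $3^{|R|}$ colorings combined with exponentially many half-chain semi-matchings expensive. My first attempt at controlling this would be two things. First, exploit color-permutation symmetry, which cuts the work by a factor of $6$. Second, store the current set as a bitset indexed by colorings, so that each consistency test is a lookup over the switch patterns of a matching, with those patterns precomputed once per ring size. If that is still too slow for the largest islands, I would first test semi-C-reducibility with small deletable sets, since shrinking the graph strongly restricts $\mathcal{C}_{I\dotdiv F}$, and I would parallelize over the islands of $\mathcal{I}$.
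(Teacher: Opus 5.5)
Your plan follows the paper's route. $\mathcal{I}$ is produced by calling \allHomImages on each $K_i$ with $\Ksmaller=\{K_j,K_j^{\textsf{mirror}}\}_{j<i}$; the paper fixes the order by filename. Each island is then certified by a checker that enumerates all $3^{|R|}$ ring colorings, uses half-chain semi-matchings concatenated over boundary faces, computes the maximal semi-consistent subset of $\mathcal{D}_R\setminus\mathcal{C}_I$ by the standard deletion iteration, and tests deletable sets $F$. Your observation that a single intersection test against the maximal $\mathcal{C}^*$ settles semi-C-reducibility by $F$ is correct.

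However, the one soundness argument you actually supply for the checker has the inclusion reversed. Replacing the singletons $\{a\},\{b\}$ of $M$ by the match $\{a,b\}$ does not enlarge the set of switches; it shrinks it. Afterwards $a$ and $b$ can only be switched together, whereas before each could also be switched alone. So the singleton version imposes the \emph{stronger} closure requirement. Whenever $M$ is a witness for $\phi$ and a color pair, so is the semi-matching obtained by pairing $a$ with $b$, provided $a,b$ lie on the same boundary face and $\{a,b\}$ overlaps no match of $M$.

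That is why non-maximal semi-matchings may be skipped, and it is what \textsf{GetPlanarHalfKempes} does. It discards choices that leave two cyclically consecutive positions unmatched, and it discards $M$ whenever some $M\cup\{\{a,b\}\}$ with $a,b$ unmatched was already generated.

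Read literally, your sentence says the paired (maximal) matching is never needed. An implementation keeping only the dominated witnesses would delete too much in the fixed-point iteration. It would return a semi-consistent set that may be a proper subset of the true maximal one. Then an empty output, or an empty intersection with $\mathcal{C}_{I\dotdiv F}$, would no longer certify semi-D- or semi-C-reducibility. Your concluding rule, ``skipping the non-maximal enumeration'', is the right one; only its justification must be reversed.

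A minor point: the bound of three degree-two vertices is a filter inside \allHomImages, which discards islands that cannot appear in $G$. It is not an independent invariant to assert.
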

For the proof, we order configurations in $\mathcal{K}$ by $K_1,\ldots,K_{|\mathcal{K}|}$ and their mirrors $K_1^\textsf{mirror},\ldots,K_{|\mathcal{K}|}^\textsf{mirror}$.
Then, we compute $\mathcal{I} = \cup_{1 \leq i \leq |\mathcal{K}|} \allHomImages(K_i, \{K_j, K_j^\textsf{mirror}\}_{j=1}^{i-1})$ by running Algorithm \ref{alg:all_hom_images}.
We verified that all multi-boundary islands in $\mathcal{I}$ are semi-reducible by computer.
Note that we order configurations in $\mathcal{K}$ by their filenames.

%% file: 4CT.bib
@book {MT,
    AUTHOR = {Mohar, Bojan and Thomassen, Carsten},
     TITLE = {Graphs on surfaces},
    SERIES = {Johns Hopkins Studies in the Mathematical Sciences},
 PUBLISHER = {Johns Hopkins University Press, Baltimore, MD},
      YEAR = {2001},
     PAGES = {xii+291},
   MRCLASS = {05C10 (57M15)},
  MRNUMBER = {1844449},
}

@article {4ct2,
    AUTHOR = {Appel, K. and Haken, W. and Koch, J.},
     TITLE = {Every planar map is four colorable. {II}. {R}educibility},
   JOURNAL = {Illinois J. Math.},
  FJOURNAL = {Illinois Journal of Mathematics},
    VOLUME = {21},
      YEAR = {1977},
    NUMBER = {3},
     PAGES = {491--567},
   MRCLASS = {05C15},
  MRNUMBER = {0543793},
       URL = {http://projecteuclid.org/euclid.ijm/1256049012},
}

@article {4ct1,
    AUTHOR = {Appel, K. and Haken, W.},
     TITLE = {Every planar map is four colorable. {I}. {D}ischarging},
   JOURNAL = {Illinois J. Math.},
  FJOURNAL = {Illinois Journal of Mathematics},
    VOLUME = {21},
      YEAR = {1977},
    NUMBER = {3},
     PAGES = {429--490},
   MRCLASS = {05C15},
  MRNUMBER = {0543792},
       URL = {http://projecteuclid.org/euclid.ijm/1256049011},
}

@article {RSST,
    AUTHOR = {Robertson, Neil and Sanders, Daniel and Seymour, Paul and
              Thomas, Robin},
     TITLE = {The four-colour theorem},
   JOURNAL = {J. Combin. Theory Ser. B},
  FJOURNAL = {Journal of Combinatorial Theory. Series B},
    VOLUME = {70},
      YEAR = {1997},
    NUMBER = {1},
     PAGES = {2--44},
   MRCLASS = {05C15},
  MRNUMBER = {1441258},
       URL = {https://doi.org/10.1006/jctb.1997.1750},
}

@article{HopcroftT74,
  author       = {John E. Hopcroft and
                  Robert Endre Tarjan},
  title        = {Efficient Planarity Testing},
  journal      = {J. {ACM}},
  volume       = {21},
  number       = {4},
  pages        = {549--568},
  year         = {1974},
  url          = {https://doi.org/10.1145/321850.321852},
  doi          = {10.1145/321850.321852},
  bibsource    = {dblp computer science bibliography, https://dblp.org}
}

@article{doublecross,
title = {Three-edge-colouring doublecross cubic graphs},
journal = {Journal of Combinatorial Theory, Series B},
volume = {119},
pages = {66-95},
year = {2016},
issn = {0095-8956},
doi = {https://doi.org/10.1016/j.jctb.2015.12.006},
url = {https://www.sciencedirect.com/science/article/pii/S0095895615001410},
author = {Katherine Edwards and Daniel P. Sanders and Paul Seymour and Robin Thomas}
}

@article{proj2024,
  author = {Inoue, Yuta and Kawarabayashi, Ken-ichi and Miyashita, Atsuyuki and Mohar, Bojan and Sonobe, Tomohiro},
  title = {Three-edge-coloring projective planar cubic graphs: A generalization of the Four Color Theorem},
  journal = {FOCS'24. ArXiv:2405.16586},
  year = {2024}
}

@article{3edgecoloring,
    AUTHOR = {Robertson, Neil and Seymour, Paul and
              Thomas, Robin},
     TITLE = {Tutte's three-edge coloring conjecture},
   JOURNAL = {J. Combin. Theory Ser. B},
  FJOURNAL = {Journal of Combinatorial Theory. Series B},
    VOLUME = {70},
      YEAR = {1997},
    NUMBER = {1},
     PAGES = {166-183},
   MRCLASS = {05C15}
}

@inproceedings {RSST-STOC,
    AUTHOR = {Robertson, Neil and Sanders, Daniel P. and Seymour, Paul and
              Thomas, Robin},
     TITLE = {Efficiently four-coloring planar graphs},
 BOOKTITLE = {Proceedings of the {T}wenty-eighth {A}nnual {ACM} {S}ymposium
              on the {T}heory of {C}omputing ({P}hiladelphia, {PA}, 1996)},
     PAGES = {571--575},
 PUBLISHER = {ACM, New York},
      YEAR = {1996},
      ISBN = {0-89791-785-5},
   MRCLASS = {05C85 (05C15)},
  MRNUMBER = {1427555},
       DOI = {10.1145/237814.238005},
       URL = {https://doi.org/10.1145/237814.238005},
}

@article{tutte,
title = {On the algebraic theory of graph colorings},
journal = {J. Combin. Theory},
volume = {1},
pages = {15--50},
year = {1966},
author = {W. Tutte},
}

@book {Diestel_book,
    AUTHOR = {Diestel, Reinhard},
     TITLE = {Graph theory},
    SERIES = {Graduate Texts in Mathematics},
    VOLUME = {173},
   EDITION = {Fifth},
 PUBLISHER = {Springer, Berlin},
      YEAR = {2018},
     PAGES = {xviii+428},
  MRNUMBER = {3822066},
}

@article{petersen-1898, 
  title={Sur le théorème de {T}ait}, 
  volume={5}, 
  journal={L'Intermédiaire des mathématiciens}, 
  author={Petersen, Julius}, 
  year={1898}, 
  pages={225–227}
}

@article{torus2024,
    title={Three-edge-coloring ({T}ait coloring) cubic graphs on the torus: A proof of  {G}r\"{u}nbaum's conjecture},
  author={Inoue, Yuta and Kawarabayashi, Ken-ichi and Miyashita, Atsuyuki and Mohar, Bojan and Sonobe, Tomohiro},
  journal={SODA'26, Arxiv:2505.07002}, 
  year={2025},
}

@article{inoue2026four,
  title={The Four Color Theorem with Linearly Many Reducible Configurations and Near-Linear Time Coloring},
  author={Inoue, Yuta and Kawarabayashi, Ken-ichi and Miyashita, Atsuyuki and Mohar, Bojan and Thomassen, Carsten and Thorup, Mikkel},
  journal={To appear in FOCS'26. ArXiv preprint arXiv:2603.24880},
  year={2026}
}

@article {Klein1879,
    AUTHOR = {Klein, Felix},
     TITLE = {Ueber die {A}ufl\"osung gewisser {G}leichungen vom siebenten
              und achten {G}rade},
   JOURNAL = {Math. Ann.},
  FJOURNAL = {Mathematische Annalen},
    VOLUME = {15},
      YEAR = {1879},
    NUMBER = {2},
     PAGES = {251--282},
      ISSN = {0025-5831,1432-1807},
   MRCLASS = {99-04},
  MRNUMBER = {1510011},
       DOI = {10.1007/BF01444143},
       URL = {https://doi-org.proxy.lib.sfu.ca/10.1007/BF01444143},
}

@incollection{Tutte1971,
    AUTHOR = {Tutte, William T.},
     TITLE = {What is a map?},
 BOOKTITLE = {New directions in the theory of graphs ({P}roc. {T}hird {A}nn
              {A}rbor {C}onf., {U}niv. {M}ichigan, {A}nn {A}rbor, {M}ich.,
              1971)},
     PAGES = {309--325},
 PUBLISHER = {Academic Press, New York-London},
      YEAR = {1973},
   MRCLASS = {05C10},
  MRNUMBER = {376413},
MRREVIEWER = {Laszlo\ Lovasz},
}

@article{excludedcubic,
  title={Excluded minors in cubic graphs},
  author={Robertson, Neil and Seymour, Paul and Thomas, Robin},
  journal={Journal of Combinatorial Theory, Series B},
  volume={138},
  pages={219--285},
  year={2019},
  publisher={Elsevier}
}

@article{cyclically5,
title = {Cyclically five-connected cubic graphs},
journal = {Journal of Combinatorial Theory, Series B},
volume = {125},
pages = {132-167},
year = {2017},
doi = {https://doi.org/10.1016/j.jctb.2017.03.003},
url = {https://www.sciencedirect.com/science/article/pii/S0095895617300187},
author = {Neil Robertson and P.D. Seymour and Robin Thomas}
}
